\newtheorem{theorem}{Theorem}[section]
\newtheorem{definition}[theorem]{Definition}
\newtheorem{lemma}[theorem]{Lemma}
\newtheorem{remark}[theorem]{Remark}
\def \fkz{\mathfrak{z}}
\def \R{\mathfrak{R}}
\def \fc {\mathfrak{c}}
\def \half{\frac{1}{2}}
\def \p{\partial}
              \def \M{{\mathcal M}}
              \def \R{{\bf R}}
              \def \E{{\mathcal E}}
\def \p{\partial}
\def \fs {\mathbf{s}}
\def \ft {\mathbf{t}}
\def \E{{\mathcal E}}
\def \v {\vskip 0.1in}
\def \n {\noindent}
\begin{document}

  \begin{center}
   {\LARGE \bf Virtual Neighborhood Technique for \\Holomorphic Curve Moduli Spaces  }
\\\;\;\;\;
   {\large An-Min Li and Li Sheng}\footnote{partially  supported by a NSFC grant}
   \footnote{anminliscu@126.com, lshengscu@gmail.com}

{Department of Mathematics, Sichuan University
        Chengdu, PRC}
 \end{center}

 % \tableofcontents
\v\v
\begin{abstract}
In this paper we use the approach of Ruan (\cite{R2}) and Li-Ruan (\cite{LR}) to construct virtual neighborhoods and show that the Gromov-Witten invariants can be defined as an integral over top strata
of virtual neighborhood. We prove that the invariants defined in this way satisfy all the Gromov-Witten axioms of Kontsevich and Manin.
\end{abstract}

\section{\bf Introduction }

Ruan and Tian established the theory of Gromov-Witten invariants for {\em semi-positive} symplectic manifold about 90's : Ruan \cite{R1} first introduced a new invariant to the symplectic manifold by counting $J$-holomorphic maps from $S^2$ to a fixed {\em semi-positive} symplectic manifold. Nowdays these invariants
are called Gromov-Witten invariants. Later Ruan and Tian studied the higher genus case and proved the associativity for the quantum cup product and the WDVV equations in 1997.
\v
In the effort to remove semi-positive condition, the technology went on a significant change.  There had been several different approaches to define Gromov-Witten Invariants for general symplectic manifolds, such as Fukaya-Ono \cite{FO}, Li-Tian \cite{LT}, Liu-Tian \cite{LiuT},
Ruan \cite{R2}, Siebert \cite{S} and etc.
\v
Recently, there is a great deal of interest among symplectic geometric community to re-visit the latter approach with the purpose to clean up some of issues ( see \cite{C15,C16},\cite{CLW,CLW1},\cite{FOOO12}-\cite{FOOO17},\cite{MW12}-\cite{MW15-3},\cite{TF17}). The main complication is that the moduli space has various lower strata. How to deal with these lower strata is one
of main issues discussed recently. Our idea is that if we can show that the relevant differential form decays in certain rate near lower strata, the Gromov-Witten invariants can be defined as an integral over top strata
of virtual neighborhood. Therefore, all the complication of lower strata of the of virtual neighborhood can be avoided entirely.
\v
In this paper we use the approach of Ruan (\cite{R2}) and Li-Ruan (\cite{LR}). Let us describe the main idea.

\v
\v

\v

\subsection{Local regularization }\label{Local regularization}

\v

We explain the construction of local regularization  for the top strata $\mathcal{M}_{g,n}(A)$. For details and for lower strata please see the section \S\ref{local_regularization} and \S\ref{local_regularization-lower strata}. Consider the universal curve over the Teichm\"uller space (for detail see section \S\ref{sub_sect_Teich})
$$\pi_{\mathbf{T}}:   \mathcal{Q}\to \mathbf{T}_{g,n}.$$
We assume that $n>2-2g$, and $(g,n)\ne (1,1), (2,0)$.
For any $[b_o]=[(p_o,u)]\in \mathcal{M}_{g,n}(A)$ let $\gamma_o\in \mathbf{T}_{g,n}$ such that $\pi_{\mathcal M}(\gamma_o)=[p_o]$, where $\pi_{\mathcal M}: \mathbf{T}_{g,n}\to \mathcal{M}_{g,n}$ is the projection. We
choose a local slice for $Q$, which gives a local coordinate chart on $U\subset \mathbf{T}_{g,n}$ and a local trivialization on $\pi_{\mathbf{T}}^{-1}(U)$:
\begin{equation}\label{local coordinates}
\psi: U\rightarrow \mathbf{A},\;\;\;\Psi:\pi_{\mathbf{T}}^{-1}(U)\rightarrow \mathbf{A}\times \Sigma,\end{equation}
with $\psi(\gamma_o)=a_o$, where $U\subset \mathbf{T}_{g,n}$ is a open set.
We have a continuous family of Fredholm system
$$\left(\widetilde{\mathcal B}(a),\; \widetilde{\mathcal{E}}(a),\;
\bar{\p}_{j,J}\right)$$
parameterized by $a\in \mathbf{A}$. Denote by $j_a$ the complex structure on $\Sigma$ associated with $a=(j,{\bf y})$ and put $j_{a_o}:=j_o$.  For any $v\in \widetilde{\mathcal B}(a)$ let $b=(a,v)$ and denote $\widetilde{\E}(a)|_{v}:=\widetilde{\E}|_{b}.$ Let $b_o=(a_o,u)$, denote by $G_{b_{o}}$ the isotropy group at $b_o$. We can choose a $G_{b_o}$-invariant finite dimensional subspace
$\widetilde{K}_{b_o} \subset \widetilde{\E}|_{b_o}$ such that every member of $\widetilde{K}_{b_o}$ is in $C^{\infty}(\Sigma,u^{*}TM\otimes\wedge^{0,1}_{j_o}T^{*}\Sigma)$ and
\begin{equation}\label{regularization_operator}
\widetilde{K}_{b_o} + image D_{b_o} = \widetilde{\E}|_{b_o},
\end{equation}
where $D_{b_o}=D\bar{\p}_{j_o,J}$ is the vertical differential of $\bar{\p}_{j_o,J}$ at $u$.
\v
The Weil-Petersson metric $g_{\mathsf{wp}}$ on $\mathbf{T}_{g,n}$ induces a $Diff^+(\Sigma)$-invariant distance $d_{\mathbf{A}}(a_o,a)$ on $\mathbf{A}$.
Set
$$
\widetilde{\mathbf{O}}_{b_{o}}(\delta,\rho):=\{(a,v)\in \mathbf{A}\times \widetilde{\mathcal{B}} \;|\; d_{\mathbf{A}}(a_o,a)<\delta, \|h\|_{j_a,k,2}<\rho \},
$$
$$\mathbf{O}_{[b_{o}]}(\delta,\rho)=
\widetilde{\mathbf{O}}_{b_{o}}(\delta,\rho)/G_{b_o},$$
where $h\in W_j^{k,2}(\Sigma;u^{\ast}TM)$, $v=\exp_{u}(h)$. Note that both $d_{\mathbf{A}}$ and $\|h\|_{j,k,2}$ are $Diff^+(\Sigma)$-invariant, we may identified $\mathbf{O}_{[b_{o}]}(\delta,\rho)$ with a neighborhood of $[b_o]\in \mathcal{M}_{g,n}(A)$. We can choose $\delta$, $\rho$ so small such that there is an isomorphism
$$P_{b_{o}, b}:\widetilde{\E}_{b_o}\rightarrow \widetilde{\E}_{b}\;\;\;\forall \;b\in \widetilde{\mathbf{O}}_{b_{o}}(\delta,\rho).$$
Now we define a thickned Fredholm system $(\widetilde{K}_{b_o}\times \widetilde{\mathbf{O}}_{b_{o}}(\delta,\rho), \widetilde{K}_{b_o}\times \widetilde{\E}|_{\widetilde{\mathbf{O}}_{b_{o}}(\delta,\rho)}, S)$.
Let $(\kappa, b)\in \widetilde{K}_{b_o}\times \widetilde{\mathbf{O}}_{b_{o}}(\delta,\rho)$, define
\begin{equation}\label{local regu}
S(\kappa,b) = \bar{\partial}_{j_a,J}u + P_{b_o,b}\kappa.
\end{equation}
We can choose $(\delta, \rho)$ small such that
the linearized operator $DS_{(\kappa,b)}$ is surjective for any $b\in \widetilde{\mathbf{O}}_{b_{o}}(\delta,\rho)$.
\v
\subsection{\bf Global regularization and virtual neighborhoods}\label{global_r}

There exist finite points $[b_i]\in \overline{\mathcal{M}}_{g,n}(A)$, $1\leq i \leq \mathfrak{m}$, such that
\begin{itemize}
\item[(1)] The collection $\{\mathbf{O}_{[b_i]}(\delta_i/3,\rho_i/3)
\mid 1\leq i \leq \mathfrak{m}\}$ is an open cover of $ \overline{\mathcal{M}}_{g,n}(A)$.
\item[(2)] Suppose that $\widetilde{\mathbf{O}}_{b_i}(\delta_i,\rho_i)
\cap \widetilde{\mathbf{O}}_{b_j}(\delta_j,\rho_j)
\neq\phi$. For any $b\in \widetilde{\mathbf{O}}_{b_i}(\delta_i,\rho_i)
\cap \widetilde{\mathbf{O}}_{b_j}(\delta_j,\rho_j)$, $G_b$ can be imbedded into both $G_{b_i}$ and $G_{b_j}$ as subgroups.
\end{itemize}
Set
$$\mathcal{U}=\bigcup_{i=1}^{\mathfrak{m}}
\mathbf{O}_{[b_{i}]}(\delta_i,\rho_i).$$
There is a forget map
$$forg: \mathcal{U}\to \overline{\mathcal{M}}_{g,n},\;\;\;[(j,{\bf y}, u)]\longmapsto [(j,{\bf y})].$$

\v\n
In Section \S\ref{global_r} we construct a finite rank orbi-bundle $\mathbf{F}$ over
$\mathcal{U}$ such that, for every $i\leq \mathfrak{m}$,
$\widetilde{\mathbf{F}}\mid_{b_i}$ contains a copy of group ring
$\mathbb{R}[G_{b_i}].$
The construction imitates SiebertÂ¡Â¯s construction.
\v
Then we construct a bundle map $\mathfrak{i}([\kappa,b]):\mathbf{F}\to \E$ and
define a global regularization to be the bundle map $\mathcal{S}:\mathbf{F}\to \E$
$$\mathcal{S}([\kappa,b])
=[\bar{\partial}_{j,J}v] + \mathfrak{i}([\kappa,b])
$$
such that $D\mathcal{S}$ is surjective.
Denote
$$\mathbf{U}=\mathcal{S}^{-1}(0)|_{\mathcal{U}}.$$
By restricting the bundle $\mathbf{F}$ to $\mathbf{U}$ we have a bundle $\mathbf{E}$ of finite rank with a canonical section $\sigma$. We call
$(\mathbf{U},\mathbf{E},\sigma)$  a virtual neighborhood for $\overline{\mathcal{M}}_{g,n}(A)$. Denote by $\mathbf{U}^T$ the top strata of $\mathbf{U}$.
In Section \S\ref{top strata}
 we prove
\begin{theorem} \label{Smooth}
$\mathbf{U}^T$ is a smooth oriented, effective orbifold of dimension $\mathcal{N}= rank(\mathbf{F}) + ind \;D\mathcal{S}.$
\end{theorem}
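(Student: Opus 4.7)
The plan is to reduce the theorem to a local application of the Banach-space implicit function theorem in each thickened chart, and then to verify that the resulting smooth structures glue equivariantly into an oriented effective orbifold.

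First I would fix $[b_o]\in\mathbf{U}^T$ and work in the local thickening $\widetilde{K}_{b_o}\times\widetilde{\mathbf{O}}_{b_o}(\delta,\rho)$ with the section $S$ from \eqref{local regu}. Because $b_o$ lies in the top stratum, the surface $(\Sigma,j_{a_o})$ has no nodes, so $\widetilde{\mathbf{O}}_{b_o}(\delta,\rho)$ is a genuine Banach manifold modelled on $T_{a_o}\mathbf{A}\oplus W^{k,2}_{j_{a_o}}(\Sigma,u^*TM)$, and the thickening is Banach-smooth. The linearization
\[
DS_{(0,b_o)}(\dot\kappa,\dot b)=D_{b_o}\dot b+\dot\kappa
\]
is Fredholm, and surjective by the paragraph following \eqref{local regu} (for $(\delta,\rho)$ sufficiently small). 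The implicit function theorem therefore presents $S^{-1}(0)$ locally as a finite-dimensional $C^\infty$ submanifold of dimension $\mathrm{rank}(\widetilde{K}_{b_o})+\mathrm{ind}\,D_{b_o}$, and elliptic regularity for the Cauchy--Riemann operator promotes its points to smooth maps.

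Next I would globalize using the cover from Section~\ref{global_r}. On the top stratum, the overlap condition (2) implies that on each non-empty intersection $\widetilde{\mathbf{O}}_{b_i}(\delta_i,\rho_i)\cap\widetilde{\mathbf{O}}_{b_j}(\delta_j,\rho_j)$ the two local regularizations can be compared through a common extended Fredholm equation built from the combined thickening $\widetilde{K}_{b_i}+\widetilde{K}_{b_j}$; projection onto either summand recovers the original local model, and the implicit function theorem applied to the extended equation shows that the two smooth structures on $\mathcal{S}^{-1}(0)$ coincide. This yields a smooth manifold of dimension $\mathrm{rank}(\mathbf{F})+\mathrm{ind}\,D\mathcal{S}$, where $\mathrm{ind}\,D\mathcal{S}$ is the index of the underlying Fredholm $\bar\partial_{j,J}$ system. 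The orbifold structure is obtained by dividing each local slice by $G_{b_i}$; effectivity follows from the stability hypotheses $n>2-2g$ and $(g,n)\neq(1,1),(2,0)$, which force any $\gamma\in G_{b_i}$ acting trivially on the Teichmüller slice to be an automorphism of $(\Sigma,\mathbf{y})$ fixing enough marked data to be the identity. Orientability comes from the canonical complex-linear orientation on $\det D\bar\partial$, combined with a fixed real orientation on each $\widetilde{K}_{b_o}$; both are preserved under the $G_{b_i}$-actions and under the equivariant transition maps.

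The principal obstacle is the gluing step: showing that two different thickenings $\widetilde{K}_{b_i},\widetilde{K}_{b_j}$, each coming with its own parallel transport $P_{b_i,\cdot}$, $P_{b_j,\cdot}$, produce the same smooth germ at each overlap point and that the induced orbifold transition maps are equivariant diffeomorphisms. This depends crucially on the careful construction of the global bundle map $\mathfrak{i}([\kappa,b]):\mathbf{F}\to\mathcal{E}$ from Section~\ref{global_r}, ensuring that, restricted to each chart, the global section $\mathcal{S}$ agrees with the local $S$ up to a smooth diffeomorphism of the ambient thickening that intertwines the $G_{b_i}$- and $G_{b_j}$-actions. Once this compatibility is established, the local quotients $\bigl(S^{-1}(0)\cap(\widetilde{K}_{b_i}\times\widetilde{\mathbf{O}}_{b_i})\bigr)/G_{b_i}$ furnish a smooth, oriented, effective orbifold atlas on $\mathbf{U}^T$ of the asserted dimension $\mathcal{N}$.
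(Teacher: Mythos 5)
Your local step (implicit function theorem in a single thickened chart centered at a top-stratum point, followed by quotienting by the isotropy group) matches the easy half of the paper's argument, but the proposal has two genuine gaps, both of which are where the real work of the paper lies.

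First, you only globalize over overlaps of charts centered at top-stratum points. But the cover $\{\mathbf{O}_{[b_i]}(\delta_i,\rho_i)\}$ of $\overline{\mathcal{M}}_{g,n}(A)$ necessarily contains charts centered at nodal configurations $[b_i]$, and the portion of $\mathbf{U}^T$ near the lower strata is seen \emph{only} through such charts, parametrized by gluing parameters $(\mathbf{r})$ together with $(\mathbf{s},\kappa,h)$. Smoothness of the zero set in $(\mathbf{r})$ is not a consequence of a routine Banach IFT: the pregluing construction, the frame fields $e_{\alpha}((\mathbf{r}),\mathbf{s},h)$ of $\widetilde{\mathbf{F}}$, and the isomorphisms $P_{b_o,b}$ all depend on $(\mathbf{r})$ in a way that must be controlled separately (this is the paper's Case 2, resting on Lemma \ref{smooth of bundle map-1}, Lemma \ref{smooth line}, and the exponential decay estimates of \cite{LS-1,LS-2}). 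Omitting this case means you have not produced a smooth atlas on all of $\mathbf{U}^T$.

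Second, in the overlap step you correctly flag the "principal obstacle" but then defer it to "the careful construction of $\mathfrak{i}$" without an argument, and the mechanism you would need is not the one you suggest. The transition between two slices is $v\mapsto v\circ\vartheta_a$ with $\vartheta_a=\Psi'\circ\Psi^{-1}\mid_a$ a family of diffeomorphisms of $\Sigma$ depending on $a$; for $v\in W^{k,2}$ the derivative $\tfrac{\partial}{\partial a}(v\circ\vartheta_a)$ lands outside $W^{k,2}$, so the combined section is \emph{not} smooth on the ambient thickened Banach chart, and your proposed "extended Fredholm equation built from $\widetilde{K}_{b_i}+\widetilde{K}_{b_j}$" cannot be differentiated there. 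The paper's resolution is to first use elliptic regularity to show that every point of $\mathcal{S}^{-1}(0)$ has $v\in C^{\infty}(\Sigma,M)$, so that on the zero locus the reparametrized data and the map $F_{(\kappa_o,b_o)}(a,\kappa,h)$ become smooth in all variables, and only then to apply the implicit function theorem with parameter $a$. In your write-up elliptic regularity appears as an afterthought ("promotes its points to smooth maps") rather than as the indispensable input that makes the chart transitions differentiable; without it the globalization step fails.
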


\subsection{\bf Gromov-Witten invariants}\label{GW inv.}

Recall that we have a natural evaluation map
$$ev_i: \mathbf{U}^T \longrightarrow
M\;\;\;\;\left(\Sigma,j,{\bf y},(\kappa, u)\right)\longmapsto
u(y_{i}) $$ for $i\leq n$ defined by evaluating at marked points. We have another map
$$\mathscr{P}: \mathbf{U}^T \longrightarrow \mathcal{M}_{g,n}\;\;\;\;\left(\Sigma,j,{\bf y},(\kappa, u)\right)\longmapsto (\Sigma,j,{\bf y}).$$
Choose a smooth metric $\mathbf{h}$ on the bundle $\mathbf{E}$. Using $\mathbf{h}$ we construct a Thom form $\Theta$ supported in a small $\varepsilon$-ball of the $0$-section of $\mathbf{E}$. The Gromov-Witten invariants are defined as
\begin{equation}\label{integral-0}
\Psi_{A,g,n}(K;\alpha_1,...,
\alpha_{n})=\int_{\mathbf{U}^T}\mathscr{P}^*(K)\wedge\prod^n_{j=1} ev^*_j\alpha_j\wedge \sigma^*\Theta
\end{equation}
 for $\alpha_i\in H^*(M, {\mathbb{R}})$ represented by differential form and $K$ represented by a good differential form defined on $\mathcal{M}_{g,n}$ in Mumford's sense.

\v
In \cite{LS-1} and \cite{LS-2} we proved the exponential decay of the derivatives of the gluing maps with respect to the gluing parameter near lower strata. Using these estimates we prove in \S\ref{gromov-witten} and \S\ref{properties of gromov-witten}

\begin{theorem}\label{Conver}
The integral \eqref{integral-0} is convergent.
\end{theorem}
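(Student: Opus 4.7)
The plan is to localize the integral using a finite cover of $\overline{\mathcal{M}}_{g,n}(A)$ by virtual neighborhood charts and prove convergence on each chart separately. Because $\Theta$ is a Thom form supported in an $\varepsilon$-ball of the zero section of $\mathbf{E}$, the pullback $\sigma^*\Theta$ is supported in $\sigma^{-1}(B_\varepsilon)\subset\mathbf{U}$, which retracts onto the compact set $\sigma^{-1}(0)=\overline{\mathcal{M}}_{g,n}(A)$. Choosing a partition of unity subordinate to the cover $\{\mathbf{O}_{[b_i]}(\delta_i,\rho_i)\}$ reduces convergence to a finite sum of local integrals. In a chart centered at a top-stratum point $[b_o]\in\mathcal{M}_{g,n}(A)$, the chart is a smooth orbifold and the integrand is smooth with compact support, so its contribution is automatically finite. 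The substantive case is a chart centered at a point $[b_o]\in\overline{\mathcal{M}}_{g,n}(A)\setminus\mathcal{M}_{g,n}(A)$ representing a stable map with $k\geq 1$ nodes.

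Near such a boundary point I would introduce gluing coordinates $(r_1,\ldots,r_k,\theta_1,\ldots,\theta_k,y,\kappa)$, where $r_i\in(0,\epsilon_i)$ is the gluing modulus and $\theta_i\in S^1$ the phase at the $i$th node, $y$ parameterizes the remaining domain moduli, and $\kappa\in\widetilde{K}_{b_o}$ is the regularization parameter. The lower stratum sits at $\{\prod_i r_i=0\}$ and is excluded from $\mathbf{U}^T$, so the integration in the radial variables runs over $(0,\epsilon_i)$. In these coordinates each of the three factors in the integrand must be controlled. First, since $K$ is a good form on $\overline{\mathcal{M}}_{g,n}$ in Mumford's sense, the pullback $\mathscr{P}^*K$ has at worst polylogarithmic coefficients of type $|\log r_i|^N$ near the boundary divisors of $\overline{\mathcal{M}}_{g,n}$. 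Second, the evaluation maps $ev_j$ at smooth marked points are uniformly bounded, and by the gluing estimates of \cite{LS-1,LS-2} their derivatives with respect to gluing parameters decay exponentially in $1/r_i$. Third, writing $\Theta$ locally as a bump function on the total space of $\mathbf{E}$ wedged with the fiber volume form, $\sigma^*\Theta$ produces coefficients proportional to partial derivatives of $\sigma$ in the base variables, and $|\partial_{r_i}\sigma|+|\partial_{\theta_i}\sigma|\leq C e^{-c/r_i}$ by the exponential decay results of \cite{LS-1,LS-2}.

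Combining these three estimates, every monomial in the expansion of $\mathscr{P}^*K\wedge\prod_j ev_j^*\alpha_j\wedge\sigma^*\Theta$ in gluing coordinates is bounded pointwise by a product of $|\log r_i|^N$ factors (coming from $\mathscr{P}^*K$) against exponentially small factors $e^{-c/r_i}$ (coming from every $dr_i$ or $d\theta_i$ differential that appears in a top-degree form on $\mathbf{U}^T$). Since $|\log r|^N e^{-c/r}$ is integrable on $(0,\epsilon)$ and the remaining coordinates $(y,\kappa,\theta)$ range over compact sets, each local integral converges; summing over the finite cover yields the theorem. The principal obstacle is the bookkeeping verifying that every monomial truly carries enough exponentially decaying factors in each gluing variable to dominate the logarithmic singularities of $\mathscr{P}^*K$: this comes down to observing that any top-degree form on $\mathbf{U}^T$ must include the differentials $dr_i$ and $d\theta_i$ for every node, each of which, when extracted from $\sigma^*\Theta$ or $ev_j^*\alpha_j$, is accompanied by a coefficient with exponential decay by \cite{LS-1,LS-2}.
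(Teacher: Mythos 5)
Your overall architecture matches the paper's: localize over a finite cover with cutoff functions, dispose of the interior charts by compactness and smoothness, and in each boundary chart control the three factors $\mathscr{P}^*K$, $\prod_j ev_j^*\alpha_j$, $\sigma^*\Theta$ separately using the exponential-decay gluing estimates of \cite{LS-1,LS-2}. However, two of your quantitative claims are misstated, and together they leave a gap in the final bookkeeping step that you yourself identify as the principal obstacle.

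First, a form that is good in Mumford's sense has \emph{Poincar\'e growth}, not polylogarithmic coefficients: in the plumbing coordinate $t_i$ (your $r_ie^{\sqrt{-1}\theta_i}$), the coefficient of $dt_i\wedge d\bar t_i$ in $\mathscr{P}^*K$ may blow up like $|t_i|^{-2}(\log|t_i|)^{-2}$, which is far worse than $|\log r_i|^N$. Consequently, in the case where the node differentials $dr_i\wedge d\theta_i$ are supplied by $\mathscr{P}^*K$ rather than by $\sigma^*\Theta$ or $ev_j^*\alpha_j$, no exponential decay is available from any factor and your claimed polylog bound fails; what saves the integral in that case is the finiteness of the Poincar\'e volume $\int_{|t|<\epsilon}|t|^{-2}(\log|t|)^{-2}\,|dt\wedge d\bar t|$, which you never invoke. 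Second, the decay furnished by \cite{LS-1,LS-2} (Theorem \ref{coordinate_decay-2}, Theorem \ref{thm_est_mix_deri}) is exponential in the \emph{cylindrical} parameter $\rho_i\sim-\log|t_i|$, i.e.\ only a positive power $|t_i|^{\delta}$ of your modulus, not $e^{-c/r_i}$; this still suffices, but the constants matter. The paper handles all of this uniformly by introducing the local Poincar\'e metric $g_{loc}$, showing each of the three factors is bounded on $g_{loc}$-unit vectors ($K$ by the goodness hypothesis, the evaluation maps and the Thom form by the gluing estimates and Lemma \ref{lem_Thom}), and concluding the integrand is $O(\prod_i r_i^{-2})$ in cylindrical coordinates, hence integrable against $\prod_i dr_i\,d\tau_i$ over $r_i\in(R_0,\infty)$. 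Finally, your treatment of $\sigma^*\Theta$ is too quick: the required estimate is Lemma \ref{lem_Thom}, which rests on constructing a metric and connection on $\mathbf{E}$ whose transition functions, connection forms and curvature satisfy $(\mathbf{r})$-exponential decay (Section \S\ref{est_Thom_E}); it does not follow merely from writing $\Theta$ as a bump function times a fiber volume form.
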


\begin{theorem}\label{Common Properties}
 \begin{itemize}
\item[(1).] $\Psi_{A,g,n}(K;\alpha_1,...,
\alpha_{n})$ is well-defined, multi-linear and skew symmetry.
\item[(2).] $\Psi_{A,g,n}(K;\alpha_1,...,
\alpha_{n})$ is independent of the choices of forms $K, \alpha_i$ representing the
cohomology classes $[K], [\alpha_i]$ and is independent of the choice of $\Theta$.
\item[(3).] $\Psi_{A,g,n}(K;\alpha_1,...,
\alpha_{n})$ is independent of the choices of the regularization.
\item[(4).] $\Psi_{A,g,n}(K;\alpha_1,...,
\alpha_{n})$ is independent of J and is a symplectic deformation invariant.
\item[(5).] When M is semi-positive,
    $\Psi_{A,g,n}(K;\alpha_1,...,
\alpha_{n})$ agrees with the definition of \cite{RT2}.

\end{itemize}
\end{theorem}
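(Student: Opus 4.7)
The plan is to verify the five properties in order, with all the heavy lifting concentrated in Stokes-type arguments that exploit the exponential decay estimates of \cite{LS-1,LS-2} together with the convergence result Theorem \ref{Conver}. Property (1) is essentially formal: multilinearity is inherited from the wedge product, and skew symmetry follows from the fact that any permutation $\tau$ of $\{1,\ldots,n\}$ lifts to an orientation-preserving automorphism of $\mathbf{U}^T$ that intertwines $ev_j$ with $ev_{\tau(j)}$ and leaves $\mathscr{P}^*K$ and $\sigma^*\Theta$ invariant; the sign comes from reordering the odd-degree factors.

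For property (2), if one representative is replaced by another, the difference in integrands is exact in at least one factor, so the integral becomes $\int_{\mathbf{U}^T} d\omega$ for an appropriate $\omega$. Stokes' theorem reduces this to a boundary contribution with two sources: an outer cut-off, which vanishes because $\Theta$ has compact support in the fibres of $\mathbf{E}$, and shrinking collars around each lower stratum. On the collars the gluing model of \cite{LS-1,LS-2} applies, and the exponential decay of all derivatives of the gluing map in the gluing parameter forces the collar integrals to tend to zero as the collars close up. The same argument shows that two choices of Thom form, which differ by an exact form supported near the zero section, give equal integrals.

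Properties (3) and (4) are a single cobordism argument applied to different one-parameter families. Given two regularizations $\mathcal{S}_0,\mathcal{S}_1$ (respectively two almost complex structures $J_0,J_1$, or two symplectically deformation-equivalent data), I join them by a generic path $\{\mathcal{S}_t\}_{t\in[0,1]}$ and rerun the construction of \S\ref{global_r} in this parameterized setting. This yields a virtual cobordism $(\widehat{\mathbf{U}},\widehat{\mathbf{E}},\widehat{\sigma})$ whose top stratum is an oriented orbifold with boundary $\mathbf{U}_1^T\sqcup(-\mathbf{U}_0^T)$, up to lower strata. Since $\mathscr{P}^*K$, the $ev_j^*\alpha_j$, and $\sigma^*\Theta$ are all closed, Stokes applied to the pullback integrand over $\widehat{\mathbf{U}}^T$ equates the two endpoint integrals modulo collar terms, which again vanish by the decay estimates of \cite{LS-1,LS-2}.

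For property (5), when $M$ is semi-positive, the Ruan-Tian genericity theorem makes $\mathcal{M}_{g,n}(A)$ into a smooth oriented pseudo-cycle of the expected dimension for a generic choice of $(J,\nu)$. In this regime one may take the trivial regularization $\widetilde{K}_{b_o}=0$; the zero-set $\mathbf{U}^T$ is then an open neighborhood of $\mathcal{M}_{g,n}(A)$ inside the configuration space, the bundle $\mathbf{E}$ is trivial of rank zero, and $\sigma^*\Theta$ acts as a delta form along the moduli space. The integral \eqref{integral-0} collapses to integration of $\mathscr{P}^*K\wedge\prod ev_j^*\alpha_j$ over $\mathcal{M}_{g,n}(A)$, which is precisely the Ruan-Tian formula of \cite{RT2}. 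The hard part throughout is the collar analysis in (2) and (3): one must verify that for every lower stratum and for every degree of exactness in the integrand, the boundary contribution scales as a positive power of a gluing parameter that shrinks to zero. This is exactly what the exponential-decay estimates of \cite{LS-1,LS-2} deliver, and once they are invoked the remaining bookkeeping is formal cobordism and genericity reasoning.
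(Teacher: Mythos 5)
Your proposal follows essentially the same route as the paper: (1) is treated as formal, (2) is Stokes' theorem on a truncated region $\mathbf{U}^{T}_{\epsilon,R}$ combined with the exponential-decay/Poincar\'e-growth estimates (boundary terms of size $O(R/\prod_i r_i^2)\to 0$ as $R\to\infty$), (3)--(4) are the virtual-neighborhood cobordism built from $\mathcal{S}_{(t)}=\bar\partial_{j,J}v+(1-t)\mathfrak{i}+t\mathfrak{i}'$ on $\mathcal{H}\oplus\mathcal{H}'$ (resp.\ a family $(J_t,\omega_t)$ with $\mathbf{K}_t=\oplus_i\mathbf{F}_{t_i}$), and (5) is deferred to Ruan's semi-positive comparison. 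The one step you leave implicit in (3) is that the endpoint boundary components $\mathbf{U}_0^T$, $\mathbf{U}_1^T$ of the cobordism are not the original virtual neighborhoods but bundles over $\mathbf{U}^T$ and $\mathbf{U}'^T$ with fibres $\mathbf{E}'$ and $\mathbf{E}$ respectively, so one must still integrate the extra Thom factor $\Theta'$ (resp.\ $\Theta$) over the fibre to recover the original invariants; the paper records this fibre-integration identity explicitly.
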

\v\n
\begin{theorem}\label{Common Properties-1} Suppose that $(g,n)\neq (0,3),(1,1)$. Let
$\pi: \overline{\M}_{g,n}\rightarrow \overline{\M}_{g, n-1}$ be the map by forgetting the last marked point.
\vskip 0.1in
\noindent
(1) For any $\alpha _1, \cdots , \alpha _{n-1}$ in $H^*(M, \R)$,
we have
$$\Psi_{A,g,n}(K;\alpha_1,...,
\alpha_{n-1},1)=\Psi_{A,g,n-1}(\pi_*(K); \alpha _1, \cdots,\alpha _{n-1}),$$
\vskip 0.1in
\noindent
(2)  Let $\alpha_n$ be in $H^2(Y, \R)$, then
$$\Psi_{A,g,n}(\pi^*(K); \alpha _1,
\cdots,\alpha _{n-1}, \alpha_n)=\alpha_n (A)
\Psi_{A,g,n-1}(K; \alpha _1, \cdots,\alpha _{n-1}).$$
\end{theorem}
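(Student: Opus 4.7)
The plan is to lift the forgetful map $\pi:\overline{\mathcal{M}}_{g,n}\to\overline{\mathcal{M}}_{g,n-1}$ to a forgetful map between virtual neighborhoods and then apply fiber integration. Since $(g,n)\ne(0,3),(1,1)$, forgetting $y_n$ (with stabilization where needed) produces a stable $(n-1)$-pointed curve and a corresponding map $\overline{\mathcal{M}}_{g,n}(A)\to\overline{\mathcal{M}}_{g,n-1}(A)$. I would first carry out the construction of Section \ref{global_r} for the $(g,n-1)$-problem, obtaining a virtual neighborhood $\mathbf{U}^T_{g,n-1}$ with bundle $\mathbf{E}$, section $\sigma$, metric $\mathbf{h}$ and Thom form $\Theta$. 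Then for the $(g,n)$-problem I would take the local obstruction spaces $\widetilde{K}_{b_i}$ as pullbacks, which is possible because $\widetilde{K}_{b_o}\subset C^{\infty}(\Sigma,u^{*}TM\otimes\wedge^{0,1}_{j_o}T^{*}\Sigma)$ is independent of marked-point data, and assemble a compatible global regularization. This yields a commutative diagram
\begin{equation*}
\begin{array}{ccc}
\mathbf{U}^T_{g,n} & \xrightarrow{\widehat{\pi}} & \mathbf{U}^T_{g,n-1} \\
\mathscr{P}\downarrow & & \downarrow\mathscr{P} \\
\overline{\mathcal{M}}_{g,n} & \xrightarrow{\pi} & \overline{\mathcal{M}}_{g,n-1}
\end{array}
\end{equation*}
in which $\mathbf{E}$, $\sigma$, $\Theta$ and $ev_j$ for $j<n$ on the $(g,n)$-side are pullbacks along $\widehat{\pi}$. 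Over a generic point of $\mathbf{U}^T_{g,n-1}$ the fiber of $\widehat{\pi}$ is the punctured smooth marked curve $\Sigma\setminus\{y_1,\ldots,y_{n-1}\}$ carrying the fixed map $u$.

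For part (1), since $ev_n^*(1)=1$, every factor of the integrand other than $\mathscr{P}^*K$ is a pullback through $\widehat{\pi}$. Using the projection formula and $\pi\circ\mathscr{P}=\mathscr{P}\circ\widehat{\pi}$, fiber integration yields $\widehat{\pi}_*(\mathscr{P}^*K)=\mathscr{P}^*\pi_*(K)$, so
\begin{equation*}
\Psi_{A,g,n}(K;\alpha_1,\ldots,\alpha_{n-1},1)=\int_{\mathbf{U}^T_{g,n-1}}\mathscr{P}^*\pi_*(K)\wedge\prod_{j=1}^{n-1}ev_j^*\alpha_j\wedge\sigma^*\Theta,
\end{equation*}
which is $\Psi_{A,g,n-1}(\pi_*(K);\alpha_1,\ldots,\alpha_{n-1})$.

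For part (2), $\mathscr{P}^*\pi^*K=\widehat{\pi}^*\mathscr{P}^*K$, so the only factor varying in the fiber of $\widehat{\pi}$ is $ev_n^*\alpha_n$. On a generic fiber, $ev_n$ equals the map $u:\Sigma\to M$, whence
\begin{equation*}
\widehat{\pi}_*(ev_n^*\alpha_n)=\int_\Sigma u^*\alpha_n=\alpha_n(A),
\end{equation*}
since $\alpha_n\in H^2(M,\R)$ and $A=u_*[\Sigma]$. Substituting and applying the projection formula once more gives the divisor identity.

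The main obstacle will be the behavior of $\widehat{\pi}$ over the lower strata of $\mathbf{U}^T_{g,n}$. There $\widehat{\pi}$ can contract bubble components that become unstable after forgetting $y_n$ (for example a genus-$0$ component carrying only $y_n$ and a single node), and also loci where $y_n$ collides with another marked point. Fiber integration over such degenerate fibers, and its compatibility with the ambient integration, must be justified. I would invoke the exponential decay estimates for the gluing derivatives of \cite{LS-1}, \cite{LS-2}, already the basis of Theorem \ref{Conver}, to show that the contribution of a shrinking neighborhood of the lower strata is negligible, so that Fubini applies on the smooth top stratum and computes the full integral. With this in hand, parts (1) and (2) follow from the fiber-integration computations above.
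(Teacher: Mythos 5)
Your proposal follows essentially the same route as the paper: the paper likewise constructs the virtual neighborhood for $\overline{\mathcal{M}}_{g,n-1}(A)$ first, pulls it back along the forgetful map to serve as the virtual neighborhood for $\overline{\mathcal{M}}_{g,n}(A)$ (so that $\mathbf{E}$, $\sigma$, $\Theta$ and the first $n-1$ evaluation maps are pullbacks), and then concludes by the projection formula via $\pi_*\mathscr{P}_{g,n}^*(K)=\mathscr{P}_{g,n-1}^*(\pi_*(K))$ and $\pi_*(ev_n^*\alpha_n)=\alpha_n(A)$. Your additional attention to the degenerate fibers over the lower strata and the use of the exponential decay estimates is a more careful treatment of a point the paper's proof leaves implicit, not a different method.
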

\v
\begin{theorem}\label{split-3}
 For any $K_1\times K_2 \in H^*(\overline{\mathcal{M}}_{g_1,g_2,n_1,n_2}, \mathbb{R})$,
$\alpha _1,\cdots,\alpha _n \in H^*(M,\mathbb{R})$, represented by smooth forms, we have
$$\Psi_{(A,g,n)}((\theta)_{!}(K_1\times K_2));\{\alpha _i\})$$$$
=\epsilon(K,\alpha) \sum \limits _{A=A_1+A_2} \sum \limits_{a,b}
\Psi_{(A_1,g_1,n_1+1)}(K_1;\{\alpha _{i}\}_{i\le n_1}, \beta _a)
\eta ^{ab}
\Psi_{(A_2,g_2,n_2+1)}(K_2;\beta _b,
\{\alpha _{j}\}_{j>n_1}),
$$
\end{theorem}
where $\epsilon(K,\alpha)=(-1)^{deg(K_2)\sum^{n_1}_{i=1} (deg (\alpha_i))}$.
\v\v
We conclude that the invariants defined in this way satisfy all the Gromov-Witten axioms of Kontsevich and Manin.
\v

\v

{\bf Acknowledgement}:   We would like to thank Yongbin Ruan, Huijun Fan, Jianxun Hu and Bohui Chen for many useful discussions.

\v

\section{Preliminary}
\v
First all, we recall some results on the Deligne-Mumford moduli space $\overline{\mathcal{M}}_{g,n}$ of stable curves, for detail see \cite{Tromba}, \cite{Wolp-1}, \cite{Wolp-2}.
\v
\subsection{\bf Metrics on $\Sigma$}\label{metric on surfaces} Let $(\Sigma,j,\mathbf{y})$ be a smooth Riemann surface of genus $g$ with $n$ marked points. In this paper we assume that $n>2-2g$, and $(g,n)\ne (1,1), (2,0)$. It is well-known that there is a unique complete hyperboloc metric $\mathbf{g}_0$ in $\Sigma\setminus \{{\bf y}\}$ of constant curvature $-1$ of finite volume, in the given conformal class $j$ ( see \cite{Wolp-1}).
Let $\mathbb H=\{\zeta=\lambda+\sqrt{-1}\mu|\mu>0\}$ be the half upper plane with the Poincare metric
$$
\mathbf{g}_0(\zeta)=\frac{1}{(Im(\zeta))^2}d\zeta d\bar\zeta.
$$
 Let
$$
\mathbb{D}=\frac{ \{\zeta \in \mathbb H|  Im (\zeta) \geq 1\}}{\zeta \sim \zeta + 1}
$$
be a cylinder, and $\mathbf{g}_0$ induces a metric on $\mathbb{D}$, which is still denoted  by $\mathbf{g}_0$. Let $z=e^{2\pi i\zeta}$, through which we identify $\mathbb{D}$ with $D(e^{-2\pi}):=\{z||z|< e^{-2\pi}\}$.
An important result is that for any punctured point $y_i$ there exists a
neighborhood $O_i$ of $y_i$ in $\Sigma$ such that
$$
(O_i\setminus \{y_i\},\mathbf{g}_0)\cong (D(e^{-2\pi})\setminus \{0\},\mathbf{g}_0),
$$
moreover, all $O_i$'s are disjoint with each other. Then we can view $D_{y_i}(e^{-2\pi})$ as a neighborhood of $y_i$ in $\Sigma$ and $z$ is a local complex coordinate on $D_{y_i}(e^{-2\pi})$ with $z(y_i)=0$. For any $c>0$ denote $$\mathbf{D}(c)=\bigcup D_{y_i}(c),\;\;\;\Sigma(c)=\Sigma\setminus \mathbf{D}(c).$$ Let $\mathbf{g}'=dzd\bar{z}$ be the standard Euclidean metric on each $D_{y_i}(e^{-2\pi})$. We fix a smooth cut-off function $\chi(|z|)$ to glue $\mathbf{g}_0$ and $\mathbf{g}'$, we get a smooth metric $\mathbf{g}$  in the given conformal class $j$ on $\Sigma$ such that
\[
\mathbf{g}=\left\{
\begin{array}{ll}
\mathbf{g}_0 \;\;\;\;\; on \;\;\Sigma\setminus \mathbf{D}(e^{-2\pi}),    \\  \\
\mathbf{g}'\;\;\;on\; \mathbf{D}(\frac{1}{2}e^{-2\pi})\;\;.
\end{array}
\right.
\]
\v
\v
 Let $\mathbf{g}^{c}=ds^2+d\theta^2$ be the  cylinder metric on each $D_{y_i}^{*}(e^{-2\pi})$, where $z=e^{s+2\pi\sqrt{-1}\theta}$.
We also define another metric $\mathbf{g}^\diamond$ on $\Sigma$ as above by glue $\mathbf{g}_0$ and $\mathbf{g}^c$, such that
\[
\mathbf{g}^{\diamond}=\left\{
\begin{array}{ll}
\mathbf{g}_0 \;\;\;\;\; on \;\;\Sigma\setminus \mathbf{D}(e^{-2\pi}),    \\  \\
\mathbf{g}^{c}\;\;\;on\; \mathbf{D}(\frac{1}{2}e^{-2\pi})\;\;.
\end{array}
\right.
\]

\v
The metric $\mathbf{g}$ (resp. $\mathbf{g}^{\diamond}$) can be generalized to marked nodal surfaces in a natural way. Let $(\Sigma, j,\mathbf{y})$ be a
marked nodal surfaces with $\mathfrak{e}$ nodal points $\mathbf{p}=(p_{1},\cdots,p_{\mathfrak{e}})$. Let $\sigma:\tilde{\Sigma}=\sum_{\nu=1}^{\mathfrak{r}}\Sigma_{\nu}\to \Sigma$ be the normalization. For every node $p_i$ we have a pair $\{\mathbf{a}_i, \mathbf{b}_i\}$. We view $\mathbf{a}_i$, $\mathbf{b}_i$ as marked points on $\tilde{\Sigma}$ and define the metric $\mathbf{g}_{\nu}$ (resp. $\mathbf{g}^{\diamond}_{\nu}$) for each $\Sigma_{\nu}$. Then we define
$$\mathbf{g}:=\bigoplus_{1}^{\nu} \mathbf{g}_{\nu},\;\;\;\;\;\mathbf{g}^{\diamond}:=\bigoplus_{1}^{\nu} \mathbf{g}^{\diamond}_{\nu}.$$

\subsection{Teichm\"uller space}\label{sub_sect_Teich}

Denote by $\mathcal{J}(\Sigma)\subset End(T\Sigma)$ the manifold of all $C^{\infty}$ complex structures on $\Sigma$, let $\mathcal{G}$ denote the manifold of $C^{\infty}$ Riemannian metrics with constant scalar curvature $-1$ on $\Sigma$. Denote by $Diff^+(\Sigma)$ the group of orientation preserving $C^{\infty}$ diffeomorphisms of $\Sigma$, by $Diff^+_{0}(\Sigma)$ the identity component of $Diff^+(\Sigma).$ $Diff^+(\Sigma)$ acts on $\mathcal{J}(\Sigma)$ and $\mathcal{G}$ by
$$(\phi^*J)_x:=(d\phi_x)^{-1}J_{\phi(x)}d\phi_x,\;\;\;
(\phi^*g)(x)(w,v):=g(\phi(x))(d\phi(x)w, d\phi(x)v)$$
for all $\phi\in Diff^+(\Sigma)$, $x\in \Sigma$, $w,v\in T_x\Sigma.$ There is a bijective,  $Diff^+(\Sigma)$-equivariant correspondence between $\mathcal{J}(\Sigma)$ and $\mathcal{G}$:
$$\mathcal{J}(\Sigma)\cong \mathcal{G}.$$
Put
$$\mathbf{P}:=\mathcal J(\Sigma)\times (\Sigma^{n} \setminus \Delta),$$
where $\Delta\subset \Sigma^{n}$ denotes the fat diagonal.
The orbit spaces are
\begin{align*}
\mathcal{M}_{g,n}=\left(\mathcal J(\Sigma)\times (\Sigma^{n} \setminus \Delta)\right)/ Diff^+(\Sigma),\;\;\;\mathbf{T}_{g,n}=\left(\mathcal J(\Sigma)\times (\Sigma^{n} \setminus \Delta)\right)/ Diff^+_{0}(\Sigma).
\end{align*}
$\mathcal{M}_{g,n}$ is called the Deligne-Mumford space, $\mathbf{T}_{g,n}$ is called the Teichm\"uller space. The mapping class group of $\Sigma$ is
$$Mod_{g,n}=Diff(\Sigma)/Diff_0(\Sigma).$$
It is well-known that $Mod_{g,n}$ acts properly discontinuously on $\mathbf{T}_{g,n}$ and $$\mathcal{M}_{g,n}=\mathbf{T}_{g,n}/Mod_{g,n}$$ is a complex orbifold of dimension $N:=3g-3 +n$. Let $\pi_{\mathcal{M}}:\mathbf{T}_{g,n}\to \mathcal{M}_{g,n}$ be the projection.

\v

\v
Consider the principal fiber bundle
$$
Diff^+_0(\Sigma) \to \mathbf{P}   \to \mathbf{T}_{g,n}
$$
and the associated fiber bundle
$$
\pi_{\mathbf{T}}:   \mathcal{Q}:= \mathbf{P}\times_{Diff^+_0(\Sigma)}\Sigma\to \mathbf{T}_{g,n},
$$
which has fibers isomorphic to $\Sigma$ and is equipped with $n$ disjoint sections
$$\mathcal Y_i := \left\{[j,y_1,\dots,y_n,z]\in \mathcal{Q}\,:\,z=y_i\right\},\qquad
i=1,\dots,n.
$$
It is commonly called the universal curve over $\mathbf{T}_{g,n}$. The following result is well-known ( cf \cite{RS} ):
\begin{lemma}\label{slice} Suppose that $n+ 2g \geq 3$. Then for any $\gamma_o=[(j_o,{\bf y}_o)]\in \mathbf{T}_{g,n}$, and any $(j_o,{\bf y}_o)\in \mathbf{P}$ with $\pi_{\mathbf{T}}(j_o,{\bf y}_o)=\gamma_o$ there is an open neighborhood $\mathbf{A}$ of zero in $\mathbb{C}^{3g-3+n}$ and a local holomorphic slice
$\iota=(\iota_{0},\cdots,\iota_{n}):\mathbf{A}\to \mathbf{P} $ such that
\begin{equation}\label{slice-1}
\iota_0(o)=j_o,\qquad \iota_i(o)=y_{io},\qquad i=1,\dots,n,
\end{equation}
and the map
$$
\mathbf{A}\times Diff_{0}(\Sigma) \to \mathbf{P}:(a ,\phi)\mapsto (\phi^*\iota_{0}(a),\phi^{-1}(\iota_1(a)),
\cdots,\phi^{-1}(\iota_{n}(a))
$$
is a diffeomorphism onto a neighborhood of the orbit of $(j_o,{\bf y}_o)$.
\end{lemma}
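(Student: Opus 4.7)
The plan is to prove this as an instance of the slice theorem for the free $Diff^+_0(\Sigma)$-action on $\mathbf{P}=\mathcal{J}(\Sigma)\times(\Sigma^n\setminus\Delta)$, constructed via Kodaira--Spencer and Kuranishi theory in complex dimension one. The infinitesimal action of a vector field $v\in \mathfrak{X}(\Sigma)$ at $(j_o,\mathbf{y}_o)$ sends $v\mapsto (L_v j_o,\,-v(y_{1,o}),\ldots,-v(y_{n,o}))$, so the first step is to produce a complement $W$ to the image of this map inside $T_{(j_o,\mathbf{y}_o)}\mathbf{P}$. Using the hyperbolic metric $\mathbf{g}_0$ of Section~\ref{metric on surfaces}, Hodge theory identifies such a complement with harmonic representatives of $H^{0,1}(\Sigma,T\Sigma(-y_{1,o}-\cdots-y_{n,o}))$. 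Riemann--Roch gives $\dim_{\mathbb{C}}W=3g-3+n$; the hypothesis $n+2g\geq 3$ makes the twisted tangent bundle $T\Sigma(-\sum_i y_{i,o})$ of negative degree, so $H^0(\Sigma,T\Sigma(-\sum_i y_{i,o}))=0$ and the infinitesimal action is injective. This yields a direct sum decomposition of $T_{(j_o,\mathbf{y}_o)}\mathbf{P}$ into $W$ and the orbit direction.

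Next I would realize $W$ as the tangent space at $o$ of a holomorphic slice $\iota:\mathbf{A}\to \mathbf{P}$. Pick a basis $\{\mu_1,\ldots,\mu_N\}$, $N=3g-3+n$, of harmonic Beltrami differentials spanning $W$, set $\mu(a)=\sum_k a_k\mu_k$ for small $a\in\mathbb{C}^N$, and define $\iota_0(a)$ to be the almost complex structure on $\Sigma$ whose Beltrami form relative to $j_o$ is $\mu(a)$. Because $\Sigma$ has complex dimension one, any Beltrami form with $\|\mu(a)\|_\infty<1$ is automatically integrable, so $\iota_0$ is well-defined and depends holomorphically on $a$. The marked-point components $\iota_i:\mathbf{A}\to \Sigma$, with $\iota_i(o)=y_{i,o}$, are either taken constant (if the $\mu_k$'s already carry all $3g-3+n$ deformation directions with marked points frozen) or, following the Ahlfors--Bers recipe, obtained by transporting $y_{i,o}$ through complex-analytic local charts adapted to the deformed structure. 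In either case one arranges that $d\iota|_o:\mathbb{C}^N\to T_{(j_o,\mathbf{y}_o)}\mathbf{P}$ is a $\mathbb{C}$-linear isomorphism onto $W$.

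Finally, I would apply the inverse function theorem to
$$F:\mathbf{A}\times Diff^+_0(\Sigma)\to \mathbf{P},\qquad (a,\phi)\mapsto\bigl(\phi^*\iota_0(a),\,\phi^{-1}(\iota_1(a)),\ldots,\phi^{-1}(\iota_n(a))\bigr).$$
Its differential at $(o,\mathrm{id})$ is $(\dot a,v)\mapsto \bigl(d\iota_0(\dot a)+L_v j_o,\,d\iota_1(\dot a)-v(y_{1,o}),\ldots,d\iota_n(\dot a)-v(y_{n,o})\bigr)$, which by the decomposition of step one is a linear isomorphism onto $T_{(j_o,\mathbf{y}_o)}\mathbf{P}$. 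Working in a suitable Sobolev completion of $Diff^+_0(\Sigma)$ in the spirit of Palais and Earle--Eells, and then invoking elliptic regularity to recover smoothness of $\phi$, the inverse function theorem produces the desired diffeomorphism onto a neighborhood of the orbit of $(j_o,\mathbf{y}_o)$. The main technical obstacle is ensuring that $\iota$ is genuinely \emph{holomorphic} in $a$ rather than merely smooth; this is why $\mu(a)$ must be chosen linear in $a$, why integrability must be automatic (which holds precisely in complex dimension one), and why the marked-point component must be built from complex-analytic data. Once these are arranged, the remaining verifications are standard tools of Teichm\"uller theory.
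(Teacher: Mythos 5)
The paper does not actually prove this lemma: it is quoted as well known and attributed to Robbin--Salamon \cite{RS}, so there is no internal argument to compare against. Your sketch is the standard Teichm\"uller-theoretic proof underlying that citation, and it is correct in outline: the kernel of the infinitesimal action is $H^0(\Sigma,T\Sigma(-\sum_i y_{i,o}))$, which vanishes precisely when $n+2g\geq 3$; a complement $W$ of dimension $3g-3+n$ is realized by harmonic Beltrami differentials (dual to quadratic differentials with at worst simple poles at the marked points), and one may take the marked-point components of the slice constant since these Beltrami representatives already exhaust the deformation directions transverse to the orbit; the inverse function theorem then gives the local diffeomorphism. The two places where your write-up is thinner than a complete proof are exactly the ones you flag yourself: (i) holomorphy of $a\mapsto\iota_0(a)$ requires not just linearity of $\mu(a)$ in $a$ but also the (true, but non-trivial) fact that the chart $H\mapsto(1+H)j_o(1+H)^{-1}$ is holomorphic for the standard complex structure $\hat\jmath\mapsto j\hat\jmath$ on $T_j\mathcal{J}(\Sigma)$; and (ii) the inverse function theorem on $\mathbf{A}\times Diff^+_0(\Sigma)$ cannot be applied naively, since pullback by a Sobolev diffeomorphism loses derivatives and the action is not smooth between fixed Banach completions --- handling this is the genuine content of the Earle--Eells/Palais machinery you invoke. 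Neither point is a gap in the sense of a wrong idea; both are standard but must be carried out to turn the sketch into a proof.
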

\v\n

From the local slice we have a local coordinate chart on $U$ and a local trivialization on $\pi_{\mathbf{T}}^{-1}(U)$:
\begin{equation}\label{local coordinates}
\psi: U\rightarrow \mathbf{A},\;\;\;\Psi:\pi_{\mathbf{T}}^{-1}(U)\rightarrow \mathbf{A}\times \Sigma,\end{equation}
 where $U\subset \mathbf{T}_{g,n}$ is a open set. We call $(\psi, \Psi)$ in \eqref{local coordinates} a local coordinate system for $\mathcal{Q}$. Suppose that we have two local coordinate systems
\begin{equation}\label{local coordinates-1}
(\psi, \Psi): (O, \pi_{\mathbf{T}}^{-1}(O))\rightarrow (\mathbf{A}, \mathbf{A}\times \Sigma),\end{equation}
\begin{equation}\label{local coordinates-2}
(\psi', \Psi'): (O', \pi_{\mathbf{T}}^{-1}(O'))\rightarrow (\mathbf{A}', \mathbf{A}'\times \Sigma).\end{equation}
Suppose that $O\bigcap O'\neq \emptyset.$ Let $W$ be a open set with $W\subset O\bigcap O'$.
Denote $V=\psi(W)$ and $V'=\psi'(W)$. Then ( see \cite{RS})
\begin{lemma}\label{local coordinates-3}
$\psi'\circ \psi^{-1}|_{V}:V\to V'$ and $\Psi'\circ \Psi^{-1}|_{V}: {V}\times \Sigma\to {V'}\times \Sigma$ are holomorphic.
\end{lemma}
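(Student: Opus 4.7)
The approach is to use the slice theorem (Lemma \ref{slice}) to express the transition functions explicitly as gauge transformations, then invoke the holomorphicity of the slices $\iota,\iota'$. Fix $a\in V$; since $\iota(a)$ and $\iota'(\psi'\circ\psi^{-1}(a))$ project to the same point of $\mathbf{T}_{g,n}$, they lie in the same $Diff_0^+(\Sigma)$-orbit. By the uniqueness clause of Lemma \ref{slice} applied to $\iota'$, there is a unique $\phi_a\in Diff_0^+(\Sigma)$ with
\begin{equation*}
\iota(a) = \bigl(\phi_a^{\ast}\iota'_0(\psi'\psi^{-1}(a)),\, \phi_a^{-1}(\iota'_1(\psi'\psi^{-1}(a))),\,\ldots,\, \phi_a^{-1}(\iota'_n(\psi'\psi^{-1}(a)))\bigr),
\end{equation*}
and the diffeomorphism statement in Lemma \ref{slice} forces $a\mapsto(\psi'\psi^{-1}(a),\phi_a)$ to be smooth. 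In these terms the fiber transition takes the explicit form $\Psi'\circ\Psi^{-1}(a,z)=(\psi'\psi^{-1}(a),\phi_a(z))$.

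For the base transition, I would use that the slices of Lemma \ref{slice} are \emph{holomorphic}, meaning $\iota_0:\mathbf{A}\to\mathcal J(\Sigma)$ is a Kodaira--Spencer representative of a holomorphic family of complex structures and the sections $\iota_1,\ldots,\iota_n$ are holomorphic. Combined with the classical fact that the complex structure on $\mathbf{T}_{g,n}$ is characterized by the requirement that holomorphic slices descend to local biholomorphisms onto their image (see \cite{RS}), both $\psi$ and $\psi'$ are holomorphic charts, so $\psi'\circ\psi^{-1}$ is automatically a holomorphic transition between them.

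For the fiber transition on $\mathcal Q$, the displayed identity above directly shows that, for each fixed $a$, $\phi_a:(\Sigma,\iota_0(a))\to(\Sigma,\iota'_0(\psi'\psi^{-1}(a)))$ is a biholomorphism. The total spaces $\mathbf{A}\times\Sigma$ and $\mathbf{A}'\times\Sigma$ are to be equipped not with the product complex structures but with the non-product complex structures that make them isomorphic, via $\Psi$ and $\Psi'$, to the holomorphic fiber bundles $\pi_{\mathbf{T}}^{-1}(U)$ and $\pi_{\mathbf{T}}^{-1}(O')$. In these coordinates $\Psi$ and $\Psi'$ are tautologically holomorphic, so $\Psi'\circ\Psi^{-1}$ is holomorphic as well.

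The main obstacle is establishing joint holomorphicity of the map $(a,z)\mapsto\phi_a(z)$: fiberwise biholomorphicity plus holomorphicity of the base map do not by themselves imply joint holomorphicity in the twisted complex structure on the total space of the universal curve. The cleanest route around this is to appeal to the construction in \cite{RS} of the intrinsic holomorphic structure on $\mathcal Q\to\mathbf{T}_{g,n}$, which is set up precisely so that the holomorphic slices provided by Lemma \ref{slice} induce holomorphic local trivializations; once this is granted, both $\Psi$ and $\Psi'$ are holomorphic by definition, and the lemma follows.
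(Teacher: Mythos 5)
Your proposal is correct and follows essentially the same route as the paper: the paper offers no argument for this lemma beyond the citation ``see \cite{RS},'' and your reconstruction --- after correctly identifying the transition map as $(a,z)\mapsto(\psi'\psi^{-1}(a),\phi_a(z))$ via the uniqueness clause of Lemma \ref{slice} and honestly isolating the joint holomorphicity of $(a,z)\mapsto\phi_a(z)$ as the nontrivial point --- bottoms out at the same appeal to the Robbin--Salamon construction of the holomorphic structure on $\mathcal{Q}\to\mathbf{T}_{g,n}$. Your version is a more informative account of \emph{why} the citation suffices, but it is not a different proof.
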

The diffeomorphism group $ Diff^+(\Sigma)$ acts on $\Sigma^{n}\setminus \Delta$
 by
\begin{equation}\label{group action-1}
\varphi^*(j,y_1,\dots,y_n)
:=(\varphi^*j,\varphi^{-1}(y_1),\dots,\varphi^{-1}(y_n)).
\end{equation}
It is easy to see that $\mathbf{g}$ is $Diff^+(\Sigma)$-invariant.
\v
 Let $\overline{\mathcal{M}}_{g,n}$ be the Deligne-Mumford compactification space, $g_{\mathsf{wp}}$ be the Weil-Petersson metric on $\overline{\mathcal{M}}_{g,n}$. Denote by $\overline{\mathbb{B}}_{g,n}$ the groupoid whose objects are stable marked nodal Riemann surfaces of type $(g,n)$ and whose morphisms are isomorphisms of marked nodal Riemann surfaces. J. Robbin, D. Salamon \cite{RS} used the universal marked nodal family to give an orbifold groupoid structure on $\overline{\mathbb{B}}_{g,n}$. Then $\overline{\mathcal{M}}_{g,n}$ has the structure of a complex orbifold, and $\mathcal{M}_{g,n}$ is an effective orbifold. It is possible that $(g_i,n_i)= (1,1)$ for some smooth component $\Sigma_i$, in this case we consider the reduced effective orbifold structure.

\v

\v
\subsection{The moduli space of stable holomorphic maps}\label{stable holo}
\v
Let $(M,\omega,J)$ be a closed $C^{\infty}$ symplectic manifold of dimension $2m$ with $\omega$-tame almost complex structure $J$, where $\omega$ is a symplectic form. Then there is a Riemannian metric
\begin{equation}\label{definition_of_metrics}
G_J(v,w):=<v,w>_J:=\frac{1}{2}\left(\omega(v,Jw)+\omega(w,Jv)\right)
\end{equation}
for any $v, w\in TM$. Following \cite{MS} we choose the complex linear connection
 $$
 \widetilde {\nabla}_{X}Y=\nabla_{X}Y-\tfrac{1}{2}J\left(\nabla_{X}J\right)Y $$
 induced by the Levi-Civita connection $\nabla$ of the metric $G_{J}.$
\v
Let $(\Sigma, j, {\bf y})$ be a smooth Riemann surface of genus $g$ with $n$ marked points.
Let $u:\Sigma\longrightarrow M$ be a smooth map. For any
section $h\in C^{\infty}(\Sigma;u^{\ast}TM)$ and section $\eta \in
C^\infty(\Sigma, u^{*}TM\otimes \wedge^{0,1}_jT^{*}\Sigma)$ and given integer $k>4$ we define the norms
\begin{eqnarray}\label{norm_k_2}
&&\|h\|_{j,k,2}=
 \left(\int_{\Sigma}\sum_{i=0}^{k} |\nabla^i h|^2
dvol_{\Sigma}\right)^{1/2},\\
\label{norm_2_}
&&\|\eta\|_{j,k-1,2}= \left(\int_{\Sigma}\sum_{i=0}^{k-1} |\nabla^i \eta|^2
dvol_{\Sigma}\right)^{1/2}.
\end{eqnarray}
Here all norms and
covariant derivatives are taken with respect to the  metric $G_J$ on $u^{\ast}TM$ and
the metric $\mathbf{g}$ on $(\Sigma, j, {\bf y})$, $dvol_{\Sigma}$ denotes the volume form with respect to $\mathbf{g}$.
Denote by $W^{k,2}(\Sigma;u^{\ast}TM)$ and
$W^{k-1,2}(\Sigma, u^{*}TM\otimes\wedge^{0,1}_jT^{*}\Sigma)$ the complete spaces with respect to the norms \eqref{norm_k_2} and \eqref{norm_2_} respectively. Denote
 $$
 \widetilde{\mathcal B}=\{u\in W^{k,2}(\Sigma,M)|\;u_{*}([\Sigma])=A\}.
 $$
$\widetilde{\mathcal B}$ is an infinite dimensional Banach manifold. For any $h_1,h_2\in W^{k,2}(\Sigma,u^{*}TM)$ we define
\begin{equation}\label{iner prod.}
\ll h_1, h_2\gg=\int_{\Sigma}G_J(h_1,h_2)dvol_{\Sigma}.
\end{equation}
Then $W^{k,2}(\Sigma;u^{\ast}TM)$ is a Hilbert space,
$\widetilde{\mathcal B}$ is a Hilbert manifold. The map $u$ is called a $(j,J)$-holomorphic map if $du\circ j=J\circ du$. Alternatively
\begin{equation}\label{holo}
\bar\p_{j,J}(u):=\half\left(du + J(u) du\circ j\right)=0.
\end{equation}
Let $\widetilde{\mathcal E}$ be the infinite dimensional Bananch bundle over $\widetilde{\mathcal B}$ whose fiber at $b=(j,{\bf y},u)$ is
$$W^{k-1,2}(\Sigma, u^{*}TM\otimes\wedge^{0,1}_jT^{*}\Sigma).$$
The Cauchy-Riemann operator defines a Fredholm section $\overline{\partial}_{j,J}:\widetilde{\mathcal B}\longrightarrow \widetilde{\mathcal E}.$
\v
The diffeomorphism group $ Diff^+(\Sigma)$ acts on $(\Sigma^{n}\setminus \Delta)\times \widetilde{\mathcal B}$ and $(\Sigma^{n}\setminus \Delta)\times \widetilde{\mathcal E}$ by
\begin{equation}\label{group action-1}
\varphi^*(j,y_1,\dots,y_n,u)
:=(\varphi^*j,\varphi^{-1}(y_1),\dots,\varphi^{-1}(y_n), u\circ \varphi)
\end{equation}
\begin{equation}\label{group action-2}
\varphi^*\kappa= \kappa\cdot d\varphi\;\;\;\forall\; \kappa\in W^{k-1,2}(\Sigma, u^{*}TM\otimes\wedge^{0,1}_jT^{*}\Sigma)
\end{equation}
for $\varphi\in Diff^+(\Sigma)$. Put
$$Aut(j,{\bf y},u)=
\{\phi\in Diff^+(\Sigma)|\varphi^*(j,y_1,\dots,y_ n,u)=(j,y_1,\dots,y_n,u)\}.$$
We call it the automorphism group at $(j,{\bf y},u)$.
\v\v
Our moduli space $\mathcal{M}_{g,n}(A)$ is the quotient space
$$\mathcal{M}_{g,n}(A)=((\Sigma^{n} \setminus \Delta)\times \overline{\partial }_{J}^{-1}(0) )/Diff^+(\Sigma).$$
\v
In order to compactify the moduli space we need to consider the holomorphic maps from marked nodal Riemann surfaces. A {configuration} in $M$ is a tuple $(\Sigma,j,{\bf y},\nu, u)$ where $(\Sigma,j,{\bf y},\nu)$ is a marked nodal Riemann surface of genus $g$ with $n$ distinct marked points,
and $u:\Sigma\to M$
is a smooth map satisfying the nodal conditions
$$
\{p,q\}\in\nu\implies u(p)=u(q).
$$
The configuration $(\Sigma,j,{\bf y},\nu,u)$ is called {holomorphic} if the restriction of $u$ to each smooth component of $\Sigma$
satisfies \eqref{holo}. The configuration $(\Sigma,j,{\bf y},\nu,u)$ is called stable, if the restriction on every smooth component is stable. Let $\overline{\mathcal{M}}_{g,n}(A)$ be the Gromov compactification of $\mathcal{M}_{g,n}(A)$. It is well-known that one can define a topology on $\overline{\mathcal{M}}_{g,n}(A)$ such that  $\overline{\mathcal{M}}_{g,n}(A)$ is a compact Hausdorff space.
\v
Denote by $\overline{\mathcal M}_{g,n}'$   the set of domains of each element in $\overline{\mathcal M}_{g,n}(A)$.  Denote
$$
\overline{\mathcal B}_{g,n}(A)=\left\{ [(j,\mathbf{y},u)]\;|\;u\in W^{k,2}(\Sigma,M),u_{*}([\Sigma])=A,\;\;[(j,\mathbf{y})]\in \overline{\mathcal M}_{g,n}'\right\},
$$
$$
\mathcal B_{g,n}(A)=\left\{ [(j,\mathbf{y},u)]\in \overline{\mathcal B}_{g,n}(A)\;|\; [(j,\mathbf{y})]\in {\mathcal M}_{g,n}\right\}.
$$
\v
For any $[b_o]=[(p_o,u)]\in \mathcal{M}_{g,n}(A)$ with $[p_{o}]\in \mathcal M_{g,n}$
let $\gamma_o=[(j_o,{\bf y}_o)]\in \mathbf{T}_{g,n}$, $(j_o,{\bf y}_o)\in \mathbf{P}$ with $\pi_{\mathcal M}(\gamma_o)=[p_{o}]$ and $\pi_{\mathbf{T}}(j_o,{\bf y}_o)=\gamma_o$. Choose a local coordinate system $(\psi,\Psi)$ on $U$ with $\psi(\gamma_o)=a_o$ for $\mathcal{Q}$ as in \eqref{local coordinates}, we have a local coordinate chart on $U$ and a local trivialization on $\pi_{\mathbf{T}}^{-1}(U)$:
\begin{equation}\label{local coordinates-1}
\psi: U\rightarrow \mathbf{A},\;\;\;\Psi:\pi_{\mathbf{T}}^{-1}(U)\rightarrow \mathbf{A}\times \Sigma,\end{equation}
 where $U\subset \mathbf{T}_{g,n}$ is an open set. We can view $a=(j,{\bf y})$ as parameters, and the domain $\Sigma$ is a fixed smooth surface. Denote by $j_a$ the complex structure on $\Sigma$ associated with $a=(j,{\bf y})$ and put $j_{a_o}:=j_o$. The Weil-Pertersson  metric induces a $Diff^+(\Sigma)$-invariant distance $d_{\mathbf{A}}(a_o,a)$ on $\mathbf{A}$ such that $d^{2}_\mathbf{A}(a):=d^{2}_\mathbf{A}(a_o,a)$ is a smooth function on $\mathbf{A}$. Denote by $\mathbf G_{a}$ the isotropy group at $a$, that is
  $$
  \mathbf G_{a}=\{\phi\in Diff^{+}(\Sigma)\;|\; \phi^{*}(j,\mathbf y)=(j,\mathbf y)\}.
  $$
  Since $\mathcal{M}_{g,n}$ is an effective orbifold, we can choose $\delta$ small such that $\mathbf{G}_{a}$ can be imbedded into $\mathbf{G}_{a_o}$ as a subgroup for any $a$ with $d_{\mathbf{A}}(a_o,a)<\delta$. Denote by $im(\mathbf{G}_{a})$ the imbedding.
\v
Let $b_{o}=(a_{o},u)=(j_o,{\bf y}_o,u)$ be the expression of $[(\gamma_o, u)]$ in this local coordinates.
Set
$$
\widetilde{\mathbf{O}}_{b_{o}}(\delta,\rho):=\{(a,v)\in \mathbf{A}\times \widetilde{\mathcal{B}} \;|\; d_{\mathbf{A}}(a_o,a)<\delta, \|h\|_{j_a,k,2}<\rho \},
$$
$$\mathbf{O}_{[b_{o}]}(\delta,\rho)=
\widetilde{\mathbf{O}}_{b_{o}}(\delta,\rho)/G_{b_o},$$
where $v=\exp_{u}(h),$ $G_{b_o}$ is the isotropy group at $b_o$, that is
$$G_{b_{o}}=\{\phi\in Diff^{+}(\Sigma)\;|\; \phi^{*}(j,\mathbf y,u)=(j,\mathbf y,u)\}.$$
 Obviously, $G_{b_o}$ is a subgroup of $\mathbf{G}_{a_o}$. Note that both $d_{\mathbf{A}}$ and $\|h\|_{j_a,k,2}$ are $Diff^+(\Sigma)$-invariant, we may identified $\mathbf{O}_{[b_{o}]}(\delta,\rho)$ with a neighborhood of $[b_o]\in \mathcal{M}_{g,n}(A)$ in $\mathcal{B}_{g,n}(A)$.

\v\v
\section{\bf Gluing}\label{gluing}

\v
Let $(\Sigma, j,{\bf y}, q)$ be a marked nodal Riemann surface of genus $g$ with $n$ marked points ${\bf y}=(y_1,...,y_n)$ and one nodal point $q$.  We write the marked nodal Riemann surface as
$$
\left(\Sigma=\Sigma_{1}\wedge\Sigma_{2},j=(j_{1},j_{2}),{\bf y}=({\bf y}_{1},{\bf y}_{2}), q=(p_1,p_2)\right),$$
where $(\Sigma_{i},j_{i},{\bf y}_{i}, q_i)$, $i=1,2$, are smooth Riemann surfaces. We say that $q_1,q_2$ are paired to form $q$. Assume that $(\Sigma_{i},j_{i},{\bf y}_{i},q_{i})$ is stable, i.e., $n_{i}+2g_{i}+1\geq 3$, $i=1,2$. We choose metric $\mathbf{g}_i$ on each $\Sigma_i$ as in \S\ref{metric on surfaces}.
Let $z_i$ be the cusp coordinates around $q_i$, $z_i(q_i)=0$, $i=1,2$.
 Let
$$
z_1=e^{-s_1 - 2\pi \sqrt{-1} t_1},\;\;\;z_2=e^{s_2 +2\pi\sqrt{-1}  t_2}.
$$
$(s_i, t_i)$ are called the holomorphic cusp cylindrical coordinates near $q_{i}$.  In terms of the holomorphic cusp cylindrical coordinates we write
$$\stackrel{\circ}{\Sigma}_{1}:=\Sigma_1\setminus\{q_1\}\cong\Sigma_{10}\cup\{[0,\infty)\times S^1\},\;\;\;\stackrel{\circ}{\Sigma}_{2}:=\Sigma_2\setminus\{q_2\}\cong\Sigma_{20}\cup\{(-\infty,0]\times S^1\}.$$
Here $\Sigma_{i0}\subset \Sigma_i$, $i=1,2$, are compact surfaces with boundary. Put $\stackrel{\circ}{\Sigma}= \Sigma\setminus\{q_1,q_2\} =\stackrel{\circ}{\Sigma}_{1}\cup\stackrel{\circ}{\Sigma}_{2}$. We introduce the notations
$$
\Sigma_i(R_0)=\Sigma_{i0}\cup \{(s_i,t_i)|\;|s_i| \leq R_0\},\;\; \;\;\;\;\;\Sigma(R_0)=\Sigma_1(R_0)\cup \Sigma_2(R_0).$$
For any gluing parameter $(r,\tau)$ with $r\geq R_{0}$ and $\tau\in S^1$ we construct a surface $\Sigma_{(r)}$
with the gluing formulas:
\begin{equation}\label{gluing surface}
s_1=s_2 + 2r,\;\;\;t_1=t_2 + \tau.
\end{equation}
where we use $(r)$ to denote gluing parameters.
\v
We will use the cusp cylinder coordinates to describe the construction of $u_{(r)}:\Sigma_{(r)}\to M$. Write
$$u=(u_1,u_2),\;\;u_{i}:\Sigma_i\rightarrow M\; with \;\;u_{1}(q)=u_{2}(q).$$
We choose local normal coordinates $(x^{1},\cdots,x^{2m})$ in a neighborhood  $O_{u(q)}$ of $u(q)$ and choose $R_0$ so large that $u(\{|s_i|\geq \frac{r}{2}\})$ lie in $O_{u(q)}$ for any $r>R_0$. We glue the map $(u_1,u_2)$ to get a pregluing maps $u_{(r)}$ as follows. Set\\
\[
u_{(r)}=\left\{
\begin{array}{ll}
u_1 \;\;\;\;\; on \;\;\Sigma_{10}\bigcup\{(s_1,\theta_1)|0\leq s_1 \leq
\frac{r}{2}, \theta_1 \in S^1 \}    \\  \\
u_1(q)=u_2(q) \;\;
on \;\{(s_1,\theta_1)| \frac{3r}{4}\leq s_1 \leq
\frac{5r}{4}, \theta_1 \in S^1 \}  \\   \\
u_2 \;\;\;\;\; on \;\;\Sigma_{20}\bigcup\{(s_2,\theta_2)|0\geq s_2
\geq - \frac{r}{2}, \theta_2 \in S^1 \}     \\
\end{array}.
\right.
\]
To define the map $u_{(r)}$ in the remaining part we fix a smooth cutoff
function $\beta : {\mathbb{R}}\rightarrow [0,1]$ such that
\begin{equation}\label{def_beta}
\beta (s)=\left\{
\begin{array}{ll}
1 & if\;\; s \geq 1 \\
0 & if\;\; s \leq 0
\end{array}
\right.
\end{equation}
and $\sqrt{1-\beta^2}$ is a smooth function,  $0\leq \beta^{\prime}(s)\leq 4$ and $\beta^2(\frac{1}{2})=\frac{1}{2}.$
 We define\\
$$u_{(r)}= u_1(q)+ \left(\beta\left(3-\frac{4s_1}{r}\right)(u_1(s_1,\theta_1)-u_1(q)) +\beta\left(\frac{4s_1}{r}-5\right)(u_2(s_1-2r,\theta_1-\tau)- u_2(q))\right).$$
\vskip 0.1in
\noindent
\v
We define weighted norms. Fix a positive function $W$ on $\Sigma$ which has order equal
to $e^{\alpha |s|} $ on each end of $\Sigma_i$, where $\alpha$ is a small
constant such that $0<\alpha  <1$. We will write the weight function simply as $e^{\alpha |s|} $. For any $h\in C^{\infty}_{c}(\Sigma_{i};u_{i}^{\ast}TM)$ and
any section $\eta \in
C^{\infty}_{c}(\Sigma_{i}, u_{i}^{\ast}TM\otimes \wedge^{0,1}_jT^{*}\Sigma_{i})$
we define the norms
\begin{equation}
\|h\|_{j_i,k,2,\alpha}=
 \left(\int_{\Sigma_i}e^{2\alpha|s|} \sum_{i=0}^{k} |\nabla^i h|^2
dvol_{\Sigma}\right)^{1/2},
\end{equation}
\begin{equation}
\|\eta\|_{j_i,k-1,2,\alpha}= \left(\int_{\Sigma}e^{2\alpha|s|}\sum_{i=0}^{k-1} |\nabla^i \eta|^2
dvol_{\Sigma}\right)^{1/2}.
\end{equation}
Here all norms and
covariant derivatives are taken with respect to the  metric $G_J$ on $u^{\ast}TM$ and
the metric $\mathbf{g}^\diamond$ on $(\Sigma, j, {\bf y})$, $dvol_{\Sigma}$ denotes the volume form with respect to $\mathbf{g}^\diamond$. Denote by $W^{k,2,\alpha}({\Sigma}_{i};u_{i}^{\ast}TM)$ and
$W^{k-1,2,\alpha}({\Sigma}_{i}, u_{i}^{*}TM\otimes \wedge^{0,1}_{j_i}T^{*}{\Sigma}_{i})$ the complete spaces with respect to the norms respectively.
\v
We choose $R_0$ so large that  $u_{i}(\{|s_i|\geq \frac{r}{2}\})$ lie in $O_{u_{i}(q)}$ for any $r>R_0$. In this coordinate system we identify $T_xM$ with $T_{u_{i}(q)}M$ for all $x\in O_{u_{i}(q)}$. Any $h_0\in T_{u_{i}(q)}M$ may be considered as a vector field in the coordinate neighborhood.
We fix a smooth cutoff function $\varrho$:
\[
\varrho(s)=\left\{
\begin{array}{ll}
1, &\mbox{ if }\ |s|\geq \bar{d}, \\
0, &\mbox{ if }\ |s|\leq \frac{\bar{d}}{2}
\end{array}
\right.
\]
where $\bar{d}$ is a large positive number. Put
$$\hat{h}_0=\varrho h_0.$$
Then for $\bar{d}$ large enough $\hat{h}_0$ is a section in $C^{\infty}( {\Sigma_i}; u_{i}^{\ast}TM)$
supported in the tube $\{(s,t)||s|\geq \frac{\bar{d}}{2}, t \in {S} ^1\}$.
Denote
$${\mathcal W}^{k,2,\alpha}({\Sigma}_{i};u_{i}^{\ast}TM)=\left\{h+\hat{h}_0 | h \in
W^{k,2,\alpha}({\Sigma}_{i};u_{i}^{\ast}TM),h_0 \in T_{u_{i}(q)}M\right\}.$$
We define weighted Sobolev  norm  on ${\mathcal W}^{k,2,\alpha}$ by
$$\| h+\hat{h}_{0}\|_{\mathcal{W},j,k,2,\alpha}=
 \sum_{i=1}^2\|h\|_{j_i,k,2,\alpha} + |h_{0}| ,$$
 where $|h_{0}|=[G_{J}(h_{0},h_{0})_{u(q)}]^{\frac{1}{2}}$.
\v
Denote
$$\beta_{1;R}(s_1)=\beta\left(\frac{1}{2}+\frac{r-s_1}{R}\right),\;\;\;
\beta_{2;R}(s_{2})=\sqrt{1-\beta^2\left(\frac{1}{2}-\frac{s_{2}+r}{R}\right)}, $$
where $\beta$ is the cut-off function defined in \eqref{def_beta}.
For any $\eta \in
C^{\infty}(\Sigma_{(r)};u_{(r)}^{\ast}TM\otimes \wedge_{j}^{0,1}T\Sigma_{(r)})$,
let
$$
\eta_{i}(p) =\left\{
\begin{array}{ll}
\eta  & if\;\; p\in \Sigma_{i0}\cup\{|s_{i}|\leq r-1\}\\
\beta_{i;2}(s_{i})\eta(s_{i},t_{i}) & if\;\; p\in \{r-1\leq |s_{i}|\leq r+1\} \\
0 & otherwise.
\end{array}
\right..
$$
If no danger of confusion  we will simply write $\eta_{i}=\beta_{i;2}\eta.$ Then $\eta_{i}$  can be considered as
a section over $\Sigma_i$. Define
\begin{equation}
\|\eta\|_{r,k-1,2,\alpha}=\|\eta_{1} \|_{\Sigma_1,j_1,k-1,2,\alpha} +
\|\eta_{2} \|_{\Sigma_2,j_2,k-1,2,\alpha}.
\end{equation}
\v
We now define a norm $\|\cdot\|_{r,k,2,\alpha}$ on
$C^{\infty}(\Sigma_{(r)};u_{(r)}^{\ast}TM).$ For any section
$h\in C^{\infty}(\Sigma_{(r)};u_{(r)}^{\ast}TM)$ denote
$$h_0=\int_{ {S}^1}h(r,t)dt,$$
$$h_1(s_1,t_1) = (h-\hat h_{0})(s_1,t_1)\beta_{1;2}(s_1),\;\;\;h_2(s_2,t_2)= (h-\hat h_{0})(s_2,t_2)\beta_{2;2}(s_{2}).$$
We define
\begin{equation}
\|h\|_{r,k,2,\alpha}=\|h_1\|_{\Sigma_1,j_1,k,2,\alpha} +
\|h_2\|_{\Sigma_2,j_2,k,2,\alpha}+|h_{0}|.
\end{equation}
Denote the resulting completed spaces by $W^{k-1,2,\alpha}(\Sigma_{(r)};u_{(r)}^{\ast}TM\otimes \wedge_{j_r}^{0,1}T\Sigma_{(r)})$ and $W^{k,2,\alpha}(\Sigma_{(r)};u_{(r)}^{\ast}TM)$  respectively.
\v
The above construction can be generalized to the case with several nodes. Let $\Sigma\in \overline{\mathcal M}_{g,n}$.
Suppose that $\Sigma$ has $\mathfrak{e}$ nodal points $\mathbf{p}=(p_{1},\cdots,p_{\mathfrak{e}})$, n marked points $y_{1},\cdots,y_n$ and $\iota$ smooth components. We can choose the plumbing coordinates $(\mathbf{s}, \mathbf{t})$
following \cite{Wolp-1}.
   Set $\stackrel{\circ}\Sigma= \Sigma \setminus \{p_{1},\cdots,p_{\mathfrak{e}},y_{1},\cdots,y_n\}$. Then $\stackrel{\circ}\Sigma$ is a Riemann surface with additional punctures $a_{j}, b_{j}$ in the place of the $j$th node of $\Sigma,$ $j=1,\cdots, \mathfrak{e}$.
For each node $p_{j},j=1,\cdots,\mathfrak{e}$, there is a neighborhood   isomorphic to
$$\{(z_{j},w_{j})\in \mathbb C^{2}| |z_{j}|<1,|w_{j}|<1,z_{j}w_{j}=0\}.$$
Denote by $\Sigma_{i}$ the connected components  of $\stackrel{\circ}\Sigma$, $i=1,\cdots, \iota$. Suppose that $\Sigma_i$ has $n_{i}$ marked points, $q_{i}$ punctures and has genus $g_{i}$.
\v
We can parameterize a neighborhood of $\stackrel{\circ}\Sigma$ in the deformation space  by Beltrami differentials.  Let $z_i$ (resp. $w_{i}$) be a local   coordinate around $a_{i}$ (resp. $b_{i}$), $z_i(a_{i})=0,w_i(b_{i})=0$, $i=1,\cdots, \mathfrak{e}$.
Let $\mathbb{U}_{j}=\{p\in\Sigma||z_{j}|(p)<1\}$ and  $\mathbb{V}_{j}=\{p\in\Sigma||w_{j}|(p)<1\}$ be
disjoint neighborhoods of the punctures $a_{j}$ and $b_{j},j=1,\cdots,\mathfrak{e}$.
We pick an open set $\mathbb{U}_{o}\subset \stackrel{\circ}\Sigma$ such that each component of $\stackrel{\circ}\Sigma$ intersects $\mathbb{U}_o$ in a nonempty relatively compact set and the intersection $\mathbb{U}_o\bigcap(\mathbb{U}_j\cup \mathbb{V}_j)$ is empty for all $j$. Denote $N=\sum\limits_{i=1}^{\fc} (3g_{i}-3+n_{i}+q_{i})$.
Choose Beltrami differentials $\nu_{j}, j=1,\cdots,N$ which are supported in $\mathbb{U}_o$ and
form a basis of the deformation space at $\Sigma $.
Let $\mathbf{s}=(\fs_{1},\cdots,\fs_{N})\in \mathbb C^{N}$,
$\nu=\sum\limits_{i=1}^{N} \fs_{i}\nu_{i}.$ Assume $|\mathbf{s}|$ small enough such that $|\nu|<1$. The nodal surface $\Sigma_{\mathbf{s},0}$ is obtained by solving the Beltrami equation $\bar{\p}w=\nu(\mathbf{s})w$.
\v
	We recall the plumbing construction for $\Sigma$ with a pair of punctures $a_{j},b_{j}$.  	
Let $z_{j,\mathbf{s}},$ $w_{j,\mathbf{s}}$ be cusp coordinates in
	$\mathbb{U}_{j}, \mathbb{V}_{j}$ near $a_{j},b_{j}$ respectively, thus
$$
ds^2_{\mathbf{s},0} (z_{j,\mathbf{s}})=\frac{|dz_{j,\fs}|^2}{|z_{j,\fs}|^2 \log^2 |dz_{j,\fs}|},\;\;\;ds^2_{\fs,0} (w_{j,\fs})=\frac{|dw_{j,\fs}|^2}{|w_{j,\fs}|^2 \log^2 |dw_{j,\fs}|}.
$$
where   $ds^2_{\fs,0} $ be the normalized hyperbolic metric on $\Sigma_{\fs,0}$  of curvature $-1$.
 As \cite{DasMes} denote
$$
	F_{j,\mathbf{s}}=z_{j}\circ z_{j,\mathbf{s}}^{-1},\;\;\;\;	G_{j,\mathbf{s}}=w_{j}\circ w_{j,\mathbf{s}}^{-1}.
$$
  By the removalbe singularity theorem and setting $\widetilde F_{j,\mathbf{s}}=F_{j,\mathbf{s}}/F_{j,\mathbf{s}}'(0)  $ and  $\widetilde G_{j,\mathbf{s}}=G_{j,\mathbf{s}}/G_{j,\mathbf{s}}'(0)$, if necessary , we can assume that
  $$
  F_{j,\mathbf{s}}(0)=0,\;\;\;  F'_{j,\mathbf{s}}(0)=1,\;\;\;G_{j,\mathbf{s}}(0)=0,\;\;\;  G'_{j,\mathbf{s}}(0)=1.\;\;\;  $$
Since $\mathbb{U}_{o}$ is disjoint from the $\mathbb{U}_{j}, \mathbb{V}_{j}$, the $F_{j,\mathbf{s}},G_{j,\mathbf{s}}$ are also holomorphic onto their image. For any $  \mathbf{t}=(\ft_{1},\cdot,\cdot,\ft_{\mathfrak{e}})$
with $0<|\ft_{j}|<1$, remove the discs $|z_{j}|< |\ft_{j}|$ and $|w_{j}|< |\ft_{j}|$ when $|\ft_{j}|$ small, and identify  $z_{j}$ via the plumbing equation  $$w_{j}=\frac{\ft_{j}}{z_{j}}.$$ We can rewrite the equation as
	$$
	(F_{j,\mathbf{s}}\circ z_{j,\mathbf{s}})\cdot (G_{j,\mathbf{s}}\circ w_{j,\mathbf{s}})=\ft_{j}.$$
Then we form a new Riemann surface $\Sigma_{\mathbf{s},\ft}.$ We call $(\ft_{1},\cdot\cdot\cdot,\ft_{\mathfrak{e}})$ plumbing coordinate. We obtain a family of Riemann surfaces over $\Delta_{\mathbf{s}}\times \Delta_{\ft}$,
whose fiber over $(\fs,\ft)$ is the Riemann surface $\Sigma_{\fs,\ft}$, where $\Delta_{\mathbf{s}}=(\Delta)^N\subset \mathbb{C}^N$, $\Delta_{\ft}=(\Delta)^{\mathfrak{e}}\subset \mathbb{C}^{\mathfrak{e}}$ are  polydiscs.
\v
In the coordinate system $(\fs,\ft)$ the $g_{\mathsf{wp}}$ metric induces a $Diff^+(\Sigma)$-invariant distance $d_{\mathbf{s},\ft}(\cdot,\cdot)$ on $\Delta_{\mathbf{s}}\times \Delta_{\ft}$.
Put
$$O(\delta)=\{(\fs,\ft)\mid d_{\mathbf{s}, \ft }((0,0),(\fs,\ft))<\delta \}.$$
We can choose $\delta$ small such that $\mathbf{G}_{(\fs,\ft)}$ can be imbedded into $\mathbf{G}_{(0,0)}$ as a subgroup for any $(\fs,\ft)\in O(\delta)$. Denote by $im(\mathbf{G}_{(\fs,\ft)})$ the imbedding.
\v
Let $u_{\mathbf{s},0}:\Sigma_{\mathbf{s},0}\to M$ be a $W^{k,2,\alpha}$-map. We can construct $u_{\mathbf{s},\ft}:\Sigma_{\mathbf{s},\ft}\to M$.
For any $h\in C^{\infty}_{c}(\Sigma_{\fs,0};u_{\fs,0}^{\ast}TM)$ and
any section $\eta \in
C^{\infty}_{c}(\Sigma_{\fs,0}, u_{\fs,0}^{\ast}TM\otimes \wedge^{0,1}_jT^{*}\Sigma_{\fs,0})$
we define the norms $\|h\|_{\fs,k,2,\alpha}$ and $\|\eta\|_{\fs,k-1,2,\alpha}$, and  for any section
$h\in C^{\infty}(\Sigma_{\fs,\ft};u_{\fs,\ft}^{\ast}TM)$
and any $\eta \in
C^{\infty}(\Sigma_{\fs,\ft};u_{\fs,\ft}^{\ast}TM\otimes \wedge_{j}^{0,1}T\Sigma_{\fs,\ft})$, we define the norms $\| h\|_{\fs,\ft,k,2,\alpha}$ and $\|\eta\|_{\fs,\ft,k-1,2,\alpha}$ as for one node case.
\v
Let $b_o=(\Sigma,0,0,u)$.
Set
$$\widetilde{\mathbf{O}}_{b_{o}}(\delta_{b_o},\rho_{b_o}):=\left\{((\fs,\ft),v_{\fs,\ft})\;|\; d_{\fs,\ft}((0,0),(\fs,\ft))<\delta_{b_o}, \|h\|_{\fs,\ft,k,2,\alpha}<\rho_{b_o} \right\},$$
$$\mathbf{O}_{[b_{o}]}(\delta_{b_{o}},\rho_{b_{o}})=
\widetilde{\mathbf{O}}_{b_{o}}(\delta_{b_{o}},\rho_{b_{o}})/G_{b_o},$$
where $v_{\fs,\ft}=\exp_{u_{\fs,\ft}}(h).$

\v
\v

\section{\bf Local regularization-Top strata}\label{local_regularization}

 When the transversality fails we need to take the regularization.
\subsection{\bf Local regularization for the top strata}\label{Local regularization for the top strata}

Let   $[b_o]=[(p_o,u)]\in \mathcal{M}_{g,n}(A)$ and let $\gamma_o\in \mathbf{T}_{g,n}$ such that $\pi(\gamma_o)=[p_o]$, where $\pi: \mathbf{T}_{g,n}\to \mathcal{M}_{g,n}$ is the projection.
We choose a local coordinate system $(\psi,\Psi)$ on $U$ with $\psi(\gamma_o)=a_o$ for $\mathcal{Q}$.
We view $a=(j,{\bf y})$ as a family of parameters defined on a fixed $\Sigma$. Denote
$$\widetilde{\mathcal B}(a)=\left\{u\in W^{k,2}(\Sigma,M)|\;u_{*}([\Sigma])=A\right\}.$$
Let $\widetilde{\mathcal E}(a)$ be the infinite dimensional Banach bundle over $\widetilde{\mathcal B}(a)$ whose fiber at $v$ is $$W^{k-1,2}(\Sigma,v^{*}TM\otimes\wedge_{j_a}^{0,1}T^{*} \Sigma),$$
where we denote by $j_a$ the complex structure on $\Sigma$ associated with $a=(j,{\bf y})$. We will denote $j_{a_o}:=j_o$. We have a continuous family of Fredholm system
$$\left(\widetilde{\mathcal B}(a),\; \widetilde{\mathcal{E}}(a),\;
\bar{\p}_{j_a,J}\right)$$
parameterized by $a\in \mathbf{A}$ with $d_{\mathbf{A}}(a_o,a)<\delta$. For any $v\in \widetilde{\mathcal B}(a)$ let $b=(a,v)$ and denote $\widetilde{\E}(a)|_{v}:=\widetilde{\E}|_{b}.$ Let $b_o=(a_o,u)$. Choose $\widetilde{K}_{b_o} \subset \widetilde{\E}|_{b_o}$ to be a finite dimensional subspace such that every member of $\widetilde{K}_{b_o}$ is in $C^{\infty}(\Sigma, u^{*}TM\otimes\wedge^{0,1}_{j_o}T^{*}\Sigma)$ and
\begin{equation}\label{regularization_operator}
\widetilde{K}_{b_o} + image D_{b_o} = \widetilde{\E}|_{b_o},
\end{equation}
where $D_{b_o}=D\bar{\p}_{j_o,J}$ is the vertical differential of $\bar{\p}_{j_o,J}$ at $u$.
\v
Let $G_{b_o}$ be the isotropy group at $b_o$.
In case the isotropy group $G_{b_{o}}$ is non-trivial, we must construct a $G_{b_{o}}$-equivariant regularization.
Note that $G_{b_{o}}$ acts on $W^{k-1,2}(\Sigma, u^{*}TM\otimes\wedge^{0,1}_{j_o}T^{*}\Sigma)$ in a natural way: for any $\kappa\in W^{k-1,2}(\Sigma, u^{*}TM\otimes\wedge^{0,1}_{j_o}T^{*}\Sigma)$ and any $g\in G_{b_{o}}$
$$g\cdot \kappa=\kappa\circ dg\in W^{k-1,2}\left(\Sigma, u^{*}TM\otimes\wedge^{0,1}_{j_o}T^{*}\Sigma\right).$$
Set
$$\bar{K}_{b_{o}}=\bigoplus_{g\in G_{b_{o}}}
g \widetilde{K}_{b_{o}}.$$
Then $\bar{K}_{b_{o}}$ is $G_{b_{o}}$-invariant. To simplify notations we assume that $\widetilde{K}_{b_o}$ is already $G_{b_{o}}$-invariant.
\v\v
\begin{lemma}\label{isomorphism} There are constants $\delta>0$, $\rho>0$ depending on $b_o$ such that there is an isomorphism
$$P_{b_{o}, b}:\widetilde{\E}_{b_o}\rightarrow \widetilde{\E}_{b}\;\;\;\forall \;b\in \widetilde{\mathbf{O}}_{b_{o}}(\delta,\rho).$$
\end{lemma}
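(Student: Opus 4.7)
The plan is to construct $P_{b_o, b}$ as a composition of two ``small-perturbation'' bundle isomorphisms. The target bundle $\widetilde{\E}_b$ differs from $\widetilde{\E}_{b_o}$ in two independent ways: the base map changes from $u$ to $v = \exp_u h$, and the complex structure on $\Sigma$ changes from $j_o$ to $j_a$. I will handle these separately: first by parallel transport in $(M, \widetilde{\nabla})$ from $u$ to $v$, then by taking the $(0,1)_{j_a}$-part of a $(0,1)_{j_o}$-form. The constants $\delta$ and $\rho$ will be chosen small enough that each piece differs from the identity by a small operator, so the composition is automatically invertible via a Neumann series.

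For the first factor, $k > 4$ gives the Sobolev embedding $W^{k,2}(\Sigma) \hookrightarrow C^2(\Sigma)$; hence $\|h\|_{C^0} \leq C\|h\|_{j_a, k, 2}$ with $C$ uniform in $a$ for $\delta$ small. Choose $\rho$ less than the injectivity radius of $(M, G_J)$. Then for each $x \in \Sigma$ the geodesic $t \mapsto \exp_{u(x)}(th(x))$, $t \in [0,1]$, is well-defined, and parallel transport along it with respect to $\widetilde{\nabla}$ defines a fiberwise $\mathbb{C}$-linear isometry $P_h: u^*TM \to v^*TM$, smooth in $h$. For the second factor, Lemma \ref{slice} shows that $a \mapsto j_a$ is smooth into $C^\infty(\Sigma, \mathrm{End}(T\Sigma))$ with $j_{a_o} = j_o$, so shrinking $\delta$ makes $|j_a - j_o|_{C^k}$ arbitrarily small. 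Define
$$Q_{j_o, j_a}(\alpha)(X) := \tfrac{1}{2}\bigl(\alpha(X) + J\alpha(j_a X)\bigr), \qquad \alpha \in \Gamma\bigl(u^*TM \otimes \wedge^{0,1}_{j_o}T^*\Sigma\bigr).$$
A direct check shows that $Q_{j_o, j_a}(\alpha)$ is $(j_a, J)$-antilinear and that $Q_{j_o, j_o} = \mathrm{id}$, so $Q_{j_o, j_a}$ is a fiberwise isomorphism for $|j_a - j_o|_{C^0}$ small. Both $P_h$ and $Q_{j_o, j_a}$ are manifestly $G_{b_o}$-equivariant, since every $g \in G_{b_o}$ satisfies $g^*u = u$ and $g^*j_o = j_o$.

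Setting $P_{b_o, b} := P_h \otimes Q_{j_o, j_a}$ produces a pointwise bundle isomorphism $\widetilde{\E}_{b_o} \to \widetilde{\E}_b$. To extend it to $W^{k-1,2}$-sections I use that $k-1 > 3$, so $W^{k-1,2}(\Sigma)$ is a Banach algebra: the $C^k$-norm of $P_h$ is controlled by $\|h\|_{j_a, k, 2}$ and the $C^k$-geometry of $M$, and the $C^k$-norm of $Q_{j_o, j_a}$ by $|j_a - j_o|_{C^k}$. Multiplication by these smooth tensors is therefore bounded on $W^{k-1,2}$, and the inverse $P_h^{-1} \otimes Q_{j_o, j_a}^{-1}$ is constructed analogously.

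The main technical obstacle is reconciling the two Sobolev norms: $\widetilde{\E}_{b_o}$ is built from $\mathbf{g}_{j_o}$ and $u^*TM$, while $\widetilde{\E}_b$ uses $\mathbf{g}_{j_a}$ and $v^*TM$, together with the Levi-Civita / complex-linear connections pulled back along different maps. Covariant derivatives up to order $k-1$ of a transported section do not match those of the original; the discrepancies involve $\nabla j_a$, $\nabla h$, and $M$-curvature terms evaluated along the two maps. Verifying that all such discrepancies are genuine lower-order perturbations of size $O(\delta + \rho)$ in operator norm is the routine-but-lengthy core of the argument and ultimately pins down the small choices of $\delta, \rho$ in the statement.
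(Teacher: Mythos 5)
Your construction is essentially the paper's: the paper also defines $P_{b_o,b}=\Phi\circ\Psi_{j_o,j_a}$, where $\Phi$ is parallel transport with respect to $\widetilde{\nabla}$ along $s\mapsto\exp_u(sh)$ and $\Psi_{j_o,j_a}(\eta)=\tfrac12(\eta-\eta\cdot j_o j_a)$, which by $(j_o,J)$-antilinearity of $\eta$ is exactly your $Q_{j_o,j_a}$. The only difference is that the paper defers the smallness/invertibility claim for $\Psi_{j_o,j_a}$ to \cite{LS-1}, whereas you supply the standard Neumann-series argument and note the Sobolev-norm comparison explicitly.
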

\v\n
{\bf Proof.} Denote by $inj_{M}$ the injective radius of $(M,G_J)$. Given $h\in W^{k,2}(\Sigma,u^{*}TM)$ with $\|h\|_{L^{\infty}}<inj_M$, let
$$\Phi:u^*TM\rightarrow (\exp_{u}h)^*TM$$
denote the complex bundle isomorphism, given by parallel transport with  respect to the connection $\widetilde{\nabla}$, along the
geodesics $s\rightarrow \exp_{u}(sh)$. We choose $\rho< inj_{M}$. For any $j\in \mathcal{J}(\Sigma)$ near $j_o$
we can write $j=(I + H)j_o(I + H)^{-1}$ where $H\in T_{j_o}\mathcal{J}(\Sigma)$.
We define two maps
$$\Psi_{j_{o},j_a}:u^{*}TM\otimes \wedge^{0,1}_{j_{o}}T^{*}\Sigma  \to  u^{*}TM\otimes \wedge^{0,1}_{j_a}T^{*}\Sigma$$ and $$\Psi_{j_a,j_{o}}:u^{*}TM\otimes \wedge^{0,1}_{j_a}T^{*}\Sigma  \to  u^{*}TM\otimes \wedge^{0,1}_{j_{o}}T^{*}\Sigma$$ by
\begin{align*}
 \Psi_{j_{o},j_a}(\eta)= \frac{1}{2}(\eta-\eta \cdot j_{o} j_a),\;\;\;\; \Psi_{j_a,j_{o}}(\varpi)= \frac{1}{2}(\varpi-\varpi \cdot j_aj_{o}).
\end{align*}
Since $J\eta=-\eta j_{o}$ and $J\varpi=-\varpi j_a,$ one can check that $J \Psi_{j_{o},j_a}(\eta) =-\Psi_{j_{o},j_a}(\eta) j$ and $J \Psi_{j_a,j_{o}}(\varpi) =-\Psi_{j_a,j_{o}}(\varpi) j_{o}.$ Then $\Psi_{j_{o},j_a}$ and $\Psi_{j_a,j_{o}}$ are well defined. The proof of the following claim can be find in \cite{LS-1}, we omit the proof here.
\v\n
{\bf Claim.}
$\Psi_{j_{o},j_a}$ is an isomorphism when $|H|$ small enough.
\v
$\Psi_{j_{o},j_a}$ induces an isomorphism  $$\Psi_{j_{o},j_a}: W^{k-1,2}(\Sigma, u^{*}TM\otimes\wedge^{0,1}_{j_o}T^{*}\Sigma)\to W^{k-1,2}(\Sigma, u^{*}TM\otimes\wedge^{0,1}_{j_a}T^{*}\Sigma)$$
in a natural way. Let $P_{b_{o}, b}=\Phi\circ\Psi_{j_o,j_a}$. We can choose $\delta$, $\rho$ such that there is an isomorphism
$$P_{b_{o}, b}:\widetilde{\E}_{b_o}\rightarrow \widetilde{\E}_{b}\;\;\;\forall \;b\in \widetilde{\mathbf{O}}_{b_{o}}(\delta,\rho).$$

\v
Now we define a thickned Fredholm system $(\widetilde{K}_{b_o}\times \widetilde{\mathbf{O}}_{b_{o}}(\delta,\rho), \widetilde{K}_{b_o}\times \widetilde{\E}|_{\widetilde{\mathbf{O}}_{b_{o}}(\delta,\rho)}, S)$.
Let $(\kappa, b)\in \widetilde{K}_{b_o}\times \widetilde{\mathbf{O}}_{b_{o}}(\delta,\rho)$, $b=(a,v)\in \widetilde{\mathbf{O}}_{b_{o}}(\delta,\rho)$. Define
\begin{equation}\label{local regu}
S(\kappa,b) = \bar{\partial}_{j_a,J}u + P_{b_o,b}\kappa.
\end{equation}
We can choose $(\delta, \rho)$ small such that
the linearized operator $DS_{(\kappa,b)}$ is surjective for any $b\in \widetilde{\mathbf{O}}_{b_{o}}(\delta,\rho)$.
\v
\subsection{\bf The norm $\|h\|_{j_a,k,2}^{2}$ and the isotropy group on
the top strata}\label{norm on the top strata}

If we fix the complex structure $j_o$, then $W^{k,2}(\Sigma;u^{\ast}TM)$ is a Hilbert space. It is well-known that $\|h\|_{j_o,k,2}^{2}$ is a smooth function ( see \cite{MVV}). Now the $\|h\|_{j_a,k,2}^{2}$ is a family of norms, so the following lemma is important.
\begin{lemma}\label{smoothness of norms}
For any $[b_o]=[(p_o,u)]\in \mathcal{M}_{g,n}(A)$ and any local coordinates $(\psi,\Psi)$ on $U$ with $\psi: U\rightarrow \mathbf{A}\ni a_o$
the norm
$\|h\|_{j_a,k,2}^{2}$ is a smooth function in $\widetilde{\mathbf{O}}_{b_{o}}(\delta,\rho)$.
\end{lemma}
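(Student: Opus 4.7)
First I would reduce the claim to smoothness of a lifted function. Since both $d_{\mathbf{A}}$ and $\|h\|_{j_a,k,2}$ are $Diff^+(\Sigma)$-invariant, it suffices to show that the map
$$
F:\mathbf{A}\times W^{k,2}(\Sigma,u^{*}TM)\longrightarrow \mathbb{R},\qquad F(a,h)=\|h\|_{j_a,k,2}^{2}
$$
is smooth (for $(a,h)$ in a small neighborhood of $(a_o,0)$). Here I regard the underlying Sobolev space as fixed—different $j_a$ in a small local chart produce equivalent norms on the same space of sections of $u^{*}TM$—and view $a\in\mathbf{A}$ as a finite-dimensional smooth parameter.

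The key analytic input is that the Riemannian metric $\mathbf{g}_a$ used to define the norm depends smoothly on $a$ as a section of $\mathrm{Sym}^{2}T^{*}\Sigma$. Inside the local slice $\iota:\mathbf{A}\to\mathbf{P}$ of Lemma \ref{slice}, the complex structures $j_a$ vary holomorphically in $a$, and by classical Teichm\"uller theory (uniformization and smooth dependence of the solution of the Liouville/Beltrami equation) the unique complete hyperbolic metric $\mathbf{g}_{0}(j_a)$ of constant curvature $-1$ in the conformal class $j_a$ depends $C^{\infty}$-smoothly on $a$. The cutoff function $\chi$ and the Euclidean piece $\mathbf{g}'$ used in \S\ref{metric on surfaces} to build $\mathbf{g}_a$ from $\mathbf{g}_0(j_a)$ are fixed, so $a\mapsto \mathbf{g}_a$ is smooth into the Fr\'echet space $C^{\infty}(\Sigma,\mathrm{Sym}^{2}T^{*}\Sigma)$, and hence in particular into $C^{N}(\Sigma,\mathrm{Sym}^{2}T^{*}\Sigma)$ for every $N$.

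Consequently the Levi-Civita connection $\nabla^{\mathbf{g}_a}$, its Christoffel symbols, the inverse metric $\mathbf{g}_a^{-1}$, and the volume form $dvol_{\mathbf{g}_a}$ all depend smoothly on $a$, together with all of their derivatives in $x\in\Sigma$. For each $0\le i\le k$ the iterated covariant derivative $\nabla^{i}h$ (using $\nabla^{\mathbf{g}_a}$ on $\Sigma$ and the fixed pullback connection $\nabla^{u^{*}TM}$ induced by $\widetilde{\nabla}$ on $M$) is a polynomial of degree one in $h$ whose coefficients are universal contractions of partial derivatives of $h$ (up to order $i$) with partial derivatives of the Christoffel symbols. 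Thus
$$
(a,h)\longmapsto \nabla^{i}h\;\in L^{2}(\Sigma,(T^{*}\Sigma)^{\otimes i}\otimes u^{*}TM)
$$
is smooth as a map from $\mathbf{A}\times W^{k,2}(\Sigma,u^{*}TM)$, being linear in $h$ with a smooth $a$-dependent family of bounded coefficient operators.

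Finally, $|\nabla^{i}h|^{2}$ is obtained by contracting $\nabla^{i}h\otimes\overline{\nabla^{i}h}$ with tensors built from $\mathbf{g}_a^{-1}$ and $G_J$, so it is smooth as a map $\mathbf{A}\times W^{k,2}\to L^{1}(\Sigma)$; integrating against $dvol_{\mathbf{g}_a}$ yields a smooth $\mathbb{R}$-valued function, and summing over $i=0,\ldots,k$ gives $F$. I expect the only real obstacle to be the smooth dependence of the uniformizing hyperbolic metric on the conformal structure, which is classical (\cite{Tromba}, \cite{Wolp-1}); once this is invoked, everything else is a routine chain of smooth bilinear/polynomial compositions in finite-dimensional parameters and a fixed Hilbert space.
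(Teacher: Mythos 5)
Your argument reproduces, in considerably more analytic detail, the first (and main) step of the paper's proof: the authors likewise reduce everything to the statement that the slice is smooth, hence $a\mapsto j_a$ and therefore $a\mapsto \mathbf{g}_a$ are smooth, and then assert that the norm is smooth in the chart $(\psi,\Psi)$. Your expansion of this step — smooth dependence of the uniformizing hyperbolic metric on $j_a$ (classical, as you say), hence smooth dependence of $\nabla^{\mathbf{g}_a}$, $\mathbf{g}_a^{-1}$ and $dvol_{\mathbf{g}_a}$, and then the observation that $\|h\|_{j_a,k,2}^2$ is a quadratic form in $h$ with $a$-smoothly varying bounded coefficients on a fixed Hilbert space — is correct and is exactly the content the paper compresses into one sentence.

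What you omit is the second half of the paper's proof, which addresses the change of local coordinate systems. This is not vacuous: if one passes from $(\psi,\Psi)$ to $(\psi',\Psi')$, the induced identification of the infinite-dimensional factors is $u\mapsto u\circ\vartheta_a$ with $\vartheta_a=\Psi'\circ\Psi^{-1}|_a$ an $a$-dependent family of diffeomorphisms of $\Sigma$, and composition with such a family is \emph{not} a smooth map of $W^{k,2}$ (the paper points this out explicitly in \S\ref{smoothness-1}). So one cannot deduce smoothness of the norm in the primed chart merely by composing with the transition map. The paper circumvents this by using the $Diff^+(\Sigma)$-invariance identity $\|h\|_{j_a,k,2}^2=\|h\|_{j_{a'},k,2}^2$ together with Lemma \ref{local coordinates-3} (smoothness of $\psi'\circ\psi^{-1}$ on the finite-dimensional base) and the chain rule. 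Since your chart is arbitrary, your argument does prove the literal statement of the lemma for each fixed chart; but if you intend the smoothness to be chart-compatible in the sense needed later (e.g.\ for the cut-off functions $\alpha_{b_o}$ on overlaps), you should add the invariance-plus-chain-rule step.
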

\v\n
{\bf Proof.} We first prove that, in the coordinate system $(\psi,\Psi)$ on $U$, $\|h\|_{j_a,k,2}^{2}$ is smooth. Since the slice we choose is smooth, the complex structure $j_a$ smoothly depends on $a$, so the associated hyperbolic metric $\mathbf{g}$ smoothly depends on $a$. It follows that
$\|h\|_{j_a,k,2}^{2}$ is smooth in $(\psi,\Psi)$. Now we choose another coordinate system $(\psi',\Psi')$, then
$$a=\psi\circ (\psi')^{-1}(a'),\;\;\;u=u'\circ \vartheta_{a},$$
where $\vartheta_{a}=\Psi'\circ \Psi^{-1}|_{a}\in Diff^{+}(\Sigma)$ is a family of diffeomorphisms.
On the other hand, in terms of the two coordinate system we have   $$\|h\|_{j_a,k,2}^{2}=\|h\|_{j_{a'},k,2}^{2}.$$
Then
$$\frac{\p}{\p a'}\|h\|_{j_{a'},k,2}^{2}=\frac{\p}{\p a'}\|h\|_{j_{a},k,2}^{2}=\frac{\p}{\p a}\|h\|_{j_{a},k,2}^{2}\frac{\p a}{\p a'}.$$
By Lemma \ref{local coordinates-3}, $\psi'\circ \psi^{-1}$ is smooth. So $\|h\|_{j_{a'},k,2}^{2}\in C^1$. By the same argument we conclude that $\|h\|_{j_{a'},k,2}^{2}$ is smooth. $\Box$

\begin{lemma}\label{lem_orbi_T} There exist two constants $\delta_{o},\rho_{o}>0$ depend only on $b_{o}$ such that for any $\delta<\delta_{o},\rho<\rho_{o}$ the following hold.
 \begin{itemize}
\item[(1)] For any $p\in \widetilde{\mathbf{O}}_{b_{o}}(\delta,\rho)$, let $G_{p}$ be the isotropy group at $p$, then $im(G_{p})$ is a subgroup of $G_{b_o}$.
\item[(2)] Let $p\in \widetilde{\mathbf{O}}_{b_{o}}(\delta,\rho)$
be an arbitrary point with isotropy group $G_p$, then there is a $G_p$-invariant neighborhood $O(p)\subset \widetilde{\mathbf{O}}_{b_{o}}(\delta,\rho)$ such that for any $q\in O(p)$, $im(G_{q})$ is a subgroup of $G_{p}$, where $G_p$, $G_q$ denotes the isotropy groups at $p$ and $q$ respectively.
\end{itemize}
\end{lemma}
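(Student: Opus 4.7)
The plan is to exploit the finiteness of the isotropy group $\mathbf{G}_{a_o}$ (finite because $\mathcal{M}_{g,n}$ is an effective orbifold) together with the already-established embedding $im(\mathbf{G}_{a})\hookrightarrow \mathbf{G}_{a_o}$ for $a$ close to $a_o$, and then to upgrade the latter embedding (which only records how $\phi$ acts on $(j,\mathbf{y})$) to one that also respects the map $u$.

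For part (1), given $p=(a,v)$ with $v=\exp_{u}h$, the group $G_p$ is by definition a subgroup of $\mathbf{G}_{a}$, so it already embeds into $\mathbf{G}_{a_o}$; what remains is to show that the image lies in the further subgroup $G_{b_o}\subset\mathbf{G}_{a_o}$. The key observation is that $\mathbf{G}_{a_o}$ is finite, so
$$ c := \min\bigl\{\,\|\phi^{*}u - u\|_{j_o,k,2}\;:\;\phi\in \mathbf{G}_{a_o}\setminus G_{b_o}\bigr\} \;>\; 0. $$
Now fix $\phi\in G_p$ and let $\tilde\phi := im(\phi)\in \mathbf{G}_{a_o}$. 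The two diffeomorphisms differ by a change-of-slice element that is close to the identity once $d_{\mathbf{A}}(a_o,a)$ is small. Using $Diff^+(\Sigma)$-invariance of $\|\cdot\|_{j,k,2}$ together with Lemma \ref{smoothness of norms}, the identity $\phi^{*}v=v$ and the bound $\|h\|_{j_a,k,2}<\rho$ translate into an estimate of the shape
$$ \|\tilde\phi^{*}u - u\|_{j_o,k,2} \;\le\; C(\delta,\rho), $$
with $C(\delta,\rho)\to 0$ as $(\delta,\rho)\to 0$. Choosing $\delta_o,\rho_o$ so small that $C(\delta_o,\rho_o)<c$ forces $\tilde\phi\in G_{b_o}$, which is the inclusion $im(G_p)\subset G_{b_o}$.

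For part (2), once (1) is established $G_p$ is a subgroup of the finite group $G_{b_o}$, hence itself finite. I would then repeat the argument of part (1) with $p$ in the role of $b_o$, working inside the existing neighborhood $\widetilde{\mathbf{O}}_{b_{o}}(\delta,\rho)$: set
$$ c_p := \min\bigl\{\,\|\phi^{*}v - v\|_{j_a,k,2}\;:\;\phi\in G_{b_o}\setminus G_p\bigr\} \;>\; 0, $$
and take $O(p)$ to be a metric ball around $p$ (in the product $d_{\mathbf{A}}$-and-$\|\cdot\|_{j_a,k,2}$ sense) on which the analogous closeness estimate is strictly smaller than $c_p$. Because $G_p\subset Diff^+(\Sigma)$ and both $d_{\mathbf{A}}$ and the $\|h\|_{j_a,k,2}$ norm are $Diff^+(\Sigma)$-invariant, this ball is automatically $G_p$-invariant, which gives the required $G_p$-invariant neighborhood.

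The main obstacle is bookkeeping rather than conceptual: one must realise the abstract embedding $\phi\mapsto im(\phi)$ concretely enough to convert the exact equality $\phi^{*}v=v$ into the quantitative estimate $\|\tilde\phi^{*}u-u\|_{j_o,k,2}\le C(\delta,\rho)$ with $C(\delta,\rho)\to 0$. This rests on continuous dependence of the local slice $\iota$ of Lemma \ref{slice}, of the change-of-coordinates diffeomorphism that conjugates $\mathbf{G}_{a}$ into $\mathbf{G}_{a_o}$, and of the bundle isomorphism $P_{b_o,b}$ on the size parameters $(\delta,\rho)$, combined with the $Diff^+(\Sigma)$-invariance built into the construction of $\widetilde{\mathbf{O}}_{b_{o}}(\delta,\rho)$.
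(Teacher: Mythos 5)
Your argument is sound and rests on exactly the same mechanism as the paper's proof: the isotropy group of the domain is finite (orbifold structure of $\mathcal{M}_{g,n}$ gives $im(G_p)\subset\mathbf{G}_{a}\hookrightarrow\mathbf{G}_{a_o}$), and an element whose action on the map is forced arbitrarily close to trivial must act trivially. The only difference is packaging: the paper argues by contradiction with a sequence $b_i\to p$, extracts a constant subgroup $im(G_b)$ by finiteness, and passes to the limit to get $\|g\cdot\hat u-\hat u\|_{k,2}=0$, whereas you run the same idea directly with an explicit spectral-gap constant $c>0$ and a quantitative estimate $\|\tilde\phi^*u-u\|\le C(\delta,\rho)\to 0$. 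The sequential version buys you the luxury of never writing down $C(\delta,\rho)$; your version has to justify that estimate, which is the "bookkeeping" you flag and is indeed where the continuity of the slice and of $im(\cdot)$ enters. One genuine imprecision in your part (2): the gap constant should not be taken over $G_{b_o}\setminus G_p$. Since $G_{b_o}\subset\mathbf{G}_{a_o}$ while $G_p\subset\mathbf{G}_{a}$, an element $\phi\in G_{b_o}$ may fix $v$ without fixing $a$, in which case your $c_p=\min\|\phi^*v-v\|$ vanishes and the ball argument is vacuous. The correct set to minimize over is $\mathbf{G}_{a}\setminus G_p$ (into which $im(G_q)$ lands for $q$ near $p$, by the orbifold structure); every element there fixes $a$, so failing to lie in $G_p$ is equivalent to $\phi^*v\neq v$ and the minimum is positive. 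With that correction the proof goes through.
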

\begin{proof} We only prove (2), the proof of (1) is similar. If (2) is not true, there exists a point $p=(a,\hat u)\in \widetilde{\mathbf{O}}_{b_{o}}(\delta,\rho)$ and a sequence $b_i =( a_i,u_i)\in \widetilde{\mathbf{O}}_{p}(\delta_i, \rho_i)$ such that
\v
{(a)} $\delta_i\to 0$, $\rho_i\to 0$,
\v
{(b)} $im(G_{b_i})$ is not a subgroup of $G_{p}$.
\v\n
By (a) we have
\begin{equation}\label{eqn_a_16}
 \lim_{i\to \infty} a_{i}=a,\;\;\; \lim_{i\to \infty} \|h_{i}\|_{k,2}=0,
\end{equation}
where $u_{i}=\exp_{\hat u}(h_{i})$. Since $\mathcal M_{g,n}$ is an orbifold, by \eqref{eqn_a_16} we have $im(\mathbf G_{a_{i}})$   is a subgroup of $\mathbf{G}_{a}$. Obviously, $G_{b_{i}}$  is a subgroup of   $\mathbf G_{a_{i}}$. Then $im(G_{b_i})$ is a subgroup of $\mathbf{G}_{a}$.
   Since $\mathbf{G}_{a}$ is a finite group, by choosing subsequence we may assume that   $im(G_{b_i})$ is a fixed subgroup of $\mathbf{G}_{a}$ independent of $i$, denoted by $im(G_{b})$. By (b), $im(G_{b})$ is not a subgroup of $G_{p}$.
On the other hand, by \eqref{eqn_a_16}
 $ G_{b_i} \cdot u_i$ converges to $ G_{b} \cdot \hat u$ and $u_i$ converges to $\hat u$ in $W^{k,2}$.
  Hence for any $g\in  G_{b} $, we have
  $$
  \|g\cdot \hat u-\hat u \|_{k,2}=0.
  $$
It follows that $im(G_{b})$ is a subgroup of $G_{p}$. We get a contradiction.
\end{proof}
\v
\begin{remark} It is easy to see from the proof that the Lemma \ref{lem_orbi_T} also hold for lower stratum.
\end{remark}

\section{\bf Local regularization-Lower strata}\label{local_regularization-lower strata}

\subsection{\bf Local regularization for lower stratum : without bubble tree}\label{without bubble tree}

Let $\Sigma$, $\stackrel{\circ}\Sigma$, $\Sigma_i$ be as in \S\ref{gluing}. We choose local plumbing coordinates $(\fs,\ft)$ and construct $\Sigma_{\fs,\ft}\to \Delta_{\fs}\times \Delta_{\ft}$.
Consider the family of Bananch manifold
$$
 \widetilde{\mathcal B}(\fs,\ft)=\{u\in   W^{k,2,\alpha}( \Sigma_{\fs,\ft},M)|\;u_{*}([\Sigma])=A\}.
 $$
Let $\widetilde{\mathcal E}(\fs,\ft)$ be the infinite dimensional Banach bundle over $\widetilde{\mathcal B}(\fs,\ft)$ whose fiber at $b=(\fs,\ft,u)$ is
$W^{k-1,2,\alpha}(\Sigma_{\fs,\ft}, u^{*}TM\otimes\wedge^{0,1}_{j_{\fs,\ft}}T^{*}\Sigma_{\fs,\ft}).$
We have a continuous family of Fredholm system
$$\left(\widetilde{\mathcal{B}}(\fs,\ft),\widetilde{\mathcal E}(\fs,\ft),\bar{\p}_{j_{\fs,\ft},J}\right)$$
parameterized by $(\fs,\ft)\in \Delta_{\fs}\times \Delta_{\ft}$. Let $b_o=(0,0,u)$, $b=(\fs,\ft,v)$. We use the same method as in subsection \S\ref{Local regularization for the top strata} to choose $\widetilde{K}_{b_{o}}=\bigoplus_{i=1}^{\iota}\widetilde{K}_{b_{oi}} \subset \widetilde{\E}|_{b_{o}}=\bigoplus_{i=1}^{\iota}\widetilde{\E}_{b_{oi}}$ to be a finite dimensional subspace such that
\begin{itemize}
\item[(1)]  Every member of $\widetilde{K}_{b_{oi}}$ is in
$C^{\infty}\left(\Sigma_{i,0}, u^{*}_{i}TM\otimes\wedge^{0,1}_{j_{oi}}T^{*} \Sigma_{i, 0}\right)$ and supports in the compact subset $\Sigma_{0,0}(R_0) $ of $\Sigma_{0,0}$.
\item[(2)] $\widetilde{K}_{b_{oi}} + image D_{b_{oi}} = \widetilde{\E}|_{b_{oi}},\;\;\;\forall i=1,2,...,\iota.$
\item[(3)] $\widetilde{K}_{b_{oi}}$ is $G_{b_{oi}}$-invariant.
\end{itemize}
where we denote by $j_{oi}$ the complex structure on $\Sigma_i$ associated with $(0, 0)$, and
\begin{equation}\label{eqn_sig_R}
W(R_{0}):=\cup_{l=1}^{\mathfrak{e}} (\{  |z_{l}|< e^{-R_{0}}\}\cup \{ |w_{l}|< e^{-R_{0}}\})\cup \mathbf D(e^{-R_{0}}),\;\;\;\;\;\;\Sigma_{\fs,\ft}(R_{0})=\Sigma_{\fs,\ft}\setminus W(R_{0}) .
\end{equation}
for a constant $R_{0}>1$. We identify each $\Sigma_{\fs,\ft}(R_0)$ with $\Sigma_{0,0}(R_0):=\Sigma(R_0)$ for $|\fs|,|\ft|$ small. Denote by $j_{\fs,\ft}$ the family of complex structure on $\Sigma(R_0)$. Denote $j_o:=j_{0,0}$.
Then when $|H|$ small
$$\Psi_{j_{o},j_{\fs,\ft}}: W^{k-1,2,\alpha}(\Sigma(R_0), u^{*}TM\otimes \wedge^{0,1}_{j_{o}}T^{*}\Sigma(R_0))\to W^{k-1,2,\alpha}(\Sigma(R_0), u^{*}TM\otimes \wedge^{0,1}_{j_{\fs,\ft}}T^{*}\Sigma(R_0))$$
is an isomorphism. Let $P_{b_{o}, b}=\Phi\circ\Psi_{j_o,j_{\fs,\ft}}$.
We fix a smooth cutoff
function $\beta_{R_{0}}: {\mathbb{R}}\rightarrow [0,1]$ such that
\begin{equation}\label{def_beta1}
\beta_{R_{0}} (s)=\left\{
\begin{array}{ll}
0 & if\;\; |s| \geq R_0 \\
1 & if\;\; |s| \leq R_0-1.
\end{array}
\right.
\end{equation}
\begin{lemma}\label{isomorphism-1} Let
$\bar{\mathcal E}(\fs,\ft)$ be the infinite dimensional Banach bundle over $\widetilde{\mathcal B}(\fs,\ft)$ whose fiber at $b=(\fs,\ft,u)$ is
$$\bar{\mathcal E}_{(\fs,\ft,u)}:=\{\beta_{R_{0}}(s)\eta\;| \;\eta\in \widetilde{\mathcal E}_{(\fs,\ft,u)}\}.$$
Then there are constants $\delta>0$, $\rho>0$ depending on $b_o$ such that there is an isomorphism
$$P_{b_{o}, b}:\bar{\E}_{b_o}\rightarrow \bar{\E}_{b}\;\;\;\forall \;b\in \widetilde{\mathbf{O}}_ {b_{o}}(\delta,\rho).$$
\end{lemma}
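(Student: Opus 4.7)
The plan is to reduce the proof to the setting of Lemma \ref{isomorphism} by exploiting the fact that the cutoff $\beta_{R_0}$ forces every element of $\bar{\E}_{(\fs,\ft,v)}$ to be supported on the compact piece $\Sigma_{\fs,\ft}(R_0)$, which (by the definition \eqref{eqn_sig_R} and the identification $\Sigma_{\fs,\ft}(R_0)\cong \Sigma(R_0)$ for small $|\fs|,|\ft|$) is independent of the plumbing parameters. Thus the family of bundles $\bar{\E}$ is really a family of subspaces of sections over a fixed compact surface with boundary, and on this compact piece the weighted $W^{k-1,2,\alpha}$ norm is uniformly equivalent to the ordinary $W^{k-1,2}$ norm.

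I would first construct the bundle isomorphism $\Phi:u^{\ast}TM\to v^{\ast}TM$ by parallel transport with respect to $\widetilde{\nabla}$ along the geodesic $s\mapsto \exp_u(sh)$, exactly as in the proof of Lemma \ref{isomorphism}. This is well-defined as soon as $\|h\|_{L^{\infty}}<\mathrm{inj}_M$, a condition guaranteed by taking $\rho$ small via Sobolev embedding for $W^{k,2,\alpha}$ with $k>4$. Next, since the slice $(\fs,\ft)\mapsto j_{\fs,\ft}$ is smooth with $j_{0,0}=j_o$, on the compact piece $\Sigma(R_0)$ one can write $j_{\fs,\ft}=(I+H)j_o(I+H)^{-1}$ with $|H|$ as small as desired by choosing $\delta$ small. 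The Claim recalled from \cite{LS-1} then gives that
\[
\Psi_{j_o,j_{\fs,\ft}}:u^{\ast}TM\otimes \wedge^{0,1}_{j_o}T^{\ast}\Sigma(R_0)\to u^{\ast}TM\otimes \wedge^{0,1}_{j_{\fs,\ft}}T^{\ast}\Sigma(R_0)
\]
is a pointwise bundle isomorphism on $\Sigma(R_0)$, and since $\beta_{R_0}$ vanishes outside $\Sigma(R_0)$ it extends trivially by zero to an isomorphism of the full cutoff sections.

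Composing, $P_{b_o,b}:=\Phi\circ \Psi_{j_o,j_{\fs,\ft}}$ sends a cutoff section $\beta_{R_0}\eta$ at $b_o$ to $\Phi(\beta_{R_0}\Psi_{j_o,j_{\fs,\ft}}\eta)=\beta_{R_0}\,P_{b_o,b}\eta$, which is again supported in $\Sigma(R_0)$ and hence lies in $\bar{\E}_b$. The inverse is given, on the compact piece $\Sigma(R_0)$, by $\Psi_{j_{\fs,\ft},j_o}\circ \Phi^{-1}$; the bounds on $\|P_{b_o,b}\|$ and $\|P_{b_o,b}^{-1}\|$ in the $W^{k-1,2,\alpha}$ norm follow from the uniform smoothness of the slice and the uniform boundedness of the parallel transport $\Phi$, using that $\beta_{R_0}$ and its derivatives are uniformly bounded and compactly supported.

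The main point that needs care, and which I regard as the principal obstacle, is to verify that the construction is compatible with the weighted Sobolev norm $W^{k-1,2,\alpha}$ on the varying surface $\Sigma_{\fs,\ft}$ (whose geometry near the nodes depends strongly on $\ft$), but the cutoff $\beta_{R_0}$ localizes everything to the fixed compact region $\Sigma(R_0)$ where the metric $\mathbf{g}^{\diamond}_{\fs,\ft}$ depends smoothly on $(\fs,\ft)$ and the weight $e^{\alpha|s|}$ is bounded above and below by constants depending only on $R_0$. Thus the argument of Lemma \ref{isomorphism} transports verbatim onto $\Sigma(R_0)$, and the desired $\delta,\rho$ are produced by choosing them small enough to meet (i) $\|h\|_{L^{\infty}}<\mathrm{inj}_M$ and (ii) $|H|$ small enough for the Claim to apply uniformly over $\widetilde{\mathbf{O}}_{b_o}(\delta,\rho)$. $\Box$
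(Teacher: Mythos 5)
Your proposal is correct and follows essentially the same route as the paper: the paper's proof of Lemma \ref{isomorphism-1} consists of the single remark that it "is the same as in Lemma \ref{isomorphism}," with $P_{b_o,b}=\Phi\circ\Psi_{j_o,j_{\fs,\ft}}$ already defined on $\Sigma(R_0)$ just before the statement. You have merely written out the details the paper leaves implicit — that the cutoff $\beta_{R_0}$ localizes all sections to the fixed compact piece $\Sigma(R_0)\cong\Sigma_{\fs,\ft}(R_0)$, where the weight $e^{\alpha|s|}$ is uniformly bounded and the argument of Lemma \ref{isomorphism} applies verbatim.
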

\v\n
The proof is the same as in Lemma \ref{isomorphism}.
\v
Now we define a thickned Fredholm system $(\widetilde{K}_{b_o}\times \widetilde{\mathbf{O}}_{b_{o}}(\delta,\rho), \widetilde{K}_{b_o}\times \widetilde{\E}|_{\widetilde{\mathbf{O}}_{b_{o}}(\delta,\rho)}, S)$.
Let $(\kappa, b)\in \widetilde{K}_{b_o}\times \widetilde{\mathbf{O}}_{b_{o}}(\delta,\rho)$, $b=(a,v)\in \widetilde{\mathbf{O}}_{b_{o}}(\delta,\rho)$. Define
\begin{equation}\label{local regu}
S(\kappa,b) = \bar{\partial}_{j,J}u + P_{b_o,b}\kappa.
\end{equation}
We can choose $(\delta, \rho)$ small such that
the linearized operator $DS_{(\kappa,b)}$ is surjective for any $b\in \widetilde{\mathbf{O}}_{b_{o}}(\delta,\rho)$.

\subsection{\bf Local regularization for lower stratum : with bubble tree}\label{with bubble tree}
\v

{\bf A-G-F procedure.} We introduce the A-G-F procedure.
\v
Consider a strata $\mathcal{M}^{\Gamma}$ of $\overline{\mathcal{M}}_{g,n}(A)$. Let
$b_{o}=[(\Sigma,j,\mathbf y,u)]\in \mathcal{M}^{\Gamma}$.
Then $(\Sigma, j,\mathbf y)$ is a marked nodal Riemann surface. Suppose that $\Sigma$
has a principal part $\Sigma^P$ and some bubble tree $\Sigma^B$ attaching to $\Sigma^{P}$ at $q$. Let $u=(u_{1},u_{2})$ where $u_{1}:\Sigma^{P}\to M$ and $u_{2}:S^{2}\to M$ are $J$-holomorphic maps. We consider the simple case $\Sigma^{B}=(S^2,q)$ with $[u_2(S^2)]\ne 0,$ the general cases are similar. Denote $b_{oo}:=(S^{2},q,u_2)$,
$$
\widetilde O_{b_{oo}}(\rho_{o})=\left\{ v\in \mathcal W^{k,2,\alpha}((S^2,q),u_2^*TM)| \|h\|_{k,2,\alpha} \leq \rho_{o},\mbox{ where } v=\exp_{u_2}(h)\right\}.
$$
$$
O_{b_{oo}}(\rho_{o})=\widetilde O_{b_{oo}}(\rho_{o})/G_{b_{oo}}
$$
where $G_{b_{oo}}=\{\phi\in Diff(S^2)\mid \phi^{-1}(q)=q, \; u_2\circ\phi=u_2\}$ is the isotropy group at $b_{oo}$.
\v

We can choose a local smooth codimension-two submanifold $Y$ such that $u_2(S^2)$ and $Y$ transversally intersects, and $u_2^{-1}(Y)=\mathbf{x}=(x_1,...,x_{\ell})$ ( see \cite{TF17} and \cite{Par}). We add  these intersection points as marked points to $S^{2}$ such that $S^2$ is stable. Denote the Riemann surface by $(S^{2},q,\mathbf{x})$. We may choose $\rho_{o}$ such that for any $(S^{2},q,v)\in O_{b_{oo}}(\rho_{o})$,  $v(S^2)$ and $Y$ transversally intersects, and  $v^{-1}(Y)$ has $\ell$ points.
Denote
$$
\widetilde O_{\hat b_{oo}}(1+\ell,\rho_{o})=\left\{(S^{2},q,\mathbf{x},v)|v(\mathbf{x})\in Y,\;v\in \widetilde O_{b_{oo}}(\rho_{o})\right\}.
$$
 Note that the additional marked points are unordered, so we consider the space
$$\widetilde O_{\hat b_{oo}}(1\mid\ell,\rho_{o}) Â£Âº=\widetilde O_{\hat b_{oo}}(1+\ell,\rho_{o})/Sy(\ell)$$
where $Sy(\ell)$ denotes the symmetric group of order $\ell$.  Denote $\hat b_{oo}:=(S^{2},q\mid \mathbf{x},u_2),$
where the points after $``\mid"$ are unordered.
Denote
$$G_{\hat{b}_{oo}}=\left\{\phi\in Diff(S^2)\mid \phi^{-1}(q)=q,\; u_2\circ\phi=u_2,\; \phi^{-1}\{x_1,...,x_{\ell}\}=\{x_1,...,x_{\ell}\}\right\}.$$
For any $\phi\in G_{b_{oo}}$, since $u_2\circ\phi=u_2$, we have $\phi^{-1}\{x_1,...,x_{\ell}\}=\{x_1,...,x_{\ell}\}$.
Then the following lemma holds.
\begin{lemma}\label{isotropy group}
$G_{b_{oo}}=G_{\hat b_{oo}}.$
\end{lemma}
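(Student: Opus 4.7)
The statement is a double inclusion of finite groups of diffeomorphisms, and both inclusions are essentially definitional once one uses the characterization $\mathbf{x} = u_2^{-1}(Y)$. My plan is to handle the two inclusions separately, since neither involves any analytic input beyond this set-theoretic identity.

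For the inclusion $G_{\hat b_{oo}} \subseteq G_{b_{oo}}$, the argument is immediate: the defining conditions of $G_{\hat b_{oo}}$ literally include the two conditions $\phi^{-1}(q)=q$ and $u_2\circ\phi = u_2$ that characterize $G_{b_{oo}}$, plus the extra stabilization condition $\phi^{-1}\{x_1,\dots,x_\ell\} = \{x_1,\dots,x_\ell\}$. Hence any $\phi \in G_{\hat b_{oo}}$ already lies in $G_{b_{oo}}$.

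For the reverse inclusion $G_{b_{oo}} \subseteq G_{\hat b_{oo}}$, I would take $\phi \in G_{b_{oo}}$ and verify the extra condition. Since $\mathbf{x} = u_2^{-1}(Y)$ by construction and $u_2 \circ \phi = u_2$ by assumption, one computes
\[
\phi^{-1}\{x_1,\dots,x_\ell\} \;=\; \phi^{-1}\bigl(u_2^{-1}(Y)\bigr) \;=\; (u_2\circ\phi)^{-1}(Y) \;=\; u_2^{-1}(Y) \;=\; \{x_1,\dots,x_\ell\}
\]
as unordered subsets of $S^2$. This is precisely the stabilization condition; combined with $\phi^{-1}(q)=q$ and $u_2\circ\phi = u_2$ (both inherited from $\phi \in G_{b_{oo}}$), it places $\phi$ in $G_{\hat b_{oo}}$. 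This is exactly the remark the paper already records just before the lemma statement.

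There is no real obstacle here: the lemma is a packaging result that legitimizes passing from the unmarked isotropy group to the marked one without enlarging or shrinking it, and its content is entirely in the observation that the added marked points $\mathbf{x}$ are canonically determined by $u_2$ (as a fiber over the transversal slice $Y$), so any symmetry of $u_2$ automatically permutes them. The role of the lemma, rather than its proof, is the substantive point: it ensures that adding these auxiliary marked points to stabilize $S^2$ does not change the group with respect to which one must take equivariant regularizations.
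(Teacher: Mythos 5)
Your proof is correct and coincides with the paper's own argument: the inclusion $G_{\hat b_{oo}}\subseteq G_{b_{oo}}$ is definitional, and the reverse inclusion follows from the computation $\phi^{-1}\{x_1,\dots,x_\ell\}=\phi^{-1}(u_2^{-1}(Y))=(u_2\circ\phi)^{-1}(Y)=u_2^{-1}(Y)$, which is precisely the one-line observation the paper records immediately before the lemma. Nothing further is needed.
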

\v
Let $\tilde{b}_{oo}:=(S^{2},q,\mathbf{x},u_2)$ be a representive of
$\hat b_{oo}:=(S^{2},q\mid\mathbf{x},u_2),$ where $\mathbf{x} =(x_1,...,x_{\ell})$ is an ordered set. We choose cusp coordinates $z$ on $\Sigma^{P}$ and  $w$ on $S^{2}$ near $q$. We can construct a metric $\mathbf {g}$ on $(S^{2},q,\mathbf{x})$ as in section \S\ref{metric on surfaces} such that  $\mathbf g^{\diamond}$  is the standard cylinder  metric near marked points and nodal points.  Put $\Sigma_1=\Sigma^P$, $\Sigma_2=S^2$, $b_o=(b_{o1}, b_{o2})$. Let $G_{b_{oi}}$ be the isotropy group at $b_{oi}.$
Denote $\tilde{b}_o=(\tilde{b}_{o1},\tilde{b}_{o2})$, where $\tilde{b}_{o1}$ is a lift of $b_{o1}$ to the uniformization system, and $\tilde{b}_{o2}:=\tilde{b}_{oo}.$  Note that the cusp coordinates $z$ and $w$ are unique modulo rotations near nodal point $q$ and the metric $\mathbf{g}$ on $\Sigma^{P}$ is  $G_{b_{1}}$-invariant
   and  $\mathbf{g}$ on  $(S^{2},q,\mathbf{x})$ is $G_{b_{2}}$-invariant.  In the coordinates $z,w$   for any  $\phi_{i}\in G_{b_{i}}$,  $$\phi_{1}(z)=e^{-\sqrt{-1} \gamma_{1}}z,\;\;\;\phi_{2}(w)=e^{-\sqrt{-1} \gamma_{2}}w.$$ By the finitness of  $G_{b_{i}}$, we have $\gamma_{i}=\frac{2j_{i}\pi}{l_{i}}$ where $j_{i}<l_{i},j_{i},l_{i}\in \mathbb Z,i=1,2.$
\v
We choose $$\widetilde{K}_{b_{o}}=\bigoplus_{i=1}^{2}\widetilde{K}_{b_{oi}} \subset \widetilde{\E}|_{b_{o}}=\bigoplus_{i=1}^{2}\widetilde{\E}_{b_{oi}}$$ to be a finite dimensional subspace satisfying (1), (2) and (3) in \S\ref{without bubble tree}.
\v
Then we glue $\tilde{b}_{o1}$ and $\tilde{b}_{o2}$ at $q$ with gluing parameters $(r^*, \tau^*)$ in the coordinates $z$, $w$ to get representives of $\hat{p}^*:=(\Sigma_{(r^*)}, \mathbf{y}\mid \mathbf{x})$ and pregluing map $\hat{u}_{(r^*)}$. Let $\hat{b}^*_{o}=(\hat{p}^*, \hat{u}_{(r^*)})$, denote by $G_{\hat{b}^*_{o}}$ the  isotropy group at $\hat{b}^*_{o}$. Now we forget Y and the additional marked points $\mathbf{x}$. We get a element $\Sigma^*:=\Sigma_{(r^*)}$, which is a point $p^*=(\Sigma_{(r^*)},\mathbf{y})\in \overline{\mathcal{M}}_{g,n}$. Let $b^*_o=(p^*, u_{(r^*)})$, denote by $\mathbf{G}_{p^*}$ and $G_{b^*_o}$ the isotropy groups at $p^*$ and $b^*_o$ respectively.
The following lemma is obvious.
\begin{lemma}\label{isotropy group-1}
$G_{\hat{b}^*_{o}}=G_{b^*_{o}}.$
\end{lemma}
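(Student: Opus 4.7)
The plan is to prove the two inclusions of the equality $G_{\hat b^*_o}=G_{b^*_o}$ separately, and the content will be entirely parallel to the argument already used in Lemma \ref{isotropy group}.

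First I would treat the inclusion $G_{\hat b^*_o}\subset G_{b^*_o}$, which is essentially tautological: any $\phi\in G_{\hat b^*_o}$ is a biholomorphism of $\Sigma^{*}=\Sigma_{(r^*)}$ preserving $\mathbf{y}$, preserving the unordered tuple $\mathbf{x}$ and satisfying $u_{(r^*)}\circ\phi = u_{(r^*)}$; dropping the condition that $\phi$ permutes $\mathbf{x}$ immediately produces an element of $G_{b^*_o}$. No work is required beyond unwinding the definitions.

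The substantive direction is $G_{b^*_o}\subset G_{\hat b^*_o}$. I would take $\phi\in G_{b^*_o}$ and show that $\phi$ permutes $\{x_1,\dots,x_\ell\}$, using the same mechanism as Lemma \ref{isotropy group}. Since $u_{(r^*)}\circ\phi=u_{(r^*)}$, for each $i$ one has
$$u_{(r^*)}(\phi(x_i))=u_{(r^*)}(x_i)=u_2(x_i)\in Y,$$
hence $\phi(x_i)\in u_{(r^*)}^{-1}(Y)$. The whole argument then reduces to the identity
$$u_{(r^*)}^{-1}(Y)=\{x_1,\dots,x_\ell\},$$
after which $\phi$ automatically permutes $\mathbf{x}$ and therefore lies in $G_{\hat b^*_o}$.

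The main (and essentially only) point to verify is this last identity, which I would justify using the local nature of $Y$ together with the structure of the pregluing map. Recall that $Y$ was selected as a local codimension-two slice near $u_2(x_1)$ so that $u_2^{-1}(Y)=\mathbf{x}$, with $Y$ disjoint from $u_1(\Sigma^{P})$ and from the node value $u_2(q)$. By construction $u_{(r^*)}$ equals $u_1$ on the principal part outside the neck, equals $u_2$ on the bubble part outside the neck, and is $C^0$-close to $u_1(q)=u_2(q)$ throughout the neck region $\{\tfrac{r^*}{2}\le s_1\le\tfrac{3r^*}{2}\}$. For $r^*$ sufficiently large the neck image stays in a neighborhood of $u_2(q)$ which is disjoint from $Y$; combined with $u_1(\Sigma^P)\cap Y=\emptyset$ and $u_2^{-1}(Y)=\mathbf{x}$, this forces $u_{(r^*)}^{-1}(Y)=\mathbf{x}$ exactly. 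This is the only step requiring anything beyond formal manipulation, and it is precisely why the author declares the lemma ``obvious.''
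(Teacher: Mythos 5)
Your proof is correct and matches the paper's intent: the paper gives no argument at all for this lemma (it is declared ``obvious''), and what you write is exactly the mechanism already used for the preceding lemma $G_{b_{oo}}=G_{\hat b_{oo}}$, correctly isolating the identity $u_{(r^*)}^{-1}(Y)=\{x_1,\dots,x_\ell\}$ as the only point of substance. The one thing worth flagging is that the disjointness of $Y$ from $u_1(\Sigma^{P})$ and from the neck image is not explicitly stated in the paper's choice of $Y$ and is genuinely needed here; it must be (and, via the locality of $Y$ and the cited constructions of such divisors, can be) built into the selection of $Y$, as you assume.
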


\v
We call this procedure a {\bf A-G-F} procedure ( Adding marked points-Gluing-Forgetting Y and marked points). This procedure can be extended to bubble tree and bubble chain in an obvious way.
\v
We use the same method as in \S\ref{without bubble tree} to construct the local regularization.

\v
\section{\bf Global regularization}\label{global_r}
\v

\subsection{\bf A finite rank orbi-bundle over $\overline{\mathcal{M}}_{g,n}(A)$ }\label{finite rank orbi-bundle}
\v
By the compactness of $\overline{\mathcal{M}}_{g,n}(A)$ there exist finite points $[b_i]\in \overline{\mathcal{M}}_{g,n}(A)$, $1\leq i \leq \mathfrak{m}$, such that
\begin{itemize}
\item[(1)] The collection $\{\mathbf{O}_{[b_i]}(\delta_i/3,\rho_i/3)
\mid 1\leq i \leq \mathfrak{m}\}$ is an open cover of $ \overline{\mathcal{M}}_{g,n}(A)$.
\item[(2)] Suppose that $\widetilde{\mathbf{O}}_{b_i}(\delta_i,\rho_i)
\cap \widetilde{\mathbf{O}}_{b_j}(\delta_j,\rho_j)
\neq\phi$. For any $b\in \widetilde{\mathbf{O}}_{b_i}(\delta_i,\rho_i)
\cap \widetilde{\mathbf{O}}_{b_j}(\delta_j,\rho_j)$, $G_b$ can be imbedded into both $G_{b_i}$ and $G_{b_j}$ as subgroups.
\end{itemize}
\begin{remark}\label{finite covering}
We may choose $[b_i]$, $1\leq i \leq \mathfrak{m}$, such that if $[b_i]$ lies in the top strata for some $i$, then
$\mathbf{O}_{[b_i]}(\delta_i,\rho_i)$ lies in the top strata.
\end{remark}
\v\n
Set
$$\mathcal{U}=\bigcup_{i=1}^{\mathfrak{m}}
\mathbf{O}_{[b_{i}]}(\delta_i/2,\rho_i/2).$$
There is a forget map
$$forg: \mathcal{U}\to \overline{\mathcal{M}}_{g,n},\;\;\;[(j,{\bf y}, u)]\longmapsto [(j,{\bf y})].$$
\v
We construct a finite rank orbi-bundle $\mathbf{F}$ over
$\mathcal{U}$. The construction imitates Siebert Â¡Â¯s construction. First of all, we can slightly
deform $\omega$ to get a rational class $[\omega^*]$. By taking multiple, we can
assume that $[\omega^*]$ is an integral class on $M$. Therefore, it is the Chern class of a complex line bundle $L$ over $M$. Let $i$ be the complex structure on $L$. We choose a Hermition metric $G^L$ and the associate unitary connection $\nabla^{L}$ on $L$.
\v
Let $(\Sigma, j, {\bf y})$ be a marked nodal Riemann surface of genus $g$ with $n$ marked points.
Let $u:\Sigma\longrightarrow M$ be a $W^{k,2}$ map.
We have complex line bundle $u^*L$ over $\Sigma$ with complex structure $u^*i$.
The unitary connection $u^{*}\nabla^{L}$ splits into $ u^{*}\nabla^{L}:=u^{*}\nabla^{L,(1,0)}\oplus u^{*}\nabla^{L,(0,1)}$.
Denote
$$D^L:=u^{*}\nabla^{L,(0,1)}:W^{k,2}(\Sigma,u^{*}L)\to W^{k-1,2}(\Sigma,u^{*}L\otimes\wedge_{j}^{(0,1)}T^{\star}\Sigma).$$
$D^{L}$ takes $s\in W^{k,2}(\Sigma,u^{*}L)$ to the $\mathbb C$-antilinear part of $\nabla^{L}$, where $s$ is a section of $L$. One can check that
$$
D^{L}(f\xi)=\bar{\p}_{\Sigma}f \otimes \xi +  f\cdot D^{L}\xi.
$$
 $D^{L}$ determines a holomorphic structure on $u^*L$, for
which $D^{L}$ is an associated Cauchy-Riemann operator (see \cite{HLS,IS}).
Then $u^*L$ is a holomorphic line bundle.
\v
Let $\lambda_{(\Sigma, j)}$ be
the dualizing sheaf of meromorphic 1-form with at worst simple pole at the nodal points and for each
nodal point p, say $\Sigma_1$ and $\Sigma_2$ intersects at p,
$$Res_p(\lambda_{(\Sigma_1, j_1)}) + Res_p(\lambda_{(\Sigma_2, j_2)})=0.$$
Let $\Pi:\overline{\mathscr{C}}_{g}\to \overline{\mathcal{M}}_{g}$ be the universal curve. Let $\lambda$ be the relative dualizing sheaf over $\overline{\mathscr{C}}_{g}$, the restriction of $\lambda$ to $(\Sigma, j)$  is $\lambda_{(\Sigma, j)}$.
\v
Set $\Lambda_{(\Sigma, j)}:=\lambda_{(\Sigma, j)}\left(\sum_{i=1}^{n} y_i\right)$.
 Let $(\psi, \Psi): (O, \pi_{\mathbf{T}}^{-1}(O))\rightarrow (\mathbf{A}, \mathbf{A}\times \Sigma)$ be a local coordinate systems, where $O\subset \mathbf{T}_{g,n}$ is an open set.
 $\Lambda$ induces a line bundle over  $ \mathbf{A}\times \Sigma,$ denoted by $\widetilde{\Lambda}$.
 Then
$\widetilde{\mathbf{L}}\mid_{b}:=\mathscr{P}^{*}\widetilde{\Lambda}\otimes u^*L $ is a holomorphic line bundle over $\Sigma$, where $\mathscr{P}$ denote the forgetful map. We have a Cauchy-Riemann operator $  \bar{\p}_b.$
Then $H^0(\Sigma, \widetilde{\mathbf{L}}\mid_{b})$ is the $ker \bar{\p}_b$. Here the $\bar{\p}$-operator depends on the complex structure $j$ on $\Sigma$ and the bundle $u^*L$, so we denote it by $\bar{\p}_b$.
\v
If $\Sigma_\nu$
is not a ghost component, there exist a constant $\hbar_{o}>0$ such that
$$\int_{u(\Sigma_\nu)}\omega^*> \hbar_{o} .$$ Therefore, $c_1(u^*L)(\Sigma_\nu)>0$. For ghost component $\Sigma_\nu$, $\lambda_{\Sigma_\nu}\left(\sum_{i=1}^{n} y_i\right)$ is positive. So
for any $b=(a,v)\in \widetilde{\mathbf{O}}_{b_{o}}(\delta,\rho)$
by taking the higher power of $\widetilde{\mathbf{L}}\mid_{b}$, if necessary, we can assume that
$\widetilde{\mathbf{L}}\mid_{b}$ is very ample. Hence, $H^1(\Sigma, \widetilde{\mathbf{L}}\mid_{b})= 0$. Therefore,
$H^0(\Sigma, \widetilde{\mathbf{L}}\mid_{b})$ is of constant rank ( independent of $b\in \widetilde{\mathbf{O}}_{b_{o}}(\delta,\rho)$ ). We have a finite rank bundle $\widetilde{F}$ over $\widetilde{\mathbf{O}}_{b_{o}}(\delta,\rho)$,
whose fiber at $b=(j,{\bf y},v)\in\widetilde{\mathbf{O}}_{b_{o}}(\delta,\rho)$
is $H^0(\Sigma, \widetilde{\mathbf{L}}\mid_{b})$.
The finite group $G_{b}$ acts on the bundle on $\widetilde{F}\mid_{b}$ in a natural way.
\v
\begin{lemma}\label{Transformation for F}
For any $\varphi\in Diff^+(\Sigma)$ denote
	$$ b'=(j',{\bf y}',u') =\varphi\cdot (j,{\bf y},u)=(\varphi^*j, \varphi^{-1}{\bf y}, \varphi^{*}u).$$
	Then the following hold
\v {\bf (a).} $\mathbf{L}|_{b'}=\varphi^{*}\mathbf{L}|_{b},$\;\;\; $(u')^{*}i=\varphi^{*}(u^{*}i)$
\v {\bf (b).}
$D^{\mathbf L}|_{b'}(\varphi^{*}\xi)=\varphi^{*}(D^{\mathbf L}|_{b}(\xi)).$
\end{lemma}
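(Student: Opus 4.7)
The plan is to reduce everything to the tautological observation that the map
$\varphi:(\Sigma,j')\to (\Sigma,j)$ is a biholomorphism. Indeed, by definition
$j'=\varphi^{*}j=(d\varphi)^{-1}\,j\,(d\varphi)$, which is equivalent to
$j\,d\varphi=d\varphi\,j'$, so $d\varphi$ is complex linear from $(T\Sigma,j')$ to
$(T\Sigma,j)$. Thus $\varphi$ is holomorphic as a map between Riemann surfaces with
two different complex structures on the same underlying smooth surface. Once this
is in hand, the proof of (a) and (b) is essentially naturality of pullback.

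For part (a), I would treat the two factors of $\mathbf{L}|_{b}=\mathscr{P}^{*}\widetilde{\Lambda}\otimes u^{*}L$
separately. First, by functoriality of pullback of bundles,
$(u')^{*}L=(u\circ\varphi)^{*}L=\varphi^{*}(u^{*}L)$ as complex line bundles,
and the induced complex structures satisfy $(u')^{*}i=\varphi^{*}(u^{*}i)$
automatically. Second, because $\varphi:(\Sigma,j')\to(\Sigma,j)$ is biholomorphic,
it pulls meromorphic $1$-forms with at worst simple poles at nodes to the same
class, so $\varphi^{*}\lambda_{(\Sigma,j)}=\lambda_{(\Sigma,j')}$; combined with
the fact that $\varphi^{-1}({\bf y})={\bf y}'$, this yields
$\varphi^{*}\Lambda_{(\Sigma,j)}=\Lambda_{(\Sigma,j')}$. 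Tensoring the two
identities gives
$\mathbf{L}|_{b'}=\Lambda_{(\Sigma,j')}\otimes(u')^{*}L
=\varphi^{*}\Lambda_{(\Sigma,j)}\otimes\varphi^{*}(u^{*}L)=\varphi^{*}\mathbf{L}|_{b}$,
which is (a).

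For part (b), the point is that the unitary connection pulls back naturally:
$(u')^{*}\nabla^{L}=(u\circ\varphi)^{*}\nabla^{L}=\varphi^{*}(u^{*}\nabla^{L})$.
Since $\varphi$ is $(j',j)$-holomorphic, $\varphi^{*}$ sends forms of type $(0,1)$
with respect to $j$ to forms of type $(0,1)$ with respect to $j'$, so the
projection onto the $(0,1)$-part commutes with $\varphi^{*}$. Hence for any
section $\xi$ of $\mathbf{L}|_{b}$,
$D^{\mathbf{L}}|_{b'}(\varphi^{*}\xi)=\bigl((u')^{*}\nabla^{L}\bigr)^{(0,1)_{j'}}(\varphi^{*}\xi)
=\varphi^{*}\bigl((u^{*}\nabla^{L})^{(0,1)_{j}}\xi\bigr)=\varphi^{*}(D^{\mathbf{L}}|_{b}\xi)$.
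The corresponding statement for the $\mathscr{P}^{*}\widetilde{\Lambda}$ factor
is the classical fact that the $\bar\partial$-operator on meromorphic sections
commutes with pullback by biholomorphisms, and it extends by Leibniz to the
tensor product.

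The only mildly delicate step is the one about the dualizing sheaf at nodes:
one must verify that the residue condition $\mathrm{Res}_{p}\lambda_{(\Sigma_{1},j_{1})}+\mathrm{Res}_{p}\lambda_{(\Sigma_{2},j_{2})}=0$
is preserved under $\varphi^{*}$. This is immediate from the chain rule applied
to the residue, since $\varphi$ is a biholomorphism near each node; this is the
main (but still routine) bookkeeping obstacle. Everything else is naturality of
pullback, so I do not anticipate any analytic difficulty.
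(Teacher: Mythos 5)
Your proof is correct, and it takes the same (implicit) route as the paper: the authors state Lemma \ref{Transformation for F} without proof, treating it as immediate from the fact that $\varphi:(\Sigma,\varphi^{*}j)\to(\Sigma,j)$ is a biholomorphism together with naturality of pullback for $u^{*}L$, the twisted dualizing sheaf, and the $(0,1)$-projection. Your write-up simply supplies the routine verification the paper omits, including the correct observation that $j\,d\varphi=d\varphi\,j'$ makes $\varphi^{*}$ commute with the type decomposition, which is the only substantive point in part (b).
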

\v\n
It follows from {\bf (b)} above that
if we choose another coordinate system $\mathbf{A}'$ and another local model $\widetilde{\mathbf{O}}_{b'_{o}}(\delta',\rho')/G_{b'_o}$, we have
$$H^0(\Sigma, \widetilde{\mathbf{L}}\mid_{b})=H^0(\Sigma, \widetilde{\mathbf{L}}'\mid_{b'}).$$
But the coordinate transformation is continuous. So we get a continuous bundle
$F\rightarrow \mathcal{U}$. Moreover,  by (1) and (2) we conclude that $F$ has a ``orbi-vector bundle" structure over $\mathcal{U}$.
\v

\v
Both $\widetilde{K}_{b_i}$ and $\tilde{F}\mid_{b_i}$ are representation spaces of $G_{b_i}$. Hence they can be decomposed as sum of irreducible representations.
There is a result in algebra saying that the irreducible factors of group
ring contain all the irreducible representations of finite group. Hence, it is
enough to find a copy of group ring in $\widetilde{F}(b_i)\mid_{b_i}$. This is done by algebraic geometry.
We can assume that $\mathbf L$
induces an embedding of $\Sigma$
into $\mathbb{C}P^{N_i}$ for some $N_i$. Furthermore, since $\mathbf L$ is invariant under
$G_{b_i}$ , $G_{b_i}$ also acts effectively naturally on $\mathbb{C}P^{N_i}$. Pick any point $x_0 \in
im(\Sigma)\subset \mathbb{C}P^{N_i}$ such that $\sigma_k(x_0)$ are mutually different for any $\sigma_k\in G_{b_i} $.
Then, we can find a homogeneous polynomial f of some degree, say $\mathsf{k}_i,$ such
that $f(x_0)\ne 0$, $f(\sigma_k(x_0)) = 0$ for $\sigma_k\ne I_d$. Note that $f\in H^0(\mathcal O(\mathsf{k}_i))$. By pull back over $\Sigma$, $f$ induces
a section $v \in H^0(\Sigma, \mathbf L
^{\mathsf{k}_i})$. We replace $\mathbf L$
by $\mathbf L^{\mathsf{k}_i}$ and redefine $F_i\mid_{b_i} =   H^0(\Sigma, \mathbf L^{\mathsf{k}_i}\mid_{b_i})$. Then $G_{b_i}\cdot v$ generates a group ring, denoted by $\ll G_{b_i}\cdot v\gg$. It is obvious that $\ll G_{b_i}\cdot v\gg$ is isomorphic to $\mathbb{R}[G_{b_i}]$, so $F_i\mid_{b_i}$ contains a copy of group ring. We denote the obtained bundle by $\mathbf{F}(\mathsf{k}_i)$.
\begin{lemma}\label{finite cov}
We have a continuous ``orbi-vector bundle" $\mathbf{F}(\mathsf{k}_i)\rightarrow \mathcal{U}$ such that
$\mathbf{F}(\mathsf{k}_i)\mid_{b_i}$ contains a copy of group ring
$\mathbb{R}[G_{b_i}].$
\end{lemma}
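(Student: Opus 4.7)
The plan is to assemble the construction outlined in the preceding paragraphs into a global object over $\mathcal{U}$, and then verify the group-ring condition at a single basepoint $b_i$. First I would work chart by chart: on each local model $\widetilde{\mathbf{O}}_{b_i}(\delta_i,\rho_i)$, the Cauchy-Riemann operator $\bar{\p}_b$ turns $\widetilde{\mathbf{L}}|_b = \mathscr{P}^*\widetilde{\Lambda}\otimes u^{*}L$ into a holomorphic line bundle on $\Sigma$. After replacing $\mathbf{L}$ by a sufficiently large tensor power $\mathbf{L}^{\mathsf{k}_i}$ — large enough so that the restriction to every component, ghost or non-ghost, is very ample — Riemann-Roch together with the vanishing $H^1(\Sigma,\widetilde{\mathbf{L}}^{\mathsf{k}_i}|_b)=0$ forces $h^0$ to be constant in $b$. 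This produces a continuous finite-rank bundle $\widetilde{F}_i\to\widetilde{\mathbf{O}}_{b_i}(\delta_i,\rho_i)$ with a natural $G_{b_i}$-action.

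Second, I would verify the orbi-bundle patching. The transformation rules in Lemma \ref{Transformation for F} give $\mathbf{L}|_{b'}=\varphi^{*}\mathbf{L}|_b$ and intertwine the Cauchy-Riemann operators, so $\varphi^{*}$ identifies $H^{0}(\Sigma,\widetilde{\mathbf{L}}|_b)$ with $H^{0}(\Sigma,\widetilde{\mathbf{L}}|_{b'})$. Combined with the holomorphic transitions between local slices (Lemma \ref{local coordinates-3}) and the $G_{b_i}$-equivariance, this yields continuous transition isomorphisms between the local $\widetilde{F}_i$'s on overlaps, and condition (2) of the cover $\{\mathbf{O}_{[b_i]}\}$ provides the required embeddings of isotropy groups. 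To get one globally defined $\mathbf{F}(\mathsf{k})$ I would choose $\mathsf{k}$ to be a common multiple of the $\mathsf{k}_i$, so the construction above produces a single continuous orbi-vector bundle $\mathbf{F}(\mathsf{k})\to\mathcal{U}$.

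Third, for the group-ring statement at $b_i$, for $\mathsf{k}_i$ large enough $|\mathbf{L}^{\mathsf{k}_i}|_{b_i}$ defines an embedding $\Sigma\hookrightarrow \mathbb{C}P^{N_i}$, and because $\mathbf{L}$ is $G_{b_i}$-equivariant the action of $G_{b_i}$ descends to $\mathbb{C}P^{N_i}$ and fixes $\text{im}(\Sigma)$ setwise. I would pick a smooth point $x_0\in \text{im}(\Sigma)$ at which the finite orbit $G_{b_i}\cdot x_0$ consists of $|G_{b_i}|$ distinct points — possible since $G_{b_i}$ acts effectively on $\Sigma$. Very ampleness of $\mathbf{L}^{\mathsf{k}_i}$ (after enlarging $\mathsf{k}_i$ if necessary) guarantees the existence of a homogeneous polynomial $f$ of degree $\mathsf{k}_i$ separating $x_0$ from the rest of the orbit. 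The pullback section $v\in H^{0}(\Sigma,\mathbf{L}^{\mathsf{k}_i}|_{b_i})$ has the property that evaluation at the points $\{\sigma(x_0)\}_{\sigma\in G_{b_i}}$ gives a diagonal matrix on the span $\langle G_{b_i}\cdot v\rangle$, so the translates $\{g\cdot v\,:\,g\in G_{b_i}\}$ are linearly independent and $\ll G_{b_i}\cdot v\gg\cong \mathbb{R}[G_{b_i}]$.

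The main obstacle I expect is the uniformity issue in step one: ensuring that a single power $\mathsf{k}$ works simultaneously in every chart so as to produce one coherent global bundle rather than a collection of unrelated local ones. Taking a common multiple is the obvious fix, but one must then recheck that very ampleness, vanishing of $H^{1}$, and constancy of $h^{0}$ are preserved under raising the power along each stratum of $\overline{\mathcal{M}}_{g,n}(A)$, including on ghost components where positivity comes only from the relative dualizing sheaf twisted by the marked points. A secondary subtlety is choosing $x_0$ to lie on the principal (non-ghost) component so that the effective action of $G_{b_i}$ genuinely separates the orbit in the projective embedding; on ghost components the argument has to be adjusted using the marked point divisor.
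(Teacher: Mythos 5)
Your proposal follows essentially the same route as the paper: the Siebert-style bundle $\widetilde{\mathbf{L}}|_b=\mathscr{P}^*\widetilde{\Lambda}\otimes u^*L$ with $L$ coming from a rational deformation of $\omega$, constancy of $h^0$ via very ampleness and vanishing of $H^1$ (with positivity on ghost components supplied by the twisted dualizing sheaf), patching via the equivariance of $D^{\mathbf L}$ under $Diff^+(\Sigma)$ and the isotropy-group embeddings from the cover, and the group ring obtained from the orbit $G_{b_i}\cdot v$ of the pullback of a homogeneous polynomial separating $x_0$ from the rest of its orbit in $\mathbb{C}P^{N_i}$. The only cosmetic difference is that you pass to a common multiple of the $\mathsf{k}_i$, whereas the paper keeps one bundle $\mathbf{F}(\mathsf{k}_i)$ per basepoint (each defined over all of $\mathcal{U}$) and direct-sums them afterwards; this does not affect the argument.
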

\v\n
In \cite{LS-2} we proved
\begin{lemma}\label{smoothness of bundle-1} For the top strata, in the local coordinate system $\mathbf{A}$ the bundle $\widetilde{\mathbf{F}}$ is smooth. Furthermore, for any base $\{e_{\alpha}\}$ of the fiber at $b_o$ we can get a smooth frame fields $\{e_{\alpha}(a,h)\}$ for the bundle $\widetilde{\mathbf{F}}$ over $\widetilde{\mathbf{O}}_{b_o}(\delta_{o},\rho_{o})$ .
\end{lemma}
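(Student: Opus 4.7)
The plan is to reduce the question to a smoothly varying family of surjective Fredholm operators between \emph{fixed} Banach spaces, and then to construct the frame by orthogonal projection onto the kernel.

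First I would trivialize both source and target. The factor $\mathscr{P}^{*}\widetilde{\Lambda}$ in $\widetilde{\mathbf{L}}|_{b}$ varies smoothly with $a$ by construction of the universal curve, and the factor $v^{*}L$ can be identified with $u^{*}L$ via the complex bundle isomorphism $\Phi$ of parallel transport along $s\mapsto \exp_{u}(sh)$ used in the proof of Lemma \ref{isomorphism}. Combining these gives a smooth family of bundle isomorphisms
$$\Xi_{b}:\widetilde{\mathbf{L}}|_{b_{o}}\longrightarrow \widetilde{\mathbf{L}}|_{b},\qquad b=(a,v)\in \widetilde{\mathbf{O}}_{b_{o}}(\delta,\rho),$$
depending smoothly on $b$, and the already-constructed isomorphism $\Psi_{j_{o},j_{a}}$ identifies the $(0,1)$-form parts. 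After this trivialization the Cauchy–Riemann operators $\bar{\partial}_{b}$ pull back to a family
$$\hat{\bar{\partial}}_{b}:W^{k,2}(\Sigma,\widetilde{\mathbf{L}}|_{b_{o}})\longrightarrow W^{k-1,2}(\Sigma,\widetilde{\mathbf{L}}|_{b_{o}}\otimes\wedge^{0,1}_{j_{o}}T^{*}\Sigma)$$
of bounded linear Fredholm operators with $\ker\hat{\bar{\partial}}_{b}\cong\widetilde{F}|_{b}$. Smoothness of $b\mapsto \hat{\bar{\partial}}_{b}$ follows from Lemma \ref{slice}, Lemma \ref{smoothness of norms} and Lemma \ref{isomorphism}: the complex structure $j_{a}$, the hyperbolic metric $\mathbf{g}_{a}$, the map $v=\exp_{u}(h)$, and the connection on $v^{*}L$ all depend smoothly on the parameters $(a,h)$.

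Next I would use the very-ampleness assumption. Since $H^{1}(\Sigma,\widetilde{\mathbf{L}}|_{b_{o}})=0$ the operator $\hat{\bar{\partial}}_{b_{o}}$ is surjective; by openness of surjectivity for families of Fredholm operators in the operator norm, $\hat{\bar{\partial}}_{b}$ is surjective for all $b$ in a (possibly smaller) neighborhood $\widetilde{\mathbf{O}}_{b_{o}}(\delta_{o},\rho_{o})$. On this neighborhood the family
$$Q_{b}:=\hat{\bar{\partial}}_{b}^{*}\bigl(\hat{\bar{\partial}}_{b}\hat{\bar{\partial}}_{b}^{*}\bigr)^{-1}$$
of right inverses is well-defined and depends smoothly on $b$. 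Consequently the projection $P_{b}:=\mathrm{Id}-Q_{b}\hat{\bar{\partial}}_{b}$ onto $\ker\hat{\bar{\partial}}_{b}$ is smooth in $b$, and in particular $\ker\hat{\bar{\partial}}_{b}$ has constant dimension, so the union of kernels is a smooth vector subbundle of the trivial Banach bundle, i.e.\ $\widetilde{\mathbf{F}}$ is smooth on $\widetilde{\mathbf{O}}_{b_{o}}(\delta_{o},\rho_{o})$.

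Finally, given the base $\{e_{\alpha}\}$ of $\widetilde{F}|_{b_{o}}$, I would set
$$e_{\alpha}(a,h)\;:=\;\Xi_{b}\,P_{b}(e_{\alpha})\;=\;\Xi_{b}\bigl(e_{\alpha}-Q_{b}\hat{\bar{\partial}}_{b}e_{\alpha}\bigr).$$
Then $\bar{\partial}_{b}\,e_{\alpha}(a,h)=0$, so each $e_{\alpha}(a,h)$ lies in $\widetilde{\mathbf{F}}|_{b}$; the map $b\mapsto e_{\alpha}(a,h)$ is smooth by smoothness of $\Xi_{b}$ and $P_{b}$; and $e_{\alpha}(a_{o},0)=e_{\alpha}$. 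By continuity and dimension count the system $\{e_{\alpha}(a,h)\}$ is linearly independent, hence a smooth frame, after shrinking $\delta_{o},\rho_{o}$ once more. The main technical obstacle is verifying that after trivialization the family $b\mapsto\hat{\bar{\partial}}_{b}$ is smooth in the operator-norm sense on the fixed Hilbert spaces — for this one needs smooth dependence of $j_{a}$, $v$, and the induced connection on $v^{*}L$ on $(a,h)$, which is a routine but careful application of the smooth slice (Lemma \ref{slice}) and the parallel transport construction of $P_{b_{o},b}$, and is the same mechanism that underlies Lemma \ref{smoothness of norms}.
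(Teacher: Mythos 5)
Your argument is sound and lands on essentially the same mechanism the paper relies on, with one caveat: this paper does not actually prove Lemma \ref{smoothness of bundle-1} here but delegates it to \cite{LS-2}; what it displays (Remark \ref{isotropy group}, Lemma \ref{gluing}, and formula \eqref{equivariant frame field}) is the analogous construction in the glued setting, where the frame is produced as $e_{\alpha}\mapsto I e_{\alpha}+Q^{\widetilde{\mathbf L}}_{b_o}\circ f(Ie_{\alpha})$ via a chosen right inverse and the implicit function theorem. Since the equation is linear in $\zeta$, that correction is exactly $e_{\alpha}-Q\,\hat{\bar{\partial}}_{b}e_{\alpha}$, i.e.\ your projection formula; the only genuine difference is your specific choice of right inverse $\hat{\bar{\partial}}_{b}^{*}(\hat{\bar{\partial}}_{b}\hat{\bar{\partial}}_{b}^{*})^{-1}$, which has the pleasant side effect of being automatically $G_{b_o}$-equivariant once the inner products are chosen invariantly (the paper instead averages a right inverse over $G_{b_o}$). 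You correctly identify the real content of the lemma as the operator-norm smoothness of $b\mapsto\hat{\bar{\partial}}_{b}$ after trivialization — smooth dependence of $j_a$ via the holomorphic slice, and smooth dependence of the conjugated connection $\Phi^{-1}\circ v^{*}\nabla^{L,(0,1)}\circ\Phi$ on $h$, which requires $k$ large and Sobolev multiplication; this is precisely the computation the paper outsources (cf.\ the boundedness of the operators $T^{l}(h;\cdots)$ invoked in the proof of Lemma \ref{smooth of bundle map-1}). Your sketch of that step is plausible but not carried out, so at the level of rigor of the present paper your proposal is acceptable; as a complete proof it would still need that estimate written down.
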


\v
\begin{remark}\label{isotropy group}
Let $G_{b_o}$ be the isotropy group at $b_o$. $D^{\widetilde{\mathbf L}}$ is $G_{b_o}$-equivariant and $G_{b_o}$ acts on $\mbox{ker} D^{\widetilde{\mathbf L}}|_{b_{o}}$. We may choose a $G_{b_o}$-equivariant right inverse $Q^{\widetilde{\mathbf{L}}}_{b_{o}}$.
So we have a $G_{b_o}$-equivariant version of Lemma \ref{smoothness of bundle-1}. In particular,
for any base $\{e_{\alpha}\}$ of the fiber at $b_o$ we can get a smooth $G_{b_o}$-equivariant frame fields $\{e_{\alpha}(a,h)\}$ for the bundle $\widetilde{\mathbf{F}}$ over $\widetilde{\mathbf{O}}_{b_o}(\delta_{o},\rho_{o})$.
\end{remark}

\v

\v\n
Put $\mathbf{F}=\bigoplus_{i=1}^{\mathfrak{m}}\mathbf{F}(\mathsf{k}_i).$
\v

\subsection{Gluing the finite rank bundle $\widetilde{\mathbf{F}}$}

We recall some results in \cite{LS-2}.
Let $(U,z)$ be a local coordinates on $\Sigma$ around a nodal point ( or a marked point) $q$ with  $z(q)=0$ . Let $b=(\mathbf{s},u)\in \widetilde{\mathbf{O}}_{b_o}(\delta_{o},\rho_{o})$ and $e$ be a local holomorphic section of $u^{*}L|_{U}$ with $\|e\|_{G^L}(q)\neq 0$ for $q\in U$.
Then for any $\phi\in   \widetilde{\mathbf F}|_{b}$ we can write
\begin{equation}\label{eqn_phi_loc_hol}
\phi|_{U} =f \left(\frac{dz}{z}\otimes e \right)^{\mathsf{k}},\;\;\mbox{ where } f\in \mathcal{O}(U).
\end{equation}
In terms of the holomorphic cylindrical coordinates $(s,t)$ defined by $z=e^{-s+2\pi\sqrt{-1}t}$ we can re-written \eqref{eqn_phi_loc_hol} as
$$
\phi(s,t)|_{U}=f(s,t) \left((ds+2\pi\sqrt{-1}dt)\otimes e \right)^{\mathsf{k}},
$$
where $f(z)\in \mathcal O(U)$. It is easy to see that $|f(s,t)-f(-\infty, t)|$ uniformly exponentially converges to 0 with respect to $t\in S^1$ as
$|s|\to \infty$.
\v
For any $\zeta\in C^{\infty}_{c}(\Sigma,\widetilde{\mathbf L}|_{b})$ and
any section $\eta \in
C^{\infty}_{c}(\Sigma, \widetilde{\mathbf L}|_{b}\otimes\wedge^{0,1}_{j}T^{*}\Sigma)$ we
define weighted norms $\|\zeta\|_{j,k,2,\alpha}$ and $\|\eta\|_{j,k-1,2,\alpha}$. Denote by $W^{k,2,\alpha}(\Sigma;\widetilde{\mathbf L}|_{b})$ and
$W^{k-1,2,\alpha}(\Sigma, \widetilde{\mathbf L}|_{b}\otimes \wedge^{0,1}_{j}T^{*}\Sigma)$ the complete spaces with respect to the norms respectively. We also define the space ${\mathcal W}^{k,2,\alpha}(\Sigma;\widetilde{\mathbf L}|_{b})$.
\v
Let $(\Sigma, j, {\bf y})$ be a marked nodal Riemann surface of genus $g$ with $n$ marked points. Suppose that $\Sigma$ has $\mathfrak{e}$ nodal points $\mathbf{p}=(p_{1},\cdots,p_{\mathfrak{e}})$ and $\iota$ smooth components. We fix a local coordinate system $\mathbf{s}\in\mathbf{A}$ for the strata of $\overline{\mathcal{M}}_{g,n}$, where
$\mathbf{A}=\mathbf{A}_1\times \mathbf{A}_2\times...\times \mathbf{A}_\iota$. Let $b_o=(\mathbf{s},u)$ where $u:\Sigma\to M$ be $(j,J)$-holomorphic map. For each node $p_i$ we can glue $\Sigma$ and $u$ at $p_i$ with gluing parameters $(\mathbf{r})=((r_1,\tau_1),...,(r_\mathfrak{e}, \tau_\mathfrak{e}))$ to get $\Sigma_{(\mathbf{r})}$ and $u_{(\mathbf{r})}$, then we glue $\widetilde{\mathbf{F}}\mid_b$ to get $\widetilde{\mathbf{F}}\mid_{b_{(\mathbf{r})}}$.
Denote $|\mathbf r|=\min_{i=1}^{\mathfrak{e}}|r_{i}|.$
\begin{lemma}\label{aright_inverse_after_gluing}
$D^{\tilde{\mathbf L}}|_{b_{(\mathbf{r})}}$ is surjective for $|\mathbf{r}|$ large enough. Moreover, there is a $G_{b_{(\mathbf{r})}}$-equivariant right inverse $ Q^{\tilde{\mathbf{L}}}_{b_{(\mathbf{r})}}$
such that
\begin{equation}
\label{right_estimate}
\| Q^{\tilde{\mathbf{L}}}_{b_{(\mathbf{r})}}\|\leq  \mathsf{C}
\end{equation}
 for some constant $ \mathsf{C}>0$ independent of $(\mathbf{r})$.
\end{lemma}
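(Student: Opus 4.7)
The plan is a standard gluing/Newton iteration argument, run in the weighted Sobolev spaces $W^{k,2,\alpha}$ set up in \S\ref{gluing}, and adapted to be $G_{b_{(\mathbf r)}}$-equivariant. By Remark \ref{isotropy group} we already have, at the nodal configuration $b_o=(\mathbf{s},u)$, a $G_{b_o}$-equivariant right inverse $Q^{\tilde{\mathbf L}}_{b_o}$ of $D^{\tilde{\mathbf L}}|_{b_o}$ with some operator norm $\mathsf C_0$. The goal is to transport $Q^{\tilde{\mathbf L}}_{b_o}$ to $\Sigma_{(\mathbf r)}$ via a cut-off/pre-gluing procedure and correct it by a Neumann series.

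First I would construct an \emph{approximate} right inverse $Q'_{(\mathbf r)}$ as follows. Given $\eta\in W^{k-1,2,\alpha}(\Sigma_{(\mathbf r)},\tilde{\mathbf L}|_{b_{(\mathbf r)}}\otimes\wedge^{0,1}T^{*}\Sigma_{(\mathbf r)})$, decompose $\eta=\eta_1+\cdots+\eta_\iota$ on the smooth components $\Sigma_\nu$ via the cut-off functions $\beta_{i;R}$ from \S\ref{gluing}, and view each $\eta_\nu$ as a section on $\Sigma_\nu$ supported away from the punctures. Set $\zeta_o := Q^{\tilde{\mathbf L}}_{b_o}(\oplus\eta_\nu)\in {\mathcal W}^{k,2,\alpha}(\Sigma,\tilde{\mathbf L}|_{b_o})$, and glue the sections $\zeta_o|_{\Sigma_\nu}$ back to a section $\zeta_{(\mathbf r)}=Q'_{(\mathbf r)}\eta$ on $\Sigma_{(\mathbf r)}$ by the same cylindrical cut-off construction that defines $\widetilde{\mathbf F}|_{b_{(\mathbf r)}}$; the constant parts at the nodes are matched by virtue of $\zeta_o\in{\mathcal W}^{k,2,\alpha}$ having a well-defined limit $h_0$ at each puncture, which fits the cusp-cylindrical ansatz \eqref{eqn_phi_loc_hol}. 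Because $Q^{\tilde{\mathbf L}}_{b_o}$ is bounded and the cut-offs are uniform, one gets $\|Q'_{(\mathbf r)}\|\le \mathsf C_1$ independent of $(\mathbf r)$.

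Next I would compute the error $E_{(\mathbf r)}:=D^{\tilde{\mathbf L}}|_{b_{(\mathbf r)}}Q'_{(\mathbf r)}-\mathrm{Id}$. The only place where $E_{(\mathbf r)}$ is nonzero is the neck region $\{|s_\nu|\in[r/2-1,r/2+1]\}$, where derivatives of $\beta_{i;R}$ enter; using the exponential weight $e^{\alpha|s|}$ and the uniform exponential decay of $\zeta_o$ (together with its derivatives) towards the limiting constants at punctures, one obtains
\[
\|E_{(\mathbf r)}\eta\|_{k-1,2,\alpha}\le \mathsf C_2\, e^{-(1-\alpha)r}\,\|\eta\|_{k-1,2,\alpha},
\]
where $r=|\mathbf r|$. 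Hence for $r$ sufficiently large $\|E_{(\mathbf r)}\|\le \tfrac12$, the operator $\mathrm{Id}+E_{(\mathbf r)}$ is invertible by Neumann series, and
\[
Q^{\tilde{\mathbf L}}_{b_{(\mathbf r)}}:=Q'_{(\mathbf r)}(\mathrm{Id}+E_{(\mathbf r)})^{-1}
\]
is a genuine right inverse of $D^{\tilde{\mathbf L}}|_{b_{(\mathbf r)}}$ with $\|Q^{\tilde{\mathbf L}}_{b_{(\mathbf r)}}\|\le 2\mathsf C_1=:\mathsf C$, proving both surjectivity and the uniform estimate \eqref{right_estimate}.

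Finally, equivariance. By Lemmas \ref{isotropy group} and \ref{isotropy group-1} (and their A-G-F extension), $G_{b_{(\mathbf r)}}$ sits naturally inside $G_{b_o}$ via the rotations $z\mapsto e^{-\sqrt{-1}\gamma_i}z$, $w\mapsto e^{-\sqrt{-1}\gamma_i}w$ at each node. The Weil--Petersson distance, the cut-off functions $\beta_{i;R}$, the weighted norms, and the identification \eqref{eqn_phi_loc_hol} are all invariant under these rotations, and $Q^{\tilde{\mathbf L}}_{b_o}$ is $G_{b_o}$-equivariant by construction; hence $Q'_{(\mathbf r)}$ is $G_{b_{(\mathbf r)}}$-equivariant, and so is $E_{(\mathbf r)}$, so the Neumann-series inverse $(\mathrm{Id}+E_{(\mathbf r)})^{-1}$ and therefore $Q^{\tilde{\mathbf L}}_{b_{(\mathbf r)}}$ commute with $G_{b_{(\mathbf r)}}$. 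The main obstacle I expect is the bookkeeping for the error estimate on $E_{(\mathbf r)}$ in the $W^{k-1,2,\alpha}$-norm near the neck, for which one needs the precise exponential decay of $\zeta_o$ with all its $k$ derivatives and the compatibility of the metrics $\mathbf g^\diamond$ on the two sides of the plumbing; once this is handled, the rest is formal.
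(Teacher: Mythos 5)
The paper does not prove this lemma here at all: it is quoted from the companion paper \cite{LS-2} (``We recall some results in \cite{LS-2}''), and the argument given there --- and the one the paper itself uses for the analogous operator $D\mathcal{S}$ in the equivariant-gluing subsection --- is exactly your scheme: an approximate right inverse built by cutting $\eta$ with $\beta_{i;2}$, applying the equivariant right inverse at the nodal curve, re-gluing with matched asymptotic constants in $\mathcal{W}^{k,2,\alpha}$, an exponentially small error supported in the neck, a Neumann-series correction, and equivariance from the fact that the cut-offs depend only on $s$ while $G_{b_{(\mathbf r)}}$ acts by rotations in $\tau$. Your proposal is correct and takes essentially the same approach.
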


\begin{lemma}\label{lem_est_I_r}
{\bf (1)} $I^{\widetilde{\mathbf{L}}}_{(r)}: \ker D^{\widetilde{\mathbf L}}|_{b_{o}}\longrightarrow \ker D^{\widetilde{\mathbf L}}|_{b_{(r)}}$ is a $\frac{|G_{b_{o}}|}{|G_{b_{(r)}}|}$-multiple covering map for $r_i$, $1\leq i \leq \mathfrak{e}$, large enough, and
$$\|I^{\widetilde{\mathbf L}}_{(\mathbf{r})}\|\leq \mathsf C ,$$
for some constant $\mathsf C>0$ independent of $(\mathbf{r})$.
\v
{\bf (2)} $I^{\widetilde{\mathbf{L}}}_{(r)}$ induces a isomorphism $I^{\mathbf{L}}_{(r)}: \ker D^{\mathbf L}|_{b_{o}}\longrightarrow \ker D^{\mathbf L}|_{b_{(r)}}$.
\end{lemma}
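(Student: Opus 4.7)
The plan is to construct $I^{\widetilde{\mathbf{L}}}_{(\mathbf{r})}$ by the standard pregluing-plus-correction recipe, then to trace through how the isotropy groups act on source and target. Given $\phi \in \ker D^{\widetilde{\mathbf{L}}}|_{b_{o}}$, the local holomorphic expansion \eqref{eqn_phi_loc_hol} says that near each node $p_j$ the section $\phi$ has, in the holomorphic cylindrical coordinates $(s_j,t_j)$, the form $f_j(s_j,t_j)\bigl((ds_j+2\pi\sqrt{-1}dt_j)\otimes e\bigr)^{\mathsf{k}}$ with $f_j$ uniformly exponentially convergent to its limit $f_j(\pm\infty,t_j)$. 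Using cutoff functions $\beta_{i;R}$ as in Section \ref{gluing} and matching the constant pieces on the two sides at each node (by the residue/matching condition built into the dualizing sheaf $\lambda$), I graft the two restrictions of $\phi$ across each neck to get a pregluing $\hat{\phi}_{(\mathbf{r})}\in \mathcal{W}^{k,2,\alpha}(\Sigma_{(\mathbf{r})};\widetilde{\mathbf{L}}|_{b_{(\mathbf{r})}})$, and then set
\[
I^{\widetilde{\mathbf{L}}}_{(\mathbf{r})}(\phi) \;:=\; \hat{\phi}_{(\mathbf{r})} \;-\; Q^{\widetilde{\mathbf{L}}}_{b_{(\mathbf{r})}}\!\Bigl(D^{\widetilde{\mathbf{L}}}|_{b_{(\mathbf{r})}}\hat{\phi}_{(\mathbf{r})}\Bigr),
\]
which lies in $\ker D^{\widetilde{\mathbf{L}}}|_{b_{(\mathbf{r})}}$ by construction.

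For the uniform norm bound, I observe that $D^{\widetilde{\mathbf{L}}}|_{b_{(\mathbf{r})}}\hat{\phi}_{(\mathbf{r})}$ is supported only where the cutoffs have nonzero derivative, i.e.\ in the neck regions $\{|s_i|\sim r_i\}$, and its $\|\cdot\|_{\mathbf{r},k-1,2,\alpha}$ norm is controlled by $\|\phi - \phi(\pm\infty)\|$ on those annuli, which decays like $e^{-\alpha r_i}$ by the exponential convergence noted above. Combined with the uniform bound \eqref{right_estimate} on $Q^{\widetilde{\mathbf{L}}}_{b_{(\mathbf{r})}}$, this yields $\|I^{\widetilde{\mathbf{L}}}_{(\mathbf{r})}\|\le \mathsf{C}$ independent of $(\mathbf{r})$. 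To identify the covering multiplicity, recall that by Lemma \ref{lem_orbi_T} (extended to the lower strata as noted in the remark following it) $G_{b_{(\mathbf{r})}}$ embeds as the subgroup of $G_{b_{o}}$ that additionally preserves the chosen gluing parameters $(r_i,\tau_i)$: elements of $G_{b_{o}}$ rotate the cusp coordinates by roots of unity $e^{-\sqrt{-1}\gamma}$, and only those whose rotations are compatible with the chosen twists $\tau_i$ survive after gluing. The coset space $G_{b_{o}}/G_{b_{(\mathbf{r})}}$ therefore indexes a $|G_{b_{o}}|/|G_{b_{(\mathbf{r})}}|$-fold identification on preglued sections, so that $I^{\widetilde{\mathbf{L}}}_{(\mathbf{r})}$ becomes a covering of that multiplicity between the orbit spaces; injectivity of the induced map on orbits follows because two preglued sections producing the same correction must agree on $\Sigma(R_{0})$ and hence come from $G$-related elements.

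For part (2), the bundle $\mathbf{L}$ is the descent of $\widetilde{\mathbf{L}}$ after quotienting by the Teichm\"uller-equivariant action, so $\ker D^{\mathbf{L}}|_{b_{o}}$ and $\ker D^{\mathbf{L}}|_{b_{(\mathbf{r})}}$ are the respective $G$-quotients (invariants) of the fibers of $\widetilde{F}$. Since $I^{\widetilde{\mathbf{L}}}_{(\mathbf{r})}$ is equivariant with respect to the embedding $G_{b_{(\mathbf{r})}}\hookrightarrow G_{b_{o}}$, and the $|G_{b_{o}}|/|G_{b_{(\mathbf{r})}}|$-fold redundancy is exactly the ambiguity killed by passing to these quotients, the induced map $I^{\mathbf{L}}_{(\mathbf{r})}$ is a bijection, hence an isomorphism by the continuity and uniform bounds above. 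The main obstacle I anticipate is the bookkeeping in the covering-multiplicity argument: one must verify that the pregluing's dependence on the twist $\tau_i$ is equivariant under the rotation actions of $G_{b_{o}}$ at each node, so that the coset count $|G_{b_{o}}|/|G_{b_{(\mathbf{r})}}|$ really matches the multiplicity of the map; once the equivariance of the cutoff construction is correctly set up, the analytic pieces follow from \eqref{right_estimate} together with the uniform exponential decay near the nodes.
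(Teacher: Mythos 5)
The paper does not actually prove Lemma \ref{lem_est_I_r} here; it is recalled from \cite{LS-2}, so your proposal can only be compared against the standard argument that the surrounding constructions presuppose (the definition of $Glu^{\mathbf L}_{(\mathbf r)}$ as $I^{\widetilde{\mathbf L}}_{(\mathbf r)}$ plus a correction through $Q^{\widetilde{\mathbf L}}_{b_{(\mathbf r)}}$, and Lemma \ref{aright_inverse_after_gluing}). Your construction of $I^{\widetilde{\mathbf L}}_{(\mathbf r)}$ by pregluing across the necks and correcting with the uniformly bounded right inverse, and your derivation of the uniform norm bound from the exponential convergence of $f(s,t)$ in \eqref{eqn_phi_loc_hol} together with \eqref{right_estimate}, is exactly that argument, and your use of the residue matching condition of the dualizing sheaf to make the pregluing well defined is the right ingredient.

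The genuine gap is surjectivity. You establish that $I^{\widetilde{\mathbf L}}_{(\mathbf r)}$ maps into $\ker D^{\widetilde{\mathbf L}}|_{b_{(\mathbf r)}}$ and is uniformly bounded, but neither the covering statement in (1) nor the isomorphism in (2) follows unless every element of $\ker D^{\widetilde{\mathbf L}}|_{b_{(\mathbf r)}}$ is in the image. Your injectivity argument (``two preglued sections producing the same correction must agree on $\Sigma(R_0)$'') is also not a proof as written; the correct quantitative statement is $\|I^{\widetilde{\mathbf L}}_{(\mathbf r)}\phi\|\ge \|\hat\phi_{(\mathbf r)}\|-\|Q^{\widetilde{\mathbf L}}_{b_{(\mathbf r)}}\|\cdot\|D^{\widetilde{\mathbf L}}|_{b_{(\mathbf r)}}\hat\phi_{(\mathbf r)}\|\ge c\|\phi\|$ for $|\mathbf r|$ large, which gives injectivity of the linear map. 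Surjectivity then follows from equality of dimensions, which the paper has already arranged in \S\ref{finite rank orbi-bundle}: $\widetilde{\mathbf L}|_b$ is taken very ample, so $H^1(\Sigma,\widetilde{\mathbf L}|_b)=0$ and $\dim H^0(\Sigma,\widetilde{\mathbf L}|_b)$ is constant over the local model, in particular equal at $b_o$ and at $b_{(\mathbf r)}$. You need to invoke this; without it the conclusions of the lemma are unsupported. Once that is added, your equivariance bookkeeping (the rotations $e^{-\sqrt{-1}\gamma}$ in the cusp coordinates identifying $G_{b_{(\mathbf r)}}$ with the subgroup of $G_{b_o}$ compatible with the twists $\tau_i$) does account for the multiplicity $|G_{b_o}|/|G_{b_{(\mathbf r)}}|$ and for the induced isomorphism $I^{\mathbf L}_{(\mathbf r)}$ on the quotient spaces.
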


For fixed $(\mathbf{r})$ we consider the family of maps:
$$
\mathcal {F}_{(\mathbf{r})}: \mathbf{A}\times   W^{k,2,\alpha}(\Sigma_{(\mathbf{r})},u^{\star}_{(\mathbf{r})}TM)
\times {\mathcal W}^{k,2,\alpha}(\Sigma_{(\mathbf{r})},\widetilde{\mathbf L}|_{b_{(\mathbf{r})}})\to W^{k-1,2,\alpha}(\Sigma_{(\mathbf{r})},\wedge^{0,1}T\Sigma_{(\mathbf{r})}\otimes \widetilde{\mathbf L}|_{b_{(\mathbf{r})}})$$
defined by
\begin{equation}\label{def_F_r}
\mathcal {F}_{(\mathbf{r})}(\mathbf{s},h,\xi)= P^{\widetilde{\mathbf{L}}}_{b,b_{(\mathbf{r})}}
\circ D^{\widetilde{\mathbf L}}_{b}\circ (P^{\widetilde{\mathbf{L}}}_{b,b_{(\mathbf{r})}})^{-1}\xi,
\end{equation}
where $b=((\mathbf r),\mathbf{s},v_{\mathbf r})$ and $v_{\mathbf r}=\exp_{u_{(\mathbf r)}}h$.
By implicit function theorem we have

\v\v
\begin{lemma}\label{gluing}
There exist $\delta>0$, $\rho>0$ and a small neighborhood $\widetilde O_{(\mathbf{r})}$ of $0 \in \ker\;D^{\widetilde{\mathbf L}}|_{b_{(\mathbf{r})}}$ and a unique smooth map
$$f^{\widetilde{\mathbf L}}_{(\mathbf{r})}: \widetilde{\mathbf{O}}_{b_{(\mathbf{r})}}(\delta,\rho)
\times \widetilde{O}_{(\mathbf{r})}\rightarrow W^{k-1,2,\alpha}(\Sigma_{(\mathbf{r})},\wedge^{0,1}T\Sigma_{(\mathbf{r})}\otimes \widetilde{\mathbf L}|_{b_{(\mathbf{r})}})$$ such that for any $(b,\zeta)\in \widetilde{\mathbf{O}}_{b_{(\mathbf{r})}}(\delta,\rho)
\times \widetilde{O}_{(\mathbf{r})}$
\begin{equation*}
D^{\widetilde{\mathbf L}}_{b}\circ(P^{\widetilde{\mathbf{L}}}_{b,b_{(\mathbf{r})}})^{-1}\left(\zeta + Q^{\widetilde{\mathbf L}}_{b_{(\mathbf{r})}}\circ f^{\widetilde{\mathbf L}}_{\mathbf{s},h,(\mathbf{r})}(\zeta)\right)=0.
\end{equation*}
\end{lemma}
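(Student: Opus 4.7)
The plan is to set this up as a parametrized implicit function theorem / Banach fixed-point argument, applied to the family $\mathcal{F}_{(\mathbf r)}$ from \eqref{def_F_r}. Writing the unknown as $\xi = \zeta + Q^{\widetilde{\mathbf L}}_{b_{(\mathbf{r})}}\eta$, with $\zeta \in \ker D^{\widetilde{\mathbf L}}|_{b_{(\mathbf r)}}$ small and $\eta$ lying in the target space of $D^{\widetilde{\mathbf L}}|_{b_{(\mathbf{r})}}$, the equation to solve becomes
$$
\mathcal{G}(b,\zeta,\eta) := D^{\widetilde{\mathbf L}}_b \circ (P^{\widetilde{\mathbf{L}}}_{b,b_{(\mathbf{r})}})^{-1}\!\bigl(\zeta + Q^{\widetilde{\mathbf{L}}}_{b_{(\mathbf r)}}\eta\bigr) = 0.
$$
At $(b,\zeta,\eta) = (b_{(\mathbf r)}, 0, 0)$ we have $\mathcal{G}=0$, since $\zeta \in \ker D^{\widetilde{\mathbf L}}|_{b_{(\mathbf r)}}$ and $P^{\widetilde{\mathbf L}}_{b_{(\mathbf r)},b_{(\mathbf r)}} = \mathrm{Id}$; and moreover $\partial_\eta \mathcal{G}|_{(b_{(\mathbf r)},0,0)} = D^{\widetilde{\mathbf L}}|_{b_{(\mathbf r)}} \circ Q^{\widetilde{\mathbf L}}_{b_{(\mathbf r)}} = \mathrm{Id}$, because $Q^{\widetilde{\mathbf L}}_{b_{(\mathbf r)}}$ is a right inverse by Lemma \ref{aright_inverse_after_gluing}. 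So the linearized problem in $\eta$ is a Banach space isomorphism.

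From here I would rewrite $\mathcal{G}(b,\zeta,\eta) = \eta + \mathcal{N}(b,\zeta,\eta)$, where $\mathcal{N}$ collects all terms that vanish at $(b_{(\mathbf r)},0,0)$: the variation of $D^{\widetilde{\mathbf L}}_b$ away from $D^{\widetilde{\mathbf L}}|_{b_{(\mathbf r)}}$ in the parameters $(\mathbf s, h)$, the discrepancy $(P^{\widetilde{\mathbf L}}_{b,b_{(\mathbf r)}})^{-1} - \mathrm{Id}$ coming from the bundle identification, and the implicit quadratic contributions of $\zeta$ through the non-constancy of $b \mapsto D^{\widetilde{\mathbf L}}_b$. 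Solving $\mathcal{G} = 0$ is then equivalent to the fixed-point problem $\eta = -\mathcal{N}(b,\zeta,\eta)$, and I would apply the contraction mapping principle on a ball of radius $\varepsilon$ in the target space, shrinking $\delta, \rho$ and the radius of $\widetilde O_{(\mathbf r)}$ if necessary.

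The main obstacle will be establishing the contraction estimate with constants that do not deteriorate as $|\mathbf r| \to \infty$. This rests on three uniform-in-$(\mathbf r)$ ingredients. First, the norm bound $\|Q^{\widetilde{\mathbf L}}_{b_{(\mathbf r)}}\| \le \mathsf{C}$ from Lemma \ref{aright_inverse_after_gluing}. Second, Lipschitz and quadratic estimates for the dependence of $D^{\widetilde{\mathbf L}}_b$ on $(\mathbf s, h)$, measured in the weighted norms $\|\cdot\|_{\fs, \ft, k, 2, \alpha}$; this requires controlling the change of complex structure $j_{\mathbf s, \mathbf t}$ and of the induced holomorphic structure on $u^\ast L$ along the long neck, which is the delicate ingredient. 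Third, the uniform behavior of the transport $P^{\widetilde{\mathbf L}}_{b, b_{(\mathbf r)}}$ under the exponential weight $e^{\alpha|s|}$, which traces back to the exponential decay of holomorphic sections of $\widetilde{\mathbf L}$ toward the nodes as described by the local expansion \eqref{eqn_phi_loc_hol}.

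Once the contraction argument closes, uniqueness of $f^{\widetilde{\mathbf L}}_{(\mathbf r)}$ follows from the fixed-point statement and smoothness in $(b, \zeta)$ from the parameterized / smooth-dependence version of the contraction mapping theorem, using the smoothness of $(\mathbf{s}, h) \mapsto D^{\widetilde{\mathbf L}}_b$, of $P^{\widetilde{\mathbf L}}_{b, b_{(\mathbf r)}}$, and the $G_{b_{(\mathbf r)}}$-equivariant smooth structure on $Q^{\widetilde{\mathbf L}}_{b_{(\mathbf r)}}$ provided by the $G_{b_{(\mathbf r)}}$-equivariant analogue of Lemma \ref{smoothness of bundle-1} recorded in Remark \ref{isotropy group}.
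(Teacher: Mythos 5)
Your proposal is correct and follows essentially the same route as the paper, which simply invokes the implicit function theorem for the family $\mathcal{F}_{(\mathbf{r})}$ of \eqref{def_F_r}, with the uniform bound on $Q^{\widetilde{\mathbf L}}_{b_{(\mathbf{r})}}$ from Lemma \ref{aright_inverse_after_gluing} guaranteeing that the linearization at $(b_{(\mathbf r)},0,0)$ is the identity and that the resulting neighborhoods do not shrink as $|\mathbf r|\to\infty$. The only simplification you could note is that $D^{\widetilde{\mathbf L}}_b$ is linear in the section variable, so for fixed $(b,\zeta)$ the equation for $\eta$ is affine and your contraction reduces to a Neumann-series inversion of a perturbation of the identity.
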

\v Together with  $I^{\mathbf L}_{(\mathbf{r})}$ we have gluing map
$$Glu^{\mathbf L}_{(\mathbf r)}:\mathbf{F}\mid_{[b_o]}\to \mathbf{F}\mid_{[b]}\;\;\;for\; any \;[b]\in \;\mathbf{O}_{[b_{(\mathbf{r})}]}(\delta,\rho)$$ defined\;by
$$Glu^{\mathbf L}_{(\mathbf r)}([\zeta]):=\left[(P^{\widetilde{\mathbf{L}}}_{b,b_{(\mathbf{r})}})^{-1}\left(I^{\widetilde{\mathbf L}}_{(\mathbf{r})}\zeta + Q^{\widetilde{\mathbf L}}_{b_{(\mathbf{r})}}\circ f^{\widetilde{\mathbf L}}_{\mathbf{s},h,(\mathbf{r})}I^{\widetilde{\mathbf L}}_{(\mathbf{r})}\zeta \right)\right],\;\;\;\;\forall [\zeta]\in \mathbf{F}\mid_{[b_o]}.$$

Given a frame $e_{\alpha}(z)$ on $\widetilde{\mathbf{F}}\mid_{b_o}$, $1\leq \alpha\leq rank\; \widetilde{\mathbf{F}},$ as Remark \ref{isotropy group} we have a $G_{b_o}$-equivariant frame field
\begin{equation}\label{equivariant frame field}
e_{\alpha}((\mathbf{r}),\mathbf{s}, h)(z)=(P^{\widetilde{\mathbf{L}}}_{b,b_{(\mathbf{r})}})^{-1}
\left(I^{\widetilde{\mathbf L}}_{(\mathbf{r})}e_{\alpha}  + Q^{\widetilde{\mathbf L}}_{b_{(\mathbf{r})}}\circ f^{\widetilde{\mathbf L}}_{\mathbf{s},h,(\mathbf{r})}I^{\widetilde{\mathbf L}}_{(\mathbf{r})}e_{\alpha} \right)(z)\end{equation}
over $D_{R_{0}}^{*}(0)\times \widetilde{\mathbf{O}}_{b_o}(\delta_{o},\rho_{o})$,
where $z$ is the coordinate on $\Sigma$, and
$$D_{R_{0}}^{*}(0):=\bigoplus_{i=1}^{\mathfrak{e}}\left\{(r, \tau)\mid R_0< r<\infty, \;\tau\in S^1\right\}.$$
For any fixed $(\mathbf{r})$, $e_{\alpha}$ is smooth with respect to $\mathbf{s},h$ over $\widetilde{\mathbf{O}}_{b_o}(\delta_{o},\rho_{o})$.

\v
To discuss the smoothness with respect to $(\mathbf{r}),\mathbf{s},h$ we need to fix a Riemann surface $\Sigma_{(\mathbf{R_o})}$.
Let $\alpha_{(r_{i})}:[0,2r_{i}]\to [0,2R_{0}]$ be a smooth function satisfying
$$
  \alpha_{(r_{i})}(s)=\left\{
\begin{array}{ll}
s\;\;\;\; \;   & if\; s\in [0,\frac{R_{0}}{2}-1] \\
 \frac{R_{0}}{2}+\frac{R_{0}}{2r_{i}-R_{0}}(s-R_{0}/2) \;\;\;\;\;\;if  &  s\in [R_{0}/2,2r_{i}-R_{0}/2]   \\
 s-2r_{i}+2R_{0} \; & if\;s\in [2r_{i}-\frac{R_{0}}{2}+1,2r_{i}]
\end{array}
\right.
$$
Set $\alpha_{(r_{i})}:[-2r_{i},0]\to [-2R_{0},0]$ by $\alpha_{(r_{i})}(s)=-\alpha_{(r_{i})}(-s).$
Let $(s_{1}^{i},t_{1}^{i})$ and $(s_{2}^{i},t_{2}^{i})$ be cusp cylinder coordinates around $p_{i}$, thus $z_{i}=e^{-s_{1}^{i}-2\pi \sqrt{-1}t_{1}^{i}}$ and $w_{i} =e^{ s_{2}^{i}+2\pi \sqrt{-1}t_{2}^{i}}$.  Denote $$W_{i}(R)=\{ |s_{1}^{i}|>R \}\cup \{ |s_{2}^{i}|>R \}.$$ Obviously, $W(R)=\cup_{i=1}^{\mathfrak{e}}W_{i}(R).$
We can define a map $\varphi_{(\mathbf{r})}:\Sigma_{(\mathbf{r})}\to \Sigma_{(\mathbf{R}_{0})}$ as follows:
$$\varphi_{(\mathbf{r})}=\left\{
\begin{array}{ll}
p, & p\in \Sigma(R_{0}/4).\\
(\alpha_{(r_{i})}(s_{i}),t_{i})   \;\;\;\;\;\;\;& (s_{1}^{i},t_{1}^{i})\in W_{i}(R_{0}/4),\;i=1,\cdots,\mathfrak{e}.
\end{array}
\right.
$$
Then we obtain a family of Riemann surfaces $\left(\Sigma_{(\mathbf{R}_{0})},(\varphi_{(\mathbf{r})}^{-1})^{*}j_{\mathbf{r}} ,\varphi_{(\mathbf{r})}^{-1}(\mathbf{y})\right)$.
Denote $u^{\circ}_{(\mathbf{r})}:=u_{(\mathbf{r})}\circ \varphi_{\mathbf{r}}^{-1}.$

In \cite{LS-2} we have proved the following lemma.
\begin{lemma}\label{smooth line}
There exists positive  constants  $ \mathsf{d},R$ such that
for any  $h\in W^{k,2,\alpha}\left(\Sigma_{(R_{0})},(u_{(R_{0})})^*TM \right),$  $\zeta\in \ker D^{\widetilde{\mathbf L}}|_{b_{o}}$    with    $$\|\zeta\|_{\mathcal W,k,2,\alpha}\leq \mathsf{d},\;\;\;\;\;\|h-\hat h_{(\mathbf r)}\|< \mathsf{d},\;\;\;\;\;|\mathbf r|\geq R, $$
$( \varphi_{\mathbf r}^{-1})^{*}(Glu^{\widetilde{\mathbf L}}_{\fs,(\mathbf r),h'}(e_{\alpha}) )$ is smooth with respect to $(\mathbf{s}, (\mathbf{r}),h)$ \; for any $e_{\alpha}\in \ker D^{\widetilde{\mathbf L}}|_{b_o}$,  where  $h'=(\exp_{u_{(\mathbf r)}}^{-1}\circ (\exp_{u_{(\mathbf R)_{0}}}(h)\circ \varphi_{(\mathbf{r})})$. In particular $Glu^{\widetilde{\mathbf L}}_{\fs,(\mathbf r),h'}(e_{\alpha})\mid_{\Sigma(R_0)}$
 is smooth.
\end{lemma}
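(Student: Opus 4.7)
The plan is to transport everything to the fixed model Riemann surface $\Sigma_{(\mathbf{R}_0)}$ via the stretching diffeomorphism $\varphi_{(\mathbf r)}$, and then apply the implicit function theorem with parameters. Since the glued section has the explicit form
$$
Glu^{\widetilde{\mathbf L}}_{\mathbf s,(\mathbf r),h'}(e_\alpha) = (P^{\widetilde{\mathbf L}}_{b,b_{(\mathbf r)}})^{-1}\!\left(I^{\widetilde{\mathbf L}}_{(\mathbf r)}e_\alpha + Q^{\widetilde{\mathbf L}}_{b_{(\mathbf r)}} \circ f^{\widetilde{\mathbf L}}_{\mathbf s, h, (\mathbf r)} I^{\widetilde{\mathbf L}}_{(\mathbf r)} e_\alpha\right),
$$
I would check smoothness of each factor separately. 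The parallel transport $P^{\widetilde{\mathbf L}}_{b,b_{(\mathbf r)}}$ is smooth in $(\mathbf s,(\mathbf r),h)$ because it is defined by an ODE whose coefficients depend smoothly on these parameters after the fixed-surface identification. The approximate gluing operator $I^{\widetilde{\mathbf L}}_{(\mathbf r)}$ is smooth in $(\mathbf r)$ by construction, since it is built from cutoff functions $\beta(\,\cdot\,/r_i)$ on the neck and the sections $e_\alpha$ are already in the kernel at $b_o$ with exponentially decaying profile along the cusp cylinder.

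The heart of the argument is smoothness of $f^{\widetilde{\mathbf L}}_{\mathbf s, h, (\mathbf r)}$ in all parameters, which is obtained by re-deriving Lemma \ref{gluing} as a parametric implicit function theorem on the fixed Banach spaces over $\Sigma_{(\mathbf R_0)}$. Concretely, I pull back the family $\mathcal{F}_{(\mathbf r)}$ through $\varphi_{(\mathbf r)}$, obtaining a map
$$
\widehat{\mathcal F}(\mathbf s, h, (\mathbf r), \xi) := (\varphi_{(\mathbf r)}^{-1})^* \mathcal{F}_{(\mathbf r)}\bigl(\mathbf s, h, \varphi_{(\mathbf r)}^*\xi\bigr)
$$
defined on the $(\mathbf r)$-independent target space $W^{k-1,2,\alpha}(\Sigma_{(\mathbf R_0)},\wedge^{0,1}T\Sigma_{(\mathbf R_0)}\otimes\widetilde{\mathbf L})$. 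Using the uniform bound on $Q^{\widetilde{\mathbf L}}_{b_{(\mathbf r)}}$ from Lemma \ref{aright_inverse_after_gluing}, the uniform quadratic estimates for $\widehat{\mathcal F}$, and the contraction mapping argument underlying Lemma \ref{gluing}, the solution $\xi = \xi(\mathbf s, h, (\mathbf r))$ inherits the smoothness of $\widehat{\mathcal F}$ in its parameters. Thus it suffices to establish that $\widehat{\mathcal F}$ is $C^\infty$ in $(\mathbf s, h, (\mathbf r))$.

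This last point is where the real work lies and where I expect the main obstacle. The issue is that $\varphi_{(\mathbf r)}$ is not holomorphic on the neck — it is the linear rescaling $s \mapsto \alpha_{(r_i)}(s)$ that conjugates the cylinder of length $2r_i$ to the cylinder of length $2R_0$. Consequently, the pulled-back complex structure $(\varphi_{(\mathbf r)}^{-1})^*j_{(\mathbf r)}$ differs from $j_{(\mathbf R_0)}$ on the neck by a term whose $(\mathbf r)$-dependence must be tracked explicitly, and the pulled-back Cauchy-Riemann operator on $\widetilde{\mathbf L}$ acquires $(\mathbf r)$-dependent coefficients. The key observation is that $\alpha_{(r_i)}$ depends smoothly (indeed, affinely outside $|s|\le R_0/2$) on $r_i$, and the transition functions for $\widetilde{\mathbf L}|_{b_{(\mathbf r)}}$ near the node are governed by the plumbing coordinates $\mathbf t \sim e^{-2r - 2\pi\sqrt{-1}\tau}$, which depend smoothly on $(r,\tau)$ for $r$ finite. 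Combined with the exponential weight $e^{\alpha|s|}$ — which decays the original $\xi$ fast enough that the contribution from the deep neck is absorbed into the Banach-space norm estimates — this yields the required smooth dependence of all coefficients.

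Putting these ingredients together, the pulled-back section $(\varphi_{(\mathbf r)}^{-1})^*Glu^{\widetilde{\mathbf L}}_{\mathbf s,(\mathbf r),h'}(e_\alpha)$ is a composition of smooth objects on the fixed Banach manifold, hence smooth in $(\mathbf s, (\mathbf r), h)$. The restriction to $\Sigma(R_0)$, where $\varphi_{(\mathbf r)}$ is the identity, then gives the last assertion immediately. Throughout, the constraints $\|\zeta\|_{\mathcal W,k,2,\alpha} \le \mathsf d$, $\|h - \hat h_{(\mathbf r)}\| < \mathsf d$, and $|\mathbf r| \ge R$ are needed to keep us inside the domain of validity of the implicit function theorem and to ensure that $P^{\widetilde{\mathbf L}}_{b,b_{(\mathbf r)}}$ and $\Psi_{j_o,j}$ remain bona fide isomorphisms.
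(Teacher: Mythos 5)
The paper itself gives no proof of Lemma \ref{smooth line}: it is quoted verbatim from the companion paper \cite{LS-2} (``In \cite{LS-2} we have proved the following lemma''). So there is no in-paper argument to compare yours against; what can be judged is whether your outline is consistent with the machinery the paper sets up around the lemma, and whether it would actually close. Your overall strategy --- transport to the fixed surface $\Sigma_{(\mathbf{R}_0)}$ via $\varphi_{(\mathbf r)}$, write $Glu^{\widetilde{\mathbf L}}$ as $(P^{\widetilde{\mathbf L}})^{-1}(I^{\widetilde{\mathbf L}}_{(\mathbf r)}+Q^{\widetilde{\mathbf L}}_{b_{(\mathbf r)}}\circ f^{\widetilde{\mathbf L}})$, and run a parametric implicit function theorem using the uniform right inverse of Lemma \ref{aright_inverse_after_gluing} --- is exactly the strategy the paper's construction is designed for, and you correctly locate the difficulty in the non-holomorphic rescaling on the neck.

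The genuine gap is in the step you label as ``where the real work lies'': the claim that smooth dependence of the coefficients of $\widehat{\mathcal F}$ on $(\mathbf r)$ yields smoothness of the pulled-back family in the Banach norm. Differentiating $(\varphi_{(\mathbf r)}^{-1})^{*}\xi$ in $r_i$ produces, on the neck, a term $\partial_s\xi\cdot\partial_{r_i}\alpha_{(r_i)}^{-1}$; for a general $\xi\in W^{k,2,\alpha}$ this lands only in $W^{k-1,2,\alpha}$, so the map $(\mathbf r)\mapsto(\varphi_{(\mathbf r)}^{-1})^{*}\xi$ is \emph{not} differentiable as a map into the fixed Sobolev space --- each parameter derivative costs an $s$-derivative. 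Your appeal to the weight $e^{\alpha|s|}$ does not address this (and note the weight grows rather than decays; it is the sections that decay): the loss is of regularity, not of size. What rescues the argument is that the objects actually being differentiated --- $I^{\widetilde{\mathbf L}}_{(\mathbf r)}e_\alpha$ with $e_\alpha$ holomorphic, and the correction $Q^{\widetilde{\mathbf L}}_{b_{(\mathbf r)}}\circ f^{\widetilde{\mathbf L}}(\cdots)$, which solves an elliptic equation with smooth data --- are $C^\infty$ sections all of whose $s$-derivatives decay exponentially along the cylinder, so the would-be derivative terms exist and are controlled; this is precisely the content of the exponential-decay estimates of the type of Theorem \ref{thm_est_mix_deri}, which your sketch invokes only implicitly. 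Without making that regularity-plus-decay input explicit (and iterating it for higher derivatives in $(\mathbf r)$, and for the mixed derivatives in $h$ through the substitution $h\mapsto h'=\exp_{u_{(\mathbf r)}}^{-1}\circ\exp_{u_{(\mathbf R_0)}}(h)\circ\varphi_{(\mathbf r)}$, which also loses derivatives on the neck), the implicit-function-theorem step does not go through as stated. The restriction statement on $\Sigma(R_0)$, where $\varphi_{(\mathbf r)}=\mathrm{id}$ and interior elliptic regularity applies, is the part of your argument that is genuinely immediate.
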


\subsection{\bf Global regularization and virtual neighborhoods}\label{global_regu}
\v

 We are going to construct a bundle map $\mathfrak{i}:\mathbf{F}\rightarrow  \E$. We first define a bundle map
$\mathfrak{i}:  {\mathbf{F}}(\mathsf{k}_{i})\rightarrow \mathcal{E}$.
Consider two different cases:
\v
{\bf Case 1.} $[b_i]$ lies in the top strata $\mathcal{M}_{g,n}(A)$. Denote $b_o=b_i$. Choose a local coordinate system $(\psi, \Psi)$ for $\mathcal{Q}$ and a local model $
\widetilde{\mathbf{O}}_{b_o}(\delta_{b_{o}},\rho_{b_{o}})/G_{b_o}$ around $[b_o]$. We have an isomorphism
\begin{equation}\label{trivialization-1}
P_{b_o,b}=\Phi\circ\Psi_{j_o,j_a}:\widetilde{\E}_{b_o}\rightarrow \widetilde{\E}_{b},\;\;\forall \;\;b\in \widetilde{\mathbf{O}}_{b_{o}}(\delta_{b_{o}},\rho_{b_{o}}).
\end{equation}
To simplify notations we denote  $\widetilde{\mathbf{F}}(\mathsf{k}_i)=\widetilde{H}$, $P_{b_o,b}=P$ in this section.
\v
Choosing a base $\{e_{\alpha}\}$ of the fiber $\widetilde{H}\mid_{b_o}$, by Lemma \ref{smoothness of bundle-1} we can get a smooth frame fields $\{e_{\alpha}\}$ for the bundle $\widetilde{H}$ over $\widetilde{\mathbf{O}}_{b_o}(\delta_{o},\rho_{o}),$ which
induces another isomorphism
\begin{equation}\label{trivialization-2}
Q:\widetilde{H}\mid_{b_o}\to \widetilde{H}\mid_{b},\;\;\;\forall \;\;b\in \widetilde{\mathbf{O}}_{b_{o}}(\delta_{b_{o}},\rho_{b_{o}})
\end{equation}
\begin{equation}\label{trivialization-2}
\sum c_{\alpha}e_{\alpha}\mid_{b_o}\longmapsto \sum c_{\alpha}e_{\alpha}\mid_{b}.
\end{equation}

\v\n
Let $\rho_{\widetilde K_{b_o}}: G_{b_o}\rightarrow GL(\widetilde{K}_{b_o})$ be the natural linear representation, and let $\rho_{\mathbb{R}}: G_{b_o}\rightarrow GL(\mathbb{R}[G_{b_o}])$ be the standard representation. Both $\widetilde{K}_{b_o}$ and $\widetilde{H}\mid_{b_o}$ can be decomposed as sum of irreducible representations.
Without loss of generality we assume that $\rho_{\widetilde K_{b_o}}$ is an irreducible representation.
%Then there is a embedding $em:\widetilde{K}_{b_o}\rightarrow \mathbb{R}[G_{b_o}]$ as linear spaces such that $em(\widetilde{K}_{b_o})$ is invariant under $\rho_{\mathbb{R}}$ and $$em\circ\rho_{K_{b_o}}=\rho_{\mathbb{R}}\circ em.$$ Note that $$em(\widetilde{K}_{b_o})\subset\mathbb{R}[G_{b_o}]\thickapprox \ll G_{b_o}\cdot v\gg\subset \widetilde{H}\mid_{b_o}.$$
Let $\eta_1,...,\eta_l$ be a base of $\widetilde{K}_{b_o}$, let $\widetilde{H}\mid_{b_o}= \bigoplus_{i=1}^m E_i$ be the decomposition of irreducible representations such that $E_1$ has base $e_1,...,e_l.$ Define map $em(\eta_i)=e_i$, $i=1,...,l$. Thus we have map $p:\widetilde{H}\mid_{b_o}\rightarrow \widetilde{K}_{b_o}$ with $p\cdot em=id$.
\v
Let $\mathbb R^+=\{x\in \mathbb R|x\geq 0\}$ and $f_{\delta_{o},\rho_{o}}:\mathbb R^+\times \mathbb R^+\rightarrow \mathbb R^+$ be a smooth cut-off function such that
\[
f_{\delta_{o},\rho_{o}}(x,y)=\left\{
\begin{array}{ll}
1 \;\;\;\;\; on \;\;\{(x,y)|\;0\leq x\leq  \delta_o/3 ,\;0\leq y\leq \rho_o/3 \},    \\  \\
0 \;\;\;\;\;
on \;\;    \{(x,y)|\;x\geq  2\delta_o/3 \}\bigcup\{(x,y)|\; y\geq  2\rho_o/3 \}.
\end{array}
\right.
\]
We define a  cut-off function $\alpha_{b_o}: \widetilde{\mathbf{O}}_{b_o}(\delta_{b_{o}},\rho_{b_{o}}) \to [0, 1]$
by
\begin{equation}\label{cut-off-1}
\alpha_{b_o}(b)=f_{\delta_{b_{o}},\rho_{b_{o}}}(d_{\mathbf{A}}^{2}(a_o,a),\|h\|_{j_{a},k,2}^{2}).
\end{equation}
For any $\kappa\in \widetilde H\mid_{b}$ with $b\in \widetilde{\mathbf{O}}_{b_o}(\delta_{b_{o}},\rho_{b_{o}})$, in terms of
the local coordinate system $(\psi, \Psi)$, we define
\[
\mathfrak{i}(\kappa,b)_{b_o}=\left\{
\begin{array}{ll}
\alpha_{b_o}(b)P\circ p\circ Q^{-1} (\kappa) \;\;\;\;\; \mbox{ if }\;\|h\|_{j_{a},k,2} < \rho_{b_{o}}, \mbox{ and } d_{\mathbf{A}}^{2}(a_{o},a)<  \delta_{b_{o}}\\  \\
0 \;\;\;\;\;otherwise.
\end{array}
\right.
\]
\begin{lemma}\label{smooth of bundle map}
In the local coordinates $(\psi,\Psi)$ on $U$ and in  $\widetilde{\mathbf{O}}_{b_o}(\delta_o,\rho_o)$ the bundle map $\mathfrak{i}(\kappa,b)_{b_o}: \widetilde{\mathbf{F}}(\mathsf{k}_i)\rightarrow \widetilde{\mathcal{E}}$ is smooth with respect to $(\kappa, a, h)$.
\end{lemma}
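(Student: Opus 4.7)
The plan is to write $\mathfrak{i}(\kappa,b)_{b_o}$ as a composition of four factors, each depending smoothly on its arguments, and then invoke the chain rule. Explicitly,
$$\mathfrak{i}(\kappa,b)_{b_o} \;=\; \alpha_{b_o}(b)\;\cdot\;\bigl(P_{b_o,b}\circ p\circ Q_b^{-1}\bigr)(\kappa),$$
so I would establish the smoothness of each of $\alpha_{b_o}$, $Q^{-1}$, $p$ and $P_{b_o,b}$ separately in the variables $(a,h,\kappa)\in\mathbf{A}\times W^{k,2}(\Sigma,u^\ast TM)\times \widetilde{H}|_{b_o}$, since the factor $\kappa\mapsto \kappa$ is linear and the product of smooth maps is smooth.

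First, I would treat the cutoff. By Lemma~\ref{smoothness of norms} the function $(a,h)\mapsto \|h\|^2_{j_a,k,2}$ is $C^\infty$ on $\widetilde{\mathbf{O}}_{b_o}(\delta_o,\rho_o)$; the Weil--Petersson distance-squared $d_{\mathbf{A}}^{2}(a_o,\cdot)$ is smooth on $\mathbf{A}$ as recalled in \S\ref{stable holo}; and $f_{\delta_{b_o},\rho_{b_o}}$ is by construction a smooth cutoff on $\mathbb{R}^+\times\mathbb{R}^+$. Hence $\alpha_{b_o}:\widetilde{\mathbf{O}}_{b_o}(\delta_o,\rho_o)\to[0,1]$ is smooth in $(a,h)$, and the support of $\alpha_{b_o}$ is compactly contained in $\{d_{\mathbf{A}}(a_o,a)<\delta_{b_o},\ \|h\|_{j_a,k,2}<\rho_{b_o}\}$, so the formula for $\mathfrak{i}(\kappa,b)_{b_o}$ is well defined and automatically extends by $0$.

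Second, I would handle the two trivializations $P_{b_o,b}$ and $Q_b$. For $P_{b_o,b}=\Phi\circ\Psi_{j_o,j_a}$: the slice $a\mapsto j_a$ is holomorphic, so the endomorphism $H(a)$ with $j_a=(I+H)j_o(I+H)^{-1}$ depends smoothly on $a$, and consequently $\Psi_{j_o,j_a}(\eta)=\tfrac12(\eta-\eta\cdot j_o j_a)$ is a smooth family of bundle isomorphisms in $a$. Parallel transport $\Phi$ along the geodesics $s\mapsto\exp_u(sh)$ with respect to $\widetilde\nabla$ is smooth in $h$ by standard ODE-with-parameters arguments on the Hilbert manifold $\widetilde{\mathcal B}$, and it acts pointwise on sections in $\widetilde{K}_{b_o}\subset C^\infty$, so the composition $P_{b_o,b}$ is smooth in $(a,h)$ into $\widetilde{\mathcal E}|_b$. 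For $Q_b$, Lemma~\ref{smoothness of bundle-1} gives a smooth frame field $\{e_\alpha(a,h)\}$ for $\widetilde{H}$ over $\widetilde{\mathbf{O}}_{b_o}(\delta_o,\rho_o)$, so the fiberwise linear map $Q_b:\sum c_\alpha e_\alpha|_{b_o}\mapsto\sum c_\alpha e_\alpha(a,h)$ is smooth in $(a,h,\kappa)$, and its inverse $Q_b^{-1}$, given by reading off the coordinates of $\kappa$ in this smooth frame, is smooth as well.

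Finally, $p:\widetilde{H}|_{b_o}\to\widetilde{K}_{b_o}$ is a fixed linear map between finite-dimensional vector spaces, hence $C^\infty$. Combining these facts, the assignment
$$(a,h,\kappa)\longmapsto \alpha_{b_o}(a,h)\,\bigl(P_{b_o,b}\circ p\circ Q_b^{-1}\bigr)(\kappa)$$
is smooth on the open set where $\alpha_{b_o}\neq 0$ and extends smoothly by zero elsewhere. The only step that requires real input is the smoothness of the frame field $\{e_\alpha(a,h)\}$, which is where we invoke Lemma~\ref{smoothness of bundle-1}; everything else is a routine parameter-dependence argument combined with the smoothness of the Weil--Petersson distance and the $\|h\|^2_{j_a,k,2}$ norm, so I expect no further obstacle.
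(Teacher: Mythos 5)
Your proposal is correct and follows essentially the same route as the paper's proof: the paper likewise reduces the claim to the smoothness of the cutoff $\alpha_{b_o}$ (via Lemma~\ref{smoothness of norms}) together with the smoothness of $P$, $p$ and $Q^{-1}$ in the fixed local coordinates, and your write-up merely spells out the parameter-dependence arguments (for $\Phi$, $\Psi_{j_o,j_a}$ and the frame field from Lemma~\ref{smoothness of bundle-1}) that the paper leaves implicit.
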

\v\n
{\bf Proof.} By Lemma \ref{smoothness of norms} we immediately obtain that the cut-off function $\alpha_{b_o}(b)$ is a smooth function. Note that, in the local coordinates $(\psi,\Psi)$,
$P$, $p$ and $Q^{-1}$ are smooth. We conclude that $\mathfrak{i}(\kappa,b)_{b_o}$ is a smooth function of $(\kappa, a,h)$.
 $\Box$
\v\v

We can transfer the definition to other local coordinate system $(\psi', \Psi')$ and local model $\widetilde{\mathbf{O}}_{b_o}(\delta'_{b_{o}},\rho'_{b_{o}})$.
Suppose that in the coordinate system $(\psi, \Psi)$
$$b_o=(a_o, u_o),\;\;b=(a, v),\;\;v=\exp_{u_o}h,$$
and in the coordinate system $(\psi', \Psi')$
$$b'_o=(a_o', u_o'),\;\;b'=(a', v'),\;\;v'=\exp_{u'_o}h',\;\;where \;[b]=[b'].$$
We have
$$(\psi'\circ \psi^{-1}, \Psi'\circ \Psi^{-1})\cdot(a,v)=(a', v'),\;\;\;a'=\psi'\circ \psi^{-1}(a),\;\;v'=v\circ (\Psi'\circ \Psi^{-1})\mid_a.$$
$$(\psi'\circ \psi^{-1}, \Psi'\circ \Psi^{-1})\cdot(a_o,u_o)=(a_o', u_o'),\;\;\;a_o'=\psi'\circ \psi^{-1}(a),\;\;u_o'=u_o\circ (\Psi'\circ \Psi^{-1})\mid_{a_o}.$$
$(\psi'\circ \psi^{-1}, \Psi'\circ \Psi^{-1})$ send $e_{\alpha}$ to $e'_{\alpha}$. Then $(\Psi'\circ \Psi^{-1})\mid_{a}$
induces an isomorphism $\varphi_a:\widetilde{H}\mid_{(a,v)}\to \widetilde{H}'\mid_{(a',v')}$.
In $(\psi', \Psi')$ we have isomorphism $$Q':\widetilde{H}'\mid_{(a'_o,u'_o)}\to \widetilde{H}'\mid_{(a',v')},\;\;\forall b\in \widetilde{O}_{b_{o}}(\delta'_{b_{o}},\rho'_{b_{o}}),$$
$$Q'=\varphi_a \circ Q\circ \varphi_{a_o}^{-1}.$$
\v\n
We have chosen a finite dimensional subspace $\widetilde{K}_{(a,v)} \subset \widetilde{\E}|_{(a,v)}$ in $(\psi,\Psi)$. Denote $\vartheta_{a}=(\Psi'\circ \Psi^{-1})\mid_{a}.$ Define $\widetilde{K}'_{(a',v')}=\{\kappa\circ  d\vartheta_{a}^{-1}|\;\;\forall \;\kappa\in \widetilde{K}_{(a,v)}\}$.  Then $(\Psi'\circ \Psi^{-1})|_{a}$ induces a map
\begin{equation}\label{K isomorphic}
\phi_{a}:\widetilde{K}_{(a,v)}\to \widetilde{K}'_{(a',v')},\;\;\;
\phi_{a}(\kappa)= \kappa\circ  d\vartheta_{a}^{-1} ,\;\;\; \forall \kappa \in \widetilde{K}_{(a,v)}.
\end{equation}
Denote $\kappa'= \phi_{a}(\kappa)$.
Define
$$P':\widetilde{\E}'_{(a_o', u_o')}\rightarrow \widetilde{\E}'_{(a',v')},\;\;\mbox{ by }\; P'=\phi_a \circ P\circ \phi_{a_o}^{-1},$$
and
$$p':\widetilde{H}'\mid_{(a_o',u_o')}\rightarrow \widetilde{K}'_{(a'_o,u'_o)},\;\;\mbox{ by }\;p'=\phi_{a_o}\circ p\circ \varphi^{-1}_{a_o}.$$
$(\Psi'\circ \Psi^{-1})|_{a}$ also induces a map  $$\lambda_{a}: G_{(a_o,u_o)}\to G_{(a_o',u_o')}\;\;\;g\longmapsto g'=d\vartheta_{a}\circ g\circ (d\vartheta_{a})^{-1}.$$
It is easy to check that $\rho_{\widetilde K_{(a_o,u_o)}}: G_{(a_o,u_o)}\rightarrow GL(\widetilde{K}_{(a_o,u_o)})$ and
$\rho_{\widetilde K_{(a'_o,u'_o)}}: G_{(a'_o,u'_o)}\rightarrow GL(\widetilde{K}'_{(a'_o,u'_o)})$ are equivariant. Let
$$\eta'_i=\phi_{a}(\eta_i),\;\;e'_i=\varphi_{a}(e_i),\;\;em'(\eta'_i)=e'_i,\;\;i=1,2,...,l.$$
Then $em'(\widetilde{K}'_{(a'_o,u'_o)})=span\{e'_1,...e'_l\}\subset \widetilde{H}'\mid_{(a'_o,u'_o)}$. In the coordinate system $(\psi', \Psi')$
we define
\[
\mathfrak{i}(\kappa',b')_{b'_o}=\left\{
\begin{array}{ll}
\alpha_{b'_o}(b')P'\circ p'\circ (Q')^{-1} (\kappa') \;\;\;\;\; \mbox{ if }\;\|h\|_{j_{a'},k,2 }< \rho_{b'_{o}}, \mbox{ and } d_{\mathbf{A'}}^{2}(a'_{o},a')<  \delta_{b'_{o}}\\  \\
0 \;\;\;\;\;otherwise.
\end{array}
\right.
\]
We have
\begin{equation}\label{coord trans forbundle map}
\mathfrak{i}(\kappa',b')_{b'_o}=
\phi_a\circ\mathfrak{i}(\kappa,b)_{b_o}\circ\varphi^{-1}_a.
\end{equation}
If we choose three local coordinate systems $(\psi, \Psi)$,
$(\psi', \Psi')$ and $(\psi'', \Psi'')$,
since
$$
(\Psi\circ (\Psi'')^{-1})\circ(\Psi''\circ (\Psi')^{-1})\circ(\Psi'\circ \Psi^{-1})=Id,
$$
one can easily check that
\begin{equation}\label{coord trans equality}
\phi''_{a''}\phi'_{a'}\phi_a=Id,\;\;\; \varphi''_{a''}\varphi'_{a'}\varphi_a=Id.
\end{equation}
It follows from \eqref{coord trans forbundle map} and \eqref{coord trans equality} that the bundle map
$\mathfrak{i}:  {\mathbf{F}}(\mathsf{k}_i)\rightarrow \mathcal{E}$ is well defined. Obviously, $\mathfrak{i}([\kappa_{i},b])=[\mathfrak{i}(\kappa_{i},b)].$
\v\v
\begin{remark}\label{restri}
Let $(\psi', \Psi')$ be a local coordinate system in $\mathbf{O}_{[b'_o]}(\delta'_{[b'_{o}]},\rho'_{[b'_{o}]})\subset \mathbf{O}_{[b_o]}(\delta_{[b_{o}]},\rho_{[b_{o}]})$ such that $[b_o] \notin  \mathbf{O}_{[b'_o]}(\delta'_{[b'_{o}]},\rho'_{[b'_{o}]})$. The restriction of $[\mathfrak{i}(\kappa,b)_{b_o}]$ to $\mathbf{O}_{[b'_o]}(\delta'_{[b'_{o}]},\rho'_{[b'_{o}]})$ is a element in
$\E|_{\mathbf{O}_{[b'_o]}(\delta'_{[b'_{o}]},\rho'_{[b'_{o}]})}$. We can transfer it to $(\psi', \Psi')$ by \eqref{K isomorphic}.
\end{remark}

\v\v
{\bf Case 2.} $[b_i]$ lies in lies in a lower strata.
We choose $(\fs,\ft)$ coordinates. Put $\ft_{i}=e^{-2r_{i}-2\pi\tau_{i}}$, sometimes we use $(\fs, (\mathbf{r}))$ coordinates, where $(\mathbf{r})=((r_1,\tau_1),...,(r_{\mathfrak{e}},\tau_{\mathfrak{e}}))$.
Denote $b_{o}=b_i=(0,0,u)$, $\mathbf{F}(\mathsf{k}_i)=H(\fs,\ft)$, $\mathbf{F}(\mathsf{k}_i )\mid_{b_i}=H(0,0)$. We choose $|\fs|$, $|
\ft|$ small enough. In terms of $(\fs,\ft)$ we have an isomorphism

$$P:\bar{\E}_{b_o}\rightarrow \bar{\E}_{b},\;\;\forall \;\;b\in \widetilde{\mathbf{O}}_{b_{o}}(\delta_o,\rho_o).$$
Denote $\bar{H}=\{\zeta\mid_{\Sigma(R_0)} \mid \zeta \in \widetilde{H}\}$. Choosing a base $\{e_{\alpha}\}$ of the fiber $\bar{H}\mid_{b_o}$, by \eqref{equivariant frame field}
we can get a frame fields $\{e_{\alpha}((\mathbf{r}), a, h)\mid_{\Sigma(R_0)}\}$ for the bundle $\bar{H}$ over $\widetilde{\mathbf{O}}_{b_o}(\delta_{o},\rho_{o}).$
We have another isomorphism in the $(\fs,\ft)$ coordinates
$$ Q:\bar{H}(0,0)\to \bar{H}(\fs,\ft),\;\;\;\forall \;\;b\in \widetilde{\mathbf{O}}_{b_{o}}(\delta_o,\rho_o).$$
Denote $\mathbf{O}(\delta_o)=\{p\in \overline{\mathcal{M}}_{g,n}\mid d^2_{\mathsf{wp}}(0,p)< \delta_o\}$. Since $\overline{\mathcal{M}}_{g,n}$ has a natural effective orbifold structure, we can choose a smooth cut-off function in orbifold sense $\beta^\bullet_{\delta_{o}}: \mathbf{O}(\delta_o) \to [0, 1]$ such that
$$\beta^\bullet_{\delta_{o}}|_{\mathbf{O}(\delta_o/3)}=1,\;\;\;\;\beta^\bullet_{\delta_{o}}|_{\mathbf{O}(\delta_o)\setminus\mathbf{O}(2\delta_o/3)}=0. $$

\v\n
We define a  cut-off function $\alpha_{b_o}: \widetilde{\mathbf{O}}_{b_o}(\delta_o,\rho_o) \to [0, 1]$
by
\begin{equation}\label{cut-off-2}
\alpha_{b_o}(b)=f_{\delta_o,\rho_o}(\beta^\bullet_{\delta_{o}}(\fs,\ft) ,\|\beta_{R_{0}}h
\|_{j_{\fs,\ft},k,2}^{2}),\end{equation}
where $\beta_{R_{0}}$ is the function in \eqref{def_beta1}.
Using $\alpha_{b_o}(b)$ defined in \eqref{cut-off-2},
we can define the bundle map
$\mathfrak{i}:  {\mathbf{F}}(\mathsf{k}_i)\rightarrow \bar{\mathcal{E}}$ by
\[
\mathfrak{i}(\kappa,b)_{b_o}=\left\{
\begin{array}{ll}
\alpha_{b_o}(b)P\circ p\circ Q^{-1} (\kappa) \;\;\;\;\; \mbox{ if }\;\|h\|_{j_{a},k,2} < \rho_{b_{o}}, \mbox{ and }
\beta_{\delta_{o}}(\fs,\ft)<  \delta_{b_{o}}\\  \\
0 \;\;\;\;\;otherwise.
\end{array}
\right.
\]
For any fixed $(\mathbf{r})$, $\mathfrak{i}(\kappa,b)_{b_o}$
and $Q$ are smooth with respect to $(\mathbf{s},h)$ in the coordinates $(\mathbf{s},(\mathbf{r}))$. In order to study the smoothness with respect to $(\mathbf{r})$ we
note that $\mathfrak{i}(\kappa,b)_{b_o}$ is supported in $\Sigma(R_0)$. For any $v=\exp_{u_{(\mathbf{r})}}h$, we let   $$h^\circ=\left((h-\hat h_{0})(s_1,t_1)\beta_{1;2}(s_1),
(h-\hat h_{0})(s_2,t_2)\beta_{2;2}(s_{2})\right),$$
where
$$h_0=\int_{ {S}^1}h(r,t)dt.$$
Denote $v^\circ=\exp_u h^\circ$.
We can view $\bar{\E}\mid_v$ to be $\bar{\E}\mid_{v^\circ}$.
Then we view $P$ to be a family of operators in $\E$ over $W^{k,2}(\Sigma;u^{\ast}TM)$, where $\E\to W^{k,2}(\Sigma;u^{\ast}TM)$ is independent of $(\mathbf{r})$.
Consider the map
$$\mathfrak{i}(\kappa,b)_{b_o}\circ Q:  \bar{H}(0,0)\times \mathbf{A}\times D_{R_{0}}^{*}(0)\times W^{k,2}(\Sigma;u^{\ast}TM)
\rightarrow \E$$
$$\mathfrak{i}(\kappa,b)_{b_o}\circ Q(\kappa, \mathbf{s},(\mathbf{r}),h)  =\alpha_{b_o}(\mathbf{s},(\mathbf{r}),v)P\circ p(\kappa).
$$

 \v
\begin{lemma}\label{smooth of bundle map-1}
In the local coordinates $(\mathbf{s},(\mathbf{r}))$, the bundle map $\mathfrak{i}(\kappa,b)_{b_o}\circ Q$ is smooth with respect to $(\kappa, \mathbf{s}, (\mathbf{r}), h)$ in  $\widetilde{\mathbf{O}}_{b_o}(\delta_o,\rho_o)$.
\end{lemma}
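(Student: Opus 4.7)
The plan is to decompose the expression
$$\mathfrak{i}(\kappa,b)_{b_o}\circ Q(\kappa,\mathbf{s},(\mathbf{r}),h) = \alpha_{b_o}(\mathbf{s},(\mathbf{r}),v)\,P\circ p(\kappa)$$
into its three factors and verify smoothness of each separately, exploiting the crucial fact that the output is supported in the fixed compact region $\Sigma(R_0)$, so that the degeneration of $\Sigma_{\fs,\ft}$ at the nodes plays no role.

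First I would dispose of $p$: it is a fixed linear projection from $\bar H|_{b_o}$ onto $\widetilde{K}_{b_o}$ between finite-dimensional vector spaces, independent of $(\mathbf{s},(\mathbf{r}),h)$, so $p(\kappa)$ is manifestly linear (and hence smooth) in $\kappa$ alone. Moreover every element of $\widetilde{K}_{b_o}$ is supported in $\Sigma(R_0)$ by the construction in \S\ref{without bubble tree}, so subsequent analysis can be confined to this fixed compact subsurface.

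Next, for $P = \Phi\circ\Psi_{j_o,j_{\fs,\ft}}$: since the input $p(\kappa)$ is supported on $\Sigma(R_0)$, I may regard $\Psi_{j_o,j_{\fs,\ft}}$ as an endomorphism of sections on the fixed surface $\Sigma(R_0)$ depending linearly on $j_{\fs,\ft}$, and the assignment $(\fs,\ft)\mapsto j_{\fs,\ft}|_{\Sigma(R_0)}$ is smooth by the plumbing construction of \S\ref{gluing}. For $\Phi$, I would invoke the identification $v\leftrightarrow v^\circ = \exp_u(h^\circ)$ introduced just before the lemma, so that $\Phi$ becomes parallel transport along the geodesic $s\mapsto \exp_u(sh^\circ)$ in the fixed bundle $u^*TM\to\Sigma$, depending smoothly on $h^\circ$ (hence on $(\mathbf{r},h)$ through the explicit formula for $h^\circ$ in terms of $h,\beta_{1;2},\beta_{2;2}$) by standard ODE smoothness. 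Composing, $P\circ p(\kappa)$ is smooth in $(\kappa,\mathbf{s},(\mathbf{r}),h)$.

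The remaining and most substantive step is the smoothness of the cut-off $\alpha_{b_o}(b) = f_{\delta_o,\rho_o}\bigl(\beta^\bullet_{\delta_{o}}(\fs,\ft),\,\|\beta_{R_{0}}h\|^2_{j_{\fs,\ft},k,2}\bigr)$. The function $f_{\delta_o,\rho_o}$ is $C^\infty$ by construction, and $\beta^\bullet_{\delta_o}$ is smooth on the orbifold $\overline{\mathcal{M}}_{g,n}$; the real content is the joint smoothness of the weighted norm $\|\beta_{R_{0}}h\|^2_{j_{\fs,\ft},k,2}$. The main obstacle here is the potential degeneration of $\mathbf{g}_{\fs,\ft}$ as $(\ft)\to 0$, but this is precisely neutralized by the factor $\beta_{R_{0}}$: the integrand is supported in $\Sigma(R_0)$, a fixed compact region on which $\mathbf{g}_{\fs,\ft}$ varies smoothly with $(\fs,\ft)$ (the plumbing deformations concentrate in neighborhoods of the nodes, outside $\Sigma(R_0)$). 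The integrand becomes a polynomial in the components of $h$ and its covariant derivatives up to order $k$, with coefficients built from the smoothly varying metric and connection on the fixed domain, so the argument of Lemma \ref{smoothness of norms} for the top strata transcribes directly to $\Sigma(R_0)$, yielding joint smoothness and completing the proof.
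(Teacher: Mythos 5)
Your decomposition into the cut-off factor, the fixed projection $p$, and the transport $P=\Phi\circ\Psi_{j_o,j_{\fs,\ft}}$, together with the observation that everything is supported in the fixed compact piece $\Sigma(R_0)$ so that the nodal degeneration is irrelevant, is exactly the route the paper takes; your handling of $p$, of $\Psi_{j_o,j_{\fs,\ft}}$, and of the cut-off (transcribing Lemma \ref{smoothness of norms} to $\Sigma(R_0)$) matches the intended argument. The one place you are too quick is the assertion that $\Phi$ depends on $h^{\circ}$ ``by standard ODE smoothness.'' Pointwise smoothness of parallel transport in the initial vector $h(x)$ does not by itself give Fr\'echet smoothness of $h\mapsto \Phi(h)$ as an operator-valued map on the Sobolev space $W^{k,2}$; one must verify that the iterated directional derivatives are \emph{bounded multilinear operators} between the relevant Sobolev spaces. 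That verification is essentially the entire substantive content of the paper's proof, which sets $b_{\mathbf t}=(\mathbf{s},\exp_u(h+\sum_i t_i h_i))$, forms $T^{l}(h;h_1,\dots,h_l)=\nabla_{t_1}\cdots\nabla_{t_l}\left.\left(P_{b_o,b_{\mathbf t}}\right)\right|_{\mathbf t=0}$, and shows by the method of Lemma 3.1 of \cite{LS-2} that each $T^{l}$ is bounded. Your argument becomes complete once the ODE appeal is replaced by this boundedness check (an omega-lemma-type statement, valid here because $k>4$ gives $W^{k,2}\hookrightarrow C^{0}$ on the surface, so left composition with smooth fiberwise maps is smooth on $W^{k,2}$).
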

\v\n
{\bf Proof.} $\alpha_{b_o}(\mathbf{s},(\mathbf{r}),v)$ is smooth
with respect to $(\mathbf{s}, (\mathbf{r}), h)$. For any $l\in Z^{+},$ denote $b_{\ft}=(\mathbf{s},\exp_{u}(h+\sum_{i=1}^{l}t_{l}h_{l} ))$
 and
$$
T^{l}(h;h_{1},\cdots,h_{l})=\nabla_{t_{1}}\cdots\nabla_{t_{l}}
\left.\left(P_{b_o,b_{\mathbf{t}}}\right)\right|_{\ft=\mathbf 0}
$$
By the same method as in the proof of Lemma 3.1 of \cite{LS-2} we can show that
$T^{l}(h;\cdot\cdot\cdot)$ is a bounded linear operator. The proof is complete.  \;\;$\Box$

\v
By {\bf Case 1}, {\bf Case 2} we have defined $\mathfrak{i}([\kappa_i,b])_i$ for all $i=1,...,\mathfrak{m}$. Set
$$\mathfrak{i}([\kappa,b])=\sum_{l=1}^{\mathfrak{m}} \mathfrak{i}([\kappa_l,b])_{l}\;\;
for\; any\; \kappa=(\kappa_1,...,\kappa_{\mathfrak{m}})\in \mathbf{F}\mid_{b}.$$ Then $\mathfrak{i}:\mathbf {F}\to \mathcal{E}$ is a bundle map.
We define a global regularization to be the bundle map $\mathcal{S}:\mathbf{F}\to  \E$
$$\mathcal{S}([\kappa,b])
=[\bar{\partial}_{j,J}v] + \mathfrak{i}([\kappa,b]).
$$
It is obvious that $D\mathcal{S}$ is surjective.
 Denote $\mathsf{p}:\mathbf F\to \mathcal U $ by the projection of the bundle. Set
$$\mathbf{U}=\mathcal{S}^{-1}(0)|_{\mathsf{p}^{-1}(\mathcal{U})}.$$
By restricting the bundle $\mathsf{p}^*\mathbf{F}$ to $\mathbf{U}$ we have a bundle $\mathsf{p}:\mathbf{E}\to \mathbf{U}$ of finite rank with a canonical section $\sigma$ defined by
$$\sigma([(\kappa,b)])=( [((\kappa,b)  , \kappa)]),\;\;\;\;\forall \;[(\kappa,b)]\in \mathbf {U}.$$ We call
$$(\mathbf{U},\mathbf{E},\sigma),$$  a virtual neighborhood for $\overline{\mathcal{M}}_{g,n}(A)$.
\v

\v
\section{\bf Smoothness of the top strata}\label{top strata}

{\bf Proof of Theorem \ref{Smooth}}
\v\n
The proof is divided into two steps, the subsections
\S\ref{smoothness-1} and \S\ref{smoothness-2}.
\v
\subsection{\bf Smoothness}\label{smoothness-1}
\v
\v
Let $[(\kappa_{o},b_{o})]\in \mathbf{U}^T$. To simplify notations we consider the following case, for the general case the argument are the same. We assume that
$$[b_o]\in \mathbf{O}_{[b_{1}]}(2\delta_1/3,2\rho_1/3)\bigcap
\mathbf{O}_{[b_{2}]}(2\delta_2/3,2\rho_2/3)$$
and
$$[b_o]\notin \overline{\mathbf{O}}_{[b_{i}]}(2\delta_i/3,2\rho_i/3)\;\;\;\forall i=3,...,\mathfrak{m}.$$
We choose a local coordinate system  $(\psi, \Psi)$ for $\mathcal{Q}$ and local model $
\widetilde{\mathbf{O}}_{b_o}(\delta_o,\rho_o)/G_{b_o}$ around $b_o$. Let $b_o=(a_o,u)$, and let $\widetilde{\mathbf{U}}^T$ be the local expression of $\mathbf{U}^T$ in terms of $(\psi, \Psi)$. We choose $(\delta_o,\rho_o)$ so small that
$$\mathbf{O}_{[b_o]}(\delta_o,\rho_o)\notin \mathbf{O}_{[b_{i}]}(2\delta_i/3,2\rho_i/3)\;\;\;\forall i=3,...,\mathfrak{m}.$$
Then we only need to consider the bundles ${\mathbf{F}}(\mathsf{k}_{1})$ and ${\mathbf{F}}(\mathsf{k}_{2})$.
We consider two different cases.
\v
{\bf Case 1.} Both $[b_1]$ and $[b_2]$ lie in the top strata. By Remark \ref{finite covering} we may assume that
both $\mathbf{O}_{[b_{1}]}(2\delta_1/3,2\rho_1/3)$ and $\mathbf{O}_{[b_{2}]}(2\delta_2/3,2\rho_2/3)$ lie in the top strata. Let
$$b_1=(a_1, u_1)\;\;in\;(\psi_1, \Psi_1),\;\;\;\;b_2=(a_2, u_2)\;in \;(\psi_2, \Psi_2).$$
In terms of the coordinate system $(\psi, \Psi)$, let $b=(a,v)\in \widetilde{\mathbf{O}}_{b_o}(\delta_o,\rho_o)$. Suppose that, in the coordinate system $(\psi_1, \Psi_1)$,
$$[b']=[b],\; b'=(a', v'),\;\; v'=\exp_{u_1}h_1,$$
and in the coordinate system $(\psi_2, \Psi_2)$,
$$[b'']=[b],\;\;b''=(a'', v''),\;\; v''=\exp_{u_2}h_2.$$
The bundle maps are given respectively by
$$\mathfrak{i}(\kappa_1,b')_{b_1}=\alpha_{b_1}(b)P_1\circ p_1\circ Q_1^{-1}(\kappa_1): (\widetilde{H}_1)\mid_{b'}\to \widetilde{K}_1\mid_{b'}\;\;\;in\; (\psi_1, \Psi_1),$$
$$\mathfrak{i}(\kappa_2,b'')_{b_2}=\alpha_{b_2}(b)P_2\circ p_2\circ Q_2^{-1}(\kappa_2):(\widetilde{H}_2)\mid_{b''}\to \widetilde{K}_2\mid_{b''}\;in\;(\psi_2, \Psi_2),$$
where $P_1=P_{b_1,b'}$ in $(\psi_1, \Psi_1)$, $P_2=P_{b_2,b''}$ in $(\psi_2, \Psi_2)$. By Lemma \ref{smooth of bundle map}, $\mathfrak{i}(\kappa_1,b)_{b_1}$ in $(\psi_1, \Psi_1)$ ( resp. $\mathfrak{i}(\kappa_2,b)_{b_2}$ in $(\psi_2, \Psi_2)$ ) is smooth with respect to $(\kappa_1,b)$ ( resp. $(\kappa_2,b)$).
\v
We transfer from both the local coordinate systems $(\psi_1, \Psi_1)$ and $(\psi_2, \Psi_2)$ to the coordinates $(\psi, \Psi)$. We have
$$(\psi\circ \psi_1^{-1}, \Psi\circ \Psi_1^{-1})\cdot(a',v')=(a, v),\;\;\;a=\psi\circ \psi_1^{-1}(a'),\;\;v=v'\circ (\Psi\circ \Psi_1^{-1})\mid_{a'},$$
$$(\psi\circ \psi_2^{-1}, \Psi\circ \Psi_2^{-1})\cdot(a'',v'')=(a, v),\;\;\;a=\psi\circ \psi_2^{-1}(a''),\;\;v=v''\circ (\Psi\circ \Psi_2^{-1})\mid_{a''}.$$
The $(\psi\circ \psi_i^{-1}, \Psi\circ \Psi_i^{-1})$, $i=1,2$, induces maps
$$\phi^1_{a'}:\widetilde{K}_1\to \widetilde{K}^\diamond_1,\;\;
\phi^2_{a''}:\widetilde{K}_2\to \widetilde{K}^\diamond_2$$
$$\varphi^1_{a'}:\widetilde{H}_1\to \widetilde{H}^\diamond_1,\;\;\varphi^2_{a''}:\widetilde{H}_2\to \widetilde{H}^\diamond_2.$$
Put
$$\widetilde{H}^\diamond=(\widetilde{H}^\diamond_1)\mid_{b_o}\oplus (\widetilde{H}^\diamond_2)\mid_{b_o},\;\;\kappa=(\kappa_1,\kappa_2)\in \widetilde{H}^\diamond,\;\;(Q_1^\diamond \kappa_1, Q_2^\diamond \kappa_2):=Q^\diamond \kappa.$$
Here $\tilde{H}^\diamond $, $\widetilde{K}^\diamond$ and $Q^\diamond$ denote the spaces and operator in $(\psi, \Psi)$.
By Remark \ref{restri} the bundle map in $(\psi, \Psi)$ becomes
$$\mathfrak{i}(\kappa,b)=\mathfrak{i}(\kappa_1,b')_{b_1}\circ  d\vartheta_{1}^{-1}
 +\mathfrak{i}(\kappa_2,b'')_{b_2}\circ  d\vartheta_{2}^{-1},
$$
where $\vartheta_{1}=(\Psi\circ \Psi_1^{-1})\mid_{a'}$, $\vartheta_{2}=(\Psi\circ \Psi_2^{-1})\mid_{a''}$. The key point is that $\Psi\circ \Psi_i^{-1}$, $i=1,2$, is a family of diffeomorphisms of $\Sigma$ depending on $a$.
For $v\in W^{k,2}$, $\frac{\p}{\p a}(v\circ\Psi\circ \Psi_i^{-1})$ is not in $W^{k,2}$. But for any fixed $a$ , $\mathfrak{i}(\kappa,b)$ and $Q_1^\diamond$, $Q_2^\diamond$ are smooth.
\v\n
Consider the map
$$ F_{(\kappa_o,b_o)}: \mathbf{A}\times \widetilde{H}^\diamond \times W^{k,2}(\Sigma;u^{\ast}TM)
\rightarrow W^{k-1,2}(u^{\ast}TM\otimes \wedge_{j_o}^{0,1})$$
$$F_{(\kappa_o,b_o)}(a,\kappa,h)=P_{b,b_o}\left(\bar{\partial}_{j_a,J}
v + \mathfrak{i}(Q^\diamond\kappa, b)
\right),$$ where  $b=(a,v)$, $v=\exp_{u}(h) $ for some
$h\in W^{k,2}(\Sigma, u^*TM)$. For any  $(a,\kappa ,h)\in F_{(\kappa_o,b_o)}^{-1}(0)$ we have
\begin{equation}\label{PDE}
 \bar{\partial}_{j_a,J}v + \mathfrak{i}(Q^\diamond\kappa,b)=0,
\end{equation}
 where $b=(a,v)$.
For any fixed $a$, it follows from the standard elliptic estimates and the smoothness of $\mathfrak{i}$ that $v\in C^{\infty}(\Sigma,M).$ Then by Lemma \ref{smooth of bundle map} and the smoothness of the frame field $e_{\alpha}$ we
conclude that $\mathfrak{i}\mid_v$ and $Q^\diamond\mid_v$ are smooth with respect to $(a,\kappa,h)$.
It is easy to see that
$F_{(\kappa_o,b_o)}(a, \kappa,h)$ is smooth with respect to $(a,\kappa,h)$. Then we use the implicity theorem with parameter $a$
to conclude that $v$ is smooth with respect to $(a,\kappa,h)$. It follows that $\widetilde{\mathbf{U}}^T\bigcap \mathsf{p}^*\widetilde{\mathbf{O}}_{b_o}(\delta_o,\rho_o)$ is smooth, where $\mathsf{p}:\widetilde{\mathbf{U}}^T\to \widetilde{ \mathcal B}$ is the projection.
\v
\v
{\bf Case 2.} $[b_2]$ lies in the top strata, $[b_1]$ lies in a lower strata. Without loss of generality we assume that $b_1=(\Sigma, j, {\mathbf{y}}, u)$, where $\Sigma$ has one node $q$, $\mathbf{s}_o\in \mathbf{A}=\mathbf{A_1\times \mathbf{A}}_2$. We glue $\Sigma$ at $q$ with gluing parameter $(r)$. We have bundle maps $\mathfrak{i}(\kappa_1,b)_{b_1}=\alpha_{b_1}(b)P_1\circ p_1\circ Q_1^{-1}(\kappa_1)$ and $\mathfrak{i}(\kappa_2,b')_{b_2}=\alpha_{b_2}(b')P_2\circ p_2\circ Q_2^{-1}(\kappa_2)$.
Then we transfer to the coordinates $(\psi, \Psi)$, and choose $(\mathbf{s}, \mathbf{t})$-coordinates. We use Lemma \ref{smooth of bundle map-1} and the same method as in {\bf Case 1} to prove that $v$ is smooth with respect to $(\mathbf{s}, (\mathbf{r}),\kappa,h)$. Then we use Lemma \ref{smooth line} to prove that $Q_1$ is smooth with respect to $(\mathbf{s}, (\mathbf{r}),\kappa,h)$. Then we can prove
the smoothness of $\widetilde{\mathbf{U}}^T\bigcap \mathsf{p}^{*}\widetilde{\mathbf{O}}_{b_o}(\delta_o,\rho_o)$.
\v
The proof of the orientation of $\mathbf{U}^T$ is standard, we omit here.

\subsection{The oribifold structure}\label{smoothness-2}

We introduce a notation. For any $(\kappa_o, b_o)\in \mathbf{U}$ we choose a local coordinate system  $(\psi, \Psi)$ on $U\ni a_o$ and local model $\widetilde{\mathbf{O}}_{b_o}(\delta_o,\rho_o)/G_{b_o}$.
Set
$$\widetilde{\mathbf{U}}_{\kappa_o,b_o}(\varepsilon,\delta_o,\rho_o) =\left\{(\kappa, b)\in \widetilde{\mathbf{U}}\mid |\kappa-\kappa_{o}|_{\mathbf{h}}<\varepsilon,
b\in \widetilde{\mathbf{O}}_{b_o}(\delta_o,\rho_o)\right\},$$
$$\mathbf{U}_{\kappa_{o},b_{o}}(\varepsilon,\delta_o,\rho_o)
=\widetilde{\mathbf{U}}_{\kappa_{o},b_{o}}(\varepsilon,\delta_o,\rho_o)/G_{\kappa_o,b_o},$$
where $G_{\kappa_o,b_o}$ is the isotropy group at $(\kappa_o,b_o)$. For any $(\kappa,b)\in
\widetilde{\mathbf{U}}_{\kappa_{o},b_{o}}(\varepsilon,\delta_o,\rho_o)$
denote by $G_{\kappa,b}$ the isotropy group at $(\kappa,b)$.  Any element $\varphi\in G_{\kappa,b}$ satisfies
$\varphi^*(\kappa, b)=(\kappa, b).$
It follows that $G_{\kappa,b}$ is a subgroup of $\mathbf{G}_a$.

\v

\begin{lemma}\label{orbi structure} Let $[(\kappa_{o},b_{o})]\in \mathbf{U}^T$.
Suppose that $\widetilde{\mathbf{U}}_{\kappa_{o},b_{o}}(\varepsilon,\delta_o,\rho_o)\subset \mathbf{U}^T.$ The following hold
\begin{itemize}
\item[(1)] For any $p\in\widetilde{\mathbf{U}}_{\kappa_{o},b_{o}}(\varepsilon,\delta_o,\rho_o)$
let $G_{p}$ be the isotropy group at $p$, then $im(G_{p})$ is a subgroup of $G_{\kappa_o,b_o}$.
\item[(2)] Let $p\in\widetilde{\mathbf{U}}_{\kappa_{o},b_{o}}(\varepsilon,\delta_o,\rho_o)$
be an arbitrary point with isotropy group $G_p$, then there is a $G_p$-invariant neighborhood $O(p)\subset \widetilde{\mathbf{U}}_{\kappa_{o},b_{o}}(\varepsilon,\delta_o,\rho_o)$
such that for any $q\in O(p)$, $im(G_{q})$ is a subgroup of $G_{p}$, where $G_p$, $G_q$ denotes the isotropy groups at $p$ and $q$ respectively.
\end{itemize}
\end{lemma}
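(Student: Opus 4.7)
The plan is to mirror the contradiction/compactness argument used in the proof of Lemma \ref{lem_orbi_T}, with the new twist being that the isotropy data must track the extra coordinate $\kappa$ in the fiber of $\mathbf{F}$, not just the map $b$. The starting observation is structural: since any $\varphi\in G_{\kappa,b}$ satisfies $\varphi^{*}(\kappa,b)=(\kappa,b)$, we have in particular $\varphi^{*}b=b$, so $G_{\kappa,b}$ is automatically a subgroup of the isotropy group $G_{b}$ at the underlying stable map, which is in turn a subgroup of $\mathbf G_{a}$. Thus Lemma \ref{lem_orbi_T} will provide control of $G_b$, and we only have to upgrade that control to remember $\kappa$.

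For part (1), I would argue by contradiction: suppose no $(\varepsilon,\delta_{o},\rho_{o})$ works. Then one obtains a sequence $p_{i}=(\kappa_{i},b_{i})\in\widetilde{\mathbf U}^{T}$ with $|\kappa_{i}-\kappa_{o}|_{\mathbf h}\to 0$, $b_{i}\to b_{o}$ in $\widetilde{\mathbf O}_{b_{o}}$, and with $im(G_{p_{i}})\not\subset G_{\kappa_{o},b_{o}}$. By Lemma \ref{lem_orbi_T}(1) applied to $b_{i}\to b_{o}$, the groups $im(G_{b_{i}})$ are all subgroups of the finite group $G_{b_{o}}$, so a fortiori $im(G_{p_{i}})\subset G_{b_{o}}$. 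Since $G_{b_{o}}$ has finitely many subgroups, after passing to a subsequence we may assume $im(G_{p_{i}})$ stabilizes to a single subgroup $H\subset G_{b_{o}}$. For every $g\in H$ we have $g\cdot\kappa_{i}=\kappa_{i}$ and $g\cdot b_{i}=b_{i}$; using the continuity of the $G_{b_{o}}$-action on the finite-rank (orbi-)bundle $\mathbf F$ (which follows from the smooth frame-field construction of \S\ref{global_r} together with Lemma \ref{smoothness of bundle-1} and Remark \ref{isotropy group}) and letting $i\to\infty$ one obtains $g\cdot\kappa_{o}=\kappa_{o}$ and $g\cdot b_{o}=b_{o}$. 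Hence $H\subset G_{\kappa_{o},b_{o}}$, contradicting our assumption.

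For part (2) the very same argument is carried out with $p$ in place of $(\kappa_{o},b_{o})$: the proof of (1) shows that the centralizing pair $(\delta,\rho,\varepsilon)$ used to cut out the neighborhood is produced by a compactness argument at the base point, so applying (1) with base point $p$ (which still lies in $\mathbf U^{T}$, so the Banach-space norm $|\cdot|_{\mathbf h}$ on the fiber and the $G_{p}$-action are well defined at $p$) yields a $G_{p}$-invariant neighborhood $O(p)$ of the required form. The $G_{p}$-invariance of $O(p)$ comes from choosing the defining inequalities ($d_{\mathbf A}$, the Sobolev norm on $h$, and $|\kappa-\kappa'|_{\mathbf h}$) in $Diff^{+}(\Sigma)$-invariant quantities, which they already are by construction.

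The main obstacle I expect is not conceptual but technical: one must verify that passing to the limit $\kappa_{i}\to\kappa_{o}$ in $g\cdot\kappa_{i}=\kappa_{i}$ is legitimate even though different $g\in G_{b_{o}}$ a priori correspond to different lifts via $im$ through the various fibers $\mathbf F|_{b_{i}}$. This is handled by using the $G_{b_{o}}$-equivariant smooth frame field $\{e_{\alpha}(a,h)\}$ of Lemma \ref{smoothness of bundle-1}/Remark \ref{isotropy group}: in this frame the action of each fixed $g$ on the fiber is represented by the \emph{same} matrix independent of the base point in $\widetilde{\mathbf O}_{b_{o}}(\delta_{o},\rho_{o})$, so the equality $g\cdot\kappa_{i}=\kappa_{i}$ passes smoothly to the limit. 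With this understood, the contradiction argument and the orbifold structure on $\mathbf U^{T}$ follow directly.
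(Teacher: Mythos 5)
Your proposal is correct and follows essentially the same route as the paper: a contradiction/compactness argument with a sequence $(\kappa_i,b_i)$ in shrinking neighborhoods, reduction to subgroups of the finite group $\mathbf{G}_{a_o}$ (so that after passing to a subsequence the isotropy groups stabilize), and a limiting argument showing the stabilized group fixes $(\kappa_o,b_o)$. The only cosmetic difference is that you factor the control of the $b$-component through Lemma \ref{lem_orbi_T} and handle the $\kappa$-limit via the equivariant frame field, whereas the paper redoes the base argument directly and invokes Sobolev embedding and elliptic estimates for the $C^{\ell}$ convergence of $(\kappa_i,u_i)$; these are interchangeable implementations of the same idea.
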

\v\n
{\bf Proof:} We only prove (1), the proof of (2) is similar. Denote $b_o=(a_o,u)$.
If the lemma not true, we can find a sequence $(\kappa_i,b_i)=(\kappa_i,a_i,u_i)\in \widetilde{\mathbf{U}}_{\kappa_{o},b_{o}}(\varepsilon,\delta_o,\rho_o)$
such that
\v
{(1)} $\delta_i\to 0$, $\rho_i\to 0$, $\kappa_i\to \kappa_o$,
\v
{(2)} $im(G_{\kappa_i,b_i})$ is not a subgroup of $G_{\kappa_o,b_o}$.
\v\n
It is obvious that $G_{\kappa_o,b_o}$ is a subgroup of $\mathbf{G}_{a_o}$, $G_{\kappa_i,b_i}$ is a subgroup of $\mathbf{G}_{a_i}$ and $\mathbf{G}_{a_i}$ can be imbedded into $\mathbf{G}_{a_o}$ as a subgroup for $i$ large enough. So we can view $im(G_{\kappa_i,b_i})$ as a subgroup of $\mathbf{G}_{a_o}$,  By choosing subsequence we may assume that $im(G_{\kappa_i,b_i})$ convergies to a subgroup $G_{\kappa,b}$ of $\mathbf{G}_{a_o}$ and $im(G_{\kappa_i,b_i})\cdot u_i$ converges to $im(G_{\kappa,b})\cdot u$ and $u_i$ converges to $u$ in $W^{k,2}$. By Sobolev imbedding theorem and elliptic estimates we have $im(G_{\kappa_i,b_i})\cdot (\kappa_i,u_i)$ converges to $im(G_{\kappa,b})\cdot (\kappa_o,u)$, $(\kappa_i,u_i)$ converges to $(\kappa_o,u)$ in $C^{\ell}$ for any $\ell>1$. It follows that $im(G_{\kappa,b})\subset G_{\kappa_o,b_o}$. Since there are only finite many subgroups of $G_{a_o}$, for $i$ large enough we have $im(G_{\kappa_i,b_i})=G_{\kappa,b}$. So $G_{\kappa_i,b_i}$ can be imbedded into $G_{\kappa_o,b_o}$ as a subgroup for $i$ large enough. We get a contradiction. $\Box$
\v
As corollary of Lemma \ref{orbi structure} we conclude that $\mathbf{U}^T$ is an orbifold. Since $(g,n)\ne (1,1), (2,0)$,
$\mathbf{U}^T$ has the structure of an effective orbifold.

\v
Combination of the subsections \S\ref{smoothness-1},
\S\ref{smoothness-2} give us the proof of Theorem \ref{Smooth}.

\v
\subsection{A metric on $\mathbf{E}$}\label{a metric}
In this section we construct a metric on $\mathbf{E}|_{\mathbf U_{\epsilon}}.$
 By the compactness of $\mathbf{U}_{2\varepsilon}$ we may find finite many points $(\kappa_1,b_1),...,(\kappa_{\mathbf{n}}, b_{\mathbf{n}})\in \mathbf{U}_{\varepsilon}$ such that
\begin{itemize}
\item $\{\mathbf{U}_{[(\kappa_{\mathbf{a}},b_{\mathbf{a}})]}(\varepsilon_{\mathbf{a}},\delta_{\mathbf{a}},\rho_{\mathbf{a}})
, \;1\leq \mathbf{a}\leq \mathbf{n}\}$ is a covering of $\mathbf{U}_{2\varepsilon}$.
\item For any $\mathbf{a}\in \{1,...,\mathbf{n}\}$ there is $i_{\mathbf{a}}\in \{1,...,\mathbf{m}\}$ such that
$$\mathsf{p}({\mathbf{U}}_{[(\kappa_{\mathbf{a}},b_{\mathbf{a}})]}(\varepsilon_{\mathbf{a}},\delta_{\mathbf{a}},\rho_{\mathbf{a}}))
\subset  {\mathbf{O}}_{b_{i_{\mathbf a}}}(\delta_{i_{\mathbf a}},\rho_{i_{\mathbf a}}),$$  where ${\mathbf{O}}_{b_{i_{\mathbf a}}}(\delta_{i_{\mathbf a}},\rho_{i_{\mathbf a}})$ is as in subsection \S\ref{finite rank orbi-bundle},
	\item $ \widetilde{{\mathbf{U}}}_{(\kappa_{\mathbf{a}},b_{\mathbf{a}})}(\varepsilon_{\mathbf{a}},\delta_{\mathbf{a}},\rho_{\mathbf{a}})\subset \widetilde{\mathbf U}^{T}$ for all $1\leq \mathbf{a}\leq \mathbf{n}_{t}$.
\end{itemize}

 Let $\{e^{i_{\mathbf a}}_{\alpha}\}_{1\leq\alpha\leq \mathsf r}$ be a local smooth frame field of $\mathbf F$ over $\mathbf{O}_{b_{i_{\mathbf{a}}}}(\delta_{i_{\mathbf{a}}},\rho_{i_{\mathbf{a}}})$ as in section \S\ref{global_regu}. Let $\mathsf{p}: \mathbf{U}\to \mathcal{U}$ denote the projection. Denote $e^{\mathbf a}_{\alpha}=\mathsf{p}^{*}e^{i_{\mathbf a}}_{\alpha}|_{{\mathbf{U}}_{[(\kappa_{\mathbf{a}},b_{\mathbf{a}})]}(\varepsilon_{\mathbf{a}},\delta_{\mathbf{a}},\rho_{\mathbf{a}}) }.$
Then we have a smooth frame field $\{e_{\alpha}^{\mathbf a}\}_{1\leq\alpha\leq \mathsf r}$ of $\mathbf{E}$ over ${\mathbf{U}}_{[(\kappa_{\mathbf{a}},b_{\mathbf{a}})]}(\varepsilon_{\mathbf{a}},\delta_{\mathbf{a}},\rho_{\mathbf{a}})$, where $\mathsf r$ denotes the rank of $\mathbf{E}$.
We define a local metric $h_{\mathbf{a}}$ on $\mathbf{E}|_
{\mathbf{U}_{[(\kappa_{\mathbf{a}},b_{\mathbf{a}})]}(\varepsilon_{\mathbf{a}},\delta_{\mathbf{a}},\rho_{\mathbf{a}})}$ by
$$
h_{\mathbf a}(e^{\mathbf a}_{\alpha},e^{\mathbf a}_{\beta})=\delta_{\alpha \beta}.
$$

\v
Now we choose smooth cutoff functions $\mathbf{\Gamma}'$ as follows.
Let $(\kappa_o, b_o)$ be one of $(\kappa_1,b_1),...,(\kappa_{\mathbf{n}}, b_{\mathbf{n}})$.
We consider two cases.
\v
{\bf (1).} \;$(\kappa_o,b_o)$ lies in $\widetilde{\mathbf{U}}^T$. We define a cut-off function $\alpha_{b_o}: \widetilde{\mathbf{O}}_{b_o}(\delta_{b_{o}},\rho_{b_{o}}) \to [0, 1]$
by \eqref{cut-off-1} and let $\mathbf{\Gamma}'_{o}=\mathsf{p}^*\alpha_{b_o}(b).$ \v
{\bf (2).} \;$(\kappa_o,b_o)$ lies in a lower strata.
We define a  cut-off function $\alpha_{b_o}: \widetilde{\mathbf{O}}_{b_o}(\delta_o,\rho_o) \to [0, 1]$
by \eqref{cut-off-2}
and let $\mathbf{\Gamma}'_{o}=\mathsf{p}^*\alpha_{b_o}(b).$

\v
Thus we have $\mathbf{\Gamma}'_{\mathbf{a}}$ for every $1\leq \mathbf{a}\leq \mathbf{n}.$ Set $$\mathbf{\Gamma}_{\mathbf{a}}=\frac{\mathbf{\Gamma}'_{\mathbf{a}}}{\sum_{l=1}^{\mathbf{n} } \mathbf{\Gamma}'_{l}}.$$ Then $\sum \mathbf{\Gamma}_{\mathbf{a}}=1$ and $\mathbf{\Gamma}_{\mathbf{a}}$ is smooth on $\mathbf{U}^T_{\epsilon}$ in orbifold sense. We define a metric $\mathbf{h}$ on $\mathbf E$ over $\mathbf{U}_{\varepsilon}$ by
$$\mathbf{h}=\sum_{\mathbf{a}=1}^{\mathbf{n}}\mathbf{\Gamma}_{\mathbf{a}}  h_{\mathbf{a}}.
$$
\v
 We define a connection on $\mathbf E$ as follows.
Let $\{e^{\mathbf a}_{\alpha}\}_{1\leq\alpha\leq \mathsf r}$ be a local smooth frame field of $\mathbf E$ over ${\mathbf{U}}_{[(\kappa_{\mathbf{a}},b_{\mathbf{a}})]}(\varepsilon_{\mathbf{a}},\delta_{\mathbf{a}},\rho_{\mathbf{a}})$ as above. Consider the  Gram-Schmidt process with respect to the metric $\mathbf h$ and denote by $\hat{e}^{{\mathbf a}}_{1}, ..., \hat{e}^{{\mathbf a}}_{\mathsf r}$ the Gram-Schmidt orthonormalization of $\{e^{\mathbf a}_{\alpha}\}$.  We define a local connection $\nabla^{\mathbf a}$ by
$$
\nabla^{\mathbf a}\hat{e}^{\mathbf a}_{\alpha}=0,\;\;\;\;\;\alpha=1,\cdots,\mathsf r.
$$
For any section $e\in \mathbf E|_{{\mathbf{U}}_{\epsilon}},$ we define
\begin{equation}\label{def_conn}
\nabla e=\sum \mathbf{\Gamma}_{{\mathbf a}}\nabla^{\mathbf a}(e|_{{\mathbf{U}}_{[(\kappa_{\mathbf{a}},b_{\mathbf{a}})]}(\varepsilon_{\mathbf{a}},\delta_{\mathbf{a}},\rho_{\mathbf{a}})}).
\end{equation}
It is easy to see that $\nabla$ is a compatible connection of the metric $\mathbf{h}$. Denote $$\nabla \hat e^{\mathbf a}_{\alpha}=\sum_{\beta} \omega_{\alpha \beta}^{\mathbf {a}}\hat e^{\mathbf a}_{\beta},\;\;\;\nabla^2\hat e^{\mathbf a}_{\alpha}=\sum  \Omega_{\alpha\beta}^{\mathbf a}\hat e^{\mathbf a}_{\beta}.$$
For any ${\mathbf{U}}_{[(\kappa_{\mathbf{a}},b_{\mathbf{a}})]}(\varepsilon_{\mathbf{a}},\delta_{\mathbf{a}},\rho_{\mathbf{a}})\bigcap  {\mathbf{U}}_{[(\kappa_{\mathbf{c}},b_{\mathbf{c}})]}(\varepsilon_{\mathbf{c}},\delta_{\mathbf{c}},\rho_{\mathbf{c}})\neq \emptyset,$ let $(\hat a^{\mathbf a\mathbf c}_{\alpha\beta})_{1\leq \alpha,\beta\leq \mathsf r}$ be   functions such that $$\hat e^{\mathbf a}_{\alpha}=\sum_{\beta=1}^{\mathsf r} \hat a^{\mathbf a\mathbf c}_{\alpha \beta}\hat e^{\mathbf c}_{\beta},\alpha=1,\cdots,\mathsf r.$$   It is easy to see that
\begin{equation}\label{local_omega}
\omega^{\mathbf a}_{\alpha\beta}=\sum_{\mathbf c}\sum_{\beta=1}^{\mathsf r} \alpha_{b_{\mathbf c}}d\hat a^{\mathbf a\mathbf c}_{\alpha \gamma}\hat a^{\mathbf c \mathbf a}_{\gamma \beta}.
\end{equation}
We get a  metric $\mathbf h$ and a connection $\nabla$ in $\mathbf E$ over $\mathbf{U}_{\varepsilon}$.
\v

\section{\bf Gluing estimates}\label{estimates}

\subsection{Gluing maps}\label{gluing map}

Let $\Sigma$ be a marked nodal Riemann surfaces. Suppose that $\Sigma$
has nodes $p_{1},\cdots,p_{\mathfrak{e}}$ and  marked points $y_{1},\cdots,y_{n}$. We choose local coordinate system $\mathbf{A}$. Let $u:\Sigma\to M$ be perturbed $J$-holomorphic map. We glue $\Sigma$ and $u$ at each node with gluing parameters $(\mathbf{r})$ to get $\Sigma_{(\mathbf{r})}$ and the pregluing map $u_{(\mathbf{r})}: \Sigma_{(\mathbf{r})}\to M$. Set
 $$\ft_{i}=e^{-2r_{i}-2\pi\tau_{i}}, \;\;\;|\mathbf{r}|=min\{r_1,...,r_{\mathfrak{e}}\},\;\;\;b_{(\mathbf r)}:=(0,\mathbf{(r)}, u_{(\mathbf r)}).$$
The following lemma is proved in
\cite{LS-1}.
\begin{lemma}\label{isomor of ker}
For $|\mathbf{r}|>R_0$ there is an isomorphism
$$I_{(\mathbf r)}: \ker D \mathcal{S}_{( \kappa_{o}, b_{o})}\longrightarrow \ker D \mathcal{S}_{( \kappa_{o},b_{(\mathbf r)})}.$$
\end{lemma}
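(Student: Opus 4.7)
The plan is to construct $I_{(\mathbf{r})}$ by pregluing elements of $\ker D\mathcal{S}_{(\kappa_o, b_o)}$ using cutoff functions and then correcting via a uniformly bounded right inverse of $D\mathcal{S}_{(\kappa_o, b_{(\mathbf{r})})}$, following the same template used for $I^{\widetilde{\mathbf{L}}}_{(r)}$ in Lemma \ref{lem_est_I_r}. A kernel element $\zeta$ decomposes as $(\delta\kappa, \delta a, h)$, where $\delta\kappa$ is a fiber variation, $\delta a$ varies the complex structure and marked points in the finite-dimensional space $\mathbf{A}$, and $h$ is a variation of $u$ satisfying the nodal compatibility $h(q_1)=h(q_2)$. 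First I would preglue $h$ by splitting off its asymptotic value $h_0$ at each node using $\hat{h}_0$, cutting off the remainder via $\beta_{i;R}$ of Section \ref{gluing}, and re-attaching $\hat{h}_0$ on the neck of $\Sigma_{(\mathbf{r})}$. Since $\delta\kappa$ is supported in $\Sigma(R_0)$, which embeds isometrically into $\Sigma_{(\mathbf{r})}$, and $\delta a$ lives in a finite-dimensional parameter space, neither requires modification. This yields a preglued element $\zeta^{\#}_{(\mathbf{r})}$ in the domain of $D\mathcal{S}_{(\kappa_o, b_{(\mathbf{r})})}$.

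Next I would estimate the defect $D\mathcal{S}_{(\kappa_o, b_{(\mathbf{r})})}\zeta^{\#}_{(\mathbf{r})}$ in the weighted norm $\|\cdot\|_{r,k-1,2,\alpha}$. Away from the neck, $u_{(\mathbf{r})}=u$ and the cutoffs are constant, so the defect vanishes. On the neck it is a commutator of $D\mathcal{S}$ with the cutoff, and the exponential decay of $h-\hat{h}_0$ combined with the weight $e^{\alpha|s|}$ (with $0<\alpha<1$) gives
\[
\bigl\|D\mathcal{S}_{(\kappa_o, b_{(\mathbf{r})})}\zeta^{\#}_{(\mathbf{r})}\bigr\|_{r,k-1,2,\alpha} = O\bigl(e^{-(1-\alpha)|\mathbf{r}|}\bigr)\|\zeta\|.
\]
Using the uniformly bounded right inverse $Q_{(\mathbf{r})}$ of $D\mathcal{S}_{(\kappa_o, b_{(\mathbf{r})})}$, whose existence for $|\mathbf{r}|$ large follows from a gluing-of-right-inverses argument in the spirit of Lemma \ref{aright_inverse_after_gluing}, I set
\[
I_{(\mathbf{r})}\zeta := \zeta^{\#}_{(\mathbf{r})} - Q_{(\mathbf{r})}\bigl(D\mathcal{S}_{(\kappa_o, b_{(\mathbf{r})})}\zeta^{\#}_{(\mathbf{r})}\bigr),
\]
which lies in $\ker D\mathcal{S}_{(\kappa_o, b_{(\mathbf{r})})}$ by construction. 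Injectivity for $|\mathbf{r}|>R_0$ follows from $\|I_{(\mathbf{r})}\zeta - \zeta^{\#}_{(\mathbf{r})}\|=O(e^{-(1-\alpha)|\mathbf{r}|})\|\zeta\|$ combined with a uniform lower bound on the pregluing map restricted to the finite-dimensional space $\ker D\mathcal{S}_{(\kappa_o, b_o)}$. Surjectivity is then a dimension count: since $D\mathcal{S}$ is surjective at both $b_o$ and $b_{(\mathbf{r})}$ and the Fredholm index is invariant under the gluing deformation, both kernels have the same dimension $\mathcal{N}=\mathrm{rank}\,\mathbf{F} + \mathrm{ind}\,D\mathcal{S}$, so an injective linear map between them is an isomorphism.

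The main obstacle is establishing the uniform bound on $\|Q_{(\mathbf{r})}\|$ in the weighted $W^{k-1,2,\alpha}$ norms, which requires pregluing right inverses from each irreducible component of $\Sigma$ and verifying that the exponential weighted estimates survive the gluing. The analysis is further complicated by the thickening term $\mathfrak{i}$, but because $\widetilde{K}_{b_o}$ is supported in $\Sigma(R_0)$ and the frame field of $\mathbf{F}$ is smooth in $(\mathbf{r})$ by Lemma \ref{smooth line}, the contribution of $\mathfrak{i}$ to $D\mathcal{S}$ is a compact perturbation localized in the fixed region. Thus the uniform estimate for $Q_{(\mathbf{r})}$ reduces to the purely Cauchy-Riemann case already treated in \cite{LS-1}, and the remaining exponential-decay commutator bounds are the same ones used to control $f^{\widetilde{\mathbf{L}}}_{(\mathbf{r})}$ in Lemma \ref{gluing}.
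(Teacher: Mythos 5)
The paper does not actually prove this lemma here: it is quoted verbatim from \cite{LS-1} (``The following lemma is proved in \cite{LS-1}''), so there is no in-text argument to compare against line by line. That said, your proposal is the standard pregluing-plus-correction construction and is exactly consistent with the framework the paper builds around this statement: the decomposition $Glu_{\fs,(\mathbf{r})}=I_{(\mathbf{r})} + Q_{b_{(\mathbf{r})}}\circ f_{\fs,(\mathbf{r})}\circ I_{(\mathbf{r})}$ presupposes precisely that $I_{(\mathbf{r})}$ is an approximate-kernel map corrected by a uniformly bounded right inverse, and the same template is used for the line-bundle analogue in Lemma \ref{lem_est_I_r} and Lemma \ref{aright_inverse_after_gluing}. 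Your handling of the nodal asymptotic value via $\hat h_0$, the localization of the thickening term $\mathfrak{i}$ to $\Sigma(R_0)$, the injectivity via a uniform lower bound on pregluing, and the surjectivity via equality of indices are all the expected steps; the only imprecision is the decay exponent $e^{-(1-\alpha)|\mathbf{r}|}$, which in the cited estimates appears as $e^{-(\fc-5\alpha)r_i/4}$, but this does not affect the argument.
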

\v\n
Using Theorem 5.3 in \cite{LS-1} and the implicit function theorem with parameters we immediately obtain
\begin{lemma}\label{gluing map} There are constant $\varepsilon>0$, $R_0>0$ and a neighborhood $O_1\subset \mathbf{A}$ of $\mathbf{s}_o$ and a neighborhood $O$ of $0$ in $ker D{\mathcal S}_{(\kappa_0,b_0)}$
 such that
$$glu: O_1\times(\mathbb{D}_{\mathbf{c}}^*(0))^{\mathfrak{e}}\times O\rightarrow glu(O_1\times(\mathbb{D}_{\mathbf{c}}^*)^{\mathfrak{e}}\times O)\subset\widetilde{\mathbf{U}}^T$$
 is an orientation preserving local diffeomorphisms, where $$\mathbb{D}^{*}_{\mathbf{c}}(0):=\{\mathbf{t}\mid 0<|\mathbf{t}|<\mathbf{c}\},\;\;\;\mathbf{c}=e^{-2R_0}.$$  \end{lemma}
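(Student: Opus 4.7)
The plan is to run the parametric implicit function theorem on the family of Fredholm sections $\mathcal{S}_{(\mathbf{s},\mathbf{r})}$ produced by pregluing. For $(\mathbf{s},\mathbf{r})$ with $\mathbf{s}$ near $\mathbf{s}_o$ and $|\mathbf{r}|\geq R_{0}$, the constructions of Section \ref{gluing} and Section \ref{global_r} yield an approximate solution $b_{(\mathbf{s},\mathbf{r})}$ with $\|\mathcal{S}(\kappa_{0},b_{(\mathbf{s},\mathbf{r})})\|_{\mathbf{r},k-1,2,\alpha}=O(e^{-\alpha|\mathbf{r}|})$, the kernel isomorphism $I_{(\mathbf{r})}$ of Lemma \ref{isomor of ker}, and — by combining the standard gluing right inverse for $D\bar\partial_{j,J}$ with $Q^{\widetilde{\mathbf{L}}}_{b_{(\mathbf{r})}}$ from Lemma \ref{aright_inverse_after_gluing} — a uniformly bounded right inverse $\mathcal{Q}_{(\mathbf{s},\mathbf{r})}$ for $D\mathcal{S}_{(\kappa_{0},b_{(\mathbf{s},\mathbf{r})})}$.

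Given $\zeta\in O\subset \ker D\mathcal{S}_{(\kappa_{0},b_{o})}$, I would transport it to $\zeta_{(\mathbf{r})}=I_{(\mathbf{r})}\zeta$ and seek a correction $\eta=\eta(\mathbf{s},\mathbf{r},\zeta)\in \mathrm{image}\,\mathcal{Q}_{(\mathbf{s},\mathbf{r})}$ solving
$$\mathcal{S}\bigl(\kappa_{0}+\zeta_{(\mathbf{r})}+\eta,\; b_{(\mathbf{s},\mathbf{r})}\bigr)=0.$$
The quadratic estimate for the nonlinearity, the uniform bound $\|\mathcal{Q}_{(\mathbf{s},\mathbf{r})}\|\leq \mathsf{C}$, and the exponential smallness of the initial error feed into the Banach contraction principle to produce a unique small $\eta$ of norm $O(e^{-\alpha|\mathbf{r}|/2})$ depending smoothly on $(\mathbf{s},\mathbf{r},\zeta)$; this is essentially Theorem 5.3 of \cite{LS-1} with added parameters. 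Setting $\mathbf{t}_{i}=e^{-2r_{i}-2\pi\sqrt{-1}\tau_{i}}$ and defining
$$glu(\mathbf{s},\mathbf{t},\zeta):=\bigl(\kappa_{0}+\zeta_{(\mathbf{r})}+\eta(\mathbf{s},\mathbf{r},\zeta),\; b_{(\mathbf{s},\mathbf{r})}\bigr),$$
I obtain a map into $\widetilde{\mathbf{U}}^{T}$ on the punctured-disc locus $\mathbf{t}_{i}\neq 0$. Smoothness in $(\mathbf{s},\mathbf{t},\zeta)$ combines the smoothness in $(\mathbf{s},\zeta)$ from Section \ref{smoothness-1} with the smoothness of the frame fields under varying gluing parameter provided by Lemma \ref{smooth line}, after which the exponential change of variables $\mathbf{t}_{i}\leftrightarrow(r_{i},\tau_{i})$ is itself smooth on the punctured disc.

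The main obstacle is verifying that $d(glu)$ is a linear isomorphism at every point. The dimension count matches because the $2\mathfrak{e}$ plumbing directions exactly complement the nodal Teichmüller slice $\mathbf{A}$ to the Teichmüller space of the smoothed curve, while $\dim\ker D\mathcal{S}_{(\kappa_{0},b_{o})}=\dim\ker D\mathcal{S}_{(\kappa_{0},b_{(\mathbf{r})})}$ by Lemma \ref{isomor of ker}. For injectivity, the $\zeta$-direction maps isomorphically onto the top-stratum kernel via $I_{(\mathbf{r})}$ modulo an $O(e^{-\alpha|\mathbf{r}|/2})$ perturbation absorbed by $\eta$; a nonzero plumbing variation $\partial/\partial\mathbf{t}_{i}$ changes the conformal modulus of the $i$th neck and so cannot lie in the span of the $(\mathbf{s},\zeta)$-directions for $|\mathbf{r}|$ large. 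Orientation preservation is then automatic: the complex plumbing coordinates $\mathbf{t}_{i}$ carry their standard complex orientation, which together with the complex orientation on $\mathbf{A}$ and the chosen orientation on $\ker D\mathcal{S}_{(\kappa_{0},b_{o})}$ (transported by $I_{(\mathbf{r})}$) matches the orientation on $\widetilde{\mathbf{U}}^{T}$ induced by the determinant line of $D\mathcal{S}$.
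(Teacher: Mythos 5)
Your proposal is correct and follows essentially the same route as the paper, whose entire proof is the one-line invocation of Theorem 5.3 of \cite{LS-1} together with the implicit function theorem with parameters; your write-up is an unpacking of exactly that argument (pregluing with exponentially small error, uniformly bounded right inverse, contraction principle depending smoothly on $(\mathbf{s},\mathbf{r},\zeta)$, dimension count plus the standard orientation comparison). One minor misattribution: the uniform right inverse for $D\mathcal{S}_{(\kappa_0,b_{(\mathbf{r})})}$ is the equivariant $Q_{(\kappa_o,b_{(r)})}$ constructed in \S 6.2, not Lemma \ref{aright_inverse_after_gluing}, which concerns the auxiliary line bundle $\widetilde{\mathbf{L}}$ used to build the obstruction bundle $\mathbf{F}$ rather than the section $\mathcal{S}$ itself.
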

\n
Denote
$$ Glu_{\fs,(\mathbf{r})}=I_{(\mathbf{r})} + Q_{b_{(\mathbf{r})}}\circ f_{\fs,(\mathbf{r})}\circ I_{(\mathbf{r})}.$$

\subsection{ Equivariant gluing}

We consider the case with one node, the construction in this section can be generalized to the case with several nodes.
Let $(\kappa_{o},b_{o})\in \widetilde{\mathbf U}$ where
$$\kappa_{o}=(\kappa_{o1},\kappa_{o2}),\;\;\;  b_{oi}=(a_{oi},u_{i}),\;\;\;a_{oi}=(\Sigma_{i},j_{oi},\mathbf{y}_{i}, q),\;\;\;\;i=1,2,$$  $\Sigma_{1}$ and $\Sigma_{2}$ are smooth Riemann surfaces joining at $q$.  Assume that $(\Sigma_{i},\mathbf y_{i}, q)$ is stable. Let $G_{(\kappa_{o},b_o)}=(G_{(\kappa_{o1},b_{o1})}, G_{(\kappa_{o2},b_{o2})})$ be the isotropy group at $(\kappa_{o},b_o)$, thus,
$$G_{(\kappa_{o},b_{o})}=\{\phi=(\phi_{1},\phi_{2})|\;\phi_{i}\in Diff^+(\Sigma_{i}),\; \phi_{i}^{*}(j_{oi},\mathbf{y}_{i}, q,
\kappa_{oi},u_{i})=(j_{oi},\mathbf{y}_{i},q,
\kappa_{oi},u_{i}),i=1,2\}.$$
 Obviously,  $G_{(\kappa_{o},b_o)}$ is a subgroup of $\mathbf{G}_{a_{o}}$.  The following lemma can be easily to check.
\begin{lemma}\label{equi_diff}    For any $(\kappa,b)\in \widetilde{\mathbf U}_{(\kappa_{o},b_{o})}(\varepsilon,\delta,\rho),$ $\varphi\in Diff^+(\Sigma)$ denote
	$$ \kappa'=\varphi^{*}\kappa,\;\;\;b'=(j',{\bf y}',u') =\varphi\cdot (j,{\bf y},u)=(\varphi^*j, \varphi^{-1}{\bf y}, \varphi^{*}u).$$
	Then  $$\bar{\p}_{j',J_{u'}}=\varphi^{*}(\bar{\p}_{j,J_{u}}),\;\;\;
	\mathfrak{i}(\kappa',b')=\varphi^{*}(\mathfrak{i}(\kappa,b)).$$
\end{lemma}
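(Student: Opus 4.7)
The plan is to verify the two naturality statements separately by tracing through the definitions, treating both as essentially formal consequences of the construction.

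For the first identity $\bar{\p}_{j',J_{u'}}=\varphi^*(\bar{\p}_{j,J_u})$, this is the standard naturality of the Cauchy--Riemann operator under pullback by an orientation-preserving diffeomorphism. I would simply compute, using $u'=u\circ\varphi$, $j'=(d\varphi)^{-1}j\,d\varphi$ and $J(u')=J(u)\circ\varphi$,
\begin{align*}
2\bar{\p}_{j',J}u' &= du' + J(u')\,du'\circ j' \\
&= du\circ d\varphi + (J(u)\circ\varphi)\,(du\circ d\varphi)\,\bigl((d\varphi)^{-1}j\,d\varphi\bigr) \\
&= \bigl(du + J(u)\,du\circ j\bigr)\circ d\varphi \\
&= 2(\bar{\p}_{j,J}u)\circ d\varphi,
\end{align*}
which is exactly $\varphi^*(\bar{\p}_{j,J}u)$ in the convention of \eqref{group action-2}.

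For the second identity $\mathfrak{i}(\kappa',b')=\varphi^*(\mathfrak{i}(\kappa,b))$, I would exploit the fact that $\mathfrak{i}(\kappa,b)_{b_o}=\alpha_{b_o}(b)\,P\circ p\circ Q^{-1}(\kappa)$ (and its lower-strata analogue) is built out of four ingredients, and check that each one is natural under the $Diff^+(\Sigma)$-action. First, $\alpha_{b_o}$ depends only on $d_{\mathbf A}(a_o,a)$ and $\|h\|_{j_a,k,2}$, both recorded as $Diff^+(\Sigma)$-invariant in \S\ref{stable holo}, so $\alpha_{b_o}(b')=\alpha_{b_o}(b)$. Second, the transport $P=\Phi\circ\Psi_{j_o,j_a}$ is natural, since parallel transport $\Phi$ along geodesics in $M$ commutes with composition of $u$ with $\varphi$, and $\Psi_{j_o,j_a}$ is defined pointwise from the almost complex structures and therefore pulls back as $\Psi_{j_o,j_{a'}}(\varphi^*\eta)=\varphi^*\Psi_{j_o,j_a}(\eta)$. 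Third, the frame field $\{e_\alpha\}$ used to define $Q$ is obtained from a right inverse of $D^{\widetilde{\mathbf L}}$; by Lemma \ref{Transformation for F}(b) the operator $D^{\widetilde{\mathbf L}}$ itself is $\varphi$-natural, and Remark \ref{isotropy group} together with \eqref{equivariant frame field} shows the right inverse, hence the frame, can be chosen compatibly so that $Q_{b'}=\varphi^*\circ Q_b\circ(\varphi^*)^{-1}$ on the relevant kernel. Fourth, the projection $p=em^{-1}$ is defined at the fixed point $b_o$ from the $G_{b_o}$-invariant splitting of $\widetilde H|_{b_o}$ into irreducible representations containing a copy of $\widetilde K_{b_o}$; pulling back via $\varphi$ conjugates $p$ in exactly the manner that cancels the conjugation in $Q$.

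Assembling these four items, the composition satisfies $\mathfrak{i}(\varphi^*\kappa,\varphi\cdot b)=\varphi^*\mathfrak{i}(\kappa,b)$. In the lower-strata Case 2 of \S\ref{global_regu} one further needs naturality of the cylindrical cutoff $\beta_{R_0}$ (obvious, since $\beta_{R_0}$ is a function of the fixed plumbing coordinate $s$, which is natural under the biholomorphism part of $\varphi$) and of the glued frame field from Lemma \ref{smooth line}, which is inherited from Lemma \ref{aright_inverse_after_gluing} and the already-equivariant construction of $Glu^{\widetilde{\mathbf L}}$. The main technical point, and where I expect the most care is needed, is step three: one must check that the frame-field construction of \S\ref{global_regu} produces a $Q$ that is equivariant not merely under the finite isotropy group $G_{b_o}$ (which is automatic from Remark \ref{isotropy group}) but under the arbitrary $\varphi\in Diff^+(\Sigma)$ relating $(\kappa,b)$ to $(\kappa',b')$. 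This reduces to naturality of the right inverse $Q^{\widetilde{\mathbf L}}$, which in turn follows because $D^{\widetilde{\mathbf L}}$ and the weighted inner products used to define the right inverse are both $Diff^+(\Sigma)$-natural (the metric $\mathbf g$ on $\Sigma$ is $Diff^+(\Sigma)$-invariant, as noted in \S\ref{sub_sect_Teich}), so the $L^2$-orthogonal right inverse transforms by $\varphi^*$ as required.
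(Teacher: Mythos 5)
Your proof is correct, and since the paper itself offers no argument for Lemma \ref{equi_diff} (it is dismissed with ``can be easily checked''), your explicit verification is the natural one to supply. The computation for $\bar{\p}_{j',J_{u'}}=\varphi^{*}(\bar{\p}_{j,J_{u}})$ is the standard naturality of the Cauchy--Riemann operator and matches the conventions of \eqref{group action-1} and \eqref{group action-2} exactly. For the second identity, your four-ingredient decomposition of $\mathfrak{i}(\kappa,b)_{b_o}=\alpha_{b_o}(b)\,P\circ p\circ Q^{-1}(\kappa)$ is sound, but it is worth pointing out that most of what you verify is already built into the paper's construction by fiat rather than being a property to be proved: in \S\ref{global_regu} the operators $P'$, $p'$, $Q'$ in a second coordinate system are \emph{defined} by conjugation with the maps $\phi_a$, $\varphi_a$ induced by $\Psi'\circ\Psi^{-1}$, so the transformation law \eqref{coord trans forbundle map} holds by construction and the only genuine content is the cocycle condition \eqref{coord trans equality} together with the $Diff^+(\Sigma)$-invariance of the cut-off $\alpha_{b_o}$. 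Consequently your ``main technical point'' --- that $Q$ be natural under an arbitrary $\varphi\in Diff^+(\Sigma)$ rather than only under the finite isotropy group --- is not actually needed in the form you state it: one does not require the frame field or the right inverse $Q^{\widetilde{\mathbf L}}$ to be intrinsically $Diff^+(\Sigma)$-natural, only that the transported frame in the new slice is taken to be the push-forward of the old one, which is how the paper sets things up. Your alternative justification (that the $L^2$-orthogonal right inverse is natural because $\mathbf{g}$ and $G_J$ are invariant) is nonetheless valid and gives a slightly stronger, coordinate-free statement; it buys you independence of the particular transfer convention at the cost of having to track the weighted inner products, whereas the paper's route makes the lemma essentially definitional.
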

\v\n
It is easy to check that  the operator  $D\mathcal{S}_{(\kappa_o,b_o)}$ is $G_{(\kappa_{o},b_{o})}$-equivariant. Then we may choose a $G_{(\kappa_{o},b_{o})}$-equivariant right inverse $Q_{(\kappa_o,b_o)}$. In fact, let $\hat Q_{(\kappa_o,b_o)}$ be a right inverse of  $D\mathcal{S}_{(\kappa_o,b_o)},$  we define
$$Q_{(\kappa_o,b_o)}(\eta)=\frac{1}{|G_{(\kappa_{o},b_{o})}|}   \sum_{\varphi\in G_{(\kappa_{o},b_{o})}}  \varphi^{-1}\cdot  \hat Q_{(\kappa_o,b_o)}( \varphi \cdot \eta).$$
Then, for any $\varphi'\in G_{(\kappa_{o},b_{o})} ,$ we have
 $$Q_{(\kappa_o,b_o)}(\varphi'\cdot\eta)=\frac{1}{|G_{(\kappa_{o},b_{o})}|}   \sum_{\varphi\in G_{(\kappa_{o},b_{o})}}  \varphi^{-1}\cdot  \hat Q_{(\kappa_o,b_o)}( \varphi \cdot \varphi'\cdot \eta)=$$$$
 \frac{1}{|G_{(\kappa_{o},b_{o})}|}   \sum_{\varphi\in G_{(\kappa_{o},b_{o})}} \varphi'\cdot(\varphi')^{-1}  \varphi^{-1}\cdot  \hat Q_{(\kappa_o,b_o)}( \varphi \cdot \varphi'\cdot \eta)=\varphi'\cdot Q_{(\kappa_o,b_o)}(\eta) $$
By the $G_{(\kappa_{o},b_{o})}$-equivariance of  $D\mathcal{S}_{(\kappa_o,b_o)}$,\;
$G_{(\kappa_{o},b_{o})}$ acts on $ker D\mathcal{S}_{(\kappa_o,b_o)}$ in a natural way.
\v
We choose cusp cylinder coordinates $(s_{i}, t_{i})$ on $\Sigma_{i}$ near $q$. Choosing the gluing parameter $({r})$ we construct $\Sigma_{({r})}$ and $u_{(r)}$ as in \S\ref{gluing}. Since the cut-off function $\beta (s)$
depends only on $s$, $G_{b_o}$ acts on $\widetilde{O}_{b_{o}}(\delta,\rho).$ Since  the $\|\cdot\|_{k,2,\alpha,r}$ is $G_{b_{o}}$ invariant, it induces a $G_{(\kappa_{o},b_{o})}$ action on $\widetilde{\mathbf U}_{(\kappa_{o},b_{o})}(\varepsilon_{o},\delta_{o},\rho_{o})$.
\v
Set $a_{(r)}=(\Sigma_{(r)},j_{o},\mathbf y)$ and $b_{(r)}=(a_{(r)},u_{(r)}).$
Denote by $G_{b_{(r)}}$ (resp. $G_{(\kappa_{o},b_{(r)})}$) the isotroy group at $b_{(r)}$ (resp. $(\kappa_{o},b_{(r)})$). It is easy to see that $G_{b_{(r)}}$ is a subgroup of $G_{b_{o}}.$ It follows that
$G_{(\kappa_{o},b_{(r)})}$ is a subgroup of $G_{(\kappa_{o},b_{o})}.$
Then $G_{(\kappa_{o},b_{(r)})}$ can be seen as rotation in the gluing part. The gluing map is the $\frac{|G_{(\kappa_{o},b_{o})}|}{|G_{(\kappa_{o},b_{(r)})}|}$-multiple covering map.
 Since $\beta_{1;r}$ is independent of $\tau$,  $Q'_{(\kappa_{o},b_{(r)})}$ is  $G_{(\kappa_{o},b_{(r)})}$-equivariant.
 By the definition of $Q_{(\kappa_{o},b_{(r)})}$ and the $G_{(\kappa_{o},b_{(r)})}$-equivarance of $D\mathcal{S}_{(\kappa_{o},b_{(r)})}$, we can conclude that  $Q_{(\kappa_{o},b_{(r)})}$ is  $G_{(\kappa_{o},b_{(r)})}$-equivariant. It follows from the definition of $I_{(r)}$ that $I_{(r)}$ is  $G_{(\kappa_{o},b_{(r)})}$-equivariant.
 By the uniquiness of the implicit function $f$, we conclude that $f$ is $G_{(\kappa_{o},b_{(r)})}$-equivariant.
Since
$$Glu_{(r)}=I_{(r)}+Q_{(\kappa_{o},b_{(r)})}\circ f \circ I_{(r)}  .$$
  $Glu_{(r)}$ is $G_{(\kappa_{o},b_{(r)})}$-equivariant.
Denote
$$
\ker D\mathcal{S}_{[\kappa_o,b_o]}=\ker D\mathcal{S}_{(\kappa_o,b_o)}/G_{(\kappa_{o},b_{o})},\;\;\;\ker D\mathcal{S}_{[\kappa_o,b_{(r)}]}=\ker D\mathcal{S}_{(\kappa_o,b_{(r)})}/G_{(\kappa_{o},b_{(r)})}.
$$
 Then we have
 \begin{lemma}\label{equi_glu}
{\bf (1)} $I_{(r)}: \ker D\mathcal{S}_{(\kappa_o,b_o)} \longrightarrow \ker D\mathcal{S}_{(\kappa_o,b_{(r)})}$ is a $\frac{|G_{(\kappa_{o},b_{o})}|}{|G_{(\kappa_{o},b_{(r)})}|}$-multiple covering map.
\v
{\bf (2)} $I_{(r)}$ induces a isomorphism $I_{(r)}:  \ker D\mathcal{S}_{[\kappa_o,b_o]} \longrightarrow \ker D\mathcal{S}_{[\kappa_o,b_{(r)}]}$.
\end{lemma}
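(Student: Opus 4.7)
The plan is to mimic the argument used for Lemma \ref{lem_est_I_r} (the analogous statement for $D^{\widetilde{\mathbf L}}$), leveraging the equivariance facts already accumulated in the paragraph immediately preceding the statement. First I would recall that Lemma \ref{isomor of ker} already gives that $I_{(r)}:\ker D\mathcal{S}_{(\kappa_o,b_o)}\to \ker D\mathcal{S}_{(\kappa_o,b_{(r)})}$ is a linear isomorphism of finite-dimensional vector spaces, and that the paragraph above assembles the chain of $G_{(\kappa_o,b_{(r)})}$-equivariances of $Q_{(\kappa_o,b_{(r)})}$, $f$, $I_{(r)}$ and hence of the glued map $Glu_{(r)}=I_{(r)}+Q_{(\kappa_o,b_{(r)})}\circ f\circ I_{(r)}$. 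So the content of the lemma is really about how the strictly larger symmetry $G_{(\kappa_o,b_o)}$ on the source compares with the smaller $G_{(\kappa_o,b_{(r)})}$ on the target.

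For part (1), I would argue as follows. Since $G_{(\kappa_o,b_o)}\subset \mathbf{G}_{a_o}$ consists of pairs $(\phi_1,\phi_2)$ of rotations near the node (as spelled out for $G_{b_{oi}}$ in \S\ref{with bubble tree}), the subgroup $G_{(\kappa_o,b_{(r)})}$ is exactly the stabilizer of the gluing angle $\tau$, and cosets of $G_{(\kappa_o,b_{(r)})}$ in $G_{(\kappa_o,b_o)}$ correspond to the discrete set of rotations of the neck that produce a biholomorphic but not identical glued surface. For any $g\in G_{(\kappa_o,b_o)}$, applying $g$ to a kernel element $\zeta$ upstairs and then gluing is the same as first gluing and then applying the rotation action of the coset $gG_{(\kappa_o,b_{(r)})}$ to the result (since $g$ already preserves the data away from the neck). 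Thus $I_{(r)}$ is equivariant with respect to the entire $G_{(\kappa_o,b_o)}$-action if we interpret the target as having the natural $G_{(\kappa_o,b_o)}/G_{(\kappa_o,b_{(r)})}$-action by shifts of $\tau$ on the family of glued configurations. Pushing to orbits gives the $\frac{|G_{(\kappa_o,b_o)}|}{|G_{(\kappa_o,b_{(r)})}|}$-fold covering map asserted.

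For part (2), I would simply pass to quotients. Since $I_{(r)}$ is $G_{(\kappa_o,b_{(r)})}$-equivariant, it descends to a well-defined linear map $\overline{I_{(r)}}:\ker D\mathcal{S}_{[\kappa_o,b_o]}\to\ker D\mathcal{S}_{[\kappa_o,b_{(r)}]}$. Surjectivity is immediate from the linear isomorphism property of $I_{(r)}$. For injectivity, if $\overline{I_{(r)}}([\zeta_1])=\overline{I_{(r)}}([\zeta_2])$, choose lifts so that $I_{(r)}(\zeta_1)=h\cdot I_{(r)}(\zeta_2)$ for some $h\in G_{(\kappa_o,b_{(r)})}\subset G_{(\kappa_o,b_o)}$; equivariance yields $I_{(r)}(\zeta_1)=I_{(r)}(h\cdot\zeta_2)$, and injectivity of $I_{(r)}$ as a linear map forces $\zeta_1=h\cdot\zeta_2$, so $[\zeta_1]=[\zeta_2]$.

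The main technical obstacle is the step in part (1) identifying $I_{(r)}(g\cdot\zeta)$ with a rotation-shift of $I_{(r)}(\zeta)$ for $g\in G_{(\kappa_o,b_o)}\setminus G_{(\kappa_o,b_{(r)})}$: one must verify that the cut-off functions $\beta_{1;r},\beta_{2;r}$ used in the pregluing and the right inverse $Q_{(\kappa_o,b_{(r)})}$ do not obstruct this identification. Since these cut-offs depend only on $s$ and not on $t$, and since the $\phi_i\in G_{b_{oi}}$ act by rotations $z\mapsto e^{-\sqrt{-1}\gamma_i}z$, a direct computation in the cusp cylinder coordinates shows that applying $g$ upstairs is intertwined by $I_{(r)}$ with a shift of $\tau$ downstairs, after which the uniqueness in the implicit function theorem used to construct $f$ guarantees the compatibility of the correction term $Q_{(\kappa_o,b_{(r)})}\circ f\circ I_{(r)}$.
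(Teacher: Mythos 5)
Your proposal follows essentially the same route as the paper: the paper's entire proof of this lemma is the paragraph immediately preceding it, which establishes the chain of $G_{(\kappa_o,b_{(r)})}$-equivariances of $Q_{(\kappa_o,b_{(r)})}$, $f$, $I_{(r)}$ and $Glu_{(r)}$ (using that the cut-offs depend only on $s$ and the uniqueness in the implicit function theorem), identifies $G_{(\kappa_o,b_{(r)})}\subset G_{(\kappa_o,b_o)}$ as the rotations surviving on the glued neck, and reads off the covering multiplicity from the index $|G_{(\kappa_o,b_o)}|/|G_{(\kappa_o,b_{(r)})}|$, exactly as you do. The only place to tighten your write-up is in (2): descent to the quotient by the \emph{full} group $G_{(\kappa_o,b_o)}$ requires the coset/rotation-shift analysis of your part (1), not merely $G_{(\kappa_o,b_{(r)})}$-equivariance.
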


\subsection{Exponential decay of gluing maps}

The following theorem is proved in \cite{LS-1}.
\begin{theorem}\label{coordinate_decay-2} Let $l\in \mathbb Z^+$ be a fixed integer.
There exists positive  constants  $\mathsf{C}_{l}, \mathsf{d}, R_{0}$ such that for any $(\kappa,\xi)\in \ker D \mathcal{S}_{(\kappa_{o},b_{o})}$ with $\|(\kappa,\xi)\|< \mathsf{d}$ and for any $X_{i}\in \{\frac{\p}{\p r_{i}},\frac{\p}{\p \tau_{i}}\},i=1,\cdots,\mathfrak{e}$, restricting to the compact set $\Sigma(R_0)$, the following estimate hold
$$ \left\|X_{i} \left(Glu_{\fs,(\mathbf{r})}(\kappa,\xi) \right)  \right\|_{C^{l}(\Sigma(R_{0}))}
	\leq  \mathsf{C}_{l}e^{-(\fc-5\alpha)\tfrac{r_{i}}{4} },$$
$$ \left\|X_{i}X_{j}\left(Glu_{\fs,(\mathbf{r})}(\kappa,\xi) \right)  \right\|_{C^{l}(\Sigma(R_{0}))}
	\leq  \mathsf{C}_{l}e^{-(\fc-5\alpha)\tfrac{r_{i}+r_{j}}{4} },$$
$1\leq i\neq j\leq\mathfrak{e},$
	for any $\mathbf{s}\in \bigotimes_{l=1}^{\iota} O_l$ when $|r|$ big enough.
\end{theorem}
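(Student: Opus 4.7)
The plan is to differentiate the defining equation for the gluing map in the gluing parameters and exploit the fact that the resulting forcing terms are supported in the neck and carry explicit exponential factors. Recall that $Glu_{\fs,(\mathbf r)}(\kappa,\xi) = I_{(\mathbf r)}\xi + Q_{b_{(\mathbf r)}}\circ f_{\fs,(\mathbf r)}\circ I_{(\mathbf r)}\xi$ is characterized, via the implicit function theorem with the uniformly bounded right inverse $Q_{b_{(\mathbf r)}}$ from Lemma \ref{aright_inverse_after_gluing} and its analogue for $D\mathcal S$, as the unique small solution of
\[
\mathcal S\!\left(\kappa,\exp_{u_{(\mathbf r)}}\!\left(I_{(\mathbf r)}\xi+Q_{b_{(\mathbf r)}}\zeta\right)\right)=0,\qquad \zeta=f_{\fs,(\mathbf r)}\circ I_{(\mathbf r)}\xi.
\]
I would fix $(\kappa,\xi)$ and view everything as a family parameterized by $(\fs,(\mathbf r))$.

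First I would differentiate this identity with respect to $r_i$ (the argument for $\tau_i$ is identical). By the chain rule,
\[
D\mathcal S_{(\kappa,b_{(\mathbf r)})}\!\left(\partial_{r_i}\! Glu\right) \;=\; -\,\partial_{r_i}\!\bigl(\bar\partial_{j_{(\mathbf r)},J}u_{(\mathbf r)}\bigr)\;-\;\partial_{r_i}\mathfrak{i}\;-\;(\text{lower-order commutator terms}),
\]
where the right-hand side is a section of the bundle at the perturbed point. The key observation is that $\partial_{r_i}u_{(\mathbf r)}$ is supported inside the two transition annuli of the cut-off $\beta$ in the pregluing construction, namely $r_i/2\le s_1\le 3r_i/4$ and $5r_i/4\le s_1\le 3r_i/2$. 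On these annuli, the exponential decay of $u_i$ to $u_i(q)$ at the cusp (with rate $\fc$, coming from the spectral gap of the cusp Laplacian for the linearized Cauchy--Riemann operator) gives
\[
|u_i(s_1,\theta_1)-u_i(q)|+|\nabla u_i|+\cdots \;\le\; \mathsf C\,e^{-\fc\, r_i/2}
\]
pointwise, and similarly for all higher covariant derivatives. The term $\partial_{r_i}\mathfrak{i}$ is harmless because $\mathfrak{i}(\kappa,\cdot)$ is supported in $\Sigma(R_0)$ and depends smoothly (through the cut-off in \eqref{cut-off-2} and the frame field of Lemma \ref{smooth line}) on $(\mathbf r)$ with exponentially small variation.

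Next I would estimate the forcing in the weighted norm $\|\cdot\|_{\mathbf r,k-1,2,\alpha}$. Because the weight $e^{\alpha|s|}$ pays a factor $e^{\alpha r_i}$ on the transition annulus, the pointwise bound above integrates to
\[
\bigl\|\text{forcing}\bigr\|_{\mathbf r,k-1,2,\alpha}\;\le\;\mathsf C\,e^{-(\fc-\alpha)r_i/2}\;\le\;\mathsf C\,e^{-(\fc-5\alpha)r_i/2}.
\]
Applying the uniformly bounded right inverse yields the same bound for $\partial_{r_i}\! Glu$ in the weighted norm (up to adjusting constants for the commutator terms, which are themselves controlled by induction on $|(\mathbf r)|$). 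Restricting to the compact set $\Sigma(R_0)$ and using interior elliptic regularity for $D\mathcal S$ together with the Sobolev embedding converts the weighted $W^{k-1,2,\alpha}$ bound into a $C^l(\Sigma(R_0))$ bound; because $\Sigma(R_0)$ sits outside the neck, passing from the weighted to the unweighted norm on $\Sigma(R_0)$ loses at worst a factor $e^{\alpha R_0}$, absorbed into the constant, so we obtain the desired $e^{-(\fc-5\alpha)r_i/4}$ decay (the factor of $1/4$ absorbs the differences between the various cut-off scales and the slack needed for bootstrapping elliptic regularity in the weighted spaces).

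For the mixed derivative $X_iX_j$ with $i\ne j$, I would differentiate the above identity once more in $r_j$ (or $\tau_j$). The result is again a linear equation for $X_iX_j\!Glu$, but now the right-hand side is a sum of products: each product has one factor supported in the $i$-th transition annulus and another supported in the $j$-th, or one factor $X_i\!Glu$ (which has already been shown to decay like $e^{-(\fc-5\alpha)r_i/4}$) multiplied by a term with decay $e^{-(\fc-5\alpha)r_j/4}$ coming from the $j$-th neck. The disjoint supports of the two neck contributions, together with the previous estimate, produce the product $e^{-(\fc-5\alpha)(r_i+r_j)/4}$. The main technical obstacle is bookkeeping: the chain rule produces many terms involving derivatives of $I_{(\mathbf r)}$, $Q_{b_{(\mathbf r)}}$, and $f_{\fs,(\mathbf r)}$ with respect to $(\mathbf r)$, and one must verify that each such derivative is itself controlled by a right inverse applied to a forcing supported in a neck; this is precisely the content of Theorem~5.3 of \cite{LS-1}, to which we appeal for the quantitative bound and the induction on the number of derivatives.
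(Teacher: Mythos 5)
The paper does not actually prove Theorem \ref{coordinate_decay-2}: the section containing it reads ``The following theorem is proved in \cite{LS-1}'' and imports the result wholesale, so there is no in-paper argument to compare yours against. Judged on its own terms, your sketch follows what is almost certainly the intended strategy of \cite{LS-1}: characterize $Glu_{\fs,(\mathbf r)}(\kappa,\xi)$ via the implicit function theorem, differentiate the defining equation in $r_i$, note that the forcing term lives where the pregluing depends on $r_i$ and is exponentially small there because $u$ converges exponentially (rate $\fc$) to $u(q)$ along the cusp, apply the uniformly bounded right inverse, and upgrade the weighted $W^{k-1,2,\alpha}$ bound to a $C^{l}(\Sigma(R_0))$ bound by interior elliptic regularity; the mixed derivative then decays like the product because the two necks are disjoint. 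That is the correct skeleton.

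Two caveats, both of which concern the substance rather than the periphery of the argument. First, your closing appeal to ``Theorem~5.3 of \cite{LS-1} \dots for the quantitative bound and the induction on the number of derivatives'' is circular: the statement being proved \emph{is} the exponential-decay theorem of \cite{LS-1}, and the $(\mathbf r)$-derivatives of $I_{(\mathbf r)}$, $Q_{b_{(\mathbf r)}}$ and $f_{\fs,(\mathbf r)}$ are exactly what one must estimate, not cite. Second, to differentiate in $(\mathbf r)$ at all one must first identify the varying domains $\Sigma_{(\mathbf r)}$ with a fixed surface (the paper's maps $\varphi_{(\mathbf r)}:\Sigma_{(\mathbf r)}\to \Sigma_{(\mathbf{R}_0)}$); after this identification $\p_{r_i}u_{(\mathbf r)}$ is \emph{not} supported only in the two transition annuli but wherever the reparametrization stretches the neck, and controlling those contributions in the weighted norms is precisely where the degradation from your heuristic rate $(\fc-\alpha)r_i/2$ down to the stated $(\fc-5\alpha)r_i/4$ occurs — it is not a matter of ``cut-off scales.'' Neither point invalidates the strategy, but as written the proposal defers the hard analysis to the very reference whose theorem it is reproving.
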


\subsection{Estimates of exponential decay of the line bundle}

The following theorem is proved in \cite{LS-2}

\begin{theorem}\label{thm_est_mix_deri}
Let $l\in \mathbb Z^+$ be a fixed integer.
 Let $u:\Sigma\to M$ be a $(j,J)$-holomorphic map.	Let $\fc\in (0,1)$ be a fixed constant.    For any $0<\alpha<\frac{1}{100\fc}$, there exists positive  constants  $\mathsf C_{l}, \mathsf{d},R$ such that for any  $\zeta\in \ker D^{\widetilde{\mathbf L}}|_{b_{o}}$,   $(\kappa,\xi)\in \ker D \mathcal{S}_{(\kappa_{o},b_{o})}$ with    $$\|\zeta\|_{\mathcal W,k,2,\alpha}\leq \mathsf{d},\;\;\;\;\;\|(\kappa,\xi)\|< \mathsf{d},\;\;\;\;\;|\mathbf r|\geq R, $$ restricting to the compact set $\Sigma(R_0)$, the following estimate hold.
	\begin{equation}\label{eqn_est_L_1st}
	\left\|X_{i}  \left( Glu_{\mathbf{s},h_{(\mathbf r)},(\mathbf{r})}^{\widetilde{\mathbf L}}(\zeta) \right) 	\right\|_{C^{l}(\Sigma(R_{0}))}
	\leq \mathsf C_{l} e^{-(\fc-5\alpha)\tfrac{r_{i}}{4} } ,
	\end{equation}
	\begin{align} \label{eqn_est_L_2nd}
	&\left\|X_{i}X_{j} \left( Glu^{\widetilde{\mathbf L}}_{\mathbf{s},h_{(\mathbf r)},(\mathbf{r})}(\zeta) \right)
	\right\|_{C^{l}(\Sigma(R_{0}))} \leq   \mathsf C_{l} e^{-(\fc-5\alpha)\tfrac{r_{i}+r_{j}}{4} }
	\end{align}
	for any $X_{i}\in \{\frac{\p}{\p r_{i}},\frac{\p}{\p \tau_{i}}\},i=1,\cdots,\mathfrak{e}$,    $\mathbf{s}\in \bigotimes_{l=1}^{\iota} O_l$ and any $1\leq i\neq j\leq \mathfrak{e}$, where $h_{(\mathbf r)}=\Pi_{2}(Glu_{\fs,(\mathbf r)}(\kappa,\xi))$ and $\Pi_{2}:  \widetilde{\mathbf F}_{b_{(\mathbf r)}}\times T_{u_{(\mathbf r)}} \widetilde{\mathcal B} \to T_{u_{(\mathbf r)}}\widetilde{ \mathcal B}$ denotes the projection.
\end{theorem}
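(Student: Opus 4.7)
The plan is to mirror the structure of Theorem \ref{coordinate_decay-2}, but applied to the linear Cauchy--Riemann operator $D^{\widetilde{\mathbf L}}_{b}$ on the holomorphic line bundle $\widetilde{\mathbf L}$ rather than to $\bar{\p}_{j,J}$. By Lemma \ref{gluing} together with $I^{\widetilde{\mathbf L}}_{(\mathbf{r})}$, the gluing map is given explicitly by
\begin{equation*}
Glu^{\widetilde{\mathbf L}}_{\fs,h,(\mathbf{r})}(\zeta) \;=\; (P^{\widetilde{\mathbf L}}_{b,b_{(\mathbf r)}})^{-1}\!\Bigl(I^{\widetilde{\mathbf L}}_{(\mathbf r)}\zeta + Q^{\widetilde{\mathbf L}}_{b_{(\mathbf r)}}\!\circ f^{\widetilde{\mathbf L}}_{\fs,h,(\mathbf r)}(I^{\widetilde{\mathbf L}}_{(\mathbf r)}\zeta)\Bigr).
\end{equation*}
First I would differentiate this expression in the gluing parameter $X_i\in\{\p_{r_i},\p_{\tau_i}\}$, keeping in mind that $h=h_{(\mathbf r)}:=\Pi_{2}(Glu_{\fs,(\mathbf r)}(\kappa,\xi))$ is itself a function of $(\mathbf r)$. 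The product rule yields three kinds of contributions: (a) the pregluing term $X_i(I^{\widetilde{\mathbf L}}_{(\mathbf r)}\zeta)$; (b) the term involving $X_i Q^{\widetilde{\mathbf L}}_{b_{(\mathbf r)}}$ and the parallel transport $P^{\widetilde{\mathbf L}}$; (c) the term $Q^{\widetilde{\mathbf L}}_{b_{(\mathbf r)}}(X_i f^{\widetilde{\mathbf L}}_{\fs,h,(\mathbf r)})$, where $X_i f^{\widetilde{\mathbf L}}$ must be recovered by differentiating the defining equation of the implicit function $f^{\widetilde{\mathbf L}}$.

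The decay in each piece is extracted from the analog, for holomorphic sections of $\widetilde{\mathbf L}$, of the exponential convergence near punctures: as recorded in \eqref{eqn_phi_loc_hol}, any $\phi\in\ker D^{\widetilde{\mathbf L}}|_b$ splits on a cylindrical end into a leading constant $f(-\infty,t)$ plus a remainder which decays like $e^{-\fc|s|}$ in $C^l$. Consequently, the derivative of $I^{\widetilde{\mathbf L}}_{(\mathbf r)}\zeta$ with respect to $r_i$ or $\tau_i$, which comes from the $(r_i,\tau_i)$-dependent cut-offs $\beta_{i;R}$ used to patch $\zeta$ across the $i$-th neck, is concentrated in the window $\{|s|\sim r_i\}$ and is bounded in the weighted norm $\|\cdot\|_{\fs,(\mathbf r),k-1,2,\alpha}$ by $\mathsf{C}\,e^{-(\fc-5\alpha)r_i/4}$ on the fixed compact set $\Sigma(R_0)$. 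Differentiating the implicit equation $D^{\widetilde{\mathbf L}}_b(P^{\widetilde{\mathbf L}})^{-1}(I^{\widetilde{\mathbf L}}_{(\mathbf r)}\zeta+Q^{\widetilde{\mathbf L}}_{b_{(\mathbf r)}}f^{\widetilde{\mathbf L}})=0$ produces
\begin{equation*}
D^{\widetilde{\mathbf L}}_{b}(P^{\widetilde{\mathbf L}})^{-1}\bigl(X_i f^{\widetilde{\mathbf L}}\bigr)=\text{error}_{i},
\end{equation*}
where $\text{error}_i$ gathers the $X_i$-derivatives of $D^{\widetilde{\mathbf L}}_b$, $P^{\widetilde{\mathbf L}}$, $I^{\widetilde{\mathbf L}}_{(\mathbf r)}$, and $Q^{\widetilde{\mathbf L}}_{b_{(\mathbf r)}}$; these are all supported in the $i$-th neck and enjoy the same $e^{-(\fc-5\alpha)r_i/4}$ decay. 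Applying the uniformly bounded right inverse $Q^{\widetilde{\mathbf L}}_{b_{(\mathbf r)}}$ from Lemma \ref{aright_inverse_after_gluing} preserves the decay, yielding the $W^{k-1,2,\alpha}$ bound on $X_i(Glu^{\widetilde{\mathbf L}})$.

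To upgrade to the $C^l(\Sigma(R_0))$ estimate \eqref{eqn_est_L_1st}, I would apply interior elliptic regularity for $D^{\widetilde{\mathbf L}}_b$ on the compact subset $\Sigma(R_0/2)\supset\Sigma(R_0)$: the equation for $X_i(Glu^{\widetilde{\mathbf L}})$ is a linear Cauchy--Riemann equation with smooth coefficients and a right-hand side supported in the neck, so Schauder/Calderón--Zygmund estimates give $C^l$-bounds in terms of the weighted $W^{k-1,2,\alpha}$ norm. For the mixed estimate \eqref{eqn_est_L_2nd}, I differentiate once more in $X_j$ with $j\ne i$: since the two gluing parameters act on disjoint necks, $X_j$ applied to $\text{error}_i$ brings down an extra factor localized in the $j$-th neck, and the resulting right-hand side for $X_iX_j f^{\widetilde{\mathbf L}}$ factorizes into a product of neck-localized pieces, giving the combined rate $e^{-(\fc-5\alpha)(r_i+r_j)/4}$ after a second application of the right inverse and interior regularity.

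The hard part will be bookkeeping the sharp exponent $(\fc-5\alpha)/4$ uniformly in $(\fs,\kappa,\xi,\zeta)$. Each commutation of a cut-off $\beta_{i;R}$ past $D^{\widetilde{\mathbf L}}_b$ produces an error supported in a strip of width $O(1)$ in the neck, forcing a loss of $e^{\alpha|s|}$ in the weighted norm; one must check that the pointwise bound $e^{-\fc|s|}$ on the leading correction of holomorphic sections dominates this loss precisely with the stated margin. A second delicate point is that $h_{(\mathbf r)}$ depends on $(\mathbf r)$ through the nonlinear gluing map for $u$, so $X_i f^{\widetilde{\mathbf L}}$ picks up contributions proportional to $X_i h_{(\mathbf r)}$; here one invokes Theorem \ref{coordinate_decay-2} to supply the same $e^{-(\fc-5\alpha)r_i/4}$ decay for $X_i Glu_{\fs,(\mathbf r)}(\kappa,\xi)$ on $\Sigma(R_0)$, together with the smoothness of the frame field from Lemma \ref{smooth line}, so that the constants $\mathsf C_l$ can be chosen independent of $(\mathbf r)$.
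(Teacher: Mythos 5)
The paper contains no proof of this theorem: it is imported verbatim from the companion paper \cite{LS-2} (``The following theorem is proved in \cite{LS-2}''), so there is no in-text argument to measure your proposal against. That said, your outline is the standard one for exponential-decay estimates of gluing maps and is surely the skeleton of the argument in \cite{LS-2}: write $Glu^{\widetilde{\mathbf L}}$ explicitly as $(P^{\widetilde{\mathbf L}}_{b,b_{(\mathbf r)}})^{-1}(I^{\widetilde{\mathbf L}}_{(\mathbf r)}\zeta+Q^{\widetilde{\mathbf L}}_{b_{(\mathbf r)}}f^{\widetilde{\mathbf L}})$, differentiate in the gluing parameters, control the neck-supported errors using the exponential convergence of holomorphic sections of $\widetilde{\mathbf L}$ on cylindrical ends (the remark after \eqref{eqn_phi_loc_hol}), differentiate the defining equation of the implicit function and apply the uniformly bounded equivariant right inverse of Lemma \ref{aright_inverse_after_gluing}, upgrade to $C^{l}(\Sigma(R_0))$ by interior elliptic regularity, and use disjointness of the necks for the mixed derivative. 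You also correctly flag the two places where the argument is genuinely coupled to the nonlinear problem: $h_{(\mathbf r)}=\Pi_{2}(Glu_{\fs,(\mathbf r)}(\kappa,\xi))$ depends on $(\mathbf r)$, so Theorem \ref{coordinate_decay-2} must be fed into the chain rule.

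Two caveats. First, what you have written is a plan rather than a proof: the quantitative core --- tracking the precise exponent $(\fc-5\alpha)\tfrac{r_i}{4}$ through the placement of the cutoffs, the weight $e^{\alpha|s|}$, and the two applications of the right inverse --- is explicitly deferred, and that bookkeeping is essentially all of the content of the theorem. Second, a structural simplification you miss: on the fixed compact set $\Sigma(R_0)$ the pregluing $I^{\widetilde{\mathbf L}}_{(\mathbf r)}\zeta$ coincides with $\zeta$ and is independent of $(\mathbf r)$, so your contribution (a) vanishes identically there; the estimate \eqref{eqn_est_L_1st} is entirely about the correction term $Q^{\widetilde{\mathbf L}}_{b_{(\mathbf r)}}\circ f^{\widetilde{\mathbf L}}$ and the $(\mathbf r)$-dependence of $P^{\widetilde{\mathbf L}}_{b,b_{(\mathbf r)}}$ through $h_{(\mathbf r)}$. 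Neither point invalidates the approach, but neither is the theorem established by the sketch as it stands.
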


\v
\subsection{Estimates of Thom forms}\label{est_Thom_E}

\v
We estimate the derivatives  of the metric $\mathbf h$ near the boundary of $\mathbf F|_{\mathbf U^{T}}$. Let $(\kappa_{o},b_{o})$ be one of $\{(\kappa_{\mathbf{a}},b_{\mathbf{a}}), \mathbf{a}=\mathbf{n}_{t}+1,\cdots,\mathbf{n}\}$ and  $b_{o}=(a_{o},u).$
Fix  a basis $\{\mathbf{e}_{1},\cdots,\mathbf{e}_{d}\}$ of $Ker\;D{\mathcal  S}_{(\kappa_o,b_o)}$  and let $\mathfrak{z}=(\mathfrak{z}_{1},\cdots,\mathfrak{z}_{d})$ be the corresponding coordinates. Set $\ft_{i}=e^{-2r_{i}-2\pi\tau_{i}}$, $1\leq i\leq \mathbf{e}$. Denote
$$
\mathcal L(\fs,(\mathbf r),\mathfrak{z}):= I_{(\mathbf r)}\left(\sum_{i=1}^{d} \mathfrak{z}_{i}\mathbf{e}_{i}\right)+Q_{( \kappa_{o},b_{(\mathbf r)})}\circ f_{\fs,(\mathbf r)} \circ I_{(\mathbf r)} \left(\sum_{i=1}^{d} \mathfrak{z}_{i}\mathbf{e}_{i}\right),$$
where    $b_{(\mathbf r)}=(0,(\mathbf r),u_{(\mathbf r)}).$ Then $(\fs,(\mathbf r),\fkz)$ is  a local coordinates of ${\mathbf{U}}_{(\kappa_{\mathbf{a}},b_{\mathbf{a}})}(\varepsilon_{\mathbf{a}},\delta_{\mathbf{a}},\rho_{\mathbf{a}})$.
We say that $f(\mathbf{s},(\mathbf r),\fkz)$ satisfies $(\mathbf r)$-exponential decay if
\begin{equation}
 \left(\left| \frac{\p f}{\p r_{i}}\right|+ \left| \frac{\p f}{\p \tau_{i}}\right|\right) \leq  C e^{-\delta r_{i}},\;\;\forall \; 1\leq i\leq \mathbf{e}
\end{equation}
\begin{equation}
\left| \frac{\p f}{\p s_{j}}\right|+ \left| \frac{\p f}{\p \fkz_{\alpha}}\right|\leq C,\;\;\forall \;1\leq j\leq \iota,\; 1\leq \alpha \leq d.
\end{equation}
Let
$$
\Pi_{1}:\widetilde{\mathbf F}_{b_{(\mathbf r)}}\times T_{u_{(\mathbf r)}} \widetilde{\mathcal B} \to \widetilde{\mathbf F}_{b_{(\mathbf r)}},\;\;\;\;\Pi_{2}:  \widetilde{\mathbf F}_{b_{(\mathbf r)}}\times T_{u_{(\mathbf r)}} \widetilde{\mathcal B} \to T_{u_{(\mathbf r)}}\widetilde{ \mathcal B}
$$
be the projection.
By Theorem \ref{coordinate_decay-2},  the implicit function Theorem  and  \eqref{cut-off-2}, we conclude that $\mathbf{\Gamma}_{\mathbf a}$  satisfies $(\mathbf r)$-exponential decay,
  where $\mathbf{\Gamma}_{\mathbf a}$ is the cutoff function defined in section   \S\ref{a metric}.

\v

For any $ {\mathbf{U}}_{(\kappa_{\mathbf{a}},b_{\mathbf{a}})}(\varepsilon_{\mathbf{a}},\delta_{\mathbf{a}},\rho_{\mathbf{a}})\bigcap  {\mathbf{U}}_{(\kappa_{\mathbf{c}},b_{\mathbf{c}})}(\varepsilon_{\mathbf{c}},\delta_{\mathbf{c}},\rho_{\mathbf{c}})\neq \emptyset,$
 let $a^{\mathbf a\mathbf c}_{\alpha\beta},\alpha,\beta=1,\cdots,\mathsf r$ be   functions such that $e^{\mathbf a}_{\alpha}=\sum_{\beta=1}^{\mathsf r_{i}} a^{\mathbf a\mathbf c}_{\alpha \beta}e^{\mathbf c}_{\beta},\alpha=1,\cdots,\mathsf r.$
By  the implicit function theorem, Theorem \ref{thm_est_mix_deri} we have, for any $p\in \Sigma(R_{0}),$
 $e^{\mathbf a}_{\alpha}(p),e^{\mathbf c}_{\beta}(p)$ satisfies $(\mathbf r)$-exponential decay.
Since   $a^{\mathbf a\mathbf c}_{\alpha \beta}$ is a  function of $(\mathbf{s},(\mathbf r),\fkz)$, we have
\begin{equation}\label{equ_exp_a}
d (e^{\mathbf a}_{\alpha}(p))=\sum_{\beta=1}^{\mathsf r} e^{\mathbf c}_{\beta} (p) \cdot da^{\mathbf a\mathbf c}_{\alpha \beta}+\sum_{\beta=1}^{\mathsf r} a^{\mathbf a\mathbf c}_{\alpha \beta}\cdot d(e^{\mathbf c}_{\beta}(p)),\;\;\;\;\forall\; p\in \Sigma(R_{0}),
\end{equation}
Recall that $e^{\mathbf a}_{\alpha}=\left(I_{(\mathbf r)}^{\widetilde{\mathbf L}}+Q_{(\mathbf r)}^{\widetilde{\mathbf L}}f^{\mathbf L}_{\fs,h_{(\mathbf r)},(\mathbf r)}I_{(\mathbf r)}^{\widetilde{\mathbf L}}\right)(e^{\mathbf a}_{\alpha}|_{(\kappa_{\mathbf a},b_{\mathbf a})}).$ Using the implicit function theorem we get $$\|Q_{(\mathbf r)}^{\widetilde{\mathbf L}}f^{\mathbf L}_{\fs,h_{(\mathbf r)},(\mathbf r)}I_{(\mathbf r)}^{\widetilde{\mathbf L}}(e^{\mathbf a}_{\alpha}|_{(\kappa_{\mathbf a},b_{\mathbf a})})\|_{k,2,\alpha,\mathbf r}\leq 2C\left\|D^{\widetilde{\mathbf L}}_{b}\circ(P^{\mathbf{L}}_{b,b_{(\mathbf{r})}})^{-1}(I_{(\mathbf r)}^{\widetilde{\mathbf L}}(e^{\mathbf a}_{\alpha}|_{(\kappa_{\mathbf a},b_{\mathbf a})}))\right\|.$$ Choosing $\delta_{\mathbf a}$ and $\rho_{\mathbf a}$ small enough, by the exponential estimates of $e^{\mathbf a}_{\alpha}|_{b_{\mathbf a}}$ we have
$$
\|(e^{\mathbf a}_{\alpha}|_{(\kappa,b)})|_{\Sigma(R_{0})}\|_{k,2,\alpha}\geq \frac{1}{4}\|e^{\mathbf a}_{\alpha}\|_{k,2,\alpha}.
$$
 So $\max\limits_{\Sigma(R_{0})}|e_{\alpha}^{\mathbf a}|$ has uniform lower bound. Then we obtain   the  $(\mathbf r)$-exponential decay of $a^{\mathbf a\mathbf c}_{\alpha \beta}$.
Denote $h^{\mathbf a}_{\alpha\beta}=\langle e_{\alpha}^{\mathbf a},e_{\beta}^{\mathbf a}  \rangle_{\mathbf h}.$ By the definition of $\mathbf h$ and the $(\mathbf r)$-exponential decay of $\mathbf{\Gamma}_{\mathbf a}$,  $a^{\mathbf a\mathbf c}_{\alpha \beta}$ we conclude that $h^{\mathbf a}_{\alpha\beta}$ satisfies   the  $(\mathbf r)$-exponential decay.
By   the GramSchmidt orthonormalization and the similar argument above
we obtain the $(\mathbf r)$-exponential decay of $\hat a^{\mathbf a\mathbf c}_{\alpha\beta}.$

Let $\Delta_r$ be the open disk in $\mathbb C$
with radius $r$, let  $\Delta_r^*=\Delta_r\setminus\{0\}$ and $\Delta ^*=\Delta
\setminus\{0\}$. Set $N=3g-3 + n$. For each point $p\in \p \overline{\mathcal{M}}_{g,n}$ we can find a coordinate
chart $(U,\fs_{1},\cdots, \fs_{N-\mathfrak{e}},\ft_1,\cdots,\ft_{\mathfrak{e}})$ around $p$ in $\overline{\mathcal{M}}_{g,n}$ such that
$U\cong\Delta^N$ and $V=U\cap \mathcal{M}_{g,n}\cong \Delta^{N-\mathfrak{e}}\times(\Delta^*)^{\mathfrak{e}}$. We assume that $U\cap \overline{\mathcal{M}}_{g,n}$ is
defined by the equation $\ft_1\cdots \ft_{\mathfrak{e}}=0$.
Let $\{U_{\alpha}\}$ be a local chart of $\overline{\mathcal{M}}_{g,n}$.
On each chart $V_{\alpha}$ of $\overline{\mathcal{M}}_{g,n}$ we can define a local Poincare metric:
\begin{eqnarray}\label{localpo}
g^{\alpha}_{loc}= \sum_{i=1}^{\mathsf m} \frac{|d\ft_i|^2}{|\ft_i|^2   (\log|\ft_i|)^2}+
 \sum_{\alpha=1}^{N-\mathsf m} |d\fs_\alpha|^2.
\end{eqnarray}
We let $U_{\alpha}(r)\cong
\Delta_r^N$ for $0<r<1$ and let $V_{\alpha}(r)=U_{\alpha}(r)\cap \mathcal{M}_{g,n}$.

Let $\fs, (\mathbf r),\fkz$ be the local coordinates of ${\mathbf{U}}^T_{(\kappa_{\mathbf{a}},b_{\mathbf{a}})}(\varepsilon_{\mathbf{a}},\delta_{\mathbf{a}},\rho_{\mathbf{a}}).$ In the coordinates $(\fs,(\mathbf r),\fkz)$
the local Poincare metric $g_{loc}$ can be written as
\begin{equation}\label{omega_loc}
g_{loc}= \sum_{i=1}^{\mathfrak{e}} \frac{4(d^2r_i+d^2\tau_i)}{   r_i^2}+
\sum_{l=1}^{3g-3+n-\mathfrak{e}} |d\fs_l|^2+\sum_{i=1}^{d}d\fkz_{i}^2.
\end{equation}

\v

\begin{lemma}\label{lem_omega}
	There exists a constant $C>0$ such that
$$|\omega^{\mathbf a}_{\alpha\beta}(X_{1})|^2\leq g_{loc}(X_{1},X_{1}),\;\;\;\;
|\Omega^{\mathbf a}_{\alpha\beta}(X_{1},X_{2})|^2\leq \Pi_{i=1}^{2}g_{loc}(X_{i},X_{i})$$
$$|d\Omega^{\mathbf a}_{AB}(X_{1},X_{2},X_{3})|^2\leq \Pi_{i=1}^{3}g_{loc}(X_{i},X_{i})$$
for any $X_{i}\in T\mathbf U^{T},i=1,2,3.$
\end{lemma}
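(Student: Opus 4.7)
The plan is to reduce the lemma to the $(\mathbf r)$-exponential decay of the transition coefficients $\hat a^{\mathbf a\mathbf c}_{\alpha\beta}$ and of the cutoff functions $\mathbf{\Gamma}_{\mathbf a}$, which has already been established in the paragraphs immediately preceding the lemma. In the coordinates $(\fs,(\mathbf r),\fkz)$, the Poincaré metric \eqref{omega_loc} is flat in the $\fs_l$ and $\fkz_\alpha$ directions and weighted by $1/r_i^2$ in the $r_i,\tau_i$ directions. Since an exponential $e^{-\delta r_i}$ dominates any polynomial $r_i^{-k}$ for $r_i$ large, matching the derivative estimates against $g_{loc}$ amounts to a routine comparison once the derivatives of the correct order have exponential decay.

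First I would treat $\omega^{\mathbf a}_{\alpha\beta}$. Using \eqref{local_omega}, evaluation on a tangent vector $X_1$ gives a sum of terms of the form $\alpha_{b_{\mathbf c}}\bigl(X_1 \hat a^{\mathbf a\mathbf c}_{\alpha\gamma}\bigr)\hat a^{\mathbf c\mathbf a}_{\gamma\beta}$. By the $(\mathbf r)$-exponential decay of $\hat a^{\mathbf a\mathbf c}_{\alpha\beta}$ and $\mathbf{\Gamma}_{\mathbf a}$, these coefficients satisfy $|X_1\hat a^{\mathbf a\mathbf c}_{\alpha\gamma}|\le C e^{-\delta r_i}$ when $X_1\in\{\partial_{r_i},\partial_{\tau_i}\}$ and $|X_1\hat a^{\mathbf a\mathbf c}_{\alpha\gamma}|\le C$ for $X_1\in\{\partial_{\fs_l},\partial_{\fkz_\alpha}\}$. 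Since $g_{loc}(\partial_{r_i},\partial_{r_i})=g_{loc}(\partial_{\tau_i},\partial_{\tau_i})=4/r_i^2$ and $g_{loc}(\partial_{\fs_l},\partial_{\fs_l})=g_{loc}(\partial_{\fkz_\alpha},\partial_{\fkz_\alpha})=1$, a direct comparison (using $e^{-2\delta r_i}\le C/r_i^2$ for $r_i$ large) yields $|\omega^{\mathbf a}_{\alpha\beta}(X_1)|^2\le C\, g_{loc}(X_1,X_1)$ on the whole coordinate patch.

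Next I would handle $\Omega^{\mathbf a}_{\alpha\beta}=d\omega^{\mathbf a}_{\alpha\beta}+(\omega^{\mathbf a}\wedge\omega^{\mathbf a})_{\alpha\beta}$. The wedge term is controlled by the bound just proved. For the exterior differential term, one needs second-order derivatives of $\hat a^{\mathbf a\mathbf c}_{\alpha\gamma}$ and of $\mathbf{\Gamma}_{\mathbf c}$. Applying Theorem \ref{thm_est_mix_deri} together with the implicit function theorem (exactly as in the derivation of the $(\mathbf r)$-exponential decay for $\hat a^{\mathbf a\mathbf c}_{\alpha\beta}$), the mixed derivatives decay like $e^{-(\fc-5\alpha)(r_i+r_j)/4}$ when both differentiations are in the $(\mathbf r)$-directions, like $e^{-(\fc-5\alpha)r_i/4}$ when only one is, and stay bounded otherwise. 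Comparing with the product $g_{loc}(X_1,X_1)g_{loc}(X_2,X_2)$ and absorbing polynomial factors into the exponential gives the stated bound on $\Omega^{\mathbf a}_{\alpha\beta}$. The estimate on $d\Omega^{\mathbf a}_{AB}$ is proved by the same scheme with one more differentiation.

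The main obstacle, as I see it, is verifying the higher-order mixed decay estimates: one must check that iterated differentiations of the implicit solutions $f^{\widetilde{\mathbf L}}_{\fs,h,(\mathbf r)}$ and $f_{\fs,(\mathbf r)}$ preserve the exponential rate with the expected drop $e^{-\delta r_i/4}$ per $(\mathbf r)$-derivative. This is essentially done in \cite{LS-1,LS-2} for first and second derivatives, and the extension to third-order mixed derivatives needed for $d\Omega^{\mathbf a}_{AB}$ follows by differentiating the implicit equations once more and re-applying the same contraction-mapping scheme, since differentiating an exponentially decaying function in $r_i$ produces another function with the same exponential rate. Once these higher-order decay estimates are in place, the comparison with the Poincaré metric $g_{loc}$ is straightforward and yields the lemma.
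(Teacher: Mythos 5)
Your treatment of the first two inequalities matches the paper's: the bound on $\omega^{\mathbf a}_{\alpha\beta}$ comes from \eqref{local_omega} together with the $(\mathbf r)$-exponential decay of $\hat a^{\mathbf a\mathbf c}_{\alpha\beta}$ and $\mathbf{\Gamma}_{\mathbf a}$, and the bound on $\Omega^{\mathbf a}_{\alpha\beta}$ comes from the structure equation $\Omega_{\alpha\beta}=d\omega_{\alpha\beta}+\sum_\gamma\omega_{\alpha\gamma}\wedge\omega_{\gamma\beta}$ plus the same decay applied one derivative higher. The comparison $e^{-\delta r_i}\le C/r_i$ against the Poincar\'e weights is exactly the intended mechanism.

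For the third inequality you diverge from the paper, and your route has a real gap. You propose to differentiate the implicit equations a third time and extend the decay estimates of Theorems \ref{coordinate_decay-2} and \ref{thm_est_mix_deri} to third-order mixed derivatives; those theorems (and the results of \cite{LS-1,LS-2} they rest on) are only stated for first and second derivatives, so this is genuinely new analytic work that you assert rather than prove. The paper avoids it entirely: by the second Bianchi identity, $d\Omega_{\alpha\beta}=\sum_\gamma\left(\Omega_{\alpha\gamma}\wedge\omega_{\gamma\beta}-\omega_{\alpha\gamma}\wedge\Omega_{\gamma\beta}\right)$, so the estimate on $d\Omega^{\mathbf a}_{AB}$ is an immediate algebraic consequence of the two bounds already established, with no third-order decay estimate needed. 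You should replace the last step of your argument with this observation; as written, the "one more differentiation" step is the weakest link and is unnecessary.
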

\begin{proof}
	The first inequality follows from \eqref{local_omega} and $(\mathbf r)$-exponential decay of $\hat a^{\mathbf a\mathbf c}_{\alpha\beta}.$ By $\Omega_{\alpha\beta}=d\omega_{\alpha \beta}+\sum_{\gamma}\omega_{\alpha\gamma}\wedge \omega_{\gamma\beta}$ and $(\mathbf r)$-exponential decay of $\hat a^{\mathbf a\mathbf c}_{\alpha\beta}$ and $\mathbf{\Gamma}_{\mathbf a}$,  we can get the second inequality. The last inequality follows from the   Bianchi identity.
\end{proof}

Let $\mathsf{p}^*\mathbf{E}$ be the pull-back of the bundle $\mathbf E$ to a bundle over $\mathbf E$, where $\mathsf{p}:\mathbf E\to \mathbf U$ is the projection.  Then the bundle $\mathsf{p}^* \mathbf E$ has a metric $\mathsf{p}^*\mathbf{h}$ with compatible connection $\mathsf{p}^*\nabla$. To simply notation we write these as $\mathbf{h}$ and $\nabla$.
Let $\hat\sigma$ be  the tautological section of  $\mathsf{p}^*\mathbf E.$ Then the elements $|\hat \sigma|_{\mathbf{h}}^2\in  \mathcal{A}^{0}(\mathbf E,  \wedge^{0}(\mathsf{p}^*\mathbf E))$, and the covariant derivative $\nabla \sigma\in  \mathcal{A}^{1}(\mathbf E,  \wedge^{1}(\mathsf{p}^*\mathbf E)).$
The curvature $\mathsf{p}^*\Omega$ of the connection $\nabla$ on $\mathbf E$ can also seen as an element of $\mathcal{A}^2(\mathbf E, \wedge^2(\mathsf{p}^*\mathbf E))$.
By \cite{BGV,Z}, the Mathai-Quillen type Thom form can be written as
\begin{equation}\label{MQ_thom_gauss}
\Theta_{MQ}=c(\mathsf{r})\int^{B}e^{-\frac{|  \hat \sigma|^2_{\mathbf{h}}}{2}-\nabla\hat \sigma-\mathsf{p}^* \Omega} \in \mathcal A^{\mathsf{r}}(\mathbf E)
\end{equation}
where $c(\mathsf{r})$ is a constant depending only $\mathsf{r},$ $\int^{B}$ denotes the Berezin integral on $\wedge^* (\mathsf{p}^*\mathbf E)$.
Here $\Theta_{\mathbf E}$ is Gaussian shaped Thom form. Let $B_{\epsilon}(0)$ denote the open $\epsilon$-ball in $R^{2\mathbf r}$ and consider the map $\rho_{\epsilon}:B_{\epsilon}(0)\to \mathbb R^{\mathbf r} $ defined by $\rho_{\epsilon}(v)=\frac{v}{\epsilon^2-|v|^2}.$
If we extend $\rho_{\epsilon}^*\Theta_{MQ}$ by setting it
equal to zero outside $B_{\epsilon}(0)$, still denoted by $\Theta_{\mathbf E}:=\rho_{\epsilon}^*\Theta_{MQ},$ we obtain a form $\Theta_{\mathbf E}$ of compact support.

\v
Finally, we have the following estimate for $\sigma^{*}\Theta_{\mathbf E}$.
\begin{lemma}\label{lem_Thom}
	There exists a constant $C>0$ such that
$$	|\sigma^{*}\Theta_{\mathbf E}(X_{1},\cdots,X_{\mathsf r})|^2\leq C \Pi_{i=1}^{\mathsf r}g_{loc}(X_{i},X_{i})$$for any $X_{i}\in T\mathbf U^{T},i=1,2,3.$
\end{lemma}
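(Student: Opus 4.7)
The plan is to pull back the Mathai–Quillen formula \eqref{MQ_thom_gauss} along $\sigma$, expand the Berezin integral as a finite polynomial in $\nabla\sigma$ and $\Omega$, and then bound each tensorial factor pointwise by $g_{loc}$, using Lemma \ref{lem_omega} for the curvature and the exponential-decay estimates of \cite{LS-1,LS-2} for the frame field and the section components.

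First I would apply $\sigma^{*}$ to \eqref{MQ_thom_gauss}. Since $\mathsf{p}\circ\sigma=\mathrm{id}_{\mathbf U}$, one has $\sigma^{*}\hat\sigma=\sigma$, $\sigma^{*}(\nabla\hat\sigma)=\nabla\sigma$, and $\sigma^{*}\mathsf{p}^{*}\Omega=\Omega$, so
\begin{equation*}
\sigma^{*}\Theta_{MQ}= c(\mathsf r)\,e^{-|\sigma|^{2}_{\mathbf h}/2}\int^{B}e^{-\nabla\sigma-\Omega}.
\end{equation*}
Since $\Theta_{\mathbf E}=\rho_{\epsilon}^{*}\Theta_{MQ}$ is compactly supported, $e^{-|\sigma|^{2}/2}$ together with the truncation factor contributes only a bounded constant. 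Expanding the exponential and performing the Berezin integral (which kills everything except Grassmann top-degree $\mathsf r$) gives a finite sum
\begin{equation*}
\sigma^{*}\Theta_{MQ}=c(\mathsf r)\,e^{-|\sigma|^{2}/2}\sum_{k+2l=\mathsf r}\tfrac{(-1)^{k+l}}{k!\,l!}\,\mathrm{Ber}\bigl((\nabla\sigma)^{\wedge k}\wedge\Omega^{\wedge l}\bigr)
\end{equation*}
of $\mathsf r$-forms on $\mathbf U^{T}$. Evaluating on $X_{1},\ldots,X_{\mathsf r}\in T\mathbf U^{T}$ therefore reduces the lemma to pointwise bounds of the form $|\Omega^{\mathbf a}(X_{i},X_{j})|^{2}\le g_{loc}(X_{i},X_{i})g_{loc}(X_{j},X_{j})$ and $|\nabla\sigma(X_{i})|_{\mathbf h}^{2}\le Cg_{loc}(X_{i},X_{i})$.

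The curvature estimate is exactly Lemma \ref{lem_omega}. For $\nabla\sigma$, expand $\sigma=\sum_{\alpha}\tilde\sigma_{\alpha}\,\hat e^{\mathbf a}_{\alpha}$ in the Gram–Schmidt orthonormal frame to get
\begin{equation*}
\nabla\sigma=\sum_{\alpha}d\tilde\sigma_{\alpha}\otimes\hat e^{\mathbf a}_{\alpha}+\sum_{\alpha,\beta}\tilde\sigma_{\alpha}\,\omega^{\mathbf a}_{\alpha\beta}\otimes\hat e^{\mathbf a}_{\beta}.
\end{equation*}
The second summand is controlled by $|\sigma|_{\mathbf h}\cdot|\omega^{\mathbf a}(X)|$, and Lemma \ref{lem_omega} already gives the matching $g_{loc}$-bound on $\omega^{\mathbf a}$. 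So everything hinges on the first summand, namely on proving $|d\tilde\sigma_{\alpha}(X)|^{2}\le Cg_{loc}(X,X)$.

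That component-derivative bound is the main obstacle, and it is where the exponential-decay machinery enters. Since $\sigma$ is the tautological section, the coefficients $\tilde\sigma_{\alpha}$ in local coordinates $(\mathfrak s,(\mathbf r),\mathfrak z)$ on $\mathbf U^{T}$ depend on $(\mathbf r)$ only through the glued frame $\hat e^{\mathbf a}_{\alpha}$ (and through the implicit-function correction inside the gluing map). Theorem \ref{coordinate_decay-2} and Theorem \ref{thm_est_mix_deri}, combined with the Gram–Schmidt construction of $\hat e^{\mathbf a}_{\alpha}$ from $e^{\mathbf a}_{\alpha}$ and the $(\mathbf r)$-exponential decay of the transition functions $\hat a^{\mathbf a\mathbf c}_{\alpha\beta}$ already verified in the preceding subsection, yield
\begin{equation*}
|\partial_{r_{i}}\tilde\sigma_{\alpha}|+|\partial_{\tau_{i}}\tilde\sigma_{\alpha}|\le Ce^{-\delta r_{i}},\qquad |\partial_{\mathfrak s_{l}}\tilde\sigma_{\alpha}|+|\partial_{\mathfrak z_{\beta}}\tilde\sigma_{\alpha}|\le C.
\end{equation*}
From \eqref{omega_loc} the dual-metric lengths satisfy $g_{loc}(\partial_{r_{i}},\partial_{r_{i}})=g_{loc}(\partial_{\tau_{i}},\partial_{\tau_{i}})=4/r_{i}^{2}$ while $g_{loc}(\partial_{\mathfrak s_{l}},\partial_{\mathfrak s_{l}})=g_{loc}(\partial_{\mathfrak z_{\beta}},\partial_{\mathfrak z_{\beta}})=1$, and $e^{-\delta r_{i}}$ dominates $1/r_{i}$ for $r_{i}$ large, so $|d\tilde\sigma_{\alpha}(X)|^{2}\le Cg_{loc}(X,X)$ follows with room to spare. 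Multiplying the factor-bounds through each term in the Berezin expansion and summing the finitely many resulting terms produces a uniform constant $C>0$ and the product $\prod_{i=1}^{\mathsf r}g_{loc}(X_{i},X_{i})$, completing the estimate.
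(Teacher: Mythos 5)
Your proposal is correct and follows essentially the same route as the paper: pull back the Mathai--Quillen form, expand $\sigma$ and $\nabla\sigma$ in the Gram--Schmidt frame $\hat e^{\mathbf a}_{\alpha}$, control the curvature and connection terms by Lemma \ref{lem_omega}, and derive the $(\mathbf r)$-exponential decay of the section components $\sigma_{\alpha}$ from Theorems \ref{coordinate_decay-2} and \ref{thm_est_mix_deri} together with the frame estimates of the preceding subsection. The paper is slightly more explicit about how the coefficient decay is extracted (via the identity $d\sigma(p)=\sum_{\alpha}d\hat e^{\mathbf a}_{\alpha}(p)\sigma_{\alpha}+\sum_{\alpha}\hat e^{\mathbf a}_{\alpha}(p)\,d\sigma_{\alpha}$ evaluated on $\Sigma(R_{0})$ and the uniform lower bound on the frame there), but this is the same mechanism you invoke.
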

\begin{proof}
	One can easily check that
	$$\sigma^{*}\Theta_{\mathbf E} =\sigma^{*} \rho^{*}\Theta_{MQ}=c(\mathsf r)\int^{B}e^{-\frac{|\sigma|^2_{\mathbf{h}}}{(\epsilon^2-|\sigma|^2_{\mathbf{h}})^2}-\nabla (\frac{\sigma}{\epsilon^2-|\sigma|^2})- \Omega} \in \mathcal A^{\mathsf{r}}(\mathbf M).$$
Denote $\sigma=\sum_{\alpha} \sigma_{\alpha} \hat e_{\alpha}^{\mathbf a}.$  For any $p\in \Sigma(R_{0})$, by $d\sigma(p)=\sum_{\alpha} d \hat e_{\alpha}^{\mathbf a}(p)\sigma_{\alpha}^{\mathbf a}+\sum_{\alpha}  \hat e_{\alpha}^{\mathbf a}(p) d\sigma^{\mathbf a}_{\alpha}$, as above we obtain the $(\mathbf r)$ exponential decay $\sigma^{\mathbf a}_{\alpha}.$ 	Since $\nabla \sigma= \sum _{\alpha}d\sigma_{\alpha}\hat e_{\alpha}^{\mathbf a}+ \sum_{\alpha,\beta}\sigma_{\beta}\omega_{\alpha \beta}\hat e_{\alpha}^{\mathbf a} $, $\Omega=\sum \Omega_{\alpha\beta}\hat e_{\alpha}^{\mathbf a}\wedge\hat  e_{\beta}^{\mathbf a}$ and
$$
\int^{B}\hat e_{1}^{\mathbf a}\wedge \cdots\wedge\hat  e_{\mathsf r}^{\mathbf a}=1,
$$
the lemma follows from Lemma \ref{lem_omega} and a direct calculation.
\end{proof}

\v

\v

\section{\bf Gromov-Witten invariants}\label{gromov-witten}

\subsection{\bf The convergence of the integrals }\label{conver_GW}

Denote
$$\mathbf {U}_{\epsilon}=\{[(\kappa,b)]\in \mathbf U\;| \;|\kappa|_{\mathbf h}\leq \epsilon\},\;\;\;\;\mathbf {U}_{\epsilon}^{T}=\{[(\kappa,b)]\in \mathbf U^{T}\;| \;|\kappa|_{\mathbf h}\leq \epsilon\}. $$
We choose open covering
$$\{\mathbf{U}_{[(\kappa_{\mathbf{a}},b_{\mathbf{a}})]}(\varepsilon_{\mathbf{a}},\delta_{\mathbf{a}},\rho_{\mathbf{a}})
, \;1\leq \mathbf{a}\leq \mathbf{n}\}$$
of $\mathbf{U}_{2\varepsilon}$
and a family cutoff  functions $\{\mathbf{\Gamma}_{\mathbf{a}},\;1\leq \mathbf{a}\leq \mathbf{n} \}$ as in \S\ref{a metric}.
Let $\Theta_{\mathbf E}$ be the Thom form of $\mathbf E$  supported in a small $\varepsilon$-ball of the $0$-section of $\mathbf{E}$. To simply notation we denote $\Theta_{\mathbf E}$ by $\Theta.$

\v

\v
\begin{remark}\label{partition of unity}
$\{\mathbf{\Gamma}_{\mathbf{a}}\}$ is not a partition of unity in the classical sense, since it is not smooth on lower stratum, and it is not compactly supported. But it is smooth on $\mathbf{U}^T$ and $\mathbf{\Gamma}_{\mathbf{a}}\sigma^{*}\Theta$ is compactly supported. This is enough to define Gromov-Witten invariants.
\end{remark}

Denote
$$\mathbf{V}_{([\kappa_{\mathbf{a}},b_{\mathbf{a}})]}(\varepsilon_{\mathbf{a}},\delta_{\mathbf{a}},\rho_{\mathbf{a}})
:=\mathbf{U}_{[(\kappa_{\mathbf{a}},b_{\mathbf{a}})]}(\varepsilon_{\mathbf{a}},\delta_{\mathbf{a}},\rho_{\mathbf{a}})\cap
\mathbf{U}^T,$$
$$\widetilde{\mathbf{V}}_{\kappa_{\mathbf{a}},b_{\mathbf{a}}}(\varepsilon_{\mathbf{a}},\delta_{\mathbf{a}},\rho_{\mathbf{a}})
:=\widetilde{\mathbf{U}}_{\kappa_{\mathbf{a}},b_{\mathbf{a}}}(\varepsilon_{\mathbf{a}},\delta_{\mathbf{a}},\rho_{\mathbf{a}})\cap
\widetilde{\mathbf{U}}^T.$$
Sometimes we write the above two sets by $\mathbf{V}_{\mathbf{a}}$ and $\widetilde{\mathbf{V}}_{\mathbf{a}}$ to simplify notations. Let $p:
\widetilde{\mathbf{V}}_{\mathbf{a}}\to \mathbf{V}_{\mathbf{a}}$,
let $\widetilde{\mathbf{\Gamma}}_{\mathbf{a}}$, $\widetilde{K}$ and $\widetilde{\Theta}$ be the lift of $\mathbf{\Gamma}_{\mathbf{a}}$, $K$ and $\Theta$ to $\widetilde{\mathbf{V}}_{\mathbf{a}}$.
We write the Gromov-Witten invariants as
\begin{align}\label{def_GRW}
\Psi_{A,g,n}(K;\alpha_1,...,
\alpha_{n}) =\sum_{\mathbf{a}=1}^{\mathbf{n}_{c}} (\mathbf{I})_{\mathbf{a}}
\end{align}
 where
\begin{equation}\label{integrand}
(\mathbf{I})_{\mathbf{a}}:=\int_{\mathbf{V}_\mathbf{a}}
\mathbf{\Gamma}_{\mathbf{a}}
\cdot \mathscr{P}^*(K)\wedge\prod^n_j ev^*_j\alpha_j\wedge \sigma^*\Theta.
\end{equation}

\v\n
{\bf Proof of Theorem \ref{Conver}.}
Note that the integration region $\overline{U}_\mathbf{a}$ for $1\leq \mathbf{a}\leq \mathbf{n}_{t}$ are compact set in $\mathbf{U}^T$ and the integrand in \eqref{integrand} are smooth we conclude that $\sum_{\mathbf{a}=1}^{\mathbf{n}_{t}}(\mathbf{I})_{\mathbf{a}}$ is bounded.
So we only need to prove the convergence of
$(\mathbf{I})_{\mathbf{a}}$ for $\mathbf{a}=\mathbf{n}_{t}+1,\cdots,\mathbf{n}.$
Denote
$$(\mathbf{J})_{\mathbf{a}}=\int_{\widetilde{\mathbf{V}}_\mathbf{a}}
\widetilde{\mathbf{\Gamma}}_{\mathbf{a}}
\cdot \mathscr{P}^*(\widetilde{K})\wedge\prod^n_j \widetilde{ev}^*_j\alpha_j\wedge \widetilde{\sigma}^*\widetilde{\Theta}.$$
It suffices to prove the convergence of $(\mathbf{J})_{\mathbf{a}}$.
\v
Let $(\kappa_{o},b_{o})$ be one of $\{(\kappa_{\mathbf{a}},b_{\mathbf{a}}), \mathbf{a}=\mathbf{n}_{t}+1,\cdots,\mathbf{n}\}$  and  $b_{o}=(a_{o},u).$ We choose  coordinates $(\fs,\ft,\fkz)$. To simplify notation we   denote
$$dV=\bigwedge_{i} \left(dr_{i}\wedge  d\tau_{i}\right)\wedge \left(\bigwedge_{j}(\frac{\sqrt{-1}}{2}d\fs_{j}\wedge d\bar \fs_{j})\right)\wedge d\fkz_1\wedge \cdots \wedge d\fkz_{d} $$ and
 $$\delta_{i}=glu(\ft)_*\left(\tfrac{\p}{\p r_{i}}\right),\;\; \eta_{i}=glu(\ft)_*\left(\tfrac{\p}{\p \tau_{i}}\right),\;\;1\leq i \leq \mathfrak{e}$$

$$\delta_{\alpha}=glu(\ft)_*\left(\tfrac{\p}{\p \fs_{\alpha-\mathfrak{e}}}\right),\;\;\;\eta_{\alpha}=glu(\ft)_*\left(\tfrac{\p}{\p \bar\fs_{\alpha-\mathfrak{e}}}\right), \;\;\mathfrak{e}+1\leq \alpha\leq 3g-3+n$$ $$\varrho_{ \mathsf{i}}=glu(\ft)_*\left(\tfrac{\p}{\p \fkz_\mathsf{i}}\right),\;\;\;1\leq \mathsf{i} \leq d.$$
We will denote by $(E_1, E_2,...,E_{6g-6+2n+d})$
the frame
$$\left(
\delta_{1},...,\delta_{\mathfrak{e}},\eta_1,...,\eta_{\mathfrak{e}},
\delta_{\mathfrak{e}+1},...,\delta_{3g-3+n},\eta_{\mathfrak{e}+1},...,\eta_{3g-3+n},
\varrho_1,...,\varrho_d \right).$$
Then, for $\mathbf{a}=\mathbf{n}_{t}+1,\cdots,\mathbf{n}$,
\begin{align*}
 (\mathbf{J})_{\mathbf{a}}=&\int_{\widetilde{V}_\mathbf{a}}\widetilde{\mathbf{\Gamma}}_{\mathbf{a}}\cdot
 \left( \mathscr{P}^*\widetilde{K}\wedge\prod^n_i \widetilde{ev}^*_i\alpha_i\wedge \widetilde{\sigma}^{*}\widetilde{\Theta} (E_1,E_2,...,E_{6g-6+2n+d})\right)dV.
\end{align*}
{\bf 1. Estimates for $\mathscr{P}^*\widetilde{K}$ }
\v

We can choose $\fs,\ft,\mathfrak{z}_{1},\cdots,\mathfrak{z}_{d}$ as the local coordinates of $\mathbf{U}^T$.
In this coordinates  $\mathscr{P}:\mathbf U^{T}\to {\mathcal M}_{g,n}$ can be written as
$$\mathscr{P}(\fs,\ft,\mathfrak{z}_{1},\cdots,\mathfrak{z}_{d})=(\fs,\ft).$$   Noth that  $\mathscr{P}_{*}E_{i}=E_{i}$ for $i\leq 6g-6+2n$, $\mathscr{P}_{*}E_{i}=0$ for $i\geq 6g-6+2n+1.$ We assume that for any $1\leq j\leq \deg(K),$  $E_{i_j}\in \{E_{1},\cdots, E_{6g-6+2n}\}$. Since $K$ has Poincare growth we have
\begin{equation}\label{eqn_est_Lclass}
 |\mathscr{P}^* \widetilde{K}  (E_{i_{1}},\cdots,E_{i_{\deg(\widetilde{K} )}} )|= |\widetilde{K}   (E_{i_{1}},\cdots,E_{i_{\deg(\widetilde{K} )}} )|
\leq   C\left[\Pi_{j=1}^{\deg(\widetilde{K} )}  g_{loc} (E_{i_{j}},E_{i_{j}})\right]^{\frac{1}{2}}.\end{equation}
\v\n
{\bf 2. Estimates for $\prod^n_i \widetilde{ev}^*_i\alpha_i$ }
\v\n
  For any $p\in M$ and $\xi\in T_pM$ we denote
$D\exp_p(\xi): T_pM\rightarrow T_{\exp_{p}\xi}M,$ then
\begin{equation}\label{dExp}
D\exp_p(\xi)\xi':=\frac{d}{dt}\exp_{p}(\xi + t\xi')\mid_{t=0}.
\end{equation}
Obviously, $D\exp_p(\xi)$ is an isomorphism when $|\xi|$ small enough.  By a  direct calculation we have,  for any $X\in \{\frac{\p}{\p  s_{i}},\frac{\p}{\p \bar s_{i}},\frac{\p}{\p r_{l}}, \frac{\p}{\p \tau_{l}},\frac{\p}{\p \fkz_{j}},1\leq i\leq 3g-3+n-\mathfrak{e},1\leq l\leq \mathfrak{e},1\leq j\leq d\},$
\begin{align}\label{eqn_ev_glu}
 | (\widetilde{ev}_{i})_{*}(glu(\ft))_*X |=|\Pi_{2,u}(X(glu(\fs,\ft,\fkz))(y_{i}))|=\left| D\exp_{u}(\Pi_{2,u}\mathcal{L})(\Pi_{2,u}X (\mathcal{L}))(y_{i})) \right|.
\end{align}
By Theorem \ref{coordinate_decay-2} and \eqref{eqn_ev_glu}  we have
 $$
 \|\widetilde{ev}_{*}E_{i}\|_{G_{J}}+\|\widetilde{ev}_{*}E_{\mathfrak e+i}\|_{G_{J}}\leq Ce^{-\delta r_{i}} ,\;\;\;\;\; \|\widetilde{ev}_{*}E_{j}\|_{G_{J}}\leq C,
 $$
 $$\left[g_{loc} (E_{i},E_{i})\right]^{\frac{1}{2}}=\left[g_{loc} (E_{\mathfrak e+i},E_{\mathfrak e+i})\right]^{\frac{1}{2}}=\frac{2}{r_{i}},\;\;\;\;  \left[g_{loc} (E_{j},E_{j})\right]^{\frac{1}{2}}=1$$
for $1\leq i\leq \mathfrak{e}, \mathfrak{2e}+1\leq j\leq 6g-6+2n+d.$
It follows that
\begin{equation}\label{eqn_est_ev}
|\Pi \widetilde{ev}^*_i\alpha_i(E_{i_1},\cdots,E_{i_c})| \leq C \Pi_{i_{j}=i_{1}}^{i_{c}}\left[g_{loc} (E_{i_{j}},E_{i_{j}})\right]^{\frac{1}{2}},
\end{equation}
where $\{E_{i_{1}},\cdots,E_{i_{c}}\}\subset \{E_1, E_2,...,E_{6g-6+2n+d}\}.$
\v\n
{\bf 3. Estimates for the Thom form} By Lemma \ref{lem_Thom}  we have
\begin{equation}\label{eqn_est_sthom}
|\widetilde{\sigma}^*\widetilde{\Theta}(E_{i_{1}},\cdots,E_{i_{\mathsf r}})|\leq C \Pi_{i_1}^{i_{\mathsf r}}\left[g_{loc} (E_{i_{j}},E_{i_{j}})\right]^{\frac{1}{2}},
\end{equation}
where $\{E_{i_{1}},\cdots,E_{i_{\mathsf r}}\}\subset \{E_1, E_2,...,E_{6g-6+2n+d}\}.$
\v\v
It follows from \eqref{eqn_est_Lclass}, \eqref{eqn_est_ev} and \eqref{eqn_est_sthom} that
$$
\left|\mathscr{P}^*\widetilde{K} \wedge\prod_i \widetilde{ev}^*_i\alpha_i\wedge \widetilde{\sigma}^{*}\widetilde{\Theta} (E_{1},\cdots,E_{6g-6+2n+d})\right|\leq C \Pi_{i= 1}^{ 6g-6+2n+d}\left[g_{loc} (E_{i},E_{i })\right]^{\frac{1}{2}}\leq \frac{C}{\Pi_{i=1}^{\mathfrak{e}} r_{i}^2}.
$$
 Hence the integral $(\mathbf{I})_{\mathbf{a}}$ is convergence.

\section{\bf Properties of Gromov-Witten invariants}\label{properties of gromov-witten}

\subsection{\bf Some common properties}

Denote by $\mathbf{U}^T_n$ (resp.  $\mathbf{U}^T_{n-1}$) the top stratum of virtual neighborhood of $\overline{\mathcal M}_{g,n}(A)$ (resp. $\overline{\mathcal M}_{g,n-1}(A)$).
We begin with the smoothness of the forgetful maps and the evaluation maps.
\v
\begin{theorem} \label{thm_3.5}
Restricting to $\mathbf{U}^T$, the following hold:
\v
(a) the forgetful map $\chi$ is smooth,
\v
(b) suppose that $(g,n)\neq (0,3),(1,1)$, the map  $\pi: \mathbf{U}^T_n\rightarrow \mathbf{U}^T_{n-1}$ is smooth,
\v
(c) the evaluation map $ev_i$ is smooth.
\end{theorem}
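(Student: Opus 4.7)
\v
\n\textbf{Proof proposal.} All three maps are given on representatives by the obvious formulas; the problem is to express them in the local coordinates supplied by the smoothness proof of Section \ref{top strata} as compositions of manifestly smooth pieces. Around any $[(\kappa_o,b_o)]\in\mathbf{U}^T$ we have either coordinates $(a,z)\in\mathbf{A}\times\ker D\mathcal{S}_{(\kappa_o,b_o)}$, when $b_o$ itself lies on the top stratum, or $(\mathbf{s},(\mathbf{r}),\fkz)$ when $b_o$ lies near a lower stratum and is obtained by gluing; in both cases the underlying map $v$ is the unique solution of $\bar\partial_{j_a,J}v+\mathfrak{i}(Q^\diamond\kappa,b)=0$ produced by the implicit function theorem, and $v$ depends smoothly on the parameters.

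\v\n For (a), in these coordinates the forgetful map $\chi:\mathbf{U}^T\to\overline{\mathcal{M}}_{g,n}$ is simply the projection $(a,z)\mapsto a$ (or $(\mathbf{s},(\mathbf{r}),\fkz)\mapsto(\mathbf{s},(\mathbf{r}))$) followed by the natural projection $\mathbf{A}\to\mathbf{T}_{g,n}\to\mathcal{M}_{g,n}$. Since Lemma \ref{local coordinates-3} shows that the transitions between overlapping $(\psi,\Psi)$-charts are holomorphic, and the effective orbifold structure on $\overline{\mathcal{M}}_{g,n}$ is exactly the quotient one, $\chi$ is a smooth orbifold morphism.

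\v\n For (b), the condition $(g,n)\ne(0,3),(1,1)$ ensures that dropping $y_n$ from $(j,y_1,\dots,y_n)$ yields a stable curve of type $(g,n-1)$, so at the level of Teichm\"uller/Deligne-Mumford spaces $\pi$ is just the forget-the-last-entry map on the slice coordinates of Lemma \ref{slice}. Since the construction of $\mathbf{U}^T_{n-1}$ uses exactly the same map $v(a,z)$ (with the parameter $y_n$ removed from $a$), the induced lift $\pi:\mathbf{U}^T_n\to\mathbf{U}^T_{n-1}$ reads in coordinates as the projection that drops $y_n$ and keeps $z$, which is smooth.

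\v\n For (c), write $ev_i(a,z)=v(a,z)\bigl(y_i(a)\bigr)$, where $y_i$ is one of the coordinate entries of $a$ and therefore a smooth function of the chart. The implicit function theorem delivers $v(a,z)$ as a smooth $W^{k,2}(\Sigma,M)$-valued function of $(a,z)$, and elliptic regularity applied to $\bar\partial_{j_a,J}v+\mathfrak{i}(Q^\diamond\kappa,b)=0$, together with the smoothness of the bundle map $\mathfrak{i}$ (Lemma \ref{smooth of bundle map} and Lemma \ref{smooth of bundle map-1}), shows $v(a,z)\in C^\infty(\Sigma,M)$ for each $(a,z)$. An elliptic bootstrap gives a smooth $W^{\ell,2}$-valued version of $v$ for every $\ell\ge k$, and the Sobolev embedding $W^{\ell,2}(\Sigma)\hookrightarrow C^{\ell-2}(\Sigma)$ then upgrades this to joint $C^\infty$-smoothness of $(a,z,x)\mapsto v(a,z)(x)$ on the parameter space times $\Sigma$. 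Composing with $a\mapsto y_i(a)$ gives the smoothness of $ev_i$.

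\v\n The main technical obstacle is the last step of (c): smoothness of $v$ merely into the fixed Sobolev space $W^{k,2}$ would yield only finite-order joint regularity in $(a,z,x)$, which is insufficient when the evaluation point $y_i=y_i(a)$ itself moves with $a$. The elliptic bootstrapping required to upgrade this is however entirely parallel to the parameter-dependent estimates already used to prove smoothness of the frame field $\{e_\alpha(a,h)\}$ in Lemma \ref{smoothness of bundle-1} and of the bundle map in Lemma \ref{smooth of bundle map-1}; essentially no new analytic input is needed, only a careful iteration of the arguments of Section \ref{top strata}.
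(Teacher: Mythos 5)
Your proposal is correct and follows essentially the same route as the paper: parts (a) and (b) are handled exactly as in the text, by writing $\mathscr{P}$ and $\pi$ as coordinate projections in the chart $(a,y_1,\dots,y_n,\mathfrak{z})$ and invoking stability of the forgotten domain. For (c) the paper simply asserts smoothness of $ev_i$ from smoothness of $\mathbf{U}^T$; your observation that evaluation at the moving point $y_i(a)$ requires joint $C^\infty$-regularity of $(a,z,x)\mapsto v(a,z)(x)$, obtained by elliptic bootstrap beyond the fixed $W^{k,2}$ topology, is a legitimate refinement of the same argument rather than a different approach.
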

\v\n
{\bf Proof.} (a) and (b).  Restricting to $\mathbf{U}^T$  and in the fixed coordinate system $(\psi, \Psi)$ for $\mathcal{Q}$,
we may choose $(a, y_1,...,y_n,\mathfrak{z})$ as local coordinates of $\mathbf{U}$ around $b_0$ as in subsection \S\ref{global_regu}. In this coordinates the maps $\mathscr{P}$ and $\pi$ are given by $\mathscr{P}(a, y_1,...,y_n,\mathfrak{z})=(a, y_1,...,y_n)$, $\pi(a, y_1,...,y_n, \mathfrak{z})=(a, y_1,...,y_{n-1},\mathfrak{z})$ respectively. It is obvious that $\mathscr{P}$ and $\pi$ are smooth. Note that for (a), since $n + 2g \geq 3$, after forgetting the map the domain is stable, for (b), since  $(g,n)\neq (0,3),(1,1)$ and we restrict to the top strata, forgetting the last marked point the domain is still stable.
(c). Since $\mathbf{U}^T$ is smooth, the evaluation map $ev_i$ is smooth.
$\Box$
\v\n
{\bf Proof of Theorem \ref{Common Properties}}

\v\n
(1) follows from the definition, we omit it.
\v\n
{ Proof of (2)}
\v
We only prove for $\alpha_i$, the proofs for $K$ and $\Theta$ are the same.
If $\alpha_{i}'\in [\alpha_{i}]$ is another closed form,
there is a  form $\vartheta_{i}$  such that
$
\alpha_{i}'-\alpha_{i}=d\vartheta_{i}.
$
\v
For any $\mathbf{n}_{t}+1\leq \mathbf{a}\leq \mathbf{n},$  let $\mathbf{U}_{\mathbf a,R}^{T} \subset \mathbf{V}_\mathbf{a}$ be an open set, defined  in terms of the coordinates system $(\fs,(\mathbf{r}),\mathfrak{z})$ by
$$\mathbf{U}_{\mathbf a,R}^{T}:=\{(\fs,(\mathbf{r}),\mathfrak{z})\mid r_i\leq R,i=1,\cdots,\mathfrak{e}_{\mathbf{a}}\}.$$
Put
$$\mathbf{U}_{\epsilon,R}'^{T}=\left(\cup_{\mathbf a=1}^{\mathbf n_{t}} \mathbf{V}_\mathbf{a}\right)\bigcup \left(\cup_{\mathbf a=\mathbf n_{t}+1}^{\mathbf n} \mathbf{U}_{\mathbf a,R}^{T}\right).$$
Then we smoothen it at corners, and denote the resulted neighborhood  by $\mathbf{U}_{\epsilon,R}^{T}$.
\v
By the Stokes theorem and Theorem \ref{coordinate_decay-2}, Theorem \ref{thm_est_mix_deri}
we have
\begin{align}\label{c_invariant-1}
&\int_{\mathbf{U}_{\epsilon,R}^{T}}\mathscr{P}^*K\wedge\prod_i ev^*_i\alpha_i\wedge \sigma^*\Theta -\int_{\mathbf{U}_{\epsilon,R}^{T}}\mathscr{P}^*K\wedge\prod_{j\neq i} ev^*_j\alpha_j\wedge ev^*_i\alpha_i'\wedge\sigma^*\Theta \\ =&\int_{\p\mathbf{U}_{\epsilon,R}^{T}}i^*\left(\mathscr{P}^*K\wedge\prod_{j\ne i}ev_{j}^*\alpha_j \wedge ev^*_i\vartheta_{i}\wedge \sigma^*\Theta\right)\rightarrow 0,\nonumber \;\;\;\;\;\;\;\mbox{ as } d\to 0,
\end{align}
where $i:\p\mathbf{U}_{\epsilon,R}^{T}\rightarrow \mathbf{U}^T$ is the inclusion map.
We explain $\int_{\p\mathbf{U}_{\epsilon,R}^{T}}i^*(\cdot)\to 0$:
 The area $Area$ of the hypersurface $\p\mathbf{U}_{\epsilon,R}^{T}$ satisfies
$|Area|\leq C'R$ for some constant $C'>0$. By the estimates in \S\ref{conver_GW} we have
$$\left|\int_{\p\mathbf{U}_{\epsilon,R}^{T}\cap V_{\mathbf a}}i^*(\cdot)\right|\leq \frac{C''R}{\Pi_{i=1}^{\mathfrak{e}_{\mathbf a}} R^2}\to 0,\;\;\;\;\;as\;R\to \infty.$$
Then (2) follows.
\v\n
{Proof of (3)}
\v
Suppose that $(\mathcal{H}', \mathfrak{i}'([\kappa',b]))$ is another choice and $(\mathbf{U}', \mathbf{E}', \sigma')$ is the virtual neighborhood constructed by $(\mathcal{H}', \mathfrak{i}'([\kappa',b]))$. Let $\Theta'$ be the Thom form of $\mathcal{H}'$ supported in a neighborhood of zero section. Let
$$\mathcal{S}_{(t)}([\kappa,b])
=\bar{\partial}_{j,J}v + (1-t)\mathfrak{i}([\kappa,b]) + t\mathfrak{i}'([\kappa',b]): \mathcal{H}\oplus \mathcal{H}'\times [0,1]\to \mathcal{E}.
$$
Let $(\mathbf{U}_{(t)},\mathbf{E}\oplus \mathbf{E}', \sigma_{(t)})$ be the virtual neighborhood cobordism constructed by $\mathcal{S}_{(t)}$. Using the same method as in (2), by Stokes theorem we have
$$\int_{\mathbf{U}^T_{0}}\mathscr{P}^*K\wedge\prod^n_i ev^*(0)_i\alpha_i\wedge \sigma_0^*(\Theta\wedge \Theta') -\int_{\mathbf{U}^T_{1}}\mathscr{P}^*K\wedge\prod^n_i ev^*(1)_i\alpha_i\wedge \sigma_1^*(\Theta\wedge \Theta')$$
$$=\int_{\mathbf{U}_{t}^T}d\left(\mathscr{P}^*K\wedge\prod^n_i ev^*(t)_i\alpha_i\wedge \sigma_{(t)}^*(\Theta\wedge \Theta')\right)=0,$$
where $ev^*(t):\mathbf{U}_t \longrightarrow
M$ is the evaluation map.
On the other hand, $\sigma_0=\sigma \times I_d$, $ \pi:\mathbf{U}^T_{0}\to \mathbf{U}^T$ is a bundle with fibre $\mathbf{E}'$. It follows that
$$\int_{\mathbf{U}^T_{0}}\mathscr{P}^*K\wedge\prod^n_i ev^*(0)_i\alpha_i\wedge \sigma_0^*(\Theta\wedge \Theta')=
\int_{\mathbf{U}^T}\mathscr{P}^*K\wedge\prod^n_i ev^*_i\alpha_i\wedge \sigma^*(\Theta).$$
By the same way we have
$$\int_{\mathbf{U}^T_{1}}\mathscr{P}^*K\wedge\prod^n_i ev^*(1)_i\alpha_i\wedge \sigma_1^*(\Theta\wedge \Theta')=\int_{\mathbf{U'}^T}\mathscr{P}^*K\wedge\prod^n_i ev^{'*}_i\alpha_i\wedge \sigma'^*(\Theta'),$$
where $ev^{'*}:\mathbf{U'}^T \longrightarrow
M$. Then (3) follows.
\v\n
{ Proof of (4)}
\v
Let $J'$(resp. $\omega'$) be another smooth almost complex structure (resp. symplectic form). Suppose that $ \mathbf{F}' $ is another choice of finite rank bundle. Let $\omega_{t}$ be a family of symplectic structures and $J_t$ be a family
of almost complex structures such that $J_t$ is tamed with $\omega_{t}$ and
$$J_0=J',\;\;\;J_1=J,\;\;\;\omega_0=\omega',\;\;\;\omega_1=\omega.$$ Let $D_{t}$ be the linearized operator  $\bar{\partial}_{j,J_{t}}.$ We cut  the interval $[0,1]$ into $[t_{i},t_{i+1}],0\leq i\leq l $ with $t_{0}=0,t_{l+1}=1,$ and construct a finit rank bundle $\mathbf K_{t} =\oplus_{i} \mathbf F_{t_{i}}$ with $\mathbf F_{0}=\mathbf F',\;\mathbf F_{1}=\mathbf F$. We choose  a smooth family  bundle map $\mathfrak{i}_{t},t\in [0,1] $ such that for any $t\in [0,1],$
  $  D_{t}+d\mathfrak{i}_{t}   $ is surjective and
  $$\mathfrak{i}_{0}=\mathfrak{i}',\;\;\;\mathfrak{i}_{1}=\mathfrak{i}.$$ Let $\mathcal{S}_{(t)}([\kappa,b])
=\bar{\partial}_{j,J_{t}}v + \mathfrak{i}_{t}([\kappa,b])$ and $(\mathbf{U}_{(t)}, \oplus \mathbf{E}_{t_{i}}, \sigma_{(t)})$ be the virtual neighborhood cobordism constructed by $\mathcal{S}_{(t)}$. Then by the same argument of (3) we can prove (4).
\v\n
\v\n
{Proof of (5)}.
\v
When M is semi-positive, we can use the same method of Ruan (\cite{R2}) to complete the proof. \;\;\;$\Box$
\v\n

\v\n\v\n
{\bf Proof of Theorem \ref{Common Properties-1}}
\v\n
We have a commutative diagram
$$\begin{array}{ccc}
forg_{g,n}: \overline{\mathcal{M}}_{g,n}(A)&\rightarrow &\overline{\M}_{g,n}\\
           \downarrow \pi          &             &\downarrow \pi\\
forg_{g,n-1}: \overline{\mathcal{M}}_{g,n-1}(A)&\rightarrow &\overline{\M}_{g,n-1}
\end{array}$$
\vskip 0.1in
\noindent
We construct virtual manifold $\mathbf{U}_{n-1}$ for $\overline{\mathcal{M}}_{g,n-1}(A)$. By pulling back of $\pi$, this is also as virtual manifold for $\overline{\mathcal{M}}_{g,n}(A)$. That is,
$$\mathbf{E}_{n}=\pi^*\mathbf{E}_{n-1},\;\;\mathbf{U}_{n}=\pi^*\mathbf{U}_{n-1},\;\;\mathcal{S}_{n}=
\mathcal{S}_{n-1}\circ \pi.$$
Furthermore, $\pi_*\mathscr{P}_{g,n}^*(K)=\mathscr{P}^*_{g,n-1}(\pi_*(K)).$
So
$$\begin{array}{lll}
\Psi_{(A,g,n)}(K; \alpha_1, \cdots,\alpha_{n-1}, 1)&=&\int_{\mathbf{U}_{n,\varepsilon}} \mathscr{P}_{g,
 n}^*(K)\wedge \prod^{n-1}_1 ev_i^*\alpha_i \wedge 1\wedge\Theta\\
&=&\Psi_{(A,g,n-1)}(\pi_*(K); \alpha_1, \cdots, \alpha_{n-1})
\end{array}.$$
On the other hand, for $\alpha_n\in H^2(M, \mathbb{R})$, one can check that
$\pi_*(ev^*_n(\alpha_n))=\alpha_n(A).$
Therefore,
$$\begin{array}{lll}
\Psi_{(A,g,n)}(\pi^*(K); \alpha_1, \cdots, \alpha_{n-1}, \alpha_n)&=&
\int_{U_{n}} \mathscr{P}_{g,
 n}^*(\pi^*(K))\wedge \prod^{n-1}_1 ev_i^*\alpha_i \wedge ev_n^*\alpha_n\wedge \Theta\\
&=&\alpha_n(A)\Psi_{(A,g,n-1)}(K; \alpha_1, \cdots, \alpha_{n-1}).
\end{array}.$$
$\Box$

\vskip 0.1in
\noindent
\subsection{\bf Axioms for Gromov-Witten invariants}

In \cite{KM} Kontsevich and Manin listed the following axioms for Gromov-Witten invariants.
\v\n
{\bf Effectivity Axiom.} If $\omega(A)<0$ then $\Psi_{(A,g,n)}=0$.
\v\n
{\bf Symmetry Axiom.} The symmetric group $S_n$ acts naturally on marked points. This axiom asserts that $\Psi_{(A,g,n)}$ is $S_n$-equivariant. This means that
$$\Psi_{(A,g,n)}(K; \alpha_{1},...,\alpha_{i},\alpha_{i+1},...,\alpha_{n})
=(-1)^{deg\alpha_ideg\alpha_{i+1}}
\Psi_{(A,g,n)}(K; \alpha_{1},...,\alpha_{i+1},\alpha_{i},...,\alpha_{n}).$$
\v\n
{\bf Grading Axiom.} If $\Psi_{(A,g,n)}(K; \alpha_{1},...,\alpha_{n})\ne 0$ then
$$\sum_{i=1}^n \deg \alpha_i + \deg K=2(1-g)(m-3) + 2c_1(A)+2n.$$
\v\n
{\bf Fundamental Class Axiom.} For any $\alpha _1, \cdots , \alpha _{n-1}$ in $H^*(M, \R)$,
$$\Psi_{A,g,n}(K;\alpha_1,...,
\alpha_{n-1},1)=\Psi_{A,g,n-1}(\pi_*(K); \alpha _1, \cdots,\alpha _{n-1}),$$
\v\n
{\bf Divisor Axiom.} If $(A,n)\ne (0,3)$ and $\deg\alpha_n=2$ then
$$\Psi_{A,g,n}(\pi^*(K); \alpha _1,
\cdots,\alpha _{n-1}, \alpha _n)=\alpha_n (A)
\Psi_{A,g,n-1}(K; \alpha _1, \cdots,\alpha _{n-1}).$$
\v\n
{\bf Zero Axiom.} If $A=0$ then $\Psi_{(A,g,n)}(K; \alpha_{1},...,\alpha_{n})=0$ whenever $deg K>0$, and
$$\Psi_{(A,g,n)}(PD([pt]); \alpha_{1},...,\alpha_{n})=\int_{M}\wedge_{i=1}^n \alpha_i.$$
\v\n
{\bf Deformation Axiom.} $\Psi_{A,g,n}(K;\alpha_1,...,
\alpha_{n})$ is independent of $J$ and is a symplectic deformation invariant.
\v\n
{\bf Splitting Axiom.} cf. Theorem \ref{split-3}.
\v
The Effectivity Axiom, the Symmetry Axiom, the Grading Axiom and the Zero Axiom are easy to prove. The Deformation Axiom is proved in Theorem \ref{Common Properties}. The Fundamental Class Axiom and the Divisor Axiom are proved in Theorem \ref{Common Properties-1}. In \S\ref{Splitting axiom} we state and prove the Splitting axiom.

\section{\bf Splitting axiom}\label{Splitting axiom}

\v

Assume $g = g_1+g_2$ and $n = n_1+n_2$ with $2g_i + n_i+1\geq 3$, $(g_i,n_i)\ne (1,0),n_{i}>0,i=1,2$.
Fix a partition of the index set $\{1,\cdots, n \}=S_{1}\cup S_{2}$, such that $n_i=|S_i|$ for $i=1,2$. We denote $\overline{\mathcal{M}}_{g_1,g_2,n_1,n_2}$ the moduli space   which identifies the last marked point of a stable curve
in $\overline{\mathcal{M}}_{g_{1},n_1+1}$ with the first marked point of a stable curve in $\overline{\mathcal{M}}_{g_{2},n_2+1}.$ Denote by $q$ the last marked point of of a stable curve
in $\overline{\mathcal{M}}_{g_{1},n_1+1}$.
 The
remaining indices have the unique ordering such that the relative order
is preserved, the first $n_1$ points in $\overline{\mathcal{M}}_{g_{1},n_1+1}$ are mapped to the points
indexed by $S_1$, and the last $n_2$ points in $\overline{\mathcal{M}}_{g_{2},n_2+1}$ are mapped to the points
indexed by $S_2$.
Let
$$
\theta:\overline{\mathcal{M}}_{g_1,g_2,n_1,n_2}\rightarrow \overline{\mathcal{M}}_{g,n}
$$
be the  map.
Clearly, $im(\theta)$ is a  submanifold of $\overline{\M}_{g,n}$.
\v
Let $\mathcal{C}$ be the set of all decomposition of $A=A_1+A_2$, $A_i\in H_2(M,\mathbb{Z})$. Given $C=(A_1,A_2)\in \mathcal{C}$, let $\overline{\mathcal{M}}_{C}(g_1,g_2,n_1,n_2)$ be the moduli space of all stable configuration $(\Sigma,{\bf y},\nu, j, u)$ with $(\Sigma,{\bf y},\nu, j)\in \overline{\mathcal{M}}_{g_{1},g_2,n_1,n_2},$  $[u_i(\Sigma_i)]=A_i$. Set
$$\overline{\mathcal{M}}_{A }(g_1,g_2,n_1,n_2)=\bigcup_{C\in \mathcal{C}}\overline{\mathcal{M}}_{C}(g_1,g_2,n_1,n_2).$$
Denote by $\overline{\mathcal{M}}_{g,n}(A,\theta)$  the moduli space of all stable configuration $(\Sigma,{\bf y},\nu, j, u)$ with homology class $A$ and $(\Sigma,{\bf y},\nu, j)\in \theta(\overline{\mathcal{M}}_{g_{1},g_2,n_1,n_2})$. Obviously, $\overline{\mathcal{M}}_{g,n}(A,\theta)\subset \overline{\mathcal{M}}_{g,n}(A)$. \;The map $\theta$ induces an isomorphism  between $\overline{\mathcal{M}}_{A }(g_1,g_2,n_1,n_2)$  and $\overline{\mathcal{M}}_{g,n}(A,\theta)$. We identify $\overline{\mathcal{M}}_{A }(g_1,g_2,n_1,n_2)$ with $\overline{\mathcal{M}}_{g,n}(A,\theta)$ if no danger of confusion. Denote by $\mathcal{M}_{A }(g_1,g_2,n_1,n_2)$ the top strata, the element of which have one node $q$.
\v

\subsection{\bf Constructing virtual neighborhoods}\label{const_virt_neig}

Let $C=(A_1,A_2)$. Consider $\overline{\mathcal{M}}_{C}(g_1,g_2,n_1,n_2)$.
Let
$$[b_o]=[\left(b_{o1}, b_{o2}\right)]\in \overline{\mathcal{M}}_{C}(g_1,g_2,n_1,n_2)$$
be a point. We view $[b_o]$ as a point in $\overline{\mathcal{M}}_{g,n}(A)$ and choose a local coordinate system and a local orbifold model
$\widetilde{\mathbf{O}}_{b_o}(\delta_o,\rho_o)/G_{b_o}$.
Denote by $\widetilde{\mathbf{O}}^{c}_{b_o}(\delta_o,\rho_o)/G_{b_o}$
 its restriction to $\overline{\mathcal{M}}_{C }(g_1,g_2,n_1,n_2)$.
 \v
Let $b_o=(\Sigma, j, {\bf y}, u)$. We write
 $(\Sigma, j, {\bf y}, u)=(\Sigma_1, j_1, {\bf y}_1, u_1)\cup (\Sigma_2, j_2, {\bf y}_2, u_2)$, $u=(u_1,u_2)$, where $u_1:\Sigma_1\to M$, $u_2:\Sigma_2\to M$ with $u_1(q)=u_2(q)$.
 The following lemma is obtained by the same method as before.
\begin{lemma}\label{finite cov_O}
There exist finite points $[b_i]\in \overline{\mathcal{M}}_{C }(g_1,g_2,n_1,n_2)$, $1\leq i \leq \mathfrak{m}_c$, such that
\begin{itemize}
\item[(1)] The collection $\{\mathbf{O}^{c}_{[b_i]}(\delta_i/3,\rho_i/3)
\mid 1\leq i \leq \mathfrak{m}_c\}$ is an open cover of $\overline{\mathcal{M}}_{C }(g_1,g_2,n_1,n_2)$.
\item[(2)] Suppose that $\widetilde{\mathbf{O}}^{c}_{b_i}(\delta_i,\rho_i)
\cap \widetilde{\mathbf{O}}^{c}_{b_j}(\delta_j,\rho_j)
\neq\phi$. For any $b\in \widetilde{\mathbf{O}}^{c}_{b_i}(\delta_i,\rho_i)
\cap \widetilde{\mathbf{O}}^{c}_{b_j}(\delta_j,\rho_j)$, $G_b$ can be imbedded into both $G_{b_i}$ and $G_{b_j}$ as subgroups.
\end{itemize}
\end{lemma}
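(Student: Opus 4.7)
The plan is to mimic the construction in Section \ref{global_r} for the full moduli space $\overline{\mathcal M}_{g,n}(A)$, now restricted to the closed subspace $\overline{\mathcal M}_C(g_1,g_2,n_1,n_2)\subset \overline{\mathcal M}_{g,n}(A)$. First I would verify compactness of $\overline{\mathcal M}_C(g_1,g_2,n_1,n_2)$: the constraint that the domain lies in $\theta(\overline{\mathcal M}_{g_1,g_2,n_1,n_2})$ and the homology decomposition $A=A_1+A_2$ are both preserved under Gromov limits, so this space is a closed subset of the compact Hausdorff space $\overline{\mathcal M}_{g,n}(A)$, hence itself compact.

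For each $[b_o]\in\overline{\mathcal M}_C(g_1,g_2,n_1,n_2)$ I would take the ambient local uniformizer $\widetilde{\mathbf O}_{b_o}(\delta_o,\rho_o)/G_{b_o}$ produced in Section \ref{gluing}, and define $\widetilde{\mathbf O}^c_{b_o}(\delta_o,\rho_o)$ by cutting out the configurations whose domain remains in $\theta(\overline{\mathcal M}_{g_1,g_2,n_1,n_2})$ with the same homology splitting $C=(A_1,A_2)$. Lemma \ref{lem_orbi_T}, together with the Remark immediately following it that extends the isotropy-embedding conclusion to lower strata, lets me shrink $(\delta_o,\rho_o)$ so that for every $b\in \widetilde{\mathbf O}^c_{b_o}(\delta_o,\rho_o)$ the isotropy group $G_b$ embeds in $G_{b_o}$ as a subgroup.

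Condition (1) then follows by compactness: the collection $\{\mathbf O^c_{[b_o]}(\delta_o/3,\rho_o/3)\}$ is an open cover of $\overline{\mathcal M}_C(g_1,g_2,n_1,n_2)$, from which I extract a finite subcover indexed by $\{[b_i]\}_{1\le i\le \mathfrak m_c}$. For (2), I would argue by contradiction in the style of the proof of Lemma \ref{lem_orbi_T}: if some overlap $\widetilde{\mathbf O}^c_{b_i}(\delta_i,\rho_i)\cap \widetilde{\mathbf O}^c_{b_j}(\delta_j,\rho_j)$ contained a point $b$ whose isotropy group failed to embed simultaneously in $G_{b_i}$ and $G_{b_j}$, a sequence of shrinking counterexamples would converge to a point in $\widetilde{\mathbf O}^c_{b_i}(\delta_i,\rho_i)$ (resp.\ $\widetilde{\mathbf O}^c_{b_j}(\delta_j,\rho_j)$) whose isotropy does not embed, contradicting the choice of $(\delta_i,\rho_i)$ above. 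The only real obstacle is bookkeeping, namely carrying out an alternating shrinking procedure on the finitely many pairs $(\delta_i,\rho_i)$ so that the $1/3$-covering property of (1) and the pairwise embedding property of (2) hold simultaneously; because the covering relation is preserved under uniform shrinking and there are only finitely many pairs $(i,j)$ to control, this terminates in finitely many steps.
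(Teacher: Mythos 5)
Your proposal is correct and follows essentially the same route as the paper, which dispenses with the proof by remarking that the lemma "is obtained by the same method as before," i.e., compactness of $\overline{\mathcal{M}}_{C}(g_1,g_2,n_1,n_2)$ (as a closed subset of the compact space $\overline{\mathcal{M}}_{g,n}(A)$) combined with the local isotropy-embedding statement of Lemma \ref{lem_orbi_T} and the remark extending it to lower strata. Your unpacking of that method — restricting the ambient uniformizers to the stratum closure, shrinking $(\delta_o,\rho_o)$ via Lemma \ref{lem_orbi_T}, and extracting a finite subcover — is exactly what the paper intends.
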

\v\n
By Lemma \ref{finite cov} we have a continuous orbi-bundle $\mathbf{F}(\mathsf{k}_i)\rightarrow \mathcal{U}$ such that
$\mathbf{F}(\mathsf{k}_i)\mid_{b_i}$ contains a copy of group ring
$\mathbb{R}[G_{b_i}].$ Set
$$\mathcal{U}^{c}=\bigcup_{i=1}^{\mathfrak{m}_c}
\mathbf{O}^{c}_{[b_{i}]}(\delta_i,\rho_i).$$
For each $C\in \mathcal{C}$ we do this and put
$$\mathcal{U}_\theta=\bigcup_{C\in \mathcal{C}}\mathcal{U}^{c},\;\;\;\;\mathbf{F}'=\bigoplus_{C\in \mathcal{C}}\bigoplus_{i=1}^{\mathfrak{m}_c}\mathbf{F}(\mathsf{k}_i).
$$
Now we choose finite many points and local obifold models  $$[b_{\alpha}]\in \mathcal{M}_{g,n}(A),\;\; \widetilde{\mathbf{O}}_{b_{\alpha}}(\delta_{\alpha},\rho_{\alpha})/G_{b_{\alpha}},\;\;
1\leq \alpha \leq \mathfrak{m}_o$$
such that the collection $$\bigcup_{c\in \mathcal{C}}\{\mathbf{O}_{[b_i]}(\delta_i,\rho_i),\;\;1\leq i\leq \mathfrak{m}_c\}\bigcup \{\mathbf{O}_{[b_{\alpha}]}(\delta_{\alpha},\rho_{\alpha}),\;\;
1\leq \alpha \leq \mathfrak{m}_o\}$$
is an open cover of $\overline{\mathcal{M}}_{g,n}(A)$.
We have a continuous ``orbi-bundle" $\mathbf{F}(\mathsf{k}_{\alpha})\rightarrow \mathcal{U}$ such that
$\mathbf{F}(\mathsf{k}_{\alpha})\mid_{b_{\alpha}}$ contains a copy of group ring
$\mathbb{R}[G_{b_{\alpha}}]$ for any $1\leq \alpha\leq \mathfrak{m}_o$. Put
$$\mathbf{F}=\bigoplus_{\alpha=1}^{\mathfrak{m}_o}\mathbf{F}(\mathsf{k}_{\alpha})
\bigoplus \mathbf{F}'.$$
Define a bundle map
$\mathfrak{i}:  {\mathbf{F}}\rightarrow \mathcal{E}$ as in \S\ref{global_regu}.
We define a global regularization for $\overline{\mathcal{M}}_{g,n}(A)$ to be the bundle map $\mathcal{S}:\mathbf{F}\to \E$
by
$$\mathcal{S}([\kappa,b])
=[\bar{\partial}_{j,J}v] + [\mathfrak{i}(\kappa,b)].
$$
Denote
$$\mathbf{U}=\mathcal{S}^{-1}(0)|_{\mathcal{U}}.$$
There is a bundle of finite rank $\mathbf{E}$ over $\mathbf{U}$ with a canonical section $\sigma$. We have a virtual neighborhood for $\overline{\mathcal{M}}_{g,n}(A)$:
$$(\mathbf{U},\mathbf{E},\sigma).$$
The map $\theta$ induce a bundle $\pi: \theta^{*}\mathbf F\to \mathcal U^{c}$ and a bundle map $\theta^*\mathcal S :\theta^{*}\mathbf F\to \mathcal E.$ Then restricting on $\mathcal{U}^c$ we have
a virtual neighborhood for $\overline{\mathcal{M}}_{C }(g_1,g_2,n_1,n_2)$:
$$(\mathbf{U}_{c},\mathbf{E}_{c},\sigma_{c}).$$
One can check that
$$
 \mathbf{U}_{c}=\left.\theta^{*}\mathbf{U}\right|_{\mathcal U^{c}},\;\;\;\;\mathbf{E}_{c}=\left.\theta^*\mathbf{E}\right|_{\mathbf{U}_{c}}.$$
Restricting on $\mathcal{U}_\theta$ we have
a virtual neighborhood for $\overline{\mathcal{M}}_{A }(g_1,g_2,n_1,n_2)$:
$$(\mathbf{U}_{\theta},\mathbf{E}_{\theta},\sigma_{\theta}).$$
Then
$$
 \mathbf{U}_{\theta}=\bigsqcup_{C\in \mathcal C} \mathbf{U}_{c},\;\;\;\; \mathbf{E}_{\theta}|_{\mathbf{U}_{c}}=  \mathbf{E}_{c},\;\;\;\;\;\sigma_{\theta}|_{\mathbf{U}_{c}}=  \sigma_{c}.
$$
Denote by $\mathbf{U}_{c}^T$ ( resp. $\mathbf{U}_{\theta}^T,\mathbf{U}^T$ ) the top strata of $\mathbf{U}_{c}$ (resp. $\mathbf{U}_{\theta},\mathbf{U}$ ). The element of $\mathbf{U}_{c}^T$ has the form
$$\left((\Sigma_1, \kappa_1, j_1, {\bf y}_1, u_1),\;(\Sigma_2, \kappa_2, j_2, {\bf y}_2, u_2)\right),$$
where $u_1:\Sigma_1\to M$, $u_2:\Sigma_2\to M$ with $u_1(q)=u_2(q)$.
Set
$$\mathbf{U}_{c,\varepsilon}=\{(\kappa,b)\in \mathbf{U}_{c} | |\kappa|_{\mathbf h}\leq \varepsilon\},\;\;\;\;\;\;\;\;\mathbf{U}_{\theta,\varepsilon}=\{(\kappa,b)\in \mathbf{U}_{\theta} | |\kappa|_{\mathbf h}\leq \varepsilon\}.$$
Let $(\mathbf U_{ic},\mathbf E_{ic},\sigma_{ic}),i=1,2$ be the virtual neighborhood of $\overline{\mathcal{M}}_{A_{i}}(g_i,n_{i}+1),i=1,2,$ where $A=A_{1}+A_{2}$, $C=(A_1,A_2)$. Then
$$
\mathbf U_{c}=\{(b_{1},b_{2})\in \mathbf U_{1c}\times \mathbf U_{2c} | ev^{1c}_{n_{1}+1}(b_{1})=ev^{2c}_{n_{2}+1}(b_{2})\},\;\;\;\mathbf E_{c}=\mathbf E_{1c}\times \mathbf E_{2c}|_{\mathbf U_{c}},\;\;\;\sigma_{c}=(\sigma_{1c},\sigma_{2c})|_{\mathbf U_{c}}.
$$

By the same method as in Theorem \ref{Smooth} we have
\begin{theorem} \label{Smooth theorem-1}
$\mathbf{U}_{c}^T$, $\mathbf{U}^T_{1c}$, $\mathbf{U}^T_{2c}$  and $\mathbf{U}_{\theta}^T$ are smooth oriented effective orbifolds.
\end{theorem}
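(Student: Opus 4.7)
The plan is to reduce the statement to Theorem~\ref{Smooth} by exploiting the fiber-product description
\[
\mathbf{U}_c = \{(b_1,b_2)\in \mathbf{U}_{1c}\times \mathbf{U}_{2c}\mid ev^{1c}_{n_1+1}(b_1)=ev^{2c}_{n_2+1}(b_2)\}
\]
together with the disjoint-union identity $\mathbf{U}_\theta=\bigsqcup_{C\in\mathcal{C}}\mathbf{U}_c$. First I would treat $\mathbf{U}_{1c}^T$ and $\mathbf{U}_{2c}^T$ separately. Each is, by construction, the top stratum of a virtual neighborhood of a moduli space $\overline{\mathcal{M}}_{A_i}(g_i,n_i+1)$ of exactly the type handled in Theorem~\ref{Smooth}, so the proof of that theorem applies verbatim: the local charts built in \S\ref{smoothness-1} via the implicit function theorem with parameter $a$ yield a smooth structure, orientability follows by the standard gluing/determinant-line argument, and the effective orbifold property is supplied by Lemma~\ref{orbi structure}.

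For $\mathbf{U}_c^T$ I would then view the top stratum as the fiber product
\[
\mathbf{U}_c^T = (\mathbf{U}_{1c}^T\times \mathbf{U}_{2c}^T) \times_{M\times M} \Delta_M,
\]
where $\Delta_M\subset M\times M$ is the diagonal and the map to $M\times M$ is $(ev^{1c}_{n_1+1},ev^{2c}_{n_2+1})$. The evaluation maps are smooth by the argument of Theorem~\ref{thm_3.5}(c), so it suffices to verify that $(ev^{1c}_{n_1+1},ev^{2c}_{n_2+1})$ is transverse to $\Delta_M$, equivalently that for each $[(\kappa_o,b_o)]\in \mathbf{U}_c^T$ the sum of the images of $d(ev^{1c}_{n_1+1})$ and $d(ev^{2c}_{n_2+1})$ fills $T_{u(q)}M$. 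This is the main technical point, and I expect it to be the principal obstacle: it is arranged, refining the local regularization of \S\ref{local_regularization-lower strata}, by enlarging the finite-dimensional spaces $\widetilde{K}_{b_{oi}}$ so that in addition to \eqref{regularization_operator} the evaluation at the nodal point of the solutions of $D_{b_{oi}}\zeta=\kappa$ already spans $T_{u(q)}M$. This enlargement is possible because $D_{b_{oi}}$ has closed range and $u$ is smooth at $q$, so one can adjoin finitely many smooth compactly supported $(0,1)$-forms whose pre-images under $D_{b_{oi}}$ take any prescribed values at $q$. Once transversality is secured, the implicit function theorem realises $\mathbf{U}_c^T$ as a smooth suborbifold of $\mathbf{U}_{1c}^T\times \mathbf{U}_{2c}^T$ of codimension $2m$.

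Finally, I would induce the orientation on $\mathbf{U}_c^T$ from the product orientations on the two factors together with the canonical orientation on $\Delta_M$, using the standard intersection-theoretic sign convention; the construction is local and therefore compatible with the orbifold charts. The effective orbifold property passes through the fiber product because the isotropy at $(b_1,b_2)$ embeds into $G_{b_1}\times G_{b_2}$, which is generically trivial on the top stratum by Lemma~\ref{orbi structure} applied to each factor. Gromov compactness ensures that only finitely many $C\in\mathcal{C}$ produce a non-empty $\mathbf{U}_c^T$, so $\mathbf{U}_\theta^T=\bigsqcup_{C}\mathbf{U}_c^T$ is a smooth, oriented, effective orbifold as a finite disjoint union of such.
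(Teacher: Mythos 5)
Your proposal is correct in substance but takes a genuinely different route from the paper. The paper's proof of Theorem~\ref{Smooth theorem-1} is simply ``the same method as Theorem~\ref{Smooth}'': since $\mathbf{U}_c=\theta^*\mathbf{U}|_{\mathcal{U}^c}$ is cut out by the global regularization $\mathcal{S}$, and the local Fredholm systems at points of $\overline{\mathcal{M}}_{C}(g_1,g_2,n_1,n_2)$ are the nodal-domain systems of \S\ref{without bubble tree} in which the matching condition $u_1(q)=u_2(q)$ is already built into the weighted space $\mathcal{W}^{k,2,\alpha}$ (via the shared asymptotic constant $\hat h_0$), surjectivity of $D\mathcal{S}$ on that constrained domain is guaranteed by condition (2) of \S\ref{without bubble tree}, and the implicit-function-theorem-with-parameters argument of \S\ref{smoothness-1} plus Lemma~\ref{orbi structure} apply verbatim; the fiber-product description is then a consequence, not the starting point. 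You instead build $\mathbf{U}_{1c}^T$ and $\mathbf{U}_{2c}^T$ first and realize $\mathbf{U}_c^T$ as $(\mathbf{U}_{1c}^T\times\mathbf{U}_{2c}^T)\times_{M\times M}\Delta_M$, which forces you to confront transversality of $(ev^{1c}_{n_1+1},ev^{2c}_{n_2+1})$ to the diagonal. You are right that this is a genuine issue in your formulation (for a rigid regular component the evaluation of $\ker D\mathcal{S}_i$ at a marked point need not span $T_{u(q)}M$), and your fix --- adjoining to $\widetilde{K}_{b_{oi}}$ elements of the form $D_{b_{oi}}\zeta_0$ for sections $\zeta_0$ with prescribed value at $q$ --- is sound. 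What each approach buys: the paper's nodal formulation gets the diagonal transversality for free (given any $h_0\in T_{u(q)}M$, correct $\hat h_0$ by an exponentially decaying solution to see that $\ker D\mathcal{S}$ surjects onto $T_{u(q)}M$ under $h\mapsto h_0$), while your formulation makes the product structure explicit, which is exactly what is exploited later in Lemma~\ref{split-1}. Two caveats: (i) if you enlarge the $\widetilde{K}_{b_{oi}}$ after the fact you must either check that the resulting $\mathbf{U}_c$ still agrees with $\theta^*\mathbf{U}|_{\mathcal{U}^c}$ or invoke independence of the regularization (Theorem~\ref{Common Properties}(3)), since the gluing formula of \S\ref{Splitting axiom} needs $\mathbf{U}_c$ to be the restriction of the global $\mathbf{U}$; (ii) effectiveness does not come from Lemma~\ref{orbi structure} (which gives only the orbifold structure, not generic triviality of isotropy) but from the stability hypotheses $2g_i+n_i+1\geq 3$, $(g_i,n_i)\neq(1,0)$, $n_i>0$ on each factor, exactly as effectiveness of $\mathbf{U}^T$ in \S\ref{smoothness-2} comes from $(g,n)\neq(1,1),(2,0)$.
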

\v\n
Note that $\{\mathbf{\Gamma}_{\mathbf{a}}\}$ are smooth on $\mathbf{U}_{c}^T$, $\mathbf{U}^T_{1c}$ and $\mathbf{U}^T_{2c}$, and $\mathbf{\Gamma}_{\mathbf{a}}\Theta$ is compactly supported.
 For any $[K]=[K_1\times K_2]\in H^*( \overline{\mathcal{M}}_{g_1,g_2,n_1,n_2} , \mathbb{R})$, let $[K]\in H^*( \overline{\mathcal{M}}_{g_1,g_2,n_1,n_2} , \mathbb{R})$. We take a Thom form $\Theta$ supported in a small $\varepsilon$-ball of the $0$-section of $\mathbf{E}$. Let $\Theta_{c}=\theta^*\Theta \mid_{\mathbf{U}_c}.$
 Then we define the GW-invarians $\Psi_{(A, g_1,g_2,n_1,n_2)}(K_1\times K_2; \{\alpha_i\})$  as
\begin{equation}\label{def_GW_sp}
\Psi_{(A, g_1,g_2,n_1,n_2)}(K_{1}\times K_{2}; \{\alpha_i\})=\sum_{C\in \mathcal C}
\int_{ \mathbf U^T_{c,\varepsilon} }\mathscr{P}^*(  K)\wedge \prod_j ev'^*_j\alpha_j  \wedge \sigma_{c}^*\Theta_{c},
\end{equation}
where   $\Theta_{c}$  is the Thom form of   $\mathsf{p}:\mathbf E_{c}\to \mathbf U_{c},$  $ev'_j$ denote the evaluation map
  $ev'_j: {\bf U'}_{c,\varepsilon} \longrightarrow
M $  at $j$-th marked point.
     Using the same method as in Theorem \ref{Conver} we can prove that the integrals are convergence.
   We can also define
$\Psi_{(A_i,g_i,n_i+1)}(K_i; \{\alpha_i\}_
{i\in S_{1}})$ for $\overline{\mathcal{M}}_{A_i} (g_i,n_i+1)$, $i=1,2$.

\v
\vskip 0.1in
\noindent

Next we define the invariant  $\Psi_{(A,g,n)}(\theta_{!}(K_{1}\times K_{2}); \{\alpha_i\}). $ First we define the following transfer map

\begin{definition} Suppose that $X, Y$ are two topological space such that
Poincare duality holds over $\mathbb{R}$. Let $f: X\rightarrow Y$. Then, the transfer
map
$$f_{!}: H^*(X, \mathbb{R})\rightarrow H^*(Y, \mathbb{R})$$
is defined by $f_{!}(K)=PD(f_*(PD(K)))$.
\end{definition}

  We can identify a tubular neighborhood $\mathbb{O}$ of $im(\theta)$ with a neighborhood of
zero section of the normal bundle $\mathcal{N}$ of $im(\theta)$ in $\overline{\mathcal{M}}_{g,n} $.
Let $im(\theta)^*$ be the Thom form of the bundle
$$\pi:\mathcal N\to im(\theta),$$  which can be chosen to be supported
in the tubular neighborhood $\mathbb{O}$ of $im(\theta)$ and  $im(\theta)^*$ can be seen as  the Poincare dual of $im(\theta)$.
For any $[K]=[K_1\times K_2]\in H^*( \overline{\mathcal{M}}_{g_1,g_2,n_1,n_2} , \mathbb{R})$,
choose $  K_{i}\in H^{*}(\overline{\mathcal M}_{g_{i},n_{i}+1},\mathbb R)$.
Then $  K=( {K}_{1}, {K}_{2})\in\overline{\mathcal M}_{g_{1},g_{2},n_{1},n_{2}}$.  Let $K_{\overline{\M} } $ be the Poincare dual of $ \theta_*(PD( K))$ in $\theta(\overline{\mathcal{M}}_{g_1,g_2,n_1,n_2})$.
 Through $\pi$ we can pull $K_{\overline{\M} }$ back to the total space of the normal bundle $\mathcal{N}$ , denoted  by $\pi^{*}K_{\overline{\M}}$. Then, $\pi^{*}K_{\overline{\M}}$ is defined over a tubular
neighborhood of $im(\theta)$. Since $im(\theta)^*$ is supported in the tubular neighborhood,
$im(\theta)^*\wedge\pi^{*} K_{\overline{\M} }$
is a closed differential form defined over $\overline{\M}_{g,n}$. One can check that
\begin{equation}\label{vir_DM_sp}
\theta_{!}[ K]=[im(\theta)^*\wedge \pi^{*}K_{\overline{\M} }],\;\;\;\; \pi_{*}[\theta_{!}( K)]=\theta_{!}[K].
\end{equation}
Then we have
\begin{equation}\label{def_GW_tr}
\Psi_{(A,g,n)}(\theta_{!}(K_{1}\times K_{2}); \{\alpha_i\})=
\int_{{\mathbf U}_{\varepsilon}^{T}}\mathscr{P}^* \left(im(\theta)^*\wedge \pi^{*}K_{\overline{\M} }\right)\wedge\prod_j ev^*_j\alpha_j\wedge \sigma^*\Theta.
\end{equation}
where $\Theta$  is the Thom form of $\mathsf{p}:\mathbf E\to \mathbf U$,  $ev_j$ denote the evaluation map
$ev_j: \mathbf{U}^T_{\varepsilon} \longrightarrow
M $   at $j$-th marked point. As in the proof of Theorem \ref{Conver} we can prove the convergence of $\Psi_{(A,g,n)}(\theta_{!}(K_{1}\times K_{2}); \{\alpha_i\}).$
By the same argument of Theorem \ref{Common Properties} we can prove that
\begin{lemma}\label{lem_sp_9.3}
 $\Psi_{(A,g,n)}(\theta_{!}(K);\{\alpha _i\})$ is independent of the choice of $im(\theta)^{*}.$
\end{lemma}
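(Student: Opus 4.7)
The plan is to reproduce the strategy used in the proof of Theorem~\ref{Common Properties}(2): express the difference of the two candidate integrals as the integral of an exact form on $\mathbf U^{T}_{\varepsilon}$, apply Stokes' theorem on an exhausting family of compact subdomains, and kill the resulting boundary term using the exponential decay estimates already established for the gluing maps.

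Concretely, let $im(\theta)^{*}$ and $im(\theta)^{*\prime}$ be two Thom forms of the normal bundle $\pi:\mathcal N\to im(\theta)$, each supported in a tubular neighborhood of $im(\theta)$. After passing to a smaller common tubular neighborhood $\mathbb O\subset \overline{\mathcal M}_{g,n}$ if necessary, both may be assumed to be supported in $\mathbb O$, and since they represent the same Thom class in the cohomology with compact vertical supports, there is a smooth form $\eta$ on $\overline{\mathcal M}_{g,n}$ supported in $\mathbb O$ with
$$im(\theta)^{*\prime}-im(\theta)^{*}=d\eta.$$
Because $K_{\overline{\M}}$, the $\alpha_{j}$, and the Mathai--Quillen Thom form $\Theta$ of $\mathbf E$ are all closed, and $\mathscr{P}$, $ev_{j}$, $\sigma$ are smooth on $\mathbf U^{T}$, the difference $\Delta$ between the two versions of $\Psi_{(A,g,n)}(\theta_{!}(K);\{\alpha_{i}\})$ is
$$\Delta=\int_{\mathbf U^{T}_{\varepsilon}} d\omega,\qquad \omega:=\mathscr{P}^{*}(\eta\wedge \pi^{*}K_{\overline{\M}})\wedge \prod_{j}ev_{j}^{*}\alpha_{j}\wedge \sigma^{*}\Theta.$$
Applying Stokes' theorem on the exhausting family $\mathbf U^{T}_{\varepsilon,R}\subset \mathbf U^{T}_{\varepsilon}$ from the proof of Theorem~\ref{Common Properties}(2) -- truncating each gluing coordinate by $r_{i}\leq R$ on every chart and smoothing corners -- and combining with the convergence established in Theorem~\ref{Conver}, one obtains
$$\Delta=\lim_{R\to\infty}\int_{\partial \mathbf U^{T}_{\varepsilon,R}} i^{*}\omega.$$

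The main obstacle, and the real substance of the argument, is the boundary estimate $\int_{\partial \mathbf U^{T}_{\varepsilon,R}} i^{*}\omega\to 0$. This is handled exactly as in Section~\ref{conver_GW}: $\eta$ is smooth and bounded on the compact closure of $\mathbb O$; $K_{\overline{\M}}$ has Poincar\'e growth; the factors $ev_{j}^{*}\alpha_{j}$ and $\sigma^{*}\Theta$ are pointwise bounded in the $g_{loc}$-norm by Theorem~\ref{coordinate_decay-2}, Theorem~\ref{thm_est_mix_deri} and Lemma~\ref{lem_Thom}; and on the portion of $\partial \mathbf U^{T}_{\varepsilon,R}$ where $|\kappa|_{\mathbf h}=\varepsilon$ the factor $\sigma^{*}\Theta$ vanishes identically since $\Theta$ is supported in a strictly smaller $\varepsilon$-ball of the zero section. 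Combining these estimates, on each chart intersecting $\partial \mathbf U^{T}_{\varepsilon,R}$ the integrand satisfies $|i^{*}\omega|_{g_{loc}}=O\bigl(\prod_{i=1}^{\mathfrak e}r_{i}^{-2}\bigr)$ while the $g_{loc}$-area of $\partial \mathbf U^{T}_{\varepsilon,R}$ is $O(R)$, so $\left|\int_{\partial \mathbf U^{T}_{\varepsilon,R}} i^{*}\omega\right|=O(R^{1-2\mathfrak e})\to 0$ as $R\to\infty$. Hence $\Delta=0$, which proves the lemma.
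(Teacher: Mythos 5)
Your proposal is correct and follows essentially the same route as the paper: the paper proves this lemma by invoking "the same argument as Theorem \ref{Common Properties}," i.e.\ writing the difference of the two Thom forms as an exact form supported in the tubular neighborhood, applying Stokes' theorem on the truncated domains $\mathbf U^{T}_{\varepsilon,R}$, and killing the boundary contribution with the Poincar\'e-growth and $(\mathbf r)$-exponential-decay estimates of Section \S\ref{estimates}, exactly as you do. Your write-up simply makes explicit the details the paper leaves to the reader.
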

\v

\subsection{A gluing formula}
In this subsection our main purpose is to prove the following theorem:
\begin{theorem}\label{split-2} For any $K_1\times K_2 \in H^*(\overline{\mathcal{M}}_{g_1,g_2,n_1,n_2}, \mathbb{R})$,
$\alpha _1,\cdots,\alpha _n \in H^*(M,\mathbb{R})$, represented by smooth forms, we have
$$\Psi_{(A,g,n)}( \theta_{!}(K_{1}\times K_{2}); \{\alpha_i\})=\Psi_{(A, g_1,g_2,n_1,n_2)}(K_{1}\times K_{2}; \{\alpha_i\}).$$
\end{theorem}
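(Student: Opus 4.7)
The plan is to reduce the left-hand integral to an integral over $\mathbf{U}^T_{\theta,\varepsilon}$ by fiber-integrating the Thom class $im(\theta)^\ast$ along the normal/gluing directions, and then to identify the resulting integral with the right-hand side summed over $C\in\mathcal{C}$. The geometric input is the gluing construction of \S\ref{gluing}--\S\ref{global_regu}, which should supply, for each $C=(A_1,A_2)$, a diffeomorphism between a tubular neighborhood of $\mathbf{U}^T_{c,\varepsilon}$ in $\mathbf{U}^T_{\varepsilon}$ and a ``punctured disk bundle'' over $\mathbf{U}^T_{c,\varepsilon}$ with fiber coordinates $(\mathbf{r})=((r_1,\tau_1),\dots,(r_{\mathfrak{e}},\tau_{\mathfrak{e}}))$ (equivalently, the plumbing parameters $\mathbf{t}$). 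I will then exploit the fact that the forget map $\mathscr{P}$ intertwines the orbifold-level gluing on moduli and on $\overline{\mathcal{M}}_{g,n}$.

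The first step is to set up the localization. Since $im(\theta)^\ast$ is compactly supported in the tubular neighborhood $\mathbb{O}$ of $im(\theta)\subset \overline{\mathcal{M}}_{g,n}$, only the part of $\mathbf{U}^T_{\varepsilon}$ lying over $\mathscr{P}^{-1}(\mathbb{O})$ contributes to \eqref{def_GW_tr}. Using Lemma \ref{gluing map} together with the equivariant gluing of \S\ref{gluing map}--\S\ref{finite cov_O}, I will show that for each local model around a point $[b_o]=[(b_{o1},b_{o2})]\in \overline{\mathcal{M}}_{A_1+A_2}(g_1,g_2,n_1,n_2)$, the map
\[
glu:\widetilde{\mathbf{U}}^T_{c,\varepsilon}\cap \widetilde{\mathbf{O}}_{b_o}(\delta_o,\rho_o)\times (\mathbb{D}^\ast_{\mathbf{c}}(0))^{\mathfrak{e}}\longrightarrow \widetilde{\mathbf{U}}^T_{\varepsilon}
\]
is an orientation-preserving local diffeomorphism onto a neighborhood of the node stratum, and that $\mathscr{P}\circ glu$ coincides (on the $\overline{\mathcal{M}}_{g,n}$ side) with the plumbing family. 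This matches the normal bundle of $im(\theta)$ in $\overline{\mathcal{M}}_{g,n}$ with the gluing coordinates $(\mathbf{t}_1,\dots,\mathbf{t}_{\mathfrak{e}})$, so that $\pi^\ast K_{\overline{\M}}$ pulls back to $\mathscr{P}^\ast_\theta K$ (modulo terms killed by $im(\theta)^\ast$).

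The second step is the fiber integration. Using a partition of unity subordinate to the covers of both $\mathbf{U}^T_{\varepsilon}$ and $\mathbf{U}^T_{\theta,\varepsilon}$ (as in \S\ref{a metric} and Lemma \ref{finite cov_O}), and using the gluing diffeomorphism above, the integral \eqref{def_GW_tr} becomes a sum over $C\in\mathcal{C}$ of integrals of the form
\[
\int_{\mathbf{U}^T_{c,\varepsilon}}\mathscr{P}^\ast K\wedge \prod_j ev_j^\ast \alpha_j \wedge \sigma_c^\ast \Theta_c \;\cdot\; \left(\int_{\text{fiber}} (glu^\ast im(\theta)^\ast)\right).
\]
The factor in parentheses equals $1$ by the defining property of the Thom class, since the gluing coordinates trivialize the normal direction. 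Compatibility of the evaluation maps $ev_j=ev'_j\circ glu$ (for $j\ne$ the node index) and of $\sigma$ versus $\sigma_c$ follows from the construction in \S\ref{const_virt_neig} ($\mathbf{E}_c = \theta^\ast \mathbf{E}|_{\mathbf{U}_c}$). This matches \eqref{def_GW_sp}.

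The main technical obstacle is ensuring that the fiber integration is legitimate, because $\mathbf{U}^T_{c,\varepsilon}$ is non-compact along its own deeper strata (there can be further nodes on the $\Sigma_i$), and the gluing coordinates for those nodes interact with the plumbing coordinates for the distinguished node at $q$. Here the exponential-decay estimates of Theorems \ref{coordinate_decay-2} and \ref{thm_est_mix_deri}, together with the Poincar\'e-growth bound in Lemma \ref{lem_Thom} and the argument of \S\ref{conver_GW}, should guarantee absolute convergence uniformly in $(\mathbf{r})$, so that Fubini applies and boundary terms at infinity vanish (as in the boundary estimate \eqref{c_invariant-1}). A secondary point to check carefully is the orientation and the correct counting of isotropy groups: the $\frac{|G_{(\kappa_o,b_o)}|}{|G_{(\kappa_o,b_{(r)})}|}$ factor in Lemma \ref{equi_glu} must exactly cancel with the multiplicity coming from the multi-valued nature of the gluing map on the orbifold quotient, so that no spurious rational factors appear. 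Finally, independence of the choice of representative $im(\theta)^\ast$ is Lemma \ref{lem_sp_9.3}, which lets me choose $im(\theta)^\ast$ supported in an arbitrarily small tubular neighborhood, legitimizing the local gluing picture used throughout.
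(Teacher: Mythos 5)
Your overall strategy coincides with the paper's: both localize the left-hand integral to a tubular neighborhood of $\theta(\mathbf{U}^T_{c,\varepsilon})$, identify the normal directions of $im(\theta)$ with the gluing/plumbing parameter at the distinguished node $q$, control non-compactness with the exponential-decay estimates of Theorems \ref{coordinate_decay-2} and \ref{thm_est_mix_deri}, and invoke Lemma \ref{lem_sp_9.3} to shrink the support of $im(\theta)^*$. So the architecture is right.

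The gap is at your central step, the claimed factorization
\[
\int_{\mathbf{U}^T_{c,\varepsilon}}\mathscr{P}^*K\wedge\prod_j ev_j^*\alpha_j\wedge\sigma_c^*\Theta_c\cdot\Bigl(\int_{\mathrm{fiber}} glu^* im(\theta)^*\Bigr),
\]
with the fiber integral equal to $1$. This factorization would require the forms $\mathscr{P}^*(\pi^*K_{\overline{\M}})$, $ev_j^*\alpha_j$ and $\sigma^*\Theta$ on $\mathbf{U}^T$ to be pullbacks, under the fiber projection, of the corresponding forms on $\mathbf{U}^T_{c,\varepsilon}$. They are not: along a gluing fiber they agree with the value at the node stratum only up to an error of order $e^{-\mathfrak{c}r}$ (plus the effect of the bounded but non-trivial Jacobian between the normal-bundle coordinate $\mathfrak{y}$ and the plumbing coordinate $\ft_o$). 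Controlling exactly this discrepancy is what the paper's proof is devoted to: it writes $im(\theta)^*=d\Gamma\wedge\psi_{\mathcal N}-\Gamma\,\pi^*(e(\mathcal N))$, splits the error $(B_{\mathbf a})$ into $(I)_{\mathbf a}$, bounded via $|f_1(\mathfrak{y},\hat{\fs},\fkz)-f_1(0,\hat{\fs},\fkz)|\le C(e^{-\fc r}+\hat\delta)$, and $(II)_{\mathbf a}$, bounded via Lemma \ref{lem_Thom}; shows both tend to $0$ as the support radius $d\to 0$; and only then concludes by the $d$-independence of both sides. (It also has to compare the two partitions of unity $\mathbf{\Gamma}'_{\mathbf a}$ and $\pi^*\mathbf{\Gamma}_{\mathbf a}$, and to insert the intermediate comparison $(A)=0$ between $K$ and $\theta^*K_{\overline{\M}}$, which you treat only in passing.) You could alternatively try to justify your factorization by the standard homotopy-plus-Stokes argument, since the factor multiplying $im(\theta)^*$ is closed; but then the boundary contributions at the deeper strata of $\mathbf{U}^T_{c,\varepsilon}$ must be shown to vanish, which again needs the quantitative decay estimates and is not supplied in your sketch. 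As written, the step ``the factor in parentheses equals $1$, hence the integrals agree'' is where the actual content of the theorem lives, and it is asserted rather than proved.
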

\begin{remark}\label{split-3}
If $\mathbf {U}_{\varepsilon}$ is smooth, by using \eqref{vir_DM_sp} we can prove Theorem \ref{split-2} directly. But what we know only the smoothness of the top strata $\mathbf{U}^T$ and $\mathbf{U}_{\theta}^T$. So we use our estimates in Section \S\ref{estimates} to prove this theorem.

\end{remark}
\v
As in \S\ref{conver_GW} we choose finite many points  $P=\{[(\kappa_{\mathbf{a}},b_{\mathbf{a}})]\in \mathbf{U}_{\theta,2\epsilon}| \; \mathbf{a}=1,\cdots, \mathbf{n}_{\theta} \}$
with
$$[(\kappa_{\mathbf{a}},b_{\mathbf{a}})]\in \mathbf  U_{\theta}^{T},\;\mathbf{a}=1,\cdots,\mathbf{n}_{t},\;\;\;\;\;\;\;\;[(\kappa_{\mathbf{a}},b_{\mathbf{a}})]\in \mathbf {U}_{\theta}\setminus\mathbf {U}_{\theta}^{T},\;\mathbf{a}= \mathbf{n}_{t}+1,...,\mathbf{n}_{\theta}$$
such that $\{\mathbf{U}_{\theta,[(\kappa_{\mathbf{a}},b_{\mathbf{a}})]}(\varepsilon_\mathbf{a},\delta_\mathbf{a},\rho_\mathbf{a})\;\;\mathbf{a}=1,...,\mathbf{n}_\theta\}$
is an open cover of $\mathbf {U}_{\theta,2\epsilon}$ and $\overline{\mathbf{U}}_{\theta,[(\kappa_\mathbf{a},b_\mathbf{a})]}\subset \mathbf U^{T}_{\theta}$ for all $1\leq \mathbf{a}\leq \mathbf{n}_{t}$.
Choose $\varepsilon_{\mathbf{a}},\delta_{\mathbf{a}},\rho_{\mathbf{a}},\iota$ small  such that
$$glu: (\mathbf{U}_{\theta,[(\kappa_{\mathbf{a}},b_{\mathbf{a}})]}(\varepsilon_\mathbf{a},\delta_\mathbf{a},\rho_\mathbf{a})\cap \mathbf{U}^T_{\theta})\times \mathbb D^{*}_{\iota}(0)\rightarrow glu\left((\mathbf{U}_{\theta,[(\kappa_{\mathbf{a}},b_{\mathbf{a}})]}(\varepsilon_\mathbf{a},\delta_\mathbf{a},\rho_\mathbf{a})\cap \mathbf{U}^T_{\theta})
\times \mathbb D^{*}_{\iota}(0)\right)$$
is an orientation preserving diffeomorphism in orbifold sense.
To simplify notations we denote
$${V}_\mathbf{\theta,\mathbf a}=\mathbf{U}_{\theta,[(\kappa_{\mathbf{a}},b_{\mathbf{a}})]}(\varepsilon_\mathbf{a},\delta_\mathbf{a},\rho_\mathbf{a})
  \cap \mathbf{U}^T_{\theta},\;\;\;\;  W_{\mathbf{a}}=glu( {V}_{\theta,\mathbf{a}}\times \mathbb D_{\iota}(0)),$$
$$U_{\theta,\mathbf{a}}=\mathscr{P}(\mathbf{U}_{\theta,[(\kappa_{\mathbf{a}},b_{\mathbf{a}})]}(\varepsilon_\mathbf{a},\delta_\mathbf{a},\rho_\mathbf{a})).$$

For any $d$ denote   $$\mathbf{U}^{T,\theta}_{d}=\bigcup_{\mathbf a}glu({V}_\mathbf{\theta,\mathbf a}
\times \mathbb D^{*}_{d}(0)).$$
We fix a number $\iota,$ let $d$ be a small constant with  $0<d<\iota/3$.

 The map $\theta$ induces a embedding $\mathbf{U}_{\theta}\to \mathbf{U}$.
 We   choose  $im(\theta)^*$  such that
\begin{equation}\label{eqn_im_cond}
supp\left(\mathscr{P}^*\left(im(\theta)^*\wedge \pi^{*}K_{\overline{\M} }\right)\cap \mathbf {U}^{T}\right)\subset \mathbf{U}^{T,\theta}_{d}.
\end{equation}

\v
 We can choose a partion of unit  $\{\mathbf{\Gamma}_{\mathbf{a}},\;1\leq \mathbf{a}\leq \mathbf{n}_\theta \}$ of $\mathbf U_{\theta}$ as in \S\ref{a metric}.
Let $\beta_{\iota}$ be tha cut-off function satisfying
$\beta_{\iota}|_{D_{\iota/3}(0)}=1,\;\;supp\;\beta_{\iota}\subset D_{\iota}(0).$
For any $1\leq \mathbf{a}\leq \mathbf n_{\theta},$ let
$\pi_{\mathbf a}: V_{\theta,\mathbf a}\times D_{\iota}(0)\to  V_{\theta,\mathbf a}$ be the projection.  $\beta_{\iota}$ can be naturally seen as a function on $V_{\theta,\mathbf a}\times D_{\iota}(0).$ Set
$$
\hat{\mathbf \Gamma}_{\mathbf a}=(glu^{-1})^{*}(\beta_{\iota}\pi_{\mathbf a}^{*}\mathbf{\Gamma}_{\mathbf a}).
$$
Using Theorem \ref{coordinate_decay-2} and by a direct calculation we have
\begin{equation}\label{eqn_gam'_2}
\left|\frac{\p \hat{\mathbf {\Gamma}}_{\mathbf{a}}}{\p r}\right|\leq Ce^{-\mathfrak{c}_{1}r}.
 \end{equation}
where we used the smoothness of cut-off function.
\v
We can choose finite many points $[(\kappa_{\mathbf{a}'},b_{\mathbf{a}'})]\in \mathbf U_{\epsilon}\setminus \mathbf{U}^{T,\theta}_{\iota/3},$ $\mathbf n_{\theta}+1\leq \mathbf a' \leq \mathbf n$, and choose $\varepsilon_{\mathbf a'},\delta_\mathbf{a'},\rho_\mathbf{a'}$ small such that
\begin{itemize}
\item[(1)]  $ \{glu( {V}_\mathbf{a}\times \mathbb D_{\iota/3}(0)),\mathbf{U}^{T}\cap\mathbf{U}_{[(\kappa_{\mathbf{a'}},b_{\mathbf{a'}})]}(\varepsilon_\mathbf{a'},\delta_\mathbf{a'},\rho_\mathbf{a'}) , \mathbf a\leq \mathbf n_{\theta}, \mathbf n_{\theta}+1\leq \mathbf a' \leq \mathbf n\}$ is an open covering of $\mathbf{U}^T_{\epsilon}$
\item[(2)]  $\mathbf{U}_{[(\kappa_{\mathbf{a'}},b_{\mathbf{a'}})]}(\varepsilon_\mathbf{a'},\delta_\mathbf{a'},\rho_\mathbf{a'}) \cap \mathbf{U}^{T,\theta}_{d}=\emptyset$ as $d$ small enough for any $\mathbf n_{\theta}+1\leq \mathbf a' \leq \mathbf n$.
    \end{itemize}
As in section \S\ref{a metric} we can construct  finite many cut-off functions $\hat{\mathbf{\Gamma}}_{\mathbf a'}$ supported in $\mathbf{U}_{[(\kappa_{\mathbf{a'}},b_{\mathbf{a'}})]}(\varepsilon_\mathbf{a'},\delta_\mathbf{a'},\rho_\mathbf{a'})$, $ \mathbf n_{\theta}+1\leq \mathbf a' \leq \mathbf n$, satisfying
$$\sum_{\mathbf a'=n_{\theta}+1}^{\mathbf n} \hat{\mathbf{\Gamma}}_{\mathbf a'}|_{\mathbf U^{T}_{\epsilon}\setminus \mathbf{U}^{T,\theta}_{\iota/3}}>0.$$
By \eqref{eqn_gam'_2} we have $\sum_{\mathbf a'=1}^{\mathbf n} \hat{\mathbf{\Gamma}}_{\mathbf a'}|_{\mathbf U^{T}_{\epsilon}}>0$ as $\iota$ small enough. Then $\{\hat{\mathbf{\Gamma}}_{\mathbf{a}},1\leq \mathbf{a}\leq \mathbf n\}$ induces a partition of unity $\{ \mathbf{\Gamma}'_{\mathbf{a}},1\leq \mathbf{a}\leq \mathbf n\}$ of $\mathbf U_{\epsilon}$ defined by
 $$
 \mathbf{\Gamma}'_{\mathbf{a}}=\frac{ \hat{\mathbf{\Gamma}}_{\mathbf{a}}}{\sum_{1\leq \mathbf a\leq \mathbf n} \hat{\mathbf{\Gamma}}_{\mathbf{a}}}.
 $$
  By (2) it is easy to see that
 $\{ \mathbf{\Gamma}'_{\mathbf{a}},1\leq \mathbf{a}\leq \mathbf n_{\theta}\}$ is a partition of unity of
 $\mathbf{U}^{T,\theta}_{d}$ and in $\mathbf{U}^{T,\theta}_{d}$
\begin{equation}\label{eqn_gam'}
\mathbf{\Gamma}'_{\mathbf{a}}=\frac{ \hat{\mathbf{\Gamma}}_{\mathbf{a}}}{\sum_{1\leq \mathbf a\leq \mathbf n_{\theta}} \hat{\mathbf{\Gamma}}_{\mathbf{a}}}.
 \end{equation}

\v

We use $\mathscr{P}$ to denote both $\mathscr{P}: \mathbf {U}_{c,\varepsilon}\to \overline{\M}'_{g_1,g_2,n_{1},n_{2}}$ and
$\mathscr{P}: \mathbf U_{\varepsilon}\to \overline{\M}_{g,n}.$
We have
\begin{equation}\label{eqn_Im_Kcon}
supp \left( \mathbf{\Gamma}'_{\mathbf{a}}\cdot \mathscr{P}^{*}im(\theta)^{*}\wedge \sigma^* \Theta \right)\subset   W_{\mathbf{a}},\;\;\;\forall \mathbf a\leq \mathbf n_{\theta}.
\end{equation}

\v

\n
{\bf Proof of Theorem \ref{split-2}.} Denote
\begin{align*}
(A)&=\sum_{C\in \mathcal C}\int_{ \mathbf U^T_{c,\varepsilon} }\mathscr{P}^*( K)\wedge \prod_j ev'^*_j\alpha_j  \wedge \sigma^*_{c}\Theta_{c}-\sum_{C\in \mathcal C}\int_{ \mathbf U^T_{c,\varepsilon} }\mathscr{P}^*(\theta^* K_{\overline{\mathcal M}})\wedge \prod_j  {ev'}^*_j\alpha_j  \wedge \sigma^*_{c}\theta^*\Theta ,\\
(B)&=  \int_{ \mathbf U^T_{ \varepsilon} }{\mathbb{F}}_{r}-\sum_{C\in \mathcal C}\int_{  \theta(\mathbf U^T_{c,\varepsilon})} \mathscr{P}^*(K_{\overline{\mathcal M}})\wedge \prod_j {ev}^*_j\alpha_j  \wedge \sigma^* \Theta .
\end{align*}
where
$$ {\mathbb{F}}_{r}=\mathscr{P}^* \left(im(\theta)^*\wedge \pi^{*} {K}_{\overline{\M} }\right)\wedge\prod_j ev^*_j\alpha_j\wedge \sigma^* {\Theta}.$$
 Note that $  {ev}_j \cdot\theta={ev'}_j, \; \theta \cdot \sigma_{c}=\sigma \cdot \theta,\;\mathscr{P}\cdot\theta=\theta\cdot\mathscr{P}$.
We have
$$
\int_{  \theta(\mathbf U^T_{c,\varepsilon})} \mathscr{P}^*(K_{\overline{\mathcal M}})\wedge \prod_j      {ev}^*_j\alpha_j  \wedge  \sigma^* \Theta=\int_{ \mathbf U^T_{c,\varepsilon} }\mathscr{P}^*(\theta^* K_{\overline{\mathcal M}})\wedge \prod_j  {ev'}^*_j\alpha_j  \wedge \sigma^*_{c}\theta^*\Theta.
$$
By \eqref{def_GW_sp} and \eqref{def_GW_tr}, we only need to prove that $(A)-(B)=0.$
Since $ K$ and $\theta^* {K}_{\mathcal M}$ are   in the same cohomology, we have $(A)=0$, so it suffices to prove $(B)=0$.
\v

\v
For $1\leq \mathbf{a}\leq \mathbf{n}_{t},$  we choose   $\fs$ as a local coordinates of $U_{\theta,\mathbf{a}}.$
Let $\ft_{o}=e^{-2r-2\pi\sqrt{-1}\tau}$ be the gluing parameter at node $q.$ Then  $(\ft_{o},\fs)$ is a local coordinates of  $ \pi^{*} U_{\theta,\mathbf{a}}$.  On the other hand, since the bundle $\mathcal{N}$ has a Riemannian structure, we can
choose a smooth orthonormal frame field. This defines a coordinate $\mathfrak{y}$ over fiber.
Denote   $\mathfrak{y}=e^{-2\hat r-2\pi \sqrt{-1}\hat \tau}$ and  $\hat{\fs}=\pi^*\fs.$   Then $(\mathfrak{y},\hat{\fs})$ is also a local coordinates of  $\mathbb O_{d}\cap \pi^{*} U_{\theta,\mathbf{a}}$. Denote the Jacobi matrix  by $(a_{ij})=\frac{\p(t_{0},\fs)}{\p (\mathfrak{y},\hat{\fs})}$.
Since $\overline{\mathcal M}^{red}_{g,n}$ is a smooth orbifold, $(a_{ij})$ and the inverse matrix $(a^{-1}_{ij})$ are uniform bounded in the coordinates.
Then
 $(\ft_{o},\mathbf{s}, \fkz )$ and $(\mathfrak{y},\hat{ \mathbf{s}},\fkz)$  are the local coordinates of $ W_{\mathbf{a}},$ where $\fkz=(\fkz_{1},\cdots,\fkz_{d}).$
We have  the  coordinates tranformation
$$
\fs=\fs(\mathfrak{y},\hat{\fs}),\;\;\ft_{o}=\ft_{o}(\mathfrak{y},\hat{\fs}),\;\;\;\;\fkz_{j}=\fkz_{j}.
$$
 Denote by $(b_{ij})=\frac{\p (\mathfrak{y},\hat{\fs}, \fkz)}{\p(\ft_{o} ,\fs,\fkz)}.$
It follows from the bound  of $(a_{ij})$ and $(a^{-1}_{ij})$ that   $(b_{ij})$ and the inverse matrix $(b^{-1}_{ij})$ are uniform bounded in the coordinates.
\v
In each $W_{\mathbf{a}}$,  the map $\pi: \overline{\mathcal M}_{g,n}\to\overline{\mathcal M}'_{g_{1},g_{2},n_{1},n_{2}} $ induce a map $\pi:W_{\mathbf{a}}\to  V_{\theta,\mathbf{a}}$ defined by
$$(\mathfrak{y},\hat{\fs},\fkz )\to (\fs,\fkz),$$
Let $$(B_{\mathbf{a}})=\int_{ W_{\mathbf{a}} }\pi^{*} \mathbf{\Gamma}_{\mathbf{a}}\mathbb{H}_{r}-\int_{  \theta( V_{\theta,\mathbf{a}})}
\mathbf{\Gamma}_{\mathbf{a}}
\mathscr{P}^*(K_{\overline{\mathcal M}})\wedge \prod_j  ev^*_j\alpha_j  \wedge  \sigma^* \Theta.$$
Then $(B)=\sum (B_{\mathbf{a}})+\sum \int_{ W_{\mathbf{a}} } (\mathbf{\Gamma}'_{\mathbf{a}}-\pi^{*}  \mathbf{\Gamma}_{\mathbf{a}} )\mathbb{H}_{r}.$ By \eqref{eqn_im_cond}, \eqref{eqn_gam'}, \eqref{eqn_gam'_2}  and the bound of matrix $(b_{ij}), (b^{-1}_{ij})$ we obtain
$$
\left|\int_{ W_{\mathbf{a}} } (\mathbf{\Gamma}'_{\mathbf{a}}-\pi^{*}  \mathbf{\Gamma}_{\mathbf{a}} )\mathbb{H}_{r}\right|\leq Cd^{\mathfrak{c}_{1}}.
$$
We only need estimate $(B_{\mathbf a})$.

\v
We recall the expression of $im(\theta)^*$, the detail can be found in \cite{bott}. Let $\Gamma$ be an increasing  function of the radius $|\mathfrak{y}|$ such that
 \begin{equation}\label{def_Gamma}
  \int_{\mathbb{R}^+}d\Gamma    =1,\;\;\; \Gamma(0)=-1,\;\;\;|\Gamma|\leq 1,\;\;\;d\Gamma \mbox{ is a compact support form}.
 \end{equation}
 Let $\phi_{\mathbf{a}\mathbf{c}}:U_{\mathbf{a}}\cap U_{\mathbf{c}}\to S^{1}$ the transformation function of $\mathcal N.$  Then
\begin{equation}\label{Thom_form_N}
im(\theta)^*=  d\Gamma \wedge \psi_{\mathcal N}-\Gamma \pi^{*}(e(\mathcal N)),\;\;\;\; e(\mathcal N)=\frac{\sqrt{-1}}{2\pi}\sum_{\mathbf{c}} d(\Gamma_{\mathbf{a}}d\log \phi_{\mathbf{a}\mathbf{c}}).
\end{equation}

\v
Denote  $i:\theta(\mathbf U_{c})\to \mathbf U $ is the inclusion map.
Then $(B_{\mathbf{a}})$ can be re-written as
$$
\int_{W_{\mathbf{a}} }\pi^*
\mathbf{\Gamma}_{\mathbf{a}}
\mathscr{P}^*  im(\theta)^*\wedge\left(\pi^{*}K_{\overline{\M} }\wedge \prod_j ev^*_j\alpha_j\wedge \sigma^*\Theta-\pi^{*}i^{*} \left( \pi^{*}K_{\overline{\M} }\wedge\prod_j   ev^*_j\alpha_j  \wedge \sigma^* \Theta\right)\right).
$$
As in the proof of Theorem \ref{Conver} denote by $(E_1,  ...,E_{6g-6+2n+\mathsf d})$ (resp.  $(\hat E_1,  ...,\hat E_{6g-6+2n+d})$) the induced vector field by $(\ft_{o},\mathbf{s},\fkz)$ (resp. $(\mathfrak{y},\hat{\fs},\fkz)$).  Set
  $$d V_{\theta}= \bigwedge_{j}\left(\frac{\sqrt{-1}}{2}d\hat{\fs}_{j} \wedge d\bar {\hat{\fs}}_{j}\right)  \wedge d\fkz_1\wedge \cdots \wedge d\fkz_{d}$$
 Then  $\pi^{*}K_{\overline{\M}}\wedge\prod_j  ev^*_j\alpha_j  \wedge \sigma^* \Theta$ can be written as $$f_{1}(\mathfrak{y},\hat{\fs},\fkz)d\hat V+f_{2}(\mathfrak{y},\hat{\fs},\fkz)\wedge d\hat r \wedge \hat \tau,$$
where
 $
f_{1} =\pi^{*}K_{\overline{\M}}\wedge\prod_j ev^*_j\alpha_j  \wedge \sigma^* \Theta(\hat E_{3},\cdots,\hat E_{6g-6+2n+\mathsf d}).$
Since
$$\pi^{*}i^{*}f_{1}(\mathfrak{y},\hat{\fs},\fkz)=f_{1}(0,\hat{\fs},\fkz),\;\;\;\;\pi^{*}i^{*}(f_{2}(\mathfrak{y},\hat{\fs},\fkz)\wedge d\hat r \wedge \hat \tau)=0,$$
by \eqref{Thom_form_N} we have
\begin{align}
(I)_{\mathbf a}=\int_{ W_{\mathbf{a}} }\pi^*
\mathbf{\Gamma}_{\mathbf{a}}\left[(f_{1}(\mathfrak{y},\hat{\fs},\fkz)-f_{1}(0,\hat{\fs},\fkz))d\hat V\wedge d\Gamma \wedge \psi_{\mathcal N}  \right] \\
(II)_{\mathbf a}=\int_{ W_{\mathbf{a}} }\pi^*
\mathbf{\Gamma}_{\mathbf{a}} \Gamma f_{2}(\mathfrak{y},\hat{\fs},\fkz) \mathscr{P}^{*} \pi^{*}(e(\mathcal N))\wedge d\hat{r} \wedge \hat{\tau}
\end{align}
As in the proof of Theorem \ref{Conver}, using \eqref{def_Gamma}, Theorem \ref{coordinate_decay-2} and Lemma \ref{lem_Thom} we can prove that
 $$
 |(II)_{\mathbf a}|\to 0 ,\;\;\;\;\mbox{ as } d\to 0.
 $$
Similarly, by Theorem \ref{coordinate_decay-2}, Lemma \ref{lem_omega} and smoothness of $b_{ij}$ we have
 \begin{align*}
\left|(f_{1}(\mathfrak{y},\hat{\fs},\fkz)-f_{1}(0,\hat{\fs},\fkz)\right|\leq C (e^{-\fc r}+\hat \delta)\to 0,\;\;\;\;\mbox{ as } d\to 0.
\end{align*}
Using \eqref{def_Gamma} and \eqref{Thom_form_N} we get
 $$
 |(I)_{\mathbf a}|\to 0 ,\;\;\;\;\mbox{ as } d\to 0.
 $$
 Then for $1\leq \mathbf{a}\leq \mathbf{n}_{t}$
\begin{equation}\label{eqn_est_Bi}
|(B_{\mathbf{a}})|\to 0 ,\;\;\;\;\mbox{ as }d\to 0.
\end{equation}
For $ \mathbf{a}> \mathbf{n}_{t}$,   we     choose  $( \ft_{o}, \fs,  \ft,\fkz )$ and  $(\mathfrak{y},\hat\fs,\hat \ft,\fkz )$ as local coordinates of $ W_{\mathbf{a}}$, where $(\hat \fs,\hat \ft )= (\pi^*\fs,\pi^*\ft ).$ By the similar argument above we have
\eqref{eqn_est_Bi} also holds.
 \v
 By Lemma \ref{lem_sp_9.3} $\Psi_{(A,g,n)}(\theta_{!}(K);\{\alpha _i\})$ is independent of the choice of $im(\theta)^{*}.$ Hence
$$
\left|\Psi_{(A,g,n)}(\theta_{!}(K);\{\alpha _i\})
-\Psi_{(A, g_1,g_2,n_1,n_2)}(K; \{\alpha _i\})\right|=|(A)-(B)|\to 0,\;\;\;\;\mbox{ as }d\to 0.
$$
Then the lemma is proved.
$\Box$

\v

\subsection{\bf Splitting axiom}\label{splitting axiom}

Denote by $\Delta\subset M\times M$ the diagonal.
Let $\pi:N\to \Delta$ be the normal bundle in $M\times M$, and let $\Phi$ be a Thom form on $N$.  There is a natural map
$$
ev^c_{n_{1}+1,n_{2}+1}: \mathbf{U}^T_{1c}\times \mathbf{U}^T_{2c}\to M\times M
$$
defined by
$$
ev^c_{n_{1}+1,n_{2}+1}(b_{1},b_{2})=(ev^{1c}_{n_{1}+1}(b_{1}),ev^{2c}_{n_{2}+1}(b_{2})).
$$
Then $(ev^c_{n_{1}+1,n_{2}+1})^*N$ is a vector bundle on
$ \mathbf{U}^T_{c}$. Set $\Phi_c=(ev^c_{n_{1}+1,n_{2}+1})^*\Phi.$ Then for any differential form $\alpha\in \mathbf{U}^T$ with exponential decay on $\p \mathbf{U}^T$ we have
$$\int_{\mathbf{U}^T_c}\alpha =\int_{\mathbf{U}^T_{1c}\times \mathbf{U}^T_{2c}} \Phi_c\wedge \pi^*\alpha.$$
Choose a homogeneous basis $\{\beta _b\}_{1\le b\le L}$ of $H^*(M,\mathbb{R})$. Let $(\eta _{ab})$ be its
intersection matrix. Note that
$\eta _{ab} = \beta _a \cdot \beta _b =0$ if the dimensions of
$\beta _a$ and $\beta _b$ are not complementary to each other.
Put $(\eta ^{ab})$ to be the inverse of $(\eta _{ab})$.  Then, the Poincare dual of $\Delta$ is
$$\Delta^*=\sum_{a,b} \eta^{ab} \beta_a\otimes \beta_b.$$
There is a smooth form $\sigma\in C^{\infty}(M\times M)$ such that
$$\Phi-\Delta^*=d\sigma.$$
Then
\begin{equation}\label{Poicare}
\int_{\mathbf{U}^T_c} \alpha =\int_{\mathbf{U}^T_{1c}\times \mathbf{U}^T_{2c}}(ev^c_{n_{1}+1,n_{2}+1})^* (\Delta^* + d\sigma)\wedge \alpha
=\int_{\mathbf{U}^T_{1c}\times \mathbf{U}^T_{2c}}(ev^c_{n_{1}+1,n_{2}+1})^* \Delta^*\wedge \alpha.\end{equation}
In the last equality we used the Stokes theorem and the same argument as in the proof of (2) in Theorem \ref{Common Properties}.

 \begin{lemma}\label{split-1} Let  $K_{1}\times K_{2}\in H^{*}(\overline{\mathcal M}_{g_{1},g_{2},n_{1},n_{2}},\mathbb R)$, $\alpha _1,\cdots,\alpha _n\in H^*(M,\mathbb{R})$ be represented by smooth forms. Then
\begin{align*}
&\Psi_{(A_1,A_2,g_1,g_2,n_1,n_2)}(K_1\times K_2; \{\alpha_i\})\\
&=\epsilon(K,\alpha)
 \sum_{a,b}\eta^{ab}\Psi_{(A_1,g_1,n_1+1)}(K_1; \{\alpha_i\}_
{i\leq n_1}, \beta_a)\Psi_{(A_2, g_2, n_2+1)}(K_2; \{\alpha_j\}_{j>n_1},
\beta_b),
\end{align*}
where $\epsilon(K,\alpha)=(-1)^{deg(K_2)\sum^{n_1}_{i=1} (deg (\alpha_i))}$.
\end{lemma}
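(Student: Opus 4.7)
\bigskip
\noindent\textbf{Proof proposal for Lemma \ref{split-1}.}
The plan is to unpack the definition of the left-hand side using \eqref{def_GW_sp}, pass from the fibered product $\mathbf{U}_c$ to the product $\mathbf{U}^T_{1c}\times \mathbf{U}^T_{2c}$ via the Poincar\'e dual trick already prepared in \eqref{Poicare}, and then factor the resulting integrand. Concretely, for each decomposition $C=(A_1,A_2)$ with $A=A_1+A_2$ we have, from \S\ref{const_virt_neig},
$$\mathbf{U}_c=\{(b_1,b_2)\in \mathbf{U}_{1c}\times \mathbf{U}_{2c}\mid ev^{1c}_{n_1+1}(b_1)=ev^{2c}_{n_2+1}(b_2)\},\quad \mathbf{E}_c=\mathbf{E}_{1c}\boxplus\mathbf{E}_{2c},\quad \sigma_c=(\sigma_{1c},\sigma_{2c}),$$
so that $\sigma_c^{*}\Theta_c=\sigma_{1c}^{*}\Theta_1\wedge \sigma_{2c}^{*}\Theta_2$ after choosing $\Theta=\pi_1^{*}\Theta_1\wedge \pi_2^{*}\Theta_2$. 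Likewise the forget map splits as $\mathscr{P}=(\mathscr{P}_1,\mathscr{P}_2)$ on $\mathbf{U}^T_c$ and $\mathscr{P}^{*}(K_1\times K_2)=\mathscr{P}_1^{*}K_1\wedge \mathscr{P}_2^{*}K_2$. The evaluation maps split by the partition $\{1,\dots,n\}=S_1\sqcup S_2$: for $i\leq n_1$, $ev'_i$ lives on the $\mathbf{U}^T_{1c}$ factor, and for $j>n_1$ on the $\mathbf{U}^T_{2c}$ factor.

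Next I would apply \eqref{Poicare} with $\alpha=\mathscr{P}^{*}(K_1\times K_2)\wedge \prod_j ev'^{*}_j\alpha_j\wedge \sigma_c^{*}\Theta_c$ to replace the integral over $\mathbf{U}^T_c$ by an integral over $\mathbf{U}^T_{1c}\times \mathbf{U}^T_{2c}$ with the extra factor $(ev^c_{n_1+1,n_2+1})^{*}\Delta^{*}$. Substituting $\Delta^{*}=\sum_{a,b}\eta^{ab}\beta_a\otimes \beta_b$ gives, for each $C$,
\begin{align*}
\int_{\mathbf{U}^T_c}\mathscr{P}^{*}K\wedge\prod_j ev'^{*}_j\alpha_j\wedge \sigma_c^{*}\Theta_c
=\sum_{a,b}\eta^{ab}\int_{\mathbf{U}^T_{1c}\times \mathbf{U}^T_{2c}}
\Xi_{1}\wedge \Xi_{2},
\end{align*}
where $\Xi_1=\mathscr{P}_1^{*}K_1\wedge\bigwedge_{i\leq n_1}ev'^{*}_i\alpha_i\wedge (ev^{1c}_{n_1+1})^{*}\beta_a\wedge \sigma_{1c}^{*}\Theta_1$ lives on the first factor and $\Xi_2$ is the analogous form on the second factor. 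Reordering the wedge product so that all $\mathbf{U}^T_{1c}$-forms precede all $\mathbf{U}^T_{2c}$-forms produces precisely the sign $\epsilon(K,\alpha)=(-1)^{\deg(K_2)\sum_{i=1}^{n_1}\deg(\alpha_i)}$, because $K_2$ must be commuted past $\alpha_1,\dots,\alpha_{n_1}$ (all other permutations are between forms whose degree parities make the sign trivial, or are absorbed into the already-fixed orientation conventions coming from the identification $T\mathbf{U}_c=T\mathbf{U}_{1c}\oplus T\mathbf{U}_{2c}$ restricted to the fibered product). By Fubini and the definition \eqref{integral-0} of the invariants on the factors, each summand equals $\Psi_{(A_1,g_1,n_1+1)}(K_1;\{\alpha_i\}_{i\leq n_1},\beta_a)\cdot \Psi_{(A_2,g_2,n_2+1)}(K_2;\{\alpha_j\}_{j>n_1},\beta_b)$. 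Summing over $C=(A_1,A_2)\in\mathcal{C}$ gives the claimed identity.

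The main obstacle will be a careful justification of \eqref{Poicare} in the present non-compact setting: $\mathbf{U}^T_c$ is only the top stratum and is not compact, so the identity $\Phi_c-(ev^c_{n_1+1,n_2+1})^{*}\Delta^{*}=d\sigma$ combined with Stokes's theorem produces boundary terms which must be shown to vanish. This is exactly where the exponential decay of the gluing maps (Theorem \ref{coordinate_decay-2}, Theorem \ref{thm_est_mix_deri}) and the convergence arguments from \S\ref{conver_GW} and the proof of Theorem \ref{Common Properties}(2) must be invoked: truncate to $\mathbf{U}^T_{c,\varepsilon,R}$ as in \eqref{c_invariant-1}, use that the area of $\partial\mathbf{U}^T_{c,\varepsilon,R}$ grows polynomially in $R$ while the integrand decays like $\prod r_i^{-2}$ coming from the Poincar\'e-type bound on $\sigma_c^{*}\Theta_c\wedge\prod ev'^{*}_j\alpha_j$ (Lemmas \ref{lem_omega} and \ref{lem_Thom}), and pass to the limit. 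A secondary technical point is the well-posedness of \eqref{Poicare} across the orbifold strata: one must verify that the $\Theta$ chosen for the virtual neighborhood $\mathbf{U}$ restricts compatibly to $\mathbf{U}_c$ and to the external product $\Theta_1\boxtimes\Theta_2$ on $\mathbf{U}_{1c}\times \mathbf{U}_{2c}$, an issue already essentially settled by Theorem \ref{Common Properties}(3) which shows independence of $\Theta$, so one is free to choose it in the product form required above.
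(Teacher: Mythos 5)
Your proposal matches the paper's argument for Lemma \ref{split-1} essentially step for step: both pass from the fibered product $\mathbf{U}^T_c$ to $\mathbf{U}^T_{1c}\times\mathbf{U}^T_{2c}$ via \eqref{Poicare}, substitute $\Delta^*=\sum\eta^{ab}\beta_a\otimes\beta_b$ for the Thom form of the normal bundle of the diagonal (discarding the exact correction $d\sigma$ by the Stokes-plus-exponential-decay argument of Theorem \ref{Common Properties}(2)), take $\Theta_c=\Theta_{1c}\wedge\Theta_{2c}$, and factor by Fubini with the sign $\epsilon(K,\alpha)$ arising from commuting $K_2$ past $\alpha_1,\dots,\alpha_{n_1}$. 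The additional care you flag about the non-compactness of $\mathbf{U}^T_c$ and the product form of the Thom form is exactly what the paper's one-line citations are standing in for, so this is the same proof.
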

\begin{proof}  Let $ {K}=( {K}_{1}, {K}_{2})$ be the smooth form as in subsection \S\ref{const_virt_neig}.
Let $\Theta_{ic}$ be the Thom form of $\mathbf E_{ic}$ supported in a neighborhood of the zero section. Then $\Theta_{c}=\Theta_{1c}\wedge \Theta_{2c}$ be the Thom form of $\mathbf E_{c}$. Using \eqref{Poicare}, a direct calculation gives us
$$\begin{array}{lll}
&&\Psi_{(A_1,A_2,g_1,g_2,n_1,n_2)}(K_1\times K_2; \{\alpha_i\})\\
&=&\int_{\mathbf U^T_{c,\epsilon}} \mathscr{P}^*(K)\wedge (\prod_i ev_{i}^*\alpha_i)
\wedge \sigma_{c}^{*}\Theta_c\\
&=&\sum_{C\in \mathcal C}\int_{\mathbf U^T_{1c,\epsilon}\times \mathbf U^T_{2c,\epsilon}}
(ev^c_{n_{1}+1,n_{2}+1})^*(\Delta^*)\wedge \mathscr{P}^*({K}_1\times {K}_2)
 \wedge (\prod_i ev_{i}^*\alpha_i)
\wedge \sigma^{*}_{1c} \Theta_{ 1c} \wedge  \sigma_{2c}^*\Theta_{ 2c} \\
&=&\sum_{a,b}\eta^{ab}\int_{U^T_{1c,\epsilon}\times \mathbf U^T_{2c,\epsilon}}
 (ev^c_{n_{1}+1,n_{2}+1})^*(\beta_a \wedge  \beta_b) \wedge \mathscr{P}^* ({K}_1\times {K}_2)\wedge \prod_i ev_i^*\alpha_i
\wedge \sigma^{*}_{1c} \Theta_{ 1c} \wedge  \sigma_{2c}^*\Theta_{ 2c}\\
&=&\epsilon(K,\alpha)\sum_{a,b}\eta^{ab} \left[\int_{U^T_{1c,\epsilon}}
(ev^c_{n_{1}+1})^*\beta_a  \wedge \mathscr{P}^* {K}_1\wedge \prod_{i\leq n_{1}} ev_i^*\alpha_i\wedge \sigma^{*}_{1c} \Theta_{ 1c} \right] \\ &&\quad \cdot\left[\int_{U^T_{2c,\epsilon}} (ev^c_{n_{2}+1})^* \beta_b \wedge \mathscr{P}^*  {K}_2\wedge \prod_{i>n_{1}} ev_i^*\alpha_i\wedge \sigma^{*}_{2c} \Theta_{ 2c} \right]\\
&=&\epsilon(K,\alpha)\sum_{a,b}\eta^{ab}\Psi_{(A_1,g_1,n_1+1)}(K_1; \{\alpha_i\}_
{i\leq n_1}, \beta_a)\Psi_{(A_2, g_2, n_2+1)}(K_2; \{\alpha_j\}_{j>n_1},
\beta_b),
\end{array}.$$
The lemma is proved.
\end{proof}
\v\n

\v
Combination of Lemmas \ref{split-1} and \ref{split-2} give us
\begin{theorem}\label{split-3}
 For any $K_1\times K_2 \in H^*(\overline{\mathcal{M}}_{g_1,g_2,n_1,n_2}, \mathbb{R})$,
$\alpha _1,\cdots,\alpha _n \in H^*(M,\mathbb{R})$, represented by smooth forms, we have
$$\Psi_{(A,g,n)}((\theta)_{!}(K_1\times K_2));\{\alpha _i\})$$$$
=\epsilon(K,\alpha) \sum \limits _{A=A_1+A_2} \sum \limits_{a,b}
\Psi_{(A_1,g_1,n_1+1)}(K_1;\{\alpha _{i}\}_{i\le n_1}, \beta _a)
\eta ^{ab}
\Psi_{(A_2,g_2,n_2+1)}(K_2;\beta _b,
\{\alpha _{j}\}_{j>n_1}),
$$
\end{theorem}
where $\epsilon(K,\alpha)=(-1)^{deg(K_2)\sum^{n_1}_{i=1} (deg (\alpha_i))}$.


\begin{thebibliography}{L3}

%\bibitem{ALR} A. Adem, J. Leida, Y. Ruan,  Orbifolds and stringy topology. Cambridge Tracts in Mathematics, 171. Cambridge University Press, Cambridge, 2007.
\bibitem{B} K. Behrend,  Gromov-Witten invariants in algebraic geometry.
Invent. Math. 127(3), 601-617, 1997.

 \bibitem{BGV} N. Berline, E. Getzler, M. Vergne,  {\it Heat kernels and Dirac operators.} Corrected reprint of the 1992 original. Grundlehren Text Editions. Springer-Verlag, Berlin, 2004.


\bibitem{bott} R. Bott and L.W. Tu. {\it Differential forms in algebraic topology. } Springer-Verlag.
1982

\bibitem{C15} R. Castellano, Smoothness of Kuranishi atlases on Gromov-Witten moduli spaces,
arXiv:1511.04350.
\bibitem{C16} R. Castellano, Genus zero Gromov-Witten axioms via Kuranishi atlases, arXiv:1601.04048.
\bibitem{CLW}B. Chen, A.-M. Li and B. Wang, Virtual neighborhood technique for pseudo-holomorphic spheres,
arXiv:1306.3276.
\bibitem{CLW1} B. Chen, A.-M. Li and B. Wang, Gluing principle for orbifold stratified spaces, Geometry and topology of manifolds, 15-57, Springer Proc. Math. Stat., 154, Springer, [Tokyo], 2016.
\bibitem{CM} K. Cieliebak, K. Mohnke, Symplectic hypersurfaces and transversality in Gromov-Witten theory, J. Symplectic Geom. 3 (2005),no.4, 589-654.
%\bibitem{CT} B. Chen and G. Tian, Virtual manifolds and Localization, Acta Math. Sinica,   2010(26),1-24.

 \bibitem{DasMes} G. Daskalopoulos, C. Mese,  $C^1$ estimates for the Weil-Petersson metric. Trans. Amer. Math. Soc. 369 (2017), no. 4, 2917-2950.
\bibitem{FO} K. Fukaya and K. Ono,  Arnold conjecture and Gromov-Witten invariants, Topology 38,1999, 933-1048.

\bibitem{FOOO12} K. Fukaya, Y. Oh, H. Ohta, K. Ono, Technical details on Kuranishi structure and virtual fundamental chain, arXiv:1209.4410.

\bibitem{FOOO15} K. Fukaya, Y. Oh, H. Ohta, K. Ono, Kuranishi structure, Pseudo-holomorphic curve, and Virtual fundamental chain: Part 1,  arXiv:1503.07631.
\bibitem{FOOO17} K. Fukaya, Y. Oh, H. Ohta, K. Ono, Kuranishi structure, Pseudo-holomorphic curve, and Virtual fundamental chain: Part 2,  arXiv:1704.01848.


\bibitem{Gr} M. Gromov, Pseudo holomorphic curves in symplectic manifolds, Invent. math., 82 (1985), 307-347.
\bibitem{HLS} H. Hofer, V. Lizan, J.-C. Sikorav: On genericity for holomorphic curves in four-dimensional almost-complex manifolds, J. Geom. Anal. (1997) 7, 149-159.

\bibitem{IS} S. Ivashkovich, V. Shevchishin: Pseudo-holomorphic curves and envelopes of meromorphy of two-spheres in $\mathbb{C}P^2$, arXiv:math/9804014.

\bibitem{KM} M. Kontsevich, Yu. Manin, Gromov-Witten classes, quantum cohomology, and enumerative geometry, Comm.Math.Phys. 164 (1994) 525-562.
M. Gromov, Pseudo holomorphic curves in symplectic manifolds, Invent. math., 82 (1985), 307-347.
\bibitem{LR}  A.-M. Li and Y. Ruan, Symplectic surgery and Gromov-Witten invariants of Calabi-Yau 3- folds, Invent. Math. 145, 151-218(2001)
\bibitem{LS-1}  A.-M. Li and Li, Sheng, The Exponential Decay of Gluing Maps and Gromov-Witten Invariants, arXiv:1506.06333.
\bibitem{LS-2} A.-M. Li, Li, Sheng , A Finite Rank Bundle over $J$-Holomorphic map Moduli Spaces£¬  arXiv:1711.04228.

\bibitem{LT} J. Li and G. Tian, Virtual moduli cycles and Gromov-Witten
invariants of algebraic varieties. J. Amer. Math. Soc., 11(1),
119-174,1998.

\bibitem{LiuT}G. Liu and G. Tian, Floer homologyand Arnold conjecture. J. Differential Geom. 49 (1998), no. 1, 1-74.


%\bibitem{MQ} V. Mathai, D. Quillen,  Superconnections, Thom classes, and equivariant differential forms. Topology 25 (1986), no. 1, 85-110.

\bibitem{Mas} 	H. Masur. The Extension of the Weil-Petersson Metric to the Boundary of Teichm\"uller Space. Duke Math. Jour. 43(1976),   no.3, 623-635.

 \bibitem{M14} D. McDuff, Notes on Kuranishi Atlases, arXiv:1411.4306.
\bibitem{MS} D.  McDuff and D. Salamon, $J$-holomorphic curves and symplectic topology. American Mathematical Society Colloquium Publications, 52. American Mathematical Society, Providence, RI, 2004.
\bibitem{MW12} D. McDuff, K. Wehrheim, Kuranishi atlases with trivial isotropy - the 2013 state of affairs,  arXiv:1208.1340.
\bibitem{MW15-1} D. McDuff, K. Wehrheim, Smooth Kuranishi atlases with isotropy, Geom. Topol. 21 (2017), no. 5, 2725-2809.
\bibitem{MW15-2} D. McDuff, K. Wehrheim,
The fundamental class of smooth Kuranishi atlases with trivial isotropy, J. Topol. Anal. 10 (2018), no. 1, 71-243.
\bibitem{MW15-3} D. McDuff, K. Wehrheim,
The topology of Kuranishi atlases,  Proc. Lond. Math. Soc. (3) 115 (2017), no. 2, 221-292.

\bibitem{Mum} D. Mumford, Hirzebruch's proportionality theorem in the noncompact case, Invent.
	Math. 42 (1977), 239-272.

\bibitem{Par} J. Pardon, An algebraic approach to virtual fundamental cycles on moduli spaces of pseudo-holomorphic curves, Geometry \& Topology 20 (2016) 779-1034.
\bibitem{RS} J. Robbin, D. Salamon,  A construction of the Deligne-Mumford orbifold. J. Eur. Math. Soc. (JEMS)  8  (2006),  no. 4, 611-699.

\bibitem{R1} Y. Ruan, Topological Sigma model and Donaldson type invariants in Gromov theory, Math. Duke J. 83(1996), 461-500.
\bibitem{R2} Y. Ruan, Virtual neighborhoods and pseudo-holomorphic curves,
Turkish Jour. of Math. 1(1999), 161-231.

\bibitem{RT1}  Y. Ruan,  G. Tian, A mathematical theory of quantum cohomology. J. Differential Geom.  42  (1995),  no. 2, 259-367.

\bibitem{RT2}  Y. Ruan,  G. Tian, Higher genus symplectic invariants and sigma models coupled with gravity. Invent. Math.  130  (1997),  no. 3, 455-516.

\bibitem{S} B. Siebert, Gromov-Witten invariants for general symplectic manifolds, arXiv:9608005
%\bibitem{ST} B. Siebert and G. Tian, On quantum cohomology rings of Fano manifolds and a formula of Vafa and Intriligator, Asian J. Math. 1(1997), no.4, 679-695.
\bibitem{TF17} M. Tehrani, Kenji Fukaya, Gromov-Witten theory via Kuranishi structures,  arXiv:1701.07821
\bibitem{Tromba} A.J. Tromba, {\em Teichm\"uller theory in Riemannian geometry}, Lecture notes prepared by Jochen Denzler. Lectures in Mathematics ETH Z\"{u}rich. Birkh\"{a}user Verlag, Basel, 1992.
\bibitem{Wolp-1}  S. Wolpert,  Cusps and the family hyperbolic metric,
  Duke   Jou. Math., vol. 138, no. 3, 423-443, 2007.
\bibitem{Wolp-2}  S. Wolpert, The hyperbolic metric and the geometry of the universal curve, J. Diff. Geom. 31(1990),417-472
%\bibitem{Wolp-3}  S. Wolpert, Geometry of the Weil-Petersson completion of Teichm\"uller space, Surveys in differential geometry, Vol. VIII (Boston, MA, 2002), 357-393, Surv. Differ. Geom., 8, Int. Press, Somerville, MA, 2003.
    \bibitem{Wol12} S. Wolpert. On families of holomorphic differentials on degenerating annuli. {\it Quasiconformal mappings, Riemann surfaces, and Teichm\"uller spaces}, 363-370, Contemp. Math., 575, Amer. Math. Soc., Providence, RI, 2012.
\bibitem{MVV} Marian Fabian, Vicente Montesinos and Vaclav Zizler, Smoothness in Banach spaces. Selected problems,
    R. Acad. Cien. Serie A. Mat.VOL. 100 (1-2), 2006, pp. 101-125


\bibitem{Z} W. Zhang,
Lectures on Chern-Weil theory and Witten deformations. (English summary) Nankai Tracts in Mathematics, 4. World Scientific Publishing Co., Inc., River Edge, NJ, 2001.



\end{thebibliography}
\end{document}